\documentclass[11pt,letterpaper,reqno]{amsart}
\usepackage[top=1in, bottom=1in, left=1.25in, right=1.25in]{geometry}
\usepackage{amsfonts}
\usepackage{amsmath}
\usepackage{amsthm, amscd}
\usepackage{mathtools}
\usepackage{amssymb}
\usepackage{mathrsfs}
\usepackage{graphicx}
\usepackage{caption}
\usepackage{subcaption}
\usepackage{float}
\usepackage{xypic}
\usepackage[abs]{overpic}
\usepackage[alphabetic]{amsrefs}
\usepackage{etex}
\usepackage{tikz-cd}

\usepackage{hyperref}
\hypersetup{
	colorlinks,
	linkcolor=[rgb]{0,0,0.7},
	urlcolor=[rgb]{0,0,0.4},
	citecolor=[rgb]{0.4,0.1,0}
}

\allowdisplaybreaks

\theoremstyle{plain}\newtheorem{Theorem}{Theorem}[section]
\theoremstyle{plain}\newtheorem{Corollary}[Theorem]{Corollary}
\theoremstyle{plain}\newtheorem{Lemma}[Theorem]{Lemma}
\theoremstyle{plain}\newtheorem{Definition}[Theorem]{Definition}
\theoremstyle{plain}\newtheorem{Proposition}[Theorem]{Proposition}
\theoremstyle{plain}
\theoremstyle{plain}
\theoremstyle{plain}\newtheorem*{Claim*}{Claim}
\theoremstyle{plain}\newtheorem*{Theorem*}{Theorem}
\theoremstyle{plain}\newtheorem*{Lemma*}{Lemma}
\theoremstyle{plain}

\theoremstyle{remark}\newtheorem{remark}[Theorem]{Remark}
\theoremstyle{remark}
\theoremstyle{remark}\newtheorem*{Notation*}{Notation}

\theoremstyle{plain}\newtheorem{alphthm}{Theorem}

\theoremstyle{plain}
\makeatletter
\newtheorem*{rep@theorem}{\rep@title}
\newcommand{\newreptheorem}[2]{
\newenvironment{rep#1}[1]{
 \def\rep@title{#2 \ref{##1}}
 \begin{rep@theorem}}
 {\end{rep@theorem}}}
\makeatother
\newreptheorem{Theorem}{Theorem}
\newreptheorem{Proposition}{Proposition}
\newreptheorem{Corollary}{Corollary}

\numberwithin{equation}{section}

\usepackage{lipsum}

\DeclareMathOperator{\rank}{rank}

\DeclareMathOperator{\Ima}{Im}

\DeclareMathOperator{\SO}{SO}
\DeclareMathOperator{\U}{U}

\DeclareMathOperator{\spann}{span}

\DeclareMathOperator{\id}{id}

\DeclareMathOperator{\lk}{lk}

\DeclareMathOperator{\Diff}{Diff}
\DeclareMathOperator{\Homeo}{Homeo}
\DeclareMathOperator{\Emb}{Emb}
\DeclareMathOperator{\Map}{Map}

\DeclareMathOperator{\inte}{int}
\DeclareMathOperator{\Co}{Co}

\DeclareMathOperator{\orb}{orb}
\DeclareMathOperator{\diff}{diff}
\DeclareMathOperator{\topo}{top}

\newcommand{\bQ}{\mathbb{Q}}

\newcommand{\bZ}{\mathbb{Z}}

\newcommand{\tor}{\mbox{tors}}

\newreptheorem{theorem}{Theorem}

\author{Jianfeng Lin}
\address{Yau Mathematical Sciences Center, Tsinghua University, Beijing 100084, China}
\email{linjian5477@mail.tsinghua.edu.cn}
\author{Yi Xie}
\address{Beijing International Center for Mathematical Research, Peking University, Beijing 100871, China}
\email{yixie@pku.edu.cn}
\author{Boyu Zhang}
\address{Department of Mathematics, The University of Maryland at College Park, Maryland 20742, USA}
\email{bzh@umd.edu}

\title{Pseudo-isotopies of $3$--manifolds with infinite fundamental groups}

\begin{document}

\maketitle
\begin{abstract}
Suppose $Y$ is a compact, connected, oriented $3$--manifold possibly with boundary, such that $\pi_1(Y)$ is infinite. Let $\operatorname{Diff}_\partial(I\times Y)$ denote the group of self-diffeomorphisms of $I\times Y$ that are equal to the identity near the boundary. Let $\operatorname{Diff}_{PI}(I\times Y)$ denote the subgroup of $\operatorname{Diff}_\partial(I\times Y)$ consisting of elements pseudo-isotopic to the identity. Define $\operatorname{Homeo}_\partial(I\times Y)$, $\operatorname{Homeo}_{PI}(I\times Y)$ similarly for homeomorphisms. We show that the canonical map $\pi_0\operatorname{Diff}_{PI}(I\times Y) \to \pi_0\operatorname{Homeo}_{PI}(I\times Y)$ is of infinite rank. As a consequence, $\pi_0\operatorname{Diff}_{PI}(I\times Y)$, $\pi_0\operatorname{Diff}_{\partial}(I\times Y)$, $\pi_0\operatorname{Homeo}_{PI}(I\times Y)$, $\pi_0\operatorname{Homeo}_{\partial}(I\times Y)$ are all abelian groups of infinite rank. 
We also prove that $\pi_0\,C(Y)$ contains an abelian subgroup of infinite rank, and $\pi_0\,C(I\times Y)$ admits a surjection to an abelian group of infinite rank, where $C(X)$ denotes the concordance automorphism group $\Diff(I\times X, \{0\}\times X\cup I\times \partial X)$ or $\Homeo(I\times X, \{0\}\times X\cup I\times \partial X)$. 
These results are proved by studying the actions of barbell diffeomorphisms on the spaces of embedded arcs and configuration spaces. 
\end{abstract}

\section{Introduction}
\label{sec_introduction}
Given an oriented smooth manifold $X$ with boundary, we use $\Diff_{\partial}(X)$ to denote the group of self-diffeomorphisms of $X$ that are the identity near the boundary. We use $\Diff_{PI}(X)$ to denote the subgroup of $\Diff_{\partial}(X)$ consisting of diffeomorphisms that are pseudo-isotopic to the identity. By definition, a diffeomorphism $f$ belongs to $\Diff_{PI}(X)$ if and only if there exists a diffeomorphism $\widetilde{f}:I\times X\to I\times X$ such that $\tilde{f}=\id$ on a neighborhood of $I\times \partial X$, and $\tilde{f}(0,x)=(0,x)$, $\tilde{f}(1,x)=(1,f(x))$ for all $x\in X$.
The group of path-connected components $\pi_{0}\Diff_{\partial}(X)$ is called the \emph{smooth mapping class group} of $X$, and it contains $\pi_{0}\Diff_{PI}(X)$ as a normal subgroup. 

Similarly, when $X$ is an oriented topological manifold, we use $\Homeo_{\partial}(X)$ to denote the topological group of self-homeomorphisms of $X$ that are the identity near the boundary. Let $\Homeo_{PI}(X)$ be the subgroup of $\Homeo_{\partial}(X)$ consisting of homeomorphisms that are (topologically) pseudo-isotopic to the identity. Then the group $\pi_0\Homeo_{\partial}(X)$ is the topological mapping class group of $X$.

Let $I=[0,1]$, let $Y$ be a compact, connected, oriented $3$--manifold. This paper studies the properties of the mapping class groups of $I\times Y$ in the smooth and topological categories. 
Note that $\pi_0\Diff_{PI}(I\times Y)$, $\pi_0\Diff_{\partial}(I\times Y)$, $\pi_0\Homeo_{PI}(I\times Y)$, $\pi_0\Diff_{\partial}(I\times Y)$ are all abelian. This is because one can always isotope two given diffeomorphisms/homeomorphisms on $I\times Y$ such that they have disjoint supports and hence commute. Also note that there is a canonical inclusion $\Diff_{PI}(I\times Y)\to \Homeo_{PI}(I\times Y)$, which induces a homomorphism $\pi_0\Diff_{PI}(I\times Y)\to \pi_0\Homeo_{PI}(I\times Y)$.


Our first main result is stated as follows. 
\begin{alphthm}
    \label{thm_main}
    Suppose $Y$ is a connected, compact, oriented $3$--manifold, possibly with boundary, such that  $\pi_1(Y)$ is infinite. Then the image of the map
    \begin{equation}\label{eqn_main_thm_statement}
\pi_0\Diff_{PI}(I\times Y) \to \pi_0\Homeo_{PI}(I\times Y)
    \end{equation}
    is of infinite rank.
\end{alphthm}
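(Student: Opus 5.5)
We will produce an infinite family of barbell diffeomorphisms of $I\times Y$ and detect them by an invariant that is preserved under topological isotopy. The abstract indicates that these diffeomorphisms should be detected through their action on spaces of embedded arcs and on configuration spaces.

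\textbf{Construction.} Because $\pi_1(Y)$ is infinite, we can pick elements $g_1,g_2,\dots$ of $\pi_1(Y)$ that are pairwise distinct and remain distinct modulo the relevant conjugation and inversion symmetries. For each $g_k$ we build a barbell diffeomorphism $\phi_k$ of the $4$--manifold $I\times Y$, in the sense of Budney--Gabai. Its two cuff spheres are embedded $2$--spheres in $I\times Y$, linking a fixed neatly embedded arc $\alpha=I\times\{p\}$. Its bar is a path representing $g_k$. Each $\phi_k$ lies in $\Diff_{PI}(I\times Y)$: barbell diffeomorphisms are supported in an embedded $S^2\times D^2\natural S^2\times D^2$, and the standard argument shows they are pseudo-isotopic to the identity. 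We then need to show that the classes $[\phi_k]$ are linearly independent in $\pi_0\Homeo_{PI}(I\times Y)$, after possibly passing to a subsequence.

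\textbf{Invariant.} Send a homeomorphism $f$ to the isotopy class of the arc $f(\alpha)$ relative to its endpoints. For smooth $f$ this is the Budney--Gabai arc-space invariant. The key point is that the relevant invariant must survive in the topological category. So we will use a homotopy-theoretic invariant of arcs: the Dax invariant, or more concretely the homotopy class of the induced map on two-point configuration spaces, $C_2(I)\to C_2(I\times Y)$ given by $(s,t)\mapsto (f\alpha(s),f\alpha(t))$, taken relative to its boundary behaviour. This class depends only on the topological isotopy class of $f(\alpha)$. It can be computed in $\pi_2$ or $H_2$ of the configuration space, via the universal cover $\widetilde Y$, and takes values in a quotient of $\bZ[\pi_1(Y)]$.

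For $\phi_k$, the standard Dax/barbell computation should give an element like $g_k - g_k^{-1}$, up to the symmetries. Choosing the $g_k$ appropriately, these elements are linearly independent. One subtlety is whether $\pi_1(Y)$ contains elements of order $2$, which would kill terms like $g-g^{-1}$. Since $\pi_1(Y)$ is infinite, we can choose $g_k$ of infinite order; for $3$--manifold groups infinitely many such elements exist, for instance in a geometric piece. The invariant should also be additive under composition, because the supports can be made disjoint. This gives a homomorphism from $\pi_0\Homeo_{PI}(I\times Y)$, or from the stabilizer setup, to an abelian group of infinite rank, and the images of the $\phi_k$ are independent.

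\textbf{Main obstacle.} The hardest step will be showing that the configuration-space invariant is well defined for homeomorphisms. The Dax invariant is classically defined through a smooth, generic homotopy of arcs. Our plan is to redefine it purely homotopically: after an isotopy fixing the endpoints, any homeomorphism carries $\alpha$ to an arc that is tame and locally flat. We then use the relative homotopy class of the induced map of configuration spaces, or of a map from a space of arcs. For invariance we only need that a topological isotopy of $f$ induces a homotopy of these configuration maps, and this holds automatically because embeddings induce maps of configuration spaces continuously. The remaining work is the comparison showing that this homotopical invariant agrees with the smooth barbell computation on $\phi_k$, which we would verify by an explicit count of double points.
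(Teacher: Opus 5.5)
\textbf{Your invariant vanishes identically, so the argument cannot get started.} In the $4$--manifold $I\times Y$, arcs rel endpoints are classified up to isotopy by their homotopy classes. In a generic homotopy $(t,s)\mapsto\gamma_t(s)$, a double point $\gamma_t(s)=\gamma_t(s')$ imposes four conditions on the three parameters $(t,s,s')$, so it can be avoided. Each $\phi=\prod_k\phi_k^{c_k}$ is homotopic to the identity rel boundary, so $\phi(\alpha)$ is smoothly isotopic to $\alpha$ rel endpoints. Hence every invariant of the isotopy class of the single arc $\phi(\alpha)$ takes the same value on $\phi$ as on $\id$, and this includes the homotopy class of its two-point configuration map. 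The dimensions show why: $C_2(I)$ is $2$--dimensional, while the class that records crossings of arcs is $3$--dimensional. That class is the meridian sphere of a point in the fiber $(I\times Y)\setminus\{\mathrm{pt}\}$ of $C_2(I\times Y)\to I\times Y$.

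The Dax invariant is not an invariant of one arc. It lives on $\pi_1\Emb_\partial(I,I\times Y)$, i.e.\ on loops of arcs; equivalently, it compares two homotopic $2$--disks, as in $\mathrm{Dax}(D_b,\psi(D_b))$. Likewise, the Budney--Gabai invariant on $S^1\times D^3$ is not the isotopy class of $f(\alpha)$. It is the $2$--parameter family $\varphi_*\circ\mathscr{I}\circ i_0$ obtained by scanning a $3$--ball, which gives an element of $\pi_2\Emb_\partial$ read off from $3$--point configuration spaces. So your values ``$g_k-g_k^{-1}$'' have no well-defined home. (Gabai's computation for a self-referential barbell in fact gives the symmetric $g^k+g^{-k}$.)

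\textbf{What has to replace it.} You must apply $\varphi$ to a positive-dimensional family of arcs; the paper uses $\varphi_*\circ\mathscr{I}$ restricted to a loop or a surface in $Y$. Topological invariance is then the easy part, so your ``main obstacle'' is not where the difficulty lies. Lemma \ref{lem_Embdagger_obstruct_isotopy} turns an isotopy into a homotopy $\varphi_*\circ\mathscr{I}\simeq\mathscr{I}$ of maps $Y\to\Emb_\dagger(I,I\times Y)$, and Robinson's theorem lets the smooth Dax computation describe $\pi_1$ of the topological arc space.

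The real difficulty is that this homotopy moves base points. You only control free homotopy classes, not values in an abelian group: conjugacy classes in $\pi_1\Emb_\dagger(I,I\times Y)\cong(\bZ[\pi_1(Y)\setminus\{1\}]\oplus\pi_2(Y))\rtimes\pi_1(Y)$, or unbased homotopy classes of surfaces.
\begin{itemize}
\item In the reducible case, the paper routes a loop $\hat\alpha$ twice through the support of a self-referential barbell. This gives $\mathscr{I}(\alpha_1)\,\beta\,\mathscr{I}(\alpha_2)\,\beta^{-1}$. Showing it is not conjugate to $\mathscr{I}(\alpha_1)\mathscr{I}(\alpha_2)$ uses that $\alpha_1,\alpha_2$ lie in different summands and $\alpha_2^2\neq1$.
\item In the irreducible case, the obstruction is not at the level of loops. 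Budney--Gabai's $\varphi_i$ are implanted along a circle $\gamma$ and detected by the $\pi_2$--level invariant $\Psi$ on a surface $\sigma$ with $\#(\sigma\cap\gamma)\neq0$. This needs $b_1>0$. For closed $Y$ that requires a finite cover (Agol, Wise, Luecke) and the pull-back of Lemma \ref{lem_pull_back_diff_PI}. It also requires trivial-centralizer or $\U(1)$-action arguments to upgrade free homotopies of closed surfaces to based ones.
\item $S^1\times S^2$ and $\mathbb{RP}^3\#\mathbb{RP}^3$ are handled separately through $S^1\times D^3$.
\end{itemize}
None of this is supplied by choosing many elements $g_k$ of infinite order.
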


Theorem \ref{thm_main} has the following immediate corollary.

\begin{Corollary}\label{cor: infinite generation}
    Suppose $Y$ is a connected, compact, oriented $3$--manifold, possibly with boundary, such that  $\pi_1(Y)$ is infinite. Then the abelian groups 
    \[\pi_0\Diff_{PI}(I\times Y),\,\,\pi_0\Homeo_{PI}(I\times Y),\,\,\pi_0\Diff_{\partial}(I\times Y),\,\,\pi_0\Homeo_{\partial}(I\times Y)
    \]
 are all of infinite rank.
\end{Corollary}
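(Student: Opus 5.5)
The corollary follows from Theorem \ref{thm_main} by basic facts about ranks of abelian groups. I would first record that all four groups are abelian; the introduction notes this, since any two elements can be isotoped to have disjoint supports. Here the rank of an abelian group $A$ means $\dim_{\bQ}(A\otimes\bQ)$, which is the maximal size of a $\bZ$-linearly independent subset. The only properties I will use are these:
\begin{enumerate}
\item if $A\to B$ is a homomorphism, then $\rank(\Ima)\le\rank A$;
\item if $A\le B$ is a subgroup, then $\rank A\le\rank B$.
\end{enumerate}
Both hold because $\bQ$ is flat over $\bZ$: tensoring with $\bQ$ preserves injections and surjections.

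\textbf{Step 1: the pseudo-isotopic groups.} Write $\Phi:\pi_0\Diff_{PI}(I\times Y)\to\pi_0\Homeo_{PI}(I\times Y)$ for the map \eqref{eqn_main_thm_statement}. Theorem \ref{thm_main} says $\Ima\Phi$ has infinite rank.
\begin{itemize}
\item Since $\Ima\Phi$ is a quotient of $\pi_0\Diff_{PI}(I\times Y)$, property (1) shows $\pi_0\Diff_{PI}(I\times Y)$ has infinite rank.
\item Since $\Ima\Phi$ is a subgroup of $\pi_0\Homeo_{PI}(I\times Y)$, property (2) shows $\pi_0\Homeo_{PI}(I\times Y)$ has infinite rank.
\end{itemize}

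\textbf{Step 2: the full mapping class groups.} The subgroup $\Diff_{PI}(I\times Y)$ is a union of path components of $\Diff_\partial(I\times Y)$. This is because pseudo-isotopy to the identity is preserved under isotopy: concatenate a pseudo-isotopy with the trace of an isotopy. Consequently $\pi_0\Diff_{PI}(I\times Y)\to\pi_0\Diff_\partial(I\times Y)$ is injective; it is the inclusion of the normal subgroup mentioned in the introduction. The same argument applies verbatim to homeomorphisms, using topological isotopies and pseudo-isotopies. Applying property (2) to these two inclusions shows that $\pi_0\Diff_\partial(I\times Y)$ and $\pi_0\Homeo_\partial(I\times Y)$ have infinite rank.

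There is no real obstacle here. The only point that needs care is the injectivity in Step 2, i.e.\ checking that pseudo-isotopy classes are unions of isotopy classes.

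If one prefers to avoid Step 2 altogether, there is a shortcut. The composite $\pi_0\Diff_{PI}(I\times Y)\to\pi_0\Homeo_{PI}(I\times Y)\to\pi_0\Homeo_\partial(I\times Y)$ agrees with $\pi_0\Diff_{PI}(I\times Y)\to\pi_0\Diff_\partial(I\times Y)\to\pi_0\Homeo_\partial(I\times Y)$. So for the two $\partial$-groups it suffices that the image of $\pi_0\Diff_{PI}(I\times Y)$ in $\pi_0\Homeo_\partial(I\times Y)$ has infinite rank. That again reduces to the injectivity of $\pi_0\Homeo_{PI}(I\times Y)\to\pi_0\Homeo_\partial(I\times Y)$, which is the same observation as in Step 2.
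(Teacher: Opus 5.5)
Your argument is correct and is the intended one. The paper states this corollary as immediate from Theorem \ref{thm_main}: $\pi_0\Diff_{PI}(I\times Y)$ surjects onto the infinite-rank image, $\pi_0\Homeo_{PI}(I\times Y)$ contains it, and the $\partial$-groups contain the $PI$-groups as subgroups (the introduction notes that $\pi_0\Diff_{PI}(X)$ is a normal subgroup of $\pi_0\Diff_\partial(X)$). Your observation that $\Diff_{PI}$ and $\Homeo_{PI}$ are unions of path components justifies that injectivity, which is the one detail the paper leaves implicit.
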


\begin{remark}
Various special cases of Corollary \ref{cor: infinite generation} were known before:  In the smooth category, Budney-Gabai \cite{budney2019knotted}, and independently by Watanabe \cite{watanabe2020},  proved the case when $Y=S^1\times D^2$. Singh \cite{singh2025pseudo} proved the case when $Y$ is $S^2\times S^1$ (see \cite{FGHK} for an alternative proof)
or the connected sum of two closed aspherical 3-manifolds. Igusa \cite{Igusa2021} proved the case when $Y=(S^2\times S^1)\# Y'$ for a non-simply connected $Y'$. For relevant results in the topological category, see  \cite{budney2025automorphism,Galvin}. See \cite{OhtaWatanabe} for results when $\pi_{1}(Y)$ is finite.
\end{remark}

Gluing $\{0\}\times Y$ with $\{1\}\times Y$ yields homomorphisms
$\Diff_{\partial}(I\times Y)\to \Diff_{\partial}(S^1\times Y)$ and $\Homeo_{\partial}(I\times Y)\to \Homeo_{\partial}(S^1\times Y)$.
It is proved by Igusa \cite{Igusa2021}*{Lemma 5.1}\footnote{The result is originally stated in the smooth category for closed manifolds, but the same argument works in the topological category and for manifolds with boundary.} that these maps induce injections on $\pi_0$.
Therefore, we have the following corollary.

\begin{Corollary}
   Suppose $Y$ is a connected, compact, oriented $3$--manifold, possibly with boundary, such that  $\pi_1(Y)$ is infinite.
   Then the groups
    \[\pi_0\Diff_{PI}(S^1\times Y),\,\,\pi_0\Homeo_{PI}(S^1\times Y),\,\,\pi_0\Diff_{\partial}(S^1\times Y),\,\,\pi_0\Homeo_{\partial}(S^1\times Y)
    \]
contain abelian subgroups of infinite rank.
\end{Corollary}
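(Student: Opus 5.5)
The corollary follows by transporting the infinite-rank subgroups produced by Corollary \ref{cor: infinite generation} along the gluing maps. Write $\Phi\colon \Diff_{\partial}(I\times Y)\to \Diff_{\partial}(S^1\times Y)$ and $\Psi\colon \Homeo_{\partial}(I\times Y)\to \Homeo_{\partial}(S^1\times Y)$ for the homomorphisms obtained by gluing $\{0\}\times Y$ to $\{1\}\times Y$. Since elements of $\Diff_\partial(I\times Y)$ are the identity near $\{0,1\}\times Y$, these maps are well defined. By \cite{Igusa2021}*{Lemma 5.1}, valid in both categories and for $Y$ with boundary by the footnote, $\pi_0\Phi$ and $\pi_0\Psi$ are injective. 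By Corollary \ref{cor: infinite generation}, $\pi_0\Diff_{\partial}(I\times Y)$ and $\pi_0\Homeo_{\partial}(I\times Y)$ are abelian of infinite rank. An injective homomorphism maps a free abelian subgroup of infinite rank isomorphically onto its image. Hence the images of $\pi_0\Phi$ and $\pi_0\Psi$ are abelian subgroups of infinite rank in $\pi_0\Diff_{\partial}(S^1\times Y)$ and $\pi_0\Homeo_{\partial}(S^1\times Y)$.

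For the pseudo-isotopy subgroups, I would check that gluing preserves pseudo-isotopy to the identity. Let $f\in \Diff_{PI}(I\times Y)$, with pseudo-isotopy $\tilde f$ on $I\times(I\times Y)$. By definition $\tilde f$ is the identity near $I\times\partial(I\times Y)\supset I\times\{0,1\}\times Y$. Gluing the middle $I$-factor therefore produces a diffeomorphism of $I\times (S^1\times Y)$. This diffeomorphism is the identity near $I\times\partial(S^1\times Y)$, equals the identity at level $0$, and equals $\Phi(f)$ at level $1$. So $\Phi(\Diff_{PI}(I\times Y))\subset \Diff_{PI}(S^1\times Y)$, and the same argument gives $\Psi(\Homeo_{PI}(I\times Y))\subset\Homeo_{PI}(S^1\times Y)$.

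Now restrict the injective maps $\pi_0\Phi$ and $\pi_0\Psi$ to $\pi_0\Diff_{PI}(I\times Y)$ and $\pi_0\Homeo_{PI}(I\times Y)$. By Corollary \ref{cor: infinite generation} these are abelian of infinite rank. Their images therefore give abelian subgroups of infinite rank in $\pi_0\Diff_{PI}(S^1\times Y)$ and $\pi_0\Homeo_{PI}(S^1\times Y)$.

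One point needs care. Restricting $\pi_0\Phi$ to $\pi_0\Diff_{PI}(I\times Y)$ is injective only because $\pi_0\Diff_{PI}(I\times Y)\to\pi_0\Diff_\partial(I\times Y)$ is itself injective. This holds because $\Diff_{PI}$ is a union of path components of $\Diff_\partial$: pseudo-isotopy classes are unions of isotopy classes. The same reasoning applies to $\Homeo_{PI}$. Beyond checking this compatibility and the footnoted extension of Igusa's lemma, there is no real obstacle.
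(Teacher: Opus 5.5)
Your proposal is correct and follows the paper's route: it combines the injectivity on $\pi_0$ of the gluing maps (Lemma 5.1 of \cite{Igusa2021}, extended to homeomorphisms and to manifolds with boundary as in the footnote) with Corollary \ref{cor: infinite generation}. Your extra checks, that gluing carries pseudo-isotopies to pseudo-isotopies and that $\pi_0\Diff_{PI}\to\pi_0\Diff_\partial$ is injective, fill in the details the paper leaves implicit for the two $PI$ groups.
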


Our second main result concerns the concordance automorphism groups of $3$-- and $4$--manifolds. Given a smooth manifold $X$, the \emph{concordance diffeomorphism (resp. homeomorphism) group} $C^{\diff}(X)$ (resp. $C^{\topo}$) is defined to be the group of diffeomorphisms (resp. homeomorphisms) on $I\times X$ that are the identity near $(\{0\}\times X)\cup (I\times \partial X)$. The concordance automorphism group is also called the \emph{pseudo-isotopy group}. This is a key object in high-dimensional manifold theory, and has a deep connection with algebraic K-theory. In fact, there is a well-defined stabilization map $C^{\diff}(X)\hookrightarrow C^{\diff}(I\times X)$, which induces an isomorphism on $\pi_{n-1}$ when $\mathrm{dim} X\geq  \max(2n+7,3n+4)$ \cite{Igusa88}. Moreover, the homotopy type of the limiting object $\varinjlim C^{\diff}(I^{k}\times X)$ can be understood via algebraic K-theory \cite{Waldhausen13}, providing a powerful machinery for computing the homotopy groups of $C^{\diff}(X)$ in the stable range. Regarding the group of path components, Cerf \cite{Cerf70} proved that $\pi_{0}\,C^{\diff}(X)=0$ for simply-connected $X$ of dimension $\geq 5$. Hatcher--Wagner \cite{Hatcher73} and Igusa \cite{Igusa84} further computed $\pi_{0}\,C^{\diff}(X)$ for a general $X$ of dimension $\geq 5$. (See \cite[Theorem 3.1]{Hatcher78} for a precise statement.) These results also hold in the topological category \cite{Burghelea82,Pedersen, Hatcher78}.
On the other hand, understanding $\pi_{0}\,C^{\diff}(X)$ and $\pi_{0}\,C^{\topo}(X)$  for $3$-- and $4$-- manifolds remains a challenging question. In fact, apart from a few specific families 
\cite{KS1993,Kwasik,budney2019knotted,Igusa2021,OhtaWatanabe}, it was previously unknown whether $\pi_0\,C^{\diff}(Y)$ or  $\pi_0\,C^{\topo}(Y)$ are nontrivial for a general $3$--manifold $Y$.
Our second main result studies the properties of the concordance automorphism groups of $Y$ and $I\times Y$.
\begin{alphthm}
    \label{thm_main2} Suppose $Y$ is a connected, compact, oriented $3$--manifold, possibly with boundary, such that  $\pi_1(Y)$ is infinite. Let $C=C^{\diff}$ or $C^{\topo}$. Then $\pi_0\,C(Y)$ contains an abelian subgroup of infinite rank, and $\pi_0\,C(I\times Y)$ admits a surjection to an abelian group of infinite rank.
\end{alphthm}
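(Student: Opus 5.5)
The second assertion follows directly from Corollary \ref{cor: infinite generation}. The first assertion comes from a fibration sequence, combined with a finiteness statement for $\pi_1$ of the $3$--dimensional diffeomorphism group; that finiteness input is the main obstacle.

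\emph{The statement about $\pi_0\,C(I\times Y)$.} Restriction to the top face $\{1\}\times(I\times Y)$ defines a homomorphism
\[
r\colon C(I\times Y)\to \Diff_\partial(I\times Y)
\]
(respectively into $\Homeo_\partial(I\times Y)$ when $C=C^{\topo}$). By the definition of $\Diff_{PI}$, a diffeomorphism lies in the image of $r$ exactly when it is pseudo-isotopic to the identity. Indeed, an element $\tilde f\in C(I\times Y)$ is precisely a pseudo-isotopy from $\id$ to $r(\tilde f)$. Hence $r$ induces a surjection
\[
\pi_0\,C(I\times Y)\twoheadrightarrow \pi_0\Diff_{PI}(I\times Y),
\]
and similarly $\pi_0\,C^{\topo}(I\times Y)\twoheadrightarrow\pi_0\Homeo_{PI}(I\times Y)$. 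The targets are abelian of infinite rank by Corollary \ref{cor: infinite generation}, which proves the second assertion.

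\emph{The statement about $\pi_0\,C(Y)$.} Here I use the restriction fibration
\[
\Diff_\partial(I\times Y)\to C(Y)\to \Diff_\partial(Y),
\]
where the second map restricts to $\{1\}\times Y$ and the fiber is the set of concordances that are also the identity on the top face. Its long exact sequence contains
\[
\pi_1\Diff_\partial(Y)\xrightarrow{\ \partial\ }\pi_0\Diff_\partial(I\times Y)\xrightarrow{\ i\ }\pi_0\,C(Y).
\]
Let $A\subset\pi_0\Diff_{PI}(I\times Y)$ be an infinite-rank subgroup coming from Theorem \ref{thm_main}; it is abelian. Its image $i(A)$ is an abelian subgroup of $\pi_0\,C(Y)$, and $\ker(i|_A)=A\cap\operatorname{Im}\partial$. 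It therefore suffices to show that $\operatorname{Im}\partial$ has finite rank, because then $i(A)$ still has infinite rank.

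\emph{The finiteness input.} I would prove finite rank of $\operatorname{Im}\partial$ in one of two ways.

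\begin{enumerate}
\item \emph{Using known computations.} For compact orientable $3$--manifolds with infinite $\pi_1$, $\pi_1\Diff_\partial(Y)$ is finitely generated. This follows from Hatcher's results for Haken manifolds, the generalized Smale conjecture of Gabai and Bamler--Kleiner for geometric pieces, and the analysis of connected sums and JSJ pieces. In fact the group is generated by Dehn twists along spheres and tori and by circle actions. The equivalence $\Diff(Y)\simeq\Homeo(Y)$ in dimension $3$ (Cerf, Hatcher) gives the same conclusion in the topological category.
\item \emph{Using the paper's own invariant, which I would try first.} The barbell elements of $A$ are detected by an invariant defined through their action on spaces of embedded arcs, or on configuration spaces. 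I would show that this invariant vanishes on $\partial(\gamma)$ for every loop $\gamma$. The reason is that $\partial(\gamma)$ is, by construction, the diffeomorphism $(t,y)\mapsto(t,\gamma_t(y))$. This map is level-preserving, so its action on arcs factors through the action of a loop of diffeomorphisms of $Y$ itself, and the invariant should then see nothing. This would give $A\cap\operatorname{Im}\partial=0$ outright.
\end{enumerate}

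Either route needs care in the topological category, where one must check the level-preserving argument or the finiteness statement for $\Homeo$.
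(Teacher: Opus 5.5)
Your treatment of $\pi_0\,C(I\times Y)$ is the paper's. So is the reduction of the first assertion to controlling $A\cap\operatorname{Im}\partial$. Your route (2) is, in substance, exactly how the paper finishes.

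\textbf{How the paper packages route (2).} The paper notes that the composite $\pi_0\Diff_\partial(I\times Y)\to\pi_0\,C(Y)\to\pi_0\Homeo(I\times Y,I\times\partial Y)$ is the canonical map, so $\operatorname{Im}\partial$ dies in the last group. It then uses the strong forms, Theorems \ref{thm_main_irred_case} and \ref{thm_main_red_case}, in which the barbell classes remain independent in $\pi_0\Homeo(I\times Y,I\times\partial Y)$. Your ``level-preserving maps are invisible'' step is Lemma \ref{lem_Embdagger_obstruct_isotopy}. That lemma is already stated for homeomorphisms, so your worry about the topological category does not arise.

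\textbf{Two corrections to route (2).}
\begin{enumerate}
\item You cannot use Theorem \ref{thm_main} as a black box; you need the detection statements in this stronger form. For $Y=S^1\times S^2$ and $\mathbb{RP}^3\#\mathbb{RP}^3$, the detecting invariant is $\Psi\circ\mathcal S$ after capping off to $S^1\times D^3$ (for $\mathbb{RP}^3\#\mathbb{RP}^3$, after first passing to the double cover). There, insensitivity to boundary motion comes from Hatcher's computation of $\pi_1\Homeo(S^1\times S^2)$, via Lemmas \ref{lem_S1xD3_boundary_Homeo_pi1}--\ref{lem_w3_obstruct_isotopy_moving_boundary}, not from arcs.
\item The vanishing does not hold ``for every loop $\gamma$''. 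For a rotation loop $\gamma$ of $T^3$, the map $(t,y)\mapsto(t,\gamma_t(y))$ sends $I\times\{y\}$ to an arc whose projection is non-contractible. So it does not even preserve $\Emb_\dagger(I,I\times Y)$. What is true is this: a level-preserving map that is homotopic to the identity rel $\{0,1\}\times Y$ fixes the homotopy class of $\mathscr I$. To see it, project the homotopy to a level-preserving one; each stage sends vertical arcs to graphs, which are embedded arcs. This suffices for you. If $\partial\gamma\in A\subset\pi_0\Diff_{PI}(I\times Y)$, then $\partial\gamma$ is homotopic to the identity rel boundary, since a pseudo-isotopy is in particular such a homotopy.
\end{enumerate}

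\textbf{Route (1) should be dropped.} Its key claim, that $\pi_1\Diff_\partial(Y)$ is finitely generated, is false in the generality of the theorem. Also, ``Dehn twists along spheres and tori'' generate $\pi_0$, not $\pi_1$. For a counterexample, take $M=T^3\#T^3$ and $Y=M\setminus\mathring B^3$, which satisfies the hypotheses. The fibration sequence $\Diff_\partial(Y)\to\Diff(M)\to\Emb^+(B^3,M)\simeq M\times\SO(3)$ gives an exact sequence $\pi_2\Diff(M)\to\pi_2(M)\to\pi_1\Diff_\partial(Y)$.
\begin{enumerate}
\item The first map is evaluation at a point, so its image lies in the Gottlieb group $G_2(M)$, on which $\pi_1(M)$ acts trivially.
\item On the other hand, $\pi_2(M)\cong\mathbb{Z}[\pi_1(M)]$, freely generated by the lifts of the neck sphere to the universal cover. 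It has no nonzero $\pi_1(M)$-invariants.
\end{enumerate}
Hence $\pi_2(M)$ injects into $\pi_1\Diff_\partial(Y)$, which therefore contains a free abelian group of infinite rank. So finite rank of $\operatorname{Im}\partial$ cannot be read off from the size of $\pi_1\Diff_\partial(Y)$. What makes the proof work is the stronger fact $A\cap\operatorname{Im}\partial=0$ supplied by route (2).
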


Now we sketch the ideas of the proofs. We start with Theorem \ref{thm_main}.
To construct an infinite collection of linearly independent elements $\varphi_1,\varphi_2,\dots$ in the image of \eqref{eqn_main_thm_statement}, we use Budney--Gabai's barbell diffeomorphisms. The main result is to show that every non-trivial $\mathbb{Z}$--linear combination $\varphi$ of the $\varphi_i$'s is not isotopic to the identity in $\pi_0\Homeo_\partial(I\times Y)$.  To obstruct the isotopy from $\varphi$ to $\id$, we consider the action of $\varphi$ on the space of arcs in $I\times Y$. 

Let $\Emb_\dagger(I,I\times Y)$ be the space of embedded arcs $\gamma:[0,1]\to I\times Y$ such that there exists $y\in Y$, which may depend on $\gamma$, with $\gamma(0) = (0,y)$, $\gamma(1) = (1,y)$, and that the projection image of $\gamma$ to $Y$ is a contractible loop.  In Part \ref{part_space_arcs} of the paper, we will study the properties of $\pi_1\Emb_\dagger(I,I\times Y)$ and $\pi_2\Emb_\dagger(I,I\times Y)$. The group $\pi_1\Emb_\dagger(I,I\times Y)$ will be computed in Proposition \ref{prop_decomposition_pi_1_emb_dagger} using the Dax isomorphism. When $b_1(Y)>0$ and $Y$ is irreducible, we will construct a homomorphism $\Psi$ from $\pi_2\Emb_\dagger(I,I\times Y)$ to $\oplus_\infty\mathbb{Q}$ via configuration spaces of points and the Bousfield--Kan spectral sequence, which generalizes the constructions in \cite{budney2025automorphism,LXZ2025}.

Our strategy is to obstruct the isotopy between $\varphi$ and $\id$ by showing that $\varphi$ acts non-trivially on the set of homotopy classes of curves and surfaces in $\Emb_\dagger(I,I\times Y)$. The homotopy classes of closed curves are given by conjugation classes in $\pi_1\Emb_\dagger(I,I\times Y)$. As for surfaces, we will develop several results in Section \ref{sec_surface_homotopy} that will allow us to obstruct their homotopies using the properties of $\pi_1\Emb_\dagger(I,I\times Y)$ and $\pi_2\Emb_\dagger(I,I\times Y)$.

Another technique that we frequently use is to lift homeomorphisms $\varphi$ on $I\times Y$ to finite covers. If $\widetilde{Y}$ is a finite cover of $Y$ which is Haken, we will prove in Lemma \ref{lem_pull_back_diff_PI} that there is a well-defined pull-back map from the mapping class group of $I\times Y$ to that of $I\times \widetilde{Y}$. Therefore, we may obstruct the isotopy of $\varphi$  by obstructing the isotopy of its pull-back to $I\times \widetilde{Y}$.

We will use different combinations of obstructions for different classes of manifolds $Y$. Let $\hat Y$ be obtained from $Y$ after filling in every boundary component diffeomorphic to $S^2$ with a copy of $D^3$. If $\hat Y$ is irreducible, we will obstruct isotopy by considering the homotopy classes of surfaces in $\Emb_\dagger(I,I\times Y)$, after possibly pulling back to a covering space.  We employ three different arguments for the cases when $\partial \hat Y\neq \emptyset$, when $\hat Y$ is closed and Seifert fibered, and when $\hat Y$ is closed and non-Seifert fibered. This is the content of Part \ref{part_irred_case}.

The case when $\hat{Y}$ is reducible will be addressed in Part \ref{part_red_case}. If $\hat{Y}$ is not diffeomorphic to $S^1\times S^2$ or $\mathbb{RP}^3\# \mathbb{RP}^3$, then the obstruction to isotopy is given by considering homotopy classes of circles in $\Emb_\dagger(I,I\times Y)$. If $\hat{Y}$ is $S^1\times S^2$ or $\mathbb{RP}^3\# \mathbb{RP}^3$, we develop an \emph{ad hoc} argument by comparing the mapping class groups of $I\times Y$ with that of $S^1\times D^3$. Combining all the above arguments concludes the proof of Theorem \ref{thm_main}.

The proof of Theorem \ref{thm_main2} is based on a stronger version of Theorem \ref{thm_main}, which states that the diffeomorphisms we constructed are linearly independent not only in $\pi_0\Homeo_{\partial}(I\times Y)$, but also in $\pi_0\Homeo(I\times Y,I\times \partial Y)$. The stronger forms of Theorem \ref{thm_main} will be stated in Theorems \ref{thm_main_irred_case} and \ref{thm_main_red_case} below. We also note that there are exact sequences
\[
\pi_0\Diff_{\partial}(I\times X)\to \pi_0\,C^{\diff}(X)\to \pi_0\Diff_{PI}(X)\to 0,
\]
\[
\pi_0\Homeo_{\partial}(I\times X)\to \pi_0\,C^{\topo}(X)\to \pi_0\Homeo_{PI}(X)\to 0.
\]
Theorem \ref{thm_main2} will follow as a consequence of the above statements.

\vspace{0.5\baselineskip}
\textbf{Acknowledgments:} 
We would like to thank Alexander Kupers and Yi Liu for helpful correspondence.
J. Lin is partially supported by  National Key R\&D Program of China 2025YFA1017500 and NSFC 12271281. 
Y. Xie is partially supported by NSFC 12341105.
B. Zhang is partially supported by NSF grant DMS-2405271 and a travel grant from the Simons Foundation.

\newpage
\begingroup
\hypersetup{linkcolor=black} 
\tableofcontents
\endgroup

\part{Spaces of arcs in $I\times Y$}
\label{part_space_arcs}
Suppose $Y$ is a connected, oriented, compact $3$--manifold, possibly with boundary. Recall that we denote $I=[0,1]$. 
Part \ref{part_space_arcs} of the paper defines several spaces of arcs in $I\times Y$ and studies the properties of their homotopy groups. 

We will later consider the actions of automorphism groups of $I\times Y$ on the space of arcs, so we introduce the notation for several automorphism groups here. 
If $A\subset X$ are topological spaces, let $\Homeo(X)$ denote the group of self-homeomorphisms of $X$, let $\Homeo(X,A)$ denote the group of elements in $\Homeo(X)$ that fix $A$, and let $\Homeo_0(X)$ denote the group of elements in $\Homeo(X)$ that are homotopic to the identity.
In the following, we will always take $X$ to be a compact manifold, and take $A$ to be a closed domain on the boundary of $X$. Therefore $\Homeo(X), \Homeo_0(X), \Homeo(X,A)$ are topological groups with respect to the $C^0$ topology. 

Let us also recall several definitions from the introduction.
If $X$ is a smooth manifold, let $\Diff_\partial(X)$ denote the group of diffeomorphisms of $X$ that are equal to the identity near the boundary, endowed with the $C^\infty$ topology. Let $\Diff_{PI}(X)$ denote the subgroup of $\Diff_{\partial}(X)$ consisting of diffeomorphisms that are pseudo-isotopic to the identity. By definition, a diffeomorphism $f$ belongs to $\Diff_{PI}(X)$ if and only if there exists a diffeomorphism $\tilde{f}:I\times X\to I\times X$ such that $\tilde{f}=\id$ on a neighborhood of $I\times \partial X$, and $\tilde{f}(0,x)=(0,x), \tilde{f}(1,x)=(1,f(x))$ for all $x\in X$.
We define $\Homeo_\partial(X)$, $\Homeo_{PI}(X)$ similarly for homeomorphisms and endow them with the $C^0$ topology.

If $G$ is one of the automorphism groups above, and $\varphi_1,\varphi_2\in G$ are in the same path-connected component, we say that $\varphi_1$ and $\varphi_2$ are \emph{isotopic} in $G$.

There are various inclusion relations between the automorphism groups. For example, we have injective maps 
\[
\Diff_{PI}(X)\hookrightarrow\Homeo_{PI}(X)\hookrightarrow\Homeo_{0}(X)\hookrightarrow\Homeo(X)
\]
defined by taking an automorphism to itself. We will call these maps and their induced maps on homotopy groups the \emph{canonical maps}.

\section{The embedding spaces of arcs}
\label{sec_embedding_space_definition}

Let $\Emb_\dagger (I,I\times Y)$ be the space of \emph{topological} proper embeddings $\gamma:I\to I\times Y$ such that there exists $y\in Y$, which may depend on $\gamma$, with $\gamma(0) = (0,y)$, $\gamma(1) = (1,y)$, and that the projection of $\gamma$ to $Y$ is a contractible loop.  Define
\begin{equation}
\label{eqn_def_mathscrI}
\begin{split}
\mathscr{I}: Y & \to \Emb_\dagger (I,I\times Y) \\
y & \mapsto (t\mapsto (t,y))
\end{split}
\end{equation}
to be the embedding that takes each $y\in Y$ to the standard arc with image $I\times \{y\}$.

Let $y_0\in Y$ be a base point, and let $\mathscr{I}(y_0)$ be the base point of $\Emb_\dagger(I,I\times Y)$. The base points will be omitted from the notation of homotopy groups when it is clear from the context. 
Define $\Emb_\partial(I,I\times Y)$ to be the set of elements $\gamma\in \Emb_\dagger (I,I\times Y)$ such that $\gamma(0) = (0,y_0)$, $\gamma(1) = (1,y_0)$. Then $\Emb_\partial(I,I\times Y)$ is a subspace of $\Emb_\dagger(I,I\times Y)$, and we have a fibration  $\Emb_\partial(I,I\times Y)\hookrightarrow \Emb_\dagger(I,I\times Y)\to Y$. 
Therefore, if $Y$ is aspherical, then the inclusion induces an isomorphism on homotopy groups
\[
\pi_i \Emb_\partial(I,I\times Y)\cong \pi_i \Emb_\dagger(I,I\times Y)
\]
for all $i\ge 2$. Since the map $\mathscr{I}:Y\to \Emb_\dagger(I,I\times Y)$ is a section for the above
fibration, we have an exact sequence 
$$
0\to \pi_1 \Emb_\partial(I,I\times Y)\to \pi_1 \Emb_\dagger(I,I\times Y) \to \pi_1(Y)\to 0
$$
with a splitting map $\mathscr{I}_\ast:\pi_1(Y)\to \pi_1 \Emb_\dagger(I,I\times Y)$. As a consequence,
we have 
\begin{equation}\label{eq_pi_1_emb_dagger}
\pi_1 \Emb_\dagger(I,I\times Y)\cong \pi_1 \Emb_\partial(I,I\times Y)\rtimes \pi_1(Y).
\end{equation}

Let $\Emb^{sm}_{\partial}(I,I\times Y)\subset \Emb_{\partial}(I,I\times Y)$ denote the subspace consisting of \emph{smoothly} embedded arcs. 
By a theorem of Robinson  \cite{Robinson}\footnote{The original statement of \cite{Robinson} only addressed the embedding spaces of closed manifolds, but the arguments employed there also apply to manifolds with boundary.}, the inclusion map induces an isomorphism on $\pi_1$:
\begin{equation}
\label{eqn_robinson_iso}
\pi_1\Emb^{sm}_{\partial}(I,I\times Y)\cong \pi_1\Emb_{\partial}(I,I\times Y).
\end{equation}
During the preparation of this paper, we learned from Alexander Kupers that the isomorphism \eqref{eqn_robinson_iso} would also follow from an upcoming work of Kremer--Kupers \cite{Kupers}.

Let $\varphi\in \Homeo_{\partial}(I\times Y)$. Then $\varphi$ induces a homeomorphism $\varphi_*:\Emb_{\dagger}(I,I\times Y) \to \Emb_{\dagger}(I,I\times Y)$. It is clear that if $\varphi$ is isotopic to the identity in $\Homeo_\partial(I\times Y)$, then $\varphi_*$ is homotopic to the identity. In fact, we also have the following result, which allows us to obstruct isotopy in $\Homeo(I\times Y, I\times \partial Y)$ using the homotopy class of $\varphi_*$.

\begin{Lemma}
\label{lem_Embdagger_obstruct_isotopy}
    Suppose $\varphi\in \Homeo_{\partial}(I\times Y)$ is isotopic to the identity in $\Homeo(I\times Y, I\times \partial Y)$, and suppose $\varphi$ is homotopic to the identity map of $I\times Y$ relative to $\{0,1\}\times Y$. Then $\varphi_*\circ\mathscr{I}$ and $\mathscr{I}$ are homotopic as maps from $Y$ to $\Emb_{\dagger}(I,I\times Y)$. 
\end{Lemma}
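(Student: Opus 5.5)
The plan is to build an explicit homotopy from $\varphi_*\circ\mathscr{I}$ to $\mathscr{I}$ out of the given isotopy. The one defect to repair is that the isotopy moves the ends $\{0,1\}\times Y$.

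\textbf{Step 1: the endpoint loops.} Let $\varphi_s$, $s\in[0,1]$, be a path in $\Homeo(I\times Y,I\times\partial Y)$ with $\varphi_0=\varphi$ and $\varphi_1=\id$. Each $\varphi_s$ preserves $\{0\}\times Y$ and $\{1\}\times Y$ (by connectedness and continuity). Write $\varphi_s(0,y)=(0,g^0_s(y))$ and $\varphi_s(1,y)=(1,g^1_s(y))$. Then $g^0,g^1$ are loops in $\Homeo(Y,\partial Y)$ based at $\id$, since $\varphi$ is the identity near $\{0,1\}\times Y$.

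The arcs $\varphi_s\circ\mathscr{I}(y)$ run from $(0,g^0_s(y))$ to $(1,g^1_s(y))$, so they need not lie in $\Emb_\dagger$. To correct this, I extend the map on the boundary of the square
\[
(s,t)\mapsto
\begin{cases}
(g^0_s)^{-1} & \text{on } t=0,\\
(g^1_s)^{-1} & \text{on } t=1,\\
\id & \text{on } s=1,\\
\lambda_t & \text{on } s=0,
\end{cases}
\]
to a family $k_{s,t}\in\Homeo(Y,\partial Y)$ on the whole square. Here $\lambda$ is the loop $\overline{g^0}\ast g^1$, suitably inverted, chosen so that the boundary loop is null-homotopic and the extension exists. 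Set $\Theta_s=\bigl((t,y)\mapsto(t,k_{s,t}(y))\bigr)$. Then $\Theta_s\circ\varphi_s$ fixes $\{0,1\}\times Y$ pointwise.

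\textbf{Step 2: the first homotopy.} By Step 1, $y\mapsto\Theta_s\circ\varphi_s\circ\mathscr{I}(y)$ is a homotopy of maps $Y\to\Emb_\dagger(I,I\times Y)$. Contractibility of the projected loop is preserved along the family, since it holds at $s=1$. This homotopy connects $\mathscr{I}$ to $y\mapsto\Theta_0\circ\varphi\circ\mathscr{I}(y)$, and $\Theta_0$ is the homeomorphism $(t,y)\mapsto(t,\lambda_t(y))$.

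\textbf{Step 3: killing $\lambda$ using the homotopy hypothesis.} Concatenate the map $(s,t,y)\mapsto\varphi_s(t,y)$ with the given homotopy from $\varphi$ to $\id$ rel $\{0,1\}\times Y$. This produces a map of a square of maps $Y\to I\times Y$. Its edges show that $g^0$ and $g^1$ are homotopic as loops in $\Map(Y,Y)$ rel $\partial Y$. Consequently $\lambda$ is null-homotopic in $\Map(Y,Y)$ through maps $\lambda^u$ with $\lambda^u_0=\lambda^u_1=\id$.

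The key observation is that for any map $c:I\to Y$ with $c(0)=c(1)$, the graph $t\mapsto(t,c(t))$ is an embedded arc. So families of graphs lie in $\Emb_\dagger$ as long as the projected loops stay contractible, and the null-homotopy $\lambda^u$ keeps them contractible. In particular, $y\mapsto(t\mapsto(t,\lambda^u_t(y)))$ is a homotopy from $\Theta_0\circ\mathscr{I}$ to $\mathscr{I}$.

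\textbf{Main obstacle.} The remaining step is to compare $\Theta_0\circ\varphi\circ\mathscr{I}$ with $\Theta_0\circ\mathscr{I}$. These arcs are not graphs, so the graph trick does not apply directly. What I would try first is to exploit the freedom in choosing the extension $k_{s,t}$. One can shrink the $s=0$ edge into a collar of $\{0\}\times Y$ where $\varphi=\id$, so that $\Theta_0$ is supported in $[0,\epsilon]\times Y$. Then $\Theta_0\circ\varphi\circ\mathscr{I}(y)$ agrees with the graph of $\lambda_\cdot(y)$ on $[0,\epsilon]$, followed by $\varphi\circ\mathscr{I}(y)$. Next I would deform $\lambda$ in $\Map(Y,Y)$ only on that collar piece, via $\lambda^u$, while the rest of the arc is untouched. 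Embeddedness is preserved because the collar piece is a graph over $[0,\epsilon]$ and so is disjoint from the rest of the arc. This reduces the problem to $\varphi_*\circ\mathscr{I}$ itself, which is what we want.

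The delicate point is checking that the square extension in Step 1 exists with $k_{0,t}$ supported in $t\in[0,\epsilon]$. This needs the boundary loop to be trivial in $\pi_1\Homeo(Y,\partial Y)$, which I arrange by the choice of $\lambda$.
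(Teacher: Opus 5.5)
Your argument is correct, but it takes a different route from the paper's. The paper uses the fibration $\Homeo_\partial(I\times Y)\to\Homeo(I\times Y,I\times\partial Y)\to\Homeo_\partial(\{0,1\}\times Y)$ to replace $\varphi$, within its isotopy class in $\Homeo_\partial(I\times Y)$, by a level-preserving homeomorphism. It then projects the given rel-ends homotopy $F$, setting $\hat F(t,(s,y))=(s,\pi_Y F(t,(s,y)))$. This is a homotopy through level-preserving maps, so its arcs are graphs and automatically embedded, and this single family is the required homotopy.

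Your $\Theta_s$ is a hands-on version of the first step. Indeed, $\Theta_0^{-1}\circ\Theta_s\circ\varphi_s$ is a path of homeomorphisms fixing $\{0,1\}\times Y$ from $\varphi$ to the level-preserving $\Theta_0^{-1}$. Building it needs only that $\Homeo(Y,\partial Y)$ is a topological group and that null-homotopic loops bound disks, not the fibration property.

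Where the paper applies the homotopy hypothesis directly to the level-preserving representative, you do three things instead:
\begin{enumerate}
\item extract $[g^0]=[g^1]$ in $\pi_1(\Map(Y,Y),\id)$ from the loop formed by the isotopy and the homotopy;
\item use the group structure of $\pi_1\Homeo(Y,\partial Y)$, where $[(g^0)^{-1}]=-[g^0]$, to conclude $[\lambda]=0$ in $\pi_1(\Map(Y,Y),\id)$;
\item finish with the collar localization.
\end{enumerate}
The checks in the last step hold. The graph piece lies in $[0,\epsilon]\times Y$ and the remainder in $(\epsilon,1]\times Y$, because $\varphi$ is the identity on $[0,\epsilon]\times Y$ and hence preserves $(\epsilon,1]\times Y$. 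Contractibility is preserved because for each $y$ you have a continuous family of loops based at $y$.

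The paper's route is shorter, since one graph homotopy does everything. Yours is more elementary but needs more bookkeeping.

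Two minor points. First, the ``rel $\partial Y$'' in your Step 3 is not justified, since the given homotopy need not fix $I\times\partial Y$. It is also not needed. Second, the ``delicate point'' is harmless: reparametrize $\lambda$ to be constant outside $[0,\epsilon]$ before extending over the square, which does not change its homotopy class.
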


\begin{proof}
    We say that a map $\varphi:Y\times I \to Y\times I$ is \emph{level-preserving}, if it preserves the $I$--coordinate.  Note that there is a fibration (up to homotopy equivalence) 
    \[
    \Homeo_\partial(I\times Y)\hookrightarrow \Homeo(I\times Y, I\times \partial Y) \xrightarrow{p} \Homeo_\partial(\{0,1\}\times Y),
    \]
    where the map $p$ is defined by the restriction to $\{0,1\}\times Y$. So we have an exact sequence
    \[
\pi_1(\Homeo_\partial(\{0,1\}\times Y))\to \pi_0 \Homeo_\partial(I\times Y)\to \pi_0 \Homeo(I\times Y, I\times \partial Y).
    \]
    It is straightforward to see that elements in the image of $\pi_1(\Homeo_\partial(\{0,1\}\times Y))\to \pi_0 \Homeo_\partial(I\times Y)$ are represented by level-preserving homeomorphisms. Therefore, if $\varphi$ is isotopic to the identity in $\Homeo(I\times Y, I\times \partial Y)$, then after an isotopy in $\Homeo_\partial(I\times Y)$, we may assume that $\varphi$ is a level-preserving homeomorphism. 

   Let $F:I\times (I\times Y)\to I\times Y$ be a homotopy from $\varphi$ to $\id$ relative to $\{0,1\}\times Y$. Define $\hat F:I\times (I\times Y)\to I\times Y$ by $\hat F(t,(s,y)) = (s,\pi_Y(F(t,(s,y))))$, where $\pi_Y:I\times Y\to Y$ is the projection. Then $\hat F$ is a homotopy from $\varphi$ to $\id$ relative to $\{0,1\}\times Y$ via level-preserving maps.

   Let $G:I\times Y \to \Emb_{\dagger}(I,I\times Y)$ be the map that takes $(t,y)\in I\times Y$ to the arc 
   \begin{align*}
       I & \to I\times Y \\
       s &\mapsto \hat{F}(t,(s,y)).
   \end{align*}
   Note that the arc above is always embedded, because it preserves the $I$--coordinate.  Therefore, $G$ is well-defined. It is straightforward to verify that $G$ is a homotopy between $\mathscr{I}$ and $\varphi_*\circ \mathscr{I}$. 
\end{proof}

\section{The homotopy groups of $\Emb_{\dagger}(I,I\times Y)$}
\label{sec_embedding_space_homotopy}
In the following, we use $\pi_i^\bQ$ to denote the rational homotopy groups. 
The purpose of this section is to study the properties of $\pi_1\Emb_\dagger(I,I\times Y)$ and $\pi_2^\bQ\Emb_\dagger(I,I\times Y)$.

\subsection{The Dax isomorphism}
\label{subsec_Dax_iso}
In this subsection, we compute $\pi_1\Emb_\dagger(I,I\times Y)$.
To simplify the computation, we assume $\pi_1(Y)$ is infinite throughout Section \ref{subsec_Dax_iso}.  This assumption implies that the universal cover of $Y$ is non-compact, so it is homotopy equivalent to a simply connected 2-dimensional CW complex, which is always homotopy equivalent to a wedge sum of 2-spheres. As a consequence, we have 
\begin{equation}
\label{eqn_pi3Y_generated_by_pi2Y}
\pi_3^\bQ(Y)=[\pi_2^\bQ(Y),\pi_2^\bQ(Y)],
\end{equation}
 where $[\cdot,\cdot]$ denotes the Whitehead product.

Given two elements $\gamma_0,\gamma_1\in \Emb_{\partial}(I,I\times Y)$, we may view 
$\gamma_1$ as a path in $[1,2]\times Y$. Then we can concatenate the two paths to obtain a path
$\gamma_0\ast\gamma_1$ in $[0,2]\times Y\cong I\times Y$. It is straightforward to check that
this construction makes $\Emb_{\partial}(I,I\times Y)$ into an H-space. As a result, 
$\pi_1\Emb_{\partial}(I,I\times Y)$ is an abelian group. 

Let $\Map_\partial (I, I\times Y)$ denote the space of continuous maps $\gamma:I\to I\times Y$ such that $\gamma(0)=(0,y_0)$, $\gamma(1)=(1,y_0)$, where $y_0\in Y$ is the base point.
We have a canonical embedding 
$$\Emb_{\partial}(I,I\times Y)\to \Map_\partial (I, I\times Y)$$ which induces 
a homomorphism
$$
\mathcal{F}: \pi_1\Emb_{\partial}(I,I\times Y)\to \pi_1\Map_\partial (I, I\times Y) \cong\pi_2(Y).
$$
A right inverse $\mathcal{J}:\pi_2(Y)\to \pi_1\Emb_{\partial}(I,I\times Y)$ for $\mathcal{F}$ can be 
constructed as follows: given an element $[\eta]\in \pi_2(Y)$ represented by
$
\eta: (I^2,\partial I^2)\to (Y,y_0)
$
we define $\mathcal{J}([\eta])=[\xi]$ by
\begin{align*}
\xi: (I,\partial I)&\to  \Emb_{\partial}(I,I\times Y;\mathscr{I}(y_0))\\
t&\mapsto (s\mapsto (s,\eta(t,s))).
\end{align*}
Therefore, we have
\begin{equation}\label{eq_pi_1_emb_partial}
\pi_1\Emb_{\partial}(I,I\times Y)\cong \pi_1^D \Emb_{\partial}(I,I\times Y)\oplus \pi_2(Y)
\end{equation}
where we use $\pi_1^D \Emb_{\partial}(I,I\times Y)$ to denote the kernel of $\mathcal{F}$.
By the Dax isomorphism theorem (see \cite{gabai2021self}*{Theorem 0.3}) and \eqref{eqn_robinson_iso}, we have 
\begin{equation*}\label{eq_Dax_iso}
\pi_1^D \Emb_{\partial}(I,I\times Y)\cong \bZ[\pi_1(Y)\setminus\{1\}]/\Ima d_3
\end{equation*}
where the homomorphism $d_3: \pi_3(I\times Y)\to \bZ[\pi_1(Y)\setminus\{1\}]$ is defined
in \cite{gabai2021self}*{Definition 3.8}. 

 According to  \cite{KT24}*{Proposition 3.14}, for $a,b\in \pi_2(I\times Y)$, we have
$$
d_3([a,b])=\lambda(a,b)+\lambda(b,a)
$$
where $\lambda(a,b)$ denotes the equivariant intersection number (omitting the coefficient of 1)
of $a,b\in \pi_2(I\times Y)$.
Since $\pi_2(I\times Y)\cong \pi_2(Y)$, we may assume $a$ and $b$ are represented by immersed spheres 
lying in different $Y$-slices of $I\times Y$. Then we have $\lambda(a,b)=\lambda(b,a)=0$ since 
there is no intersection point. By \eqref{eqn_pi3Y_generated_by_pi2Y}, we conclude that $ d_3=0$, and hence
\begin{equation}\label{eq_dax_iso_no_d3}
\pi_1^D \Emb_{\partial}(I,I\times Y)\cong \bZ[\pi_1(Y)\setminus\{1\}].
\end{equation}

Now we prove the main result of this subsection.
\begin{Proposition}\label{prop_decomposition_pi_1_emb_dagger}
We have a semidirect product decomposition
\begin{equation}
\label{pi_1_Emb_daggar_semi}
\pi_1 \Emb_\dagger(I,I\times Y)\cong (\bZ[\pi_1(Y)\setminus\{1\}] \oplus \pi_2(Y))
\rtimes \pi_1(Y), 
\end{equation}
where $\mathscr{I}_*$ is the embedding map of $\pi_1(Y)$ in $\pi_1 \Emb_\dagger(I,I\times Y)$ as a semidirect product component, and the conjugation action of $\pi_1(Y)$ on $\bZ[\pi_1(Y)\setminus\{1\}] \oplus \pi_2(Y)$ is given by the conjugation actions on the generators on 
$\mathbb{Z}[\pi_1(Y)\setminus \{1\}]$ and the 
 standard action on $\pi_2(Y)$.
\end{Proposition}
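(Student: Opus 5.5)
The plan is to combine the three decompositions already in hand: \eqref{eq_pi_1_emb_dagger}, \eqref{eq_pi_1_emb_partial}, and \eqref{eq_dax_iso_no_d3}. Together they give, as groups,
\[
\pi_1\Emb_\dagger(I,I\times Y)\cong \pi_1\Emb_\partial(I,I\times Y)\rtimes_{\mathscr{I}_*}\pi_1(Y),
\qquad
\pi_1\Emb_\partial(I,I\times Y)\cong \bZ[\pi_1(Y)\setminus\{1\}]\oplus\pi_2(Y).
\]
So the underlying set-theoretic decomposition is immediate. What remains is to identify the conjugation action of $\mathscr{I}_*(g)$, for $g\in\pi_1(Y)$, on the normal subgroup $\pi_1\Emb_\partial(I,I\times Y)$. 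I also need to check that this action preserves each of the two summands and acts on each in the stated way.

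\textbf{Geometric description of the action.} Let $g$ be represented by a loop $\alpha$ in $Y$ based at $y_0$. Then $\mathscr{I}\circ\alpha$ is the loop of vertical arcs $I\times\{\alpha(\tau)\}$. Conjugating a loop $\xi$ in $\Emb_\partial$ by $\mathscr{I}\circ\alpha$ is homotopic, through the fibration $\Emb_\partial\to\Emb_\dagger\to Y$, to the monodromy action. Concretely, pick an ambient isotopy $h_\tau$ of $I\times Y$, preserving the $I$--coordinate, that drags $I\times\{y_0\}$ along $I\times\alpha$ (a point-pushing isotopy in $Y$, crossed with $I$). Then the conjugate of $[\xi]$ is $[h_1\circ\xi]$, where $h_1=\id_I\times P_\alpha$ and $P_\alpha$ is the point-pushing homeomorphism of $(Y,y_0)$. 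This action commutes with $\mathcal{F}$ and with $\mathcal{J}$, since both are natural under level-preserving homeomorphisms fixing $I\times\{y_0\}$.
\begin{itemize}
\item On $\pi_2(Y)$, $P_\alpha$ acts by the standard $\pi_1$--action, because point-pushing along $\alpha$ acts on $\pi_2(Y,y_0)$ by the change-of-basepoint map along $\alpha$. Hence the $\pi_2(Y)$ summand, which is the image of $\mathcal{J}$, is preserved with the standard action.
\item Naturality of $\mathcal{F}$ shows that $\ker\mathcal{F}=\pi_1^D$ is preserved.
\end{itemize}

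\textbf{Action on the Dax summand.} It remains to compute the action on $\pi_1^D\cong\bZ[\pi_1(Y)\setminus\{1\}]$. I will use the description of the Dax invariant in \cite{gabai2021self}. A loop of arcs in $\ker\mathcal F$ is homotoped so that its double-point locus consists of finitely many transverse self-intersections of the track. Each intersection is labeled by a sign and by the group element given by the double-point loop, read in $\pi_1(I\times Y,(0,y_0))$ using the arc as a path back to the basepoint. Applying $h_1=\id\times P_\alpha$ is a homeomorphism fixing the basepoint arc, up to the pushing. It carries double points to double points with the same signs, and relabels each group element by the automorphism $(P_\alpha)_*$ of $\pi_1(Y,y_0)$. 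That automorphism is conjugation by $g$ (with a fixed convention for left versus right). Therefore the action on generators is $h\mapsto ghg^{-1}$, which preserves $\pi_1(Y)\setminus\{1\}$ and the basis.

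\textbf{Main obstacle.} I expect the hardest step to be this last one: making rigorous that the Dax invariant is natural under homeomorphisms of $I\times Y$ fixing the basepoint arc setwise. Two issues arise. First, $h_1$ is only a homeomorphism (or a diffeomorphism, when $P_\alpha$ is chosen smooth). Second, the identification with the smooth space goes through Robinson's isomorphism \eqref{eqn_robinson_iso}. To handle both, I would choose $P_\alpha$ smooth, so the action restricts to $\Emb^{sm}_\partial$. I would then invoke the naturality of the Dax invariant under diffeomorphisms, which is visible from its definition via double points. Compatibility with \eqref{eqn_robinson_iso} is automatic, since that isomorphism is induced by inclusion.
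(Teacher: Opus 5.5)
Your proposal is correct and takes the same route as the paper. You assemble the semidirect product from \eqref{eq_pi_1_emb_dagger}, \eqref{eq_pi_1_emb_partial} and \eqref{eq_dax_iso_no_d3}, realize conjugation by $\mathscr{I}_*(g)$ through the level-preserving point-pushing diffeomorphism $\id_I\times P_\alpha$ (the paper's $H(-,1)$), and conclude by naturality of the Dax isomorphism and of $\mathcal{J}$ under that diffeomorphism, spelling out steps the paper leaves implicit: taking $P_\alpha$ smooth for compatibility with Robinson's inclusion, and checking that both $\ker\mathcal{F}$ and $\Ima\mathcal{J}$ are preserved.
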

\begin{proof}
By \eqref{eq_pi_1_emb_dagger}, \eqref{eq_pi_1_emb_partial} 
and \eqref{eq_dax_iso_no_d3},
\begin{align*}
\pi_1 \Emb_\dagger(I,I\times Y) &\cong \pi_1 \Emb_\partial(I,I\times Y) \rtimes \pi_1(Y) \\
& \cong (\bZ[\pi_1(Y)\setminus\{1\}] \oplus \pi_2(Y))
\rtimes \pi_1(Y).
\end{align*}
We need to 
 compute the conjugation action of $\mathscr{I}_*(\pi_1(Y))$ on $\pi_1\Emb_\partial(I,I\times Y)$, viewed as subgroups of $\pi_1\Emb_\dagger(I,I\times Y)$. 
 
  For each $[\gamma]\in \pi_1(Y)$, let $H:Y\times I\to Y$ be a smooth isotopy of $Y$ from $\id$ to $\id$ such that the trajectory of the base point is  $\gamma$.  
For each $\alpha\in \pi_1\Emb_\partial(I,I\times Y)$, let $\hat\alpha:S^1\to \Emb_\partial(I,I\times Y)$ be a representative of $\alpha$. For each $t$, the map $H(-,t)$ induces a self-homeomorphism of $\Emb_\dagger(I,I\times Y)$, and we use $\hat \alpha_t$ to denote the image of $\hat \alpha$ under this homeomorphism. 
Then $\hat\alpha_t$ $(t\in[0,1])$ defines a 1-parameter family of maps from $S^1$ to $\Emb_\dagger(I,I\times Y)$ such that the trajectory of the base point is $\mathscr{I}\circ \gamma$. As a result, $\hat\alpha_1$ represents the image of the conjugation action of $[\mathscr{I}\circ\gamma]$ on $\alpha$. Since the Dax isomorphism \eqref{eq_dax_iso_no_d3} is natural with respect to self-diffeomorphisms of $I\times Y$, and the induced map of $H(-,1)$ on $\pi_1(Y)$ is the conjugation by $[\gamma]$, we conclude that, under the identification via \eqref{eq_dax_iso_no_d3}, the conjugation action of $\pi_1(Y)$ on $\pi_1^D\Emb_\partial(I,I\times Y)$ is given by conjugations on the generators of $\mathbb{Z}[\pi_1(Y)\setminus \{1\}]$.
Similarly, the action of $H(-,1)$ on $\pi_2(Y)$
is the standard action by $[\gamma]$ and it is 
clear from the construction of $\mathcal{J}$ that
it commutes
with the action of $H(-,1)$.
\end{proof}

\subsection{From embedding spaces to configuration spaces}
\label{subsec_emb_to_conf}
For the rest of Section \ref{sec_embedding_space_homotopy}, we assume that $Y$ is irreducible and $b_1(Y)>0$. This implies that $Y$ is aspherical. 
The rest of Section \ref{sec_embedding_space_homotopy} constructs a homomorphism $\Psi$ from the homotopy group $\pi_2^\bQ\Emb_\dagger(I,I\times Y)$ to an infinite-dimensional $\bQ$--vector space, and proves several basic properties of the map $\Psi$. 

We start by establishing a relationship between embedding spaces and configuration spaces of points. 
The constructions in this subsection are extensions of \cite[Section 4]{budney2025automorphism} and \cite[Section 7.4]{LXZ2025}.
Let $Y$ be as above. Let $\widetilde{Y}$ be the normal covering of $Y$ such that the image of $\pi_1(\widetilde{Y})\to \pi_1(Y)$ equals the kernel of the quotient map
\[
 \tau:\pi_1(Y)\to H_1(Y;\mathbb{Z})/\tor,
\]
where $H_1(Y;\mathbb{Z})/\tor$ denotes the quotient of $H_1(Y;\mathbb{Z})$ by its torsion.
 
For each $a\in H_1(Y;\mathbb{Z})/\tor$, let $t^a:\widetilde{Y}\to \widetilde{Y}$ denote the deck transformation determined by $a$. We also abuse notation and use $t^a$ to denote the map $\id_I \times t^a: I\times \widetilde{Y} \to I\times \widetilde{Y}$. Let $\tilde{y}_0$ be the base point on $\widetilde{Y}$, which is a lifting of the base point $y_0\in Y$.  

\begin{remark}
    All the constructions in Sections \ref{subsec_emb_to_conf} - \ref{subsec_hom_from_pi5C3} still work if we replace $\tau$ with an arbitrary surjection from $\pi_1(Y)$ to an abelian group. The reason for this specific choice of $\tau$ is the following: (1) The naturality property stated by Proposition \ref{prop_naturality_Psi} requires that the codomain of $\tau$ be functorial with respect to $Y$ and the map $\tau$ be natural. (2) In the computation of $r\circ \Psi(\alpha)$ in the proof of Theorem \ref{thm_main_irred_case_when_xi1_xi2_exist_in_pi1}, we need to use the fact that the codomain of $\tau$ has no torsion. 
\end{remark}

\begin{Definition}
   For every positive integer $n$, define 
\begin{align*}
C^\tau_n(I\times Y)=&\{(x_0,x_1,\cdots,x_n,x_{n+1})\in (I\times \widetilde{Y})^{n+2} | 
x_0 = (0,\tilde{y}_0), x_{n+1} = (1,\tilde{y}_0), \\
&x_i\neq t^a x_j ~\text{for all}~a\in (H_1(Y;\mathbb{Z})/\tor)\setminus\{0\}, \\
&\quad 0\le i<j\le n+1 \}
\end{align*}

\end{Definition}
The sequence of spaces $C^\tau_n(I\times Y)$ $(n\ge 0)$ can be made into a cosimplicial space where 
the coface map $\delta_n^i: C^\tau_n(M) \to C^\tau_{n+1}(M)$ ($0\le i \le n+1$) is defined by doubling the $i^{th}$
point
and the codegeneracy map $\sigma_n^i: 
C^\tau_n(M) \to C^\tau_{n-1}(M)$ ($1\le i \le n$) is defined by forgetting the $i^{th}$ point. Notice that
the coface maps are cofibrations since they are embeddings of closed submanifolds. 

Following \cite[Definition 3.1]{LXZ2025}, we say that a topological space $X$ is endowed with a \emph{$\Delta^n$--structure}, if for $S\subset\{0,\dots,n\}$, there is an associated subset $X_S$ such that $X_S\cap X_{S'}= X_{S\cap S'}$. 
If $X_1, X_2$ are endowed with $\Delta^n$--structures, we use $\Map_{\Delta}(X_1,X_2)$ to denote the space of continuous maps from $X_1$ to $X_2$ that takes $(X_1)_S$ to $(X_2)_S$ for all $S$.

The space $C_n^\tau(I\times Y)$ is endowed with a $\Delta^n$--structure using its cosimplicial structure:
 each $S\subset \{0,\dots,n\}$ induces an embedding $C^\tau_{|S|-1}(I\times Y)\to C^\tau_n(I\times Y)$ from
the cosimplicial structure. We define $C_{n,S}^\tau(I\times Y)$ to be the image of this map.
We also identify the simplex $\Delta^n$ with the set $\{(t_0,t_1, t_2,\dots ,t_n,t_{n+1})|0= t_0\le t_1\le t_2\le\dots \le t_n\le t_{n+1} = 1\}$, which is endowed with a standard $\Delta^n$--structure with $\Delta^n_S = \{(t_0,t_1, t_2,\dots ,t_n,t_{n+1})\in\Delta^n| t_i = t_{i+1} ~\text{for all}~ i\notin S\}$.

Then for each $n$, we have a map\footnote{This map was denoted by $\Psi_n$ in \cite{LXZ2025}.} 
\[
\Phi_n: \Emb_\partial (I,I\times Y) \to \Map_\Delta(\Delta^n,C_n^\tau(I\times Y))
\]
defined by taking $f\in \Emb_\partial (I,I\times Y)$ to the restriction of $\tilde{f}^n:I^n \to (I\times \widetilde{Y})^n$ to $\Delta^n$,
where $\tilde f:I\to I\times \widetilde{Y}$ is the lifting of $f\in \Emb_\partial (I,I\times Y)$ such that $\tilde{f}(0)=(0,\tilde{y}_0)$.

Consider the above map when $n=3$, and consider its induced map on $\pi_2^\bQ$. We obtain a homomorphism 
\[
(\Phi_3)_*: \pi_2^\bQ \Emb_\partial (I,I\times Y) \to \pi_2^\bQ \Map_\Delta(\Delta^n,C_3^\tau(I\times Y))
\]

\subsection{The Bousfield--Kan spectral sequence}
As discussed in \cite[Section 7.4]{LXZ2025}, there is a special case of the Bousfield--Kan spectral sequence that computes $\pi_2^\bQ \Map_\Delta(\Delta^3,C_3^\tau(I\times Y))$. The spectral sequence implies that 
$\pi_2^\bQ \Map_\Delta(\Delta^3,C_3^\tau(I\times Y))$ is isomorphic to the quotient of $\pi_5^\bQ(C_3^\tau(I\times Y))$ by the images of the coface maps  
$$
\delta_2^i:\pi_5^\bQ(C_2^\tau(I\times Y))\to \pi_5^\bQ C_3^\tau(I\times Y),~0\le i \le 3
$$
and the image of a subgroup of $\oplus^6 \pi_4^\bQ C_1^\tau(I\times Y)$. 
Here, to simplify notation, we use $\delta_2^i$ to also denote the induced maps by $\delta_2^i$ on homotopy groups. 
Under our assumptions, $\pi_4^\bQ(C_1^\tau(I\times Y))\cong 0$ since $Y$ is aspherical.  Therefore, we have
\begin{equation}\label{eqn_Bousfield-Kan}
 \pi_2^\bQ \Map_\Delta(\Delta^3,C_3^\tau(I\times Y))\cong \pi_5^\bQ C_3^\tau(I\times Y)/ \sum_i \Ima \delta_2^i.
\end{equation}

The base points for the homotopy groups in \eqref{eqn_Bousfield-Kan} are given in \cite[Section 3.3]{LXZ2025}; we briefly review the definitions here. In general, a homotopy group is well-defined as long as we choose the base point in a simply connected set. The base point for the homotopy group $\pi_5^\bQ(C_3^\tau(I\times Y))$ is chosen from the set of $(x_0,x_1,x_2,x_3,x_4)\in C_3^\tau(I\times Y)$ such that $x_i=(t_i,\tilde{y}_0)$ and $0=t_0\le t_1\le\dots\le t_3\le t_4=1$. The base point for $\pi_2^\bQ \Map_\Delta(\Delta^3,C_3^\tau(I\times Y))$ is taken to be $\Phi_3(\mathscr{I}(y_0))$. Recall that $\mathscr{I}$ is defined by \eqref{eqn_def_mathscrI}.

Now we develop some tools to compute $\Ima(\delta_2^i)$. 
Let $\hat Y$ be the universal covering of $Y$. Then the universal covering of $C_n^\tau(I\times Y)$ can be 
described as 
\begin{align*}
\hat C^\tau_n(I\times Y)=&\{(x_0,x_1,\cdots,x_n,x_{n+1})\in (I\times \hat{Y})^{n} | x_0 = (0,\tilde{y}_0), x_{n+1}=(1,\tilde{y}_0),\\
&x_i\neq t^\alpha x_j ~\text{for all}~\alpha\in \pi_1(Y)\setminus \ker \tau, 
 0\le i<j\le n+1 \}
\end{align*}
where $t^\alpha$ denotes the deck transformation determined by $\alpha\in \pi_1(Y)$.

We define the following elements in the homotopy groups of $ C^\tau_n(I\times Y)$ 
(cf. \cite[Definitions 4.3, 4.8]{LXZ2025}).
\begin{Definition}
\label{def_ti^alpha_wij}
For $1\le i < j \le n$ and $\alpha \in \pi_1(Y)$, we define 
$$
t_i^\alpha w_{ij}\in \pi_3 \hat C_n^\tau(I\times Y)
$$  
by letting $x_i$ rotate around $t^\alpha x_j$ in a small coordinate neighborhood.   
\end{Definition}

Note that if $\alpha \in \ker \tau$, then $t_i^\alpha w_{ij}=0 \in \pi_3 \hat C_n^\tau(I\times Y)$. So we will usually only invoke Definition \ref{def_ti^alpha_wij} when $\alpha \in  \pi_1(Y) \setminus \ker \tau$.

Since $\pi_3 \hat C_n^\tau(I\times Y)\cong\pi_3C_n^\tau(I\times Y)$, we also view $t_i^\alpha w_{ij}$
as an element in $\pi_3C_n^\tau(I\times Y)$ by abuse of notation. 
We have the following formulas by the definition of coface maps.
\begin{equation}
\label{eqn_delta_2^i_on_w}
\begin{split}
\delta_2^0 (t_1^\alpha w_{12})&=t_2^\alpha w_{23} \\
\delta_2^1 (t_1^\alpha w_{12})&=t_1^\alpha w_{13}+t_2^\alpha w_{23} \\
\delta_2^2 (t_1^\alpha w_{12})&=t_1^\alpha w_{12}+t_1^\alpha w_{13} \\
\delta_2^3 (t_1^\alpha w_{12})&=t_1^\alpha w_{12}.\\
\end{split}
\end{equation}

The following proposition is proved by a similar argument as \cite[Proposition 4.26, Lemma 7.8(a)]{LXZ2025}.
\begin{Proposition}\label{prop_pi_3_C5_wij}
The vector space $\pi_5^\bQ  C_2^\tau(I\times Y)$ is generated by 
$$
[t_1^\alpha w_{12}, t_1^\beta w_{12}] ~\text{for}~\alpha,\beta\in \pi_1(Y)\setminus \ker \tau, \alpha\neq \beta
$$
where $[\cdot,\cdot]$ denotes the Whitehead product.
\end{Proposition}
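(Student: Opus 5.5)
Since rational homotopy groups are insensitive to coverings in degrees $\ge 2$, we will compute $\pi_5^\bQ \hat C_2^\tau(I\times Y)$ on the universal cover. The plan is to decompose $\hat C_2^\tau(I\times Y)$ as the total space of a fibration obtained by forgetting points, identify the fibers with complements of discrete sets in a contractible $3$--manifold, and then read off the rational homotopy from the resulting wedges of $2$--spheres.

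\textbf{Step 1: the fibration.} The endpoints $x_0,x_3$ are fixed. Since $Y$ is aspherical, $\hat Y$ is contractible. Projecting to $x_1$ gives a map $\hat C_2^\tau(I\times Y)\to P_1$, where $P_1$ is $I\times\hat Y$ with the orbits of $x_0,x_3$ under the deck transformations $t^\alpha$, $\alpha\in\pi_1(Y)\setminus\ker\tau$, removed. These orbits lie in $\{0,1\}\times\hat Y$, which is on the boundary, so $P_1$ is contractible. The fiber over $x_1$ is $I\times\hat Y$ minus the discrete closed set $\{t^\alpha x_1:\alpha\notin\ker\tau\}$, with boundary points handled similarly. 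This fiber is an open contractible $4$--manifold minus a discrete set of interior points, hence homotopy equivalent to a wedge of $3$--spheres $\bigvee_{\alpha\notin\ker\tau} S^3_\alpha$. The $S^3_\alpha$ represent $t_1^\alpha w_{12}$ (equivalently $t_2^{\alpha^{-1}}w_{21}$ after reindexing). To make this a genuine fibration, I will follow the Fadell--Neuwirth argument as in \cite{LXZ2025}*{Proposition 4.26}; the deck-equivariant removal is locally trivial because the orbits are discrete.

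\textbf{Step 2: rational homotopy of the total space.} Because the base is contractible, $\hat C_2^\tau(I\times Y)\simeq\bigvee_{\alpha\notin\ker\tau}S^3_\alpha$. By Hilton's theorem, $\pi_5^\bQ$ of a wedge of $3$--spheres is spanned by the Whitehead products $[\iota_\alpha,\iota_\beta]$ with $\alpha\ne\beta$, since $\pi_5^\bQ(S^3)=0$ and $[\iota_\alpha,\iota_\alpha]$ is rationally zero for odd spheres. Transporting these classes to $C_2^\tau(I\times Y)$ via $\pi_5\hat C\cong\pi_5 C$ yields the generators $[t_1^\alpha w_{12},t_1^\beta w_{12}]$.

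\textbf{Main obstacle.} The hardest part will be the point-set justification in Step 1: showing that the fiber, an open $4$--manifold with boundary minus infinitely many points, is homotopy equivalent to the wedge. I would argue this by exhausting $I\times\hat Y$ by compact contractible pieces, each containing finitely many removed points. On each piece the complement is a wedge of finitely many $S^3$, and we then pass to the colimit; homotopy groups commute with this colimit. One must also check that the base-point conventions of \cite{LXZ2025}*{Section 3.3} match the chosen generators. This is routine once the identification of the fiber is established.
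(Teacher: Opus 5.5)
Your argument follows the paper's proof. Both use the projection to $x_1$ on the universal cover, identify the fiber with a contractible $4$--manifold minus the discrete orbit $\{t^\alpha x_1\}$ (hence a wedge of $S^3$'s), and apply the Hilton--Milnor count to get Whitehead products of distinct classes.

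\textbf{The step that fails.} In Step 1 you take the base to be $P_1$, which contains boundary points of $I\times\hat Y$. You then assert that the projection is a fibration whose fiber is a wedge of $3$--spheres, with ``boundary points handled similarly''. This is false at the boundary. If $x_1\in\partial(I\times\hat Y)$, then every $t^\alpha x_1$ also lies on $\partial(I\times\hat Y)$, because the deck transformations are $\id_I\times t^\alpha$. The fiber over such an $x_1$ is then a contractible manifold with boundary minus a discrete set of boundary points, which is contractible. Fibers over interior and boundary points of the connected base have different homotopy types. So $\hat C^\tau_2(I\times Y)\to P_1$ is not a fibration, not even a quasifibration, and the Fadell--Neuwirth local triviality cannot be extended over $\partial(I\times\hat Y)$.

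\textbf{The fix, as in the paper.} The paper first passes to the subspace $\inte(\hat C^\tau_2(I\times Y))$ where the moving points $x_1,x_2$ lie in $\inte(I\times\hat Y)$. The inclusion of this subspace is a homotopy equivalence: push inward along a deck-invariant collar. The push is injective and commutes with the $t^\alpha$, so it preserves every condition $x_i\neq t^\alpha x_j$. It also moves $x_1,x_2$ off the boundary, so they avoid the orbits of the fixed endpoints $x_0,x_3$. Over the contractible base $\inte(I\times\hat Y)$, the projection is then a genuine fibration with fiber $\inte(I\times\hat Y)\setminus\{t^\alpha p\}$, and your Step 2 goes through verbatim.

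\textbf{A simplification.} What you call the main obstacle needs no exhaustion. Let $D$ be a closed discrete set of interior points in a contractible $4$--manifold $M$. Then $M\setminus D$ is simply connected, and by excision its reduced homology is free on $D$, concentrated in degree $3$. The map from the wedge of small linking spheres is therefore a homology isomorphism of simply connected spaces, hence a weak equivalence by Whitehead's theorem.
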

\begin{proof}
Let $\inte(\hat C^\tau_n(I\times Y))$ denote the set of points $(x_0,\dots,x_{n+1})\in \hat C^\tau_n(I\times Y)$ such that each $x_i$ is in the interior of $(I\times \hat Y)^n$. Then the inclusion of $\inte(\hat C^\tau_n(I\times Y))$ in $\hat C^\tau_n(I\times Y)$ is a homotopy equivalence, and we have a fibration 
    \[
    \inte(I\times \hat Y) \setminus \{t^\alpha p|\alpha \in \pi_1(Y)\setminus \ker \tau\} \to \inte(\hat C^\tau_2(I\times Y))\to \inte(I\times \hat Y),
    \]
    where $p\in \inte(I\times \hat Y)$ is a fixed point. Since $Y$ is aspherical, we have $\pi_5^\bQ \inte(I\times \hat Y) \setminus \{t^\alpha p|\alpha \in \pi_1(Y)\setminus \ker \tau\} \cong \pi_5^\bQ \inte(\hat C^\tau_n(I\times Y))$. Note that $\inte(I\times \hat Y) \setminus \{t^\alpha p|\alpha \in \pi_1(Y)\setminus \ker \tau\}$ is homotopy equivalent to the wedge sum of a collection of $S^3$'s, where each $S^3$ corresponds to an element of $\{t^\alpha p|\alpha \in \pi_1(Y)\setminus \ker \tau\}$. So its $\pi_5^\bQ$ is generated by Whitehead products on $\pi_3^\bQ$, and a straightforward diagram chasing yields the desired result. 
\end{proof}

Proposition \ref{prop_pi_3_C5_wij} and \eqref{eqn_delta_2^i_on_w} allow us to compute $\Ima(\delta_2^i)$ directly.

\subsection{A homomorphism from $\pi_5C_3^\tau(I\times Y)$}
\label{subsec_hom_from_pi5C3}
Fix an orientation on $S^5$. Suppose $\mu:S^5\to C_3^\tau(I\times Y)$ is a continuous map. 
For 
$0\neq a\in H_1(Y;\bZ)/\tor$,
let $\Co^i_j(a,\mu)$ denote the set of $p\in S^5$ such that 
$t^a(x_i)$ has the same projection to $Y$ as $x_j$ and has a greater $I$--coordinate than $x_j$, where $(x_1,x_2,x_3)=\mu(p)$. After a generic perturbation of $\mu$, the set $\Co_i^j(a,\mu)$ is a transversely cut codimension 2 closed submanifold of $S^5$, and it admits an induced orientation.

Let $a,b\in H_1(Y;\mathbb{Z})/\tor$ be such that $a,b,a-b\neq 0$. The next lemma follows verbatim as \cite[Lemma 7.1]{LXZ2025}.

\begin{Lemma}
    For generic $\mu$, the linking number    
    \[
    \lk\big(\Co^1_2(a,\mu)-\Co^1_3( a-b,\mu),\Co^3_1(b-a,\mu)- \Co^3_2(b,\mu)\big)
    \]
    is well-defined and only depends on the homotopy class of $\mu$. \qed
\end{Lemma}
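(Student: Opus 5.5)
Write $A=\Co^1_2(a,\mu)-\Co^1_3(a-b,\mu)$ and $B=\Co^3_1(b-a,\mu)-\Co^3_2(b,\mu)$, viewed as oriented codimension-$2$ cycles in $S^5$. Since $\dim A+\dim B=6=5+1$, the linking number in $S^5$ is defined exactly when $A$ and $B$ are closed cycles and $A\cap B=\emptyset$. The plan has three steps: check that $A$ and $B$ are closed, check that they are disjoint, and then show the linking number is invariant under generic homotopies of $\mu$.

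\emph{Closedness.} Each $\Co^i_j(c,\mu)$ is the preimage under the map $p\mapsto \pi_Y(t^c x_i)-\pi_Y(x_j)$ (read in a local chart near the diagonal) of a half-line condition on the $I$--coordinate. Its boundary is the locus where $t^c x_i$ and $x_j$ share the same $Y$--projection and the same $I$--coordinate, i.e.\ $t^c x_i=x_j$. This is forbidden in $C_3^\tau$ for $c\neq 0$, since $a,b,a-b\neq 0$. The endpoints $x_0,x_4$ are fixed and never enter these sets. So after generic perturbation each $\Co^i_j$ is a closed oriented $3$--manifold, and $A,B$ are cycles.

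\emph{Disjointness.} Suppose $p\in \Co^1_2(a)\cap\Co^3_1(b-a)$. Then $t^{a}x_1$ lies directly above $x_2$, and $t^{b-a}x_3$ lies directly above $x_1$. Applying $t^a$ to the second relation, $t^b x_3$ lies directly above $t^a x_1$, which lies above $x_2$. Hence $p\in\Co^3_2(b)$, so $p$ lies on a triple locus where all three conditions hold. The other pairings behave the same way. For example, $\Co^1_3(a-b)\cap\Co^3_2(b)$ forces $t^a x_1$ above $t^b x_3$, which lies above $x_2$, so $p\in\Co^1_2(a)$.

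Following \cite[Lemma 7.1]{LXZ2025}, the signed combination is chosen so that on each such triple locus the contributions cancel. Concretely, $A\cap B$ is a priori a codimension-$4$ set, i.e.\ a $1$--manifold. One resolves it by pushing $B$ slightly off $A$ along the $I$--direction, replacing "greater $I$--coordinate" by "greater by $\epsilon$". Then $A$ and the pushed $B_\epsilon$ are disjoint, and the linking number is taken to be $\lk(A,B_\epsilon)$ for small $\epsilon$. Well-definedness amounts to showing this is independent of $\epsilon$ and of the pushoff, which I expect follows because the triple locus bounds compatibly in $A$ and in $B$.

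\emph{Homotopy invariance.} Take a generic homotopy $\mu_s:S^5\times I\to C_3^\tau$. Over $S^5\times I$ we obtain $4$--chains $\mathcal A,\mathcal B$ whose boundaries are $A_1-A_0$ and $B_1-B_0$ by the closedness argument. The change in linking number is the count of $\mathcal A\cap\mathcal B$ in the $6$--manifold $S^5\times I$, which is $0$--dimensional. By the disjointness step, every such intersection point is a triple point, and these cancel in signed pairs by the same identity relating the three sets.

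I expect the main obstacle to be the sign bookkeeping at triple points, namely verifying that the chosen coefficients $\pm1$ make the triple-point contributions cancel. I would carry this out locally by linearizing near a triple point, where the three conditions are transverse half-space conditions in $\mathbb{R}^{6}$ (three $\mathbb{R}^2$ projections together with $I$--orderings). As in \cite{LXZ2025}, I would then compute the orientations directly.
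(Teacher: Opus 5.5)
Your three-step skeleton (closedness, disjointness, invariance via triple points) is the right one, but the dimension count underneath it is wrong, and that breaks the second and third steps. The condition that $t^c x_i$ and $x_j$ have the same projection to $\widetilde{Y}$ is three equations, and the $I$--inequality is open. So for generic $\mu$ each $\Co^i_j(c,\mu)$ is a closed oriented \emph{surface} in $S^5$. The phrase ``codimension $2$'' in the text must be read as ``dimension $2$''. As a check, in Proposition \ref{prop_Theta_wij} these sets, evaluated on a Whitehead product of two $\pi_3$--classes, are the two linked $2$--dimensional preimages. Separately, a linking number in $S^5$ pairs disjoint cycles with $p+q=4$, not $p+q=6$; two disjoint $3$--cycles in $S^5$ have no integer linking number.

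With the correct dimensions, $A\cap B=\emptyset$ holds for generic $\mu$ automatically. Each pairwise intersection you analyzed imposes six independent equations on the $5$--manifold $S^5$. Moreover, $\Co^1_3(a-b,\mu)\cap\Co^3_1(b-a,\mu)$ is empty for every $\mu$, since it would put $x_3$ both above and below $t^{a-b}x_1$. So drop the $\epsilon$--push-off and the unsupported claim that ``the triple locus bounds compatibly''; as written it is not an argument. Well-definedness is just closedness plus genericity.

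The triple points belong to the homotopy step, and there your chains also have the wrong dimension. Over a generic homotopy the traces are $3$--chains $\mathcal A,\mathcal B$ in $S^5\times I$, so $\mathcal A\cap\mathcal B$ is finite; two $4$--chains would meet along a surface. Your algebra shows every intersection point is of one of two types:
\begin{itemize}
\item $t^bx_3$ above $t^ax_1$ above $x_2$, contributing $\Co^1_2(a)\cdot\Co^3_1(b-a)-\Co^1_2(a)\cdot\Co^3_2(b)$;
\item $t^ax_1$ above $t^bx_3$ above $x_2$, contributing $\Co^1_3(a-b)\cdot\Co^3_2(b)-\Co^1_2(a)\cdot\Co^3_2(b)$.
\end{itemize}
The cancellation you deferred is the actual content of the lemma; the paper simply cites \cite[Lemma 7.1]{LXZ2025} for it. It is a one-line check. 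At a point of the first type, let $P_1,P_2$ be local defining maps to $\mathbb{R}^3$ for $\Co^1_2(a)$ and for $\Co^3_1(b-a)$, the latter after translating by the orientation-preserving deck transformation $t^a$. Then $\Co^3_2(b)$ is cut out by $P_1+P_2$. Since $\det(dP_1,dP_2)=\det(dP_1,dP_1+dP_2)$ for the $6\times 6$ linearizations, the two terms have equal local signs and cancel. The second type is identical: take $Q_1,Q_2$ defining $\Co^1_3(a-b)$ and $\Co^3_2(b)$, and note that $\Co^1_2(a)$ is cut out by $Q_1+Q_2$. This is exactly why the coefficients in $A$ and $B$ are chosen as they are.
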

Define $\theta_{-a,-b}: \pi_5(C_3^\tau(I\times Y))\to \mathbb{Z}$ to be the above linking number.\footnote{We add negative signs to the subscripts here so that Proposition \ref{prop_Theta_wij} will have simpler signs.}

Similarly, 
suppose $a,b \in H_1(Y;\mathbb{Z})/\tor$, $a,b,a-b\neq 0$ and $1\le i< j\le 3$.  
Then $\Co^i_j(a,\mu)\cap \Co^i_j(b,\mu)=\emptyset$ for all $\mu$. Therefore, for generic $\mu$, the linking number
\[\lk\big(\Co^i_j(a,\mu),\Co^i_j(b,\mu)\big)\] is well-defined and
only depends on the homotopy class of $\mu$. We define $\zeta^{i,j}_{-a,-b}(\mu)$ to be this linking number. Note that $\zeta^{i,j}_{a,b}(\mu)=-\zeta^{i,j}_{b,a}(\mu)$ for all $i,j,a,b$.

It is clear from the definition that $\theta_{a,b}$ and $\zeta^{i,j}_{a,b}$ give rise to group homomorphisms from $\pi_5C_3^\tau(I\times Y)$ to $\mathbb{Z}$. 
We abuse notation and also use $\theta_{a,b}$ and  $\zeta^{i,j}_{a,b}$ to denote the maps in $\bQ$--coefficients. Define $\Theta: \pi_5^\bQ C_3^\tau(I\times Y)\to \oplus_\infty \bQ$ to be the direct sum of $\theta_{a,b}$'s 
and $\zeta^{i,j}_{a,b}$'s. It is straightforward to verify the following result.

\begin{Proposition}\label{prop_Theta_wij}
The following identities hold on $\pi_5^\bQ C_3^\tau(I\times Y)$. 
\begin{align*}
  \theta_{a,b}([ t_i^\alpha w_{ij}, t_i^\beta w_{ij}])&=0 \\
  \theta_{a,b}([ t_1^{\alpha} w_{12}, t_2^{\beta} w_{23}]) &=
\begin{cases}
\pm 1~~~\text{if}~a=\tau(\alpha),b=-\tau(\beta)\\
0~~~\text{otherwise}
\end{cases}  \\
 \zeta^{i,j}_{a,b}([ t_1^{\alpha} w_{12}, t_2^{\beta} w_{23}])&=0\\
  \zeta^{i,j}_{a,b}([t_i^{\alpha} w_{ij}, t_i^{\beta} w_{ij}])&=
  \begin{cases}
\pm 1~~~\text{if}~a=\tau(\alpha),b=\tau(\beta)\\
\mp 1~~~\text{if}~a=\tau(\beta),b=\tau(\alpha)\\
0~~~\text{otherwise}
\end{cases} \\
\zeta^{i,j}_{a,b}([t_k^{\alpha} w_{kl}, t_k^{\beta} w_{kl}])&= 0~~~\text{if}~(i,j)\neq (k,l).
\end{align*}
\qed
\end{Proposition}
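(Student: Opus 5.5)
The plan is to compute each invariant on a Whitehead product by building an explicit, generic representative of it. Recall how $t_i^\alpha w_{ij}$ is represented: a map $S^3 \to \hat C_n^\tau(I\times Y)$ in which all points but $x_i$ stay fixed, while $x_i$ sweeps out a small $3$--sphere of radius $\epsilon$ around $t^\alpha x_j$.

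For $u,v\in\pi_3$, the Whitehead product $[u,v]\in\pi_5$ is represented by the standard map $S^5=\partial(D^3\times D^3)\to S^3\vee S^3$ followed by $u\vee v$. Equivalently, it is the obstruction to extending the product map $S^3\times S^3\to C$, so we can compute with $u\times v$ restricted appropriately. For each invariant $\lk(A,B)$, where $A$ and $B$ are signed unions of sets $\Co^i_j(c,\mu)$, the linking number in $S^5$ of the preimages of two disjoint codimension-$2$ "collision" loci can be computed as follows. Suppose $A$ is pulled back from a codimension-$2$ cooriented subset $\mathcal A$ of the first factor's image and $B$ from $\mathcal B$ of the second. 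Then $\lk(A,B)$ on the Whitehead product equals the product of the degree of $u$ across $\mathcal A$ and the degree of $v$ across $\mathcal B$. Each degree is the intersection number in $S^3$ of the $2$--cycle $u^{-1}(\mathcal A)$, and so on. This is the standard Hopf-invariant-style computation used in \cite{LXZ2025}, and I would cite that argument, as in the proof of \cite[Lemma 7.1]{LXZ2025}.

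The key local computation is then: on the sphere $t_i^\alpha w_{ij}$, which $\Co^k_l(c,\cdot)$ are hit, and with what multiplicity? Only the pair $(i,j)$ can change projection coincidence, and it does so only for $c=\tau(\alpha)$ up to sign conventions. We have that $t^{c}x_i$ projects onto $x_j$ exactly when $\tau(\alpha)=-c$, which accounts for the minus signs built into $\theta_{-a,-b}$ and $\zeta_{-a,-b}$. In that case $\Co$ pulls back to a half-line's worth of directions, namely the points of the sphere lying above $t^\alpha x_j$ in the $I$--direction. This is a single point of $S^3$ cut out by two conditions, so it is really a $1$--dimensional set on $S^3$ of degree $\pm1$ (a meridian-type class).

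With this in hand, each line of Proposition \ref{prop_Theta_wij} is a case check. For $[t_i^\alpha w_{ij},t_i^\beta w_{ij}]$, both factors only produce $\Co^i_j$ loci, with labels $\tau(\alpha)$ and $\tau(\beta)$. The pair appearing in $\theta$ needs loci of type $(1,2)/(1,3)$ and $(3,1)/(3,2)$, and these never come simultaneously from a single pair $(i,j)$, so $\theta$ vanishes. By contrast, $\zeta^{i,j}_{a,b}$ picks up $\pm1$ from the ordered assignment $(\alpha,\beta)$, and $\mp1$ from the swapped assignment by antisymmetry. The case $(k,l)\ne(i,j)$ gives no loci, hence $0$. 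For $[t_1^\alpha w_{12},t_2^\beta w_{23}]$, the first factor meets $\Co^1_2(\tau(\alpha))$, and the second factor, with $x_2$ moving around $t^\beta x_3$, meets $\Co^3_2$ with label $-\tau(\beta)$ because the roles of $i$ and $j$ are reversed. This matches $\theta_{a,b}$ exactly when $a=\tau(\alpha)$ and $b=-\tau(\beta)$. Meanwhile $\zeta$ would need both loci to be of the same type $(i,j)$, so it gives $0$.

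The main obstacle is rigor in the "no other contributions" claims. When $x_2$ moves along a tiny sphere, the other cross terms, for instance $\Co^1_3$ or $\Co^3_1$ in the $[w_{12},w_{23}]$ case, must either be empty or lie far from the relevant loci so that they do not link. I would handle this by choosing $\epsilon$ small and placing the base configuration $x_i=(t_i,\tilde y_0)$ so that the only near-coincidences are those forced by the rotating point. Since the labels $a$, $b$, and $a-b$ are all nonzero, the additional terms in the definition of $\theta$ correspond to deck translates that cannot be reached within radius $\epsilon$. Signs are only claimed up to $\pm$, so orientation bookkeeping can be left aside.
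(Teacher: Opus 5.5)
Your strategy is the natural way to do the verification the paper leaves implicit: represent $[u,v]$ by $S^5=\partial(D^3\times D^3)\to S^3\vee S^3\xrightarrow{u\vee v}C_3^\tau(I\times Y)$ and read off linking numbers of the pulled-back collision loci. The last four identities come out right this way.

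The gap is in the first identity, and it comes from an incomplete local picture of a single sphere. The sphere $t_i^\alpha w_{ij}$ meets the collision loci at two points, not one:
at the north pole ($x_i$ directly above $t^\alpha x_j$) it lies in $\Co^i_j(-\tau(\alpha))$;
at the south pole ($x_i$ directly below $t^\alpha x_j$) it lies in $\Co^j_i(\tau(\alpha))$.
You use the second kind yourself when $t_2^\beta w_{23}$ meets $\Co^3_2$. Each is an isolated transverse point cut out by three equations, not a $1$-dimensional set. The $\Co$-loci have codimension $3$ and pull back to surfaces $\{x\}\times S^2$ and $S^2\times\{y\}$ in $S^5$, which is why they can link; the ``codimension 2'' in the text should be read as ``dimension 2''.

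Hence, for $\{i,j\}=\{1,3\}$, your claim that the two cycles of $\theta_{a,b}$ are never fed by a single pair is false. The sphere $t_1^\alpha w_{13}$ meets $\Co^1_3(-\tau(\alpha))$, which appears in $\Co^1_2(-a)-\Co^1_3(b-a)$ exactly when $\tau(\alpha)=a-b$. It also meets $\Co^3_1(\tau(\alpha))$, which appears in $\Co^3_1(a-b)-\Co^3_2(-b)$ under the same condition. Suppose also $\tau(\beta)=\tau(\alpha)=a-b$. This is possible with $\alpha\neq\beta$ as soon as $\ker\tau\neq 1$, and such brackets do arise from $\delta_2^1$ and $\delta_2^2$. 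Then both cycles are met by both factors, and each cross term is individually $\pm1$.

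What is missing is a two-sided formula in place of your one-sided ``product of degrees''. Let $d_A(w)$ denote the signed number of points where a sphere $w$ meets the cycle $A$. Then $\lk_{[u,v]}(A,B)=\epsilon\bigl(d_A(u)d_B(v)-d_A(v)d_B(u)\bigr)$ for a universal sign $\epsilon$. To see this, compute in $D^6=D^3\times D^3$:
the self-terms vanish, since $\{x\}\times S^2$ and $\{x'\}\times S^2$ bound the disjoint chains $\{x\}\times D^3$ and $\{x'\}\times D^3$;
each cross term is one transverse intersection of $\{x\}\times D^3$ with $D^3\times\{y\}$;
the relative minus sign is the graded symmetry $P\cdot Q=(-1)^{9}Q\cdot P$ for $3$-chains in a $6$-manifold (equivalently, $2[u,u]=0$).

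This settles the $(1,3)$ case. If only one factor meets the cycles, every term vanishes. If $\tau(\alpha)=\tau(\beta)$, then $u$ and $v$ are the same small sphere around $t^{a-b}x_3$ in $I\times\widetilde Y$ up to the whisker, since they differ by the $\pi_1$-action. So $d_A(u)=d_A(v)$ and $d_B(u)=d_B(v)$, and the cross terms cancel. The same formula also gives the $\pm1$/$\mp1$ pattern for $\zeta$ directly.

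Lastly, the base configuration $x_i=(t_i,\tilde y_0)$ is not generic, because all stationary points lie on one vertical line. For instance, the north pole of $t_1^\alpha w_{12}$ also lies in $\Co^3_1(\tau(\alpha))$. You should either move the stationary points to distinct, non-deck-related fibers before counting, or check that such extra hits carry labels excluded by $a,b,a-b\neq 0$ or cancel in pairs.
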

We may also choose the orientation conventions so that the values $\pm 1$ in Proposition \ref{prop_Theta_wij} are all equal to $+1$, and the values $\mp 1$ are equal to $-1$.

We can now compute the image of $\Theta\circ \delta_2^i$ in $\oplus_\infty \bQ$ by \eqref{eqn_delta_2^i_on_w}, Proposition \ref{prop_pi_3_C5_wij} and Proposition \ref{prop_Theta_wij}. 
In the case that $Y=S^1\times D^2$,
this has been done in \cite[Section 3]{budney2019knotted}. In the following, let $\hat\theta_{a,b}$, $\hat{\zeta}_{a,b}^{i,j}$ $(a,b,a-b\neq 0, 1\le i<j\le 3)$ denote the basis vectors of $\oplus_\infty \mathbb{Q}$ that correspond to $\theta_{a,b}$ and $\zeta_{a,b}^{i,j}$ respectively. Then the image of $\Theta$ is contained in the space spanned by $\hat\theta_{a,b}$ and $\hat \zeta_{a,b}^{i,j}-\hat \zeta_{b,a}^{i,j}$.

\begin{Corollary}
    $\Ima(\Theta\circ \delta_2^i)$ is described as follows.
    \begin{enumerate}
        \item $\Ima(\Theta\circ \delta_2^0)$ is generated by vectors of the form $\hat\zeta_{a,b}^{2,3}-\hat\zeta_{b,a}^{2,3}$.
        \item $\Ima(\Theta\circ \delta_2^1)$ is generated by vectors of the form
        \[
        (\hat\zeta_{a,b}^{1,3} - \hat\zeta_{b,a}^{1,3}) + (\hat\zeta_{a,b}^{2,3}-\hat\zeta_{b,a}^{2,3})-\hat\theta_{a-b,-b}+\hat\theta_{b-a,-a}.
        \]
        \item $\Ima(\Theta\circ \delta_2^2)$ is generated by vectors of the form 
        \[
(\hat\zeta_{a,b}^{1,2} - \hat\zeta_{b,a}^{1,2}) + (\hat\zeta_{a,b}^{1,3} - \hat\zeta_{b,a}^{1,3}) - \hat\theta_{a,a-b} + \hat\theta_{b,b-a}.
        \]
        \item $\Ima(\Theta\circ \delta_2^3)$ is generated by vectors of the form $\hat\zeta_{a,b}^{1,2}-\hat\zeta_{b,a}^{1,2}$.
    \end{enumerate}
\end{Corollary}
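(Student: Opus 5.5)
By Proposition \ref{prop_pi_3_C5_wij}, the image of each $\delta_2^i$ on $\pi_5^\bQ$ is spanned by the images of the Whitehead products $[t_1^\alpha w_{12}, t_1^\beta w_{12}]$ with $\alpha,\beta\in\pi_1(Y)\setminus\ker\tau$ and $\alpha\neq\beta$. The plan is therefore to compute, for each $i$, the vector $\Theta\circ\delta_2^i([t_1^\alpha w_{12},t_1^\beta w_{12}])$ and read off the generators. Since $\delta_2^i$ is induced by a continuous map, it commutes with Whitehead products: $\delta_2^i[x,y]=[\delta_2^i x,\delta_2^i y]$. Substituting \eqref{eqn_delta_2^i_on_w} and expanding bilinearly, each image becomes a sum of Whitehead products of the form $[t_k^\alpha w_{kl},t_{k'}^\beta w_{k'l'}]$. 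The values of $\Theta$ on these are given by Proposition \ref{prop_Theta_wij}, with the sign conventions fixed so that $\pm1=+1$ and $\mp1=-1$. Throughout, write $a=\tau(\alpha)$ and $b=\tau(\beta)$.

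For $i=0$ and $i=3$ the image is a single term. It is $[t_2^\alpha w_{23},t_2^\beta w_{23}]$ for $\delta_2^0$ and $[t_1^\alpha w_{12},t_1^\beta w_{12}]$ for $\delta_2^3$. On this term every $\theta$ vanishes, and only $\zeta^{2,3}$ (respectively $\zeta^{1,2}$) is nonzero, giving $\hat\zeta^{2,3}_{a,b}-\hat\zeta^{2,3}_{b,a}$ (respectively $\hat\zeta^{1,2}_{a,b}-\hat\zeta^{1,2}_{b,a}$). When $a=b$ the vector is zero, which is consistent with the claimed generating set.

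For $i=1$ we expand
\[
[t_1^\alpha w_{13}+t_2^\alpha w_{23},\, t_1^\beta w_{13}+t_2^\beta w_{23}].
\]
The two diagonal terms contribute $\hat\zeta^{1,3}_{a,b}-\hat\zeta^{1,3}_{b,a}$ and $\hat\zeta^{2,3}_{a,b}-\hat\zeta^{2,3}_{b,a}$. The cross terms $[t_1^\alpha w_{13},t_2^\beta w_{23}]$ and $[t_2^\alpha w_{23},t_1^\beta w_{13}]$ are not literally of the form covered by Proposition \ref{prop_Theta_wij}, which treats $[t_1^\alpha w_{12},t_2^\beta w_{23}]$. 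The plan here is to use the symmetries of the configuration. Relabelling, and translating the whole configuration by a deck transformation (since $t^c$ acts on the pair of points $x_i,x_j$ while preserving the relative class), gives $t_1^\alpha w_{13}=t_3^{\alpha^{-1}}w_{31}$. The cross term can then be rewritten as a product of a $w_{31}$-type class and a $w_{32}$-type class sharing the point $x_3$. This is the analogue of $[w_{12},w_{23}]$ up to the symmetric role of indices in the definition of $\theta_{-a,-b}$. Its value is then a linking number between one $\Co^3_1$ component and one $\Co^3_2$ component, which produces $\hat\theta$ with subscripts determined by $a-b$ and $-b$. The two cross terms differ by swapping $\alpha$ and $\beta$ and carry opposite Whitehead signs, which yields $-\hat\theta_{a-b,-b}+\hat\theta_{b-a,-a}$.

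For $i=2$ the computation is the same, with $[t_1^\alpha w_{12}+t_1^\alpha w_{13},\,t_1^\beta w_{12}+t_1^\beta w_{13}]$. Here the shared point is $x_1$, and the $\theta$ contributions come from the $\Co^1_2$ and $\Co^1_3$ components.

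The main obstacle is exactly this cross-term evaluation. It requires directly computing the linking numbers in the definition of $\theta$ for Whitehead products of two $w$-classes that share the point $x_3$ (for $i=1$) or $x_1$ (for $i=2$), rather than the pattern $w_{12},w_{23}$ that Proposition \ref{prop_Theta_wij} covers. I would compute these with the same local model used there. One realizes $[x,y]$ on $S^5=\partial(D^3\times D^3)$; the relevant $\Co$ sets are then the two linked $S^2$'s coming from the rotating points, and one tracks which deck translate of $x_i$ lines up with $x_j$. This pins down the subscripts ($a-b$ versus $a$, and so on). Signs are fixed by comparing with the already-normalized convention of Proposition \ref{prop_Theta_wij}. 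Finally, the $\zeta$ values vanish on cross terms because the corresponding $\Co$ sets belong to different index pairs and are empty. Together with the diagonal terms, this gives exactly the listed generators.
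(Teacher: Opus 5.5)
Your skeleton matches the paper's proof. You generate $\pi_5^\bQ C_2^\tau(I\times Y)$ by the Whitehead products of Proposition~\ref{prop_pi_3_C5_wij}, push them through the coface formulas using naturality and bilinearity, and evaluate $\Theta$. Parts (1) and (4) are complete as you wrote them. You also correctly noticed that the cross terms in (2) and (3) are not among the cases of Proposition~\ref{prop_Theta_wij}; the paper handles (3) by quoting Eq.~(4.3) of \cite{LXZ2025} and treats (2) as the same ``straightforward'' linking computation.

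However, your sketch of that one nontrivial step is wrong in three places.
\begin{enumerate}
\item The relabelling $t_1^\alpha w_{13}=t_3^{\alpha^{-1}}w_{31}$ is true, but it reduces nothing. $\theta$ has no symmetry exchanging the roles of the indices $2$ and $3$, so a product sharing $x_3$ is not an instance of the $[t_1^\alpha w_{12},t_2^\beta w_{23}]$ case.
\item More seriously, the pairings you name never enter $\theta$. By definition $\theta_{A,B}=\lk(Z_1,Z_2)$ with $Z_1=\Co^1_2(-A)-\Co^1_3(B-A)$ and $Z_2=\Co^3_1(A-B)-\Co^3_2(-B)$. So ``a $\Co^3_1$ component against a $\Co^3_2$ component'' (for $i=1$) and ``$\Co^1_2$ against $\Co^1_3$'' (for $i=2$) are both pairs inside a single cycle, and they never contribute.
\item The $\Co$ sets on the cross terms are not empty, so that is not why $\zeta$ vanishes there.
\end{enumerate}

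Here is what actually happens. Use the model $S^5=\partial(D^3\times D^3)\to S^3\vee S^3$.
\begin{itemize}
\item The $\Co$ sets of $[x,y]$ are the preimages of the $\Co$ points of $x$, which are spheres $\{u\}\times S^2$, and of $y$, which are spheres $S^2\times\{v\}$.
\item Two spheres from the same factor are unlinked. A sphere from one factor and a sphere from the other have linking number $\pm1$.
\item For $[t_1^\alpha w_{13},t_2^\beta w_{23}]$, the factors contribute $\Co^1_3(-a),\Co^3_1(a)$ and $\Co^2_3(-b),\Co^3_2(b)$. The only mixed pair in $\lk(Z_1,Z_2)$ is $\Co^1_3(B-A)$ with $\Co^3_2(-B)$, which forces $(A,B)=(a-b,-b)$.
\item For $[t_1^\alpha w_{12},t_1^\beta w_{13}]$, the factors contribute $\Co^1_2(-a),\Co^2_1(a)$ and $\Co^1_3(-b),\Co^3_1(b)$. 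The only mixed pair is $\Co^1_2(-A)$ with $\Co^3_1(A-B)$, which forces $(A,B)=(a,a-b)$.
\end{itemize}
Your subscripts come out right only by coincidence. The set $\Co^1_3(-a)$ enters $Z_1$ under exactly the same condition $A-B=a$ under which its parallel copy $\Co^3_1(a)$ enters $Z_2$.

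The $\zeta$'s vanish on cross terms for a different reason than the one you give. $\zeta^{i,j}$ pairs two sets with the same $(i,j)$, and in a cross term all such sets come from a single factor, hence are unlinked.

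Finally, graded antisymmetry of the Whitehead product only gives the antisymmetry in $(a,b)$ of the $\theta$-part. Its sign relative to the $\zeta$-part in (2) and (3) is what feeds into the relation defining $\mathcal{W}(Y)$. That sign still has to be computed by comparing the orientations of these new mixed pairs with the normalized case of Proposition~\ref{prop_Theta_wij}; you assert this comparison but do not carry it out.
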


\begin{proof}
    Statements (1), (2) follow from Proposition \ref{prop_Theta_wij} and \eqref{eqn_delta_2^i_on_w}. Statements (3), (4) follows from \cite[Eq. (4.3)]{LXZ2025} and a similar computation. 
\end{proof}

Note that $\Ima(\Theta)\subset \spann\{\hat \theta_{a,b},\hat\zeta_{b,a}^{i,j}-\hat\zeta_{b,a}^{j,i}\}$. By the above corollary, we have 
\begin{align*}
&\spann\{\hat \theta_{a,b},\hat\zeta_{b,a}^{i,j}-\hat\zeta_{b,a}^{j,i}\}/\sum_i\Ima(\delta_2^i)\\
\cong\,& \spann\{\hat\theta_{a,b}\}/\spann\{\hat\theta_{a-b,-b}-\hat\theta_{b-a,-a}- \hat\theta_{a,a-b} + \hat\theta_{b,b-a}\},
\end{align*}
where the isomorphism takes the equivalence class of $\hat\theta_{a,b}$ to the equivalence class of $\hat\theta_{a,b}$. 
Now we can define the desired map $\Psi$.
\begin{Definition}\label{Dfn_Psi}
Define $\Psi$ to be the composition of
\begin{align*}
&\pi_2^\bQ \Emb_\partial (I,I\times Y)\xrightarrow{(\Phi_3)_*} \pi_2^\bQ \Map_\Delta(\Delta^n,C_n^\tau(I\times Y))\\
\cong\, & \pi_5^\bQ C_3^\tau(I\times Y)/\sum_i \Ima(\delta_2^i)\xrightarrow{\Theta} \spann\{\hat \theta_{a,b},\hat\zeta_{b,a}^{i,j}-\hat\zeta_{b,a}^{j,i}\}/\sum_i\Ima(\delta_2^i)
\\
\cong\, & \spann\{\hat\theta_{a,b}\}/\spann\{\hat\theta_{a-b,-b}-\hat\theta_{b-a,-a}- \hat\theta_{a,a-b} + \hat\theta_{b,b-a}\}.
\end{align*}
\end{Definition}

\subsection{Properties of $\Psi$}
\label{subsec_property_Psi}
    Given $Y$ as above, denote 
    \[
    \mathcal{W}(Y) = \spann\{\hat\theta_{a,b}\}/\spann\{\hat\theta_{a-b,-b}-\hat\theta_{b-a,-a}- \hat\theta_{a,a-b} + \hat\theta_{b,b-a}\},
    \]
    where $(a,b)$ goes over all pairs of elements in $H_1(Y;\mathbb{Z})/\tor$ such that $a,b,a-b\neq 0$. Then $\mathcal{W}(Y)$ is the codomain of $\Psi$. If $Y'$ is another manifold satisfying the same assumptions as $Y$ and $i: Y'\hookrightarrow Y$ is a codimension-0 orientation-preserving embedding that induces an injection on $H_1(-;\mathbb{Z})/\tor$, then there is an induced map $i_*:\mathcal{W}(Y')\to \mathcal{W}(Y)$. Note that since $i_*$ induces an injective map on $H_1(-;\mathbb{Z})/\tor$, whenever 
    $a,b\in H_1(Y';\mathbb{Z})/\tor$
    satisfy $a,b,a-b\neq 0$, their images in $H_1(Y;\mathbb{Z})/\tor$ satisfy the same condition. 

\begin{Proposition}
\label{prop_embedding_and_codomain_of_Psi}
    Let $i:Y'\hookrightarrow Y$ be as above. 
    Let $r:\mathcal{W}(Y)\to \mathcal{W}(Y')$ be defined by 
    \[
    r(\hat \theta_{a,b}) = \begin{cases}
        \hat\theta_{(i_*)^{-1}(a),(i_*)^{-1}(b)} \quad & \text{if } a,b\in \Ima(i_*:H_1(Y';\mathbb{Z})/\tor\to H_1(Y;\mathbb{Z})/\tor)\\
        0 & \text{otherwise}.
    \end{cases}
    \]
    Then $r$ is well-defined, and $r\circ i_* = \id$ on $\mathcal{W}(Y)$. In particular, $i_*:\mathcal{W}(Y')\to \mathcal{W}(Y)$ is injective.
\end{Proposition}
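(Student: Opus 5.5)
The plan is to define $r$ first on the free $\bQ$-vector space $\spann\{\hat\theta_{a,b}\}$ spanned by all pairs $(a,b)$ in $H_1(Y;\bZ)/\tor$ with $a,b,a-b\neq 0$. Then I check that it maps the relation subspace of $\mathcal{W}(Y)$ into the relation subspace of $\mathcal{W}(Y')$, so that it descends to the quotients. Finally I verify $r\circ i_*=\id$ directly on generators. Note that this identity is on $\mathcal{W}(Y')$, since $i_*:\mathcal{W}(Y')\to\mathcal{W}(Y)$.

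Write $L=\Ima(i_*)\subset H_1(Y;\bZ)/\tor$. This is a subgroup, and $i_*$ is injective on $H_1(-;\bZ)/\tor$, so $(i_*)^{-1}$ is a well-defined isomorphism $L\to H_1(Y';\bZ)/\tor$. On the free level, set $\tilde r(\hat\theta_{a,b})=\hat\theta_{(i_*)^{-1}a,(i_*)^{-1}b}$ when $a,b\in L$, and $0$ otherwise. This target is a legitimate generator: $(i_*)^{-1}a$, $(i_*)^{-1}b$ and their difference $(i_*)^{-1}(a-b)$ are nonzero because $a,b,a-b\neq 0$ and $(i_*)^{-1}$ is injective.

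The key step is the compatibility with relations. Take a relation vector
\[
R_{a,b}=\hat\theta_{a-b,-b}-\hat\theta_{b-a,-a}-\hat\theta_{a,a-b}+\hat\theta_{b,b-a}.
\]
\begin{itemize}
\item If $a,b\in L$, then every index pair appearing in $R_{a,b}$ lies in $L\times L$, since $L$ is a subgroup. Because $(i_*)^{-1}$ is a homomorphism, $\tilde r(R_{a,b})=R_{(i_*)^{-1}a,(i_*)^{-1}b}$, which is a relation in $\mathcal{W}(Y')$.
\item If $a\notin L$ or $b\notin L$, I claim each of the four terms has some index outside $L$, so $\tilde r(R_{a,b})=0$. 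For the term $\hat\theta_{a-b,-b}$: if both $a-b$ and $-b$ were in $L$, then $b\in L$ and $a=(a-b)+b\in L$, a contradiction. The same argument applies to each of the other three terms, since in every term the two indices generate the same subgroup as $\{a,b\}$.
\end{itemize}
Hence $\tilde r$ descends to a well-defined map $r:\mathcal{W}(Y)\to\mathcal{W}(Y')$.

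For $r\circ i_*=\id$: the map $i_*$ sends $\hat\theta_{a',b'}$ to $\hat\theta_{i_*a',i_*b'}$, whose indices lie in $L$, and $r$ then returns $\hat\theta_{a',b'}$. This holds on generators, hence on the quotient, so $r\circ i_*=\id_{\mathcal{W}(Y')}$ and $i_*$ is injective. The statement writes "on $\mathcal{W}(Y)$", which I read as a slip for $\mathcal{W}(Y')$.

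The only step needing care is the second case above: the map must kill a relation vector entirely. It is not enough for only some of its terms to vanish, and the subgroup observation is what guarantees that all four terms vanish together.
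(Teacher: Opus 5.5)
Your proposal is correct and follows the paper's proof: define $\hat r$ on the free span, show it carries $R(Y)$ into $R(Y')$ by splitting into the cases $a,b\in\Ima i_*$ and otherwise, then check $r\circ i_*=\id$ on generators. Your observation that the two indices of each of the four terms generate the same subgroup as $\{a,b\}$ justifies a vanishing step the paper only asserts. You are also right that the identity should read $r\circ i_*=\id$ on $\mathcal{W}(Y')$.
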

\begin{proof}
Given a 3-manifold $Z$, let 
\begin{align*}
V(Z)&=\spann\{\hat\theta_{a,b}|a,b\in H_1(Z;\bZ)/\tor,\,a,b,a-b\neq 0\} \\
R(Z)&=\spann\{\hat\theta_{a-b,-b}-\hat\theta_{b-a,-a}- \hat\theta_{a,a-b} + \hat\theta_{b,b-a}|a,b\in H_1(Z;\bZ)/\tor,\,
a,b,a-b\neq 0\}.
\end{align*}
It is clear that $\mathcal W(Z)=V(Z)/R(Z)$. The map
$i:Y'\hookrightarrow Y$ induces a map $\hat{i}: V(Y')\to V(Y)$ which has a left inverse
$\hat{r}: V(Y)\to V(Y')$ defined by the same formula in the definition of $r$. 
In order to prove the proposition, it suffices to show that $\hat r (R(Y))\subset R(Y')$. 
If $a,b\in \Ima i_\ast$, then it is clear 
$$
\hat r(\hat\theta_{a-b,-b}-\hat\theta_{b-a,-a}- \hat\theta_{a,a-b} + \hat\theta_{b,b-a})\in R(Y').
$$
If either $a$ or $b$ is not in $\Ima i_\ast$, then we have
$$
\hat r(\hat\theta_{a-b,-b}-\hat\theta_{b-a,-a}- \hat\theta_{a,a-b} + \hat\theta_{b,b-a})=0\in R(Y').
$$
So the result is proved.
\end{proof}

It is known by the work of Budney--Gabai in \cite[Section 4]{budney2025automorphism} that $\mathcal{W}(Y)$ is infinite dimensional if $Y=S^1\times D^2$. Since we assume $b_1(Y)>0$, Proposition \ref{prop_embedding_and_codomain_of_Psi} implies that $\mathcal{W}(Y)$ is infinite dimensional.
    
\begin{Proposition}
\label{prop_naturality_Psi}
 Let $i:Y'\hookrightarrow Y$ be as above, and suppose that $i$ takes the base point of $Y'$ to the base point of $Y$.
We have a commutative diagram
\[
\begin{tikzcd}
\pi_2^\bQ\Emb_\dagger(I,I\times Y')  \arrow[r,"\Psi'"]\arrow[d]& 
\mathcal{W}(Y') \arrow[d,"i_\ast"]\\
 \pi_2^\bQ \Emb_\dagger (I,I\times Y) \arrow[r,"\Psi"]  & \mathcal{W}(Y)
\end{tikzcd}
\]
\end{Proposition}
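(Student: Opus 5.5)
The plan is to check that every map in the composition defining $\Psi$ (Definition \ref{Dfn_Psi}) is natural with respect to the embedding $i$, and then stack the resulting commutative squares. First note that $\Psi$ is really defined on $\pi_2^\bQ\Emb_\partial(I,I\times Y)$. It is transported to $\pi_2^\bQ\Emb_\dagger$ via the isomorphism induced by inclusion, which holds because $Y$ and $Y'$ are aspherical. The inclusion $\Emb_\partial\hookrightarrow\Emb_\dagger$ commutes with $\id_I\times i$, since $i$ preserves base points. So it suffices to prove commutativity with $\Emb_\partial$ in place of $\Emb_\dagger$.

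\textbf{Step 1: the covering spaces.} Since $i_*$ is injective on $H_1(-;\bZ)/\tor$, naturality of $\tau$ gives $\tau\circ i_* = i_*\circ\tau'$. Hence $i$ lifts to an embedding $\tilde i:\widetilde{Y'}\to\widetilde Y$ with $\tilde i(\tilde y_0')=\tilde y_0$. This lift is equivariant: $\tilde i\circ t^{a}=t^{i_*a}\circ\tilde i$ for all $a$.

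\textbf{Step 2: the configuration spaces.} Because $i_*$ is injective on $H_1/\tor$, the condition $x_i\neq t^{a}x_j$ for all $a\neq 0$ in $Y'$ is carried to the corresponding condition in $Y$. Indeed, $t^{c}\tilde i(x)=\tilde i(x')$ forces $c\in\Ima i_*$, since $\tilde i$ is an embedding and equivariant. So $(\id\times\tilde i)^{n+2}$ defines maps $C^{\tau'}_n(I\times Y')\to C^\tau_n(I\times Y)$. These maps commute with the cofaces and codegeneracies, preserve the $\Delta^n$--structures and base points, and commute with the maps $\Phi_n$. They are also compatible with the Bousfield--Kan identification \eqref{eqn_Bousfield-Kan}, since that identification is natural in maps of cosimplicial spaces. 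In particular, the images of the $\delta_2^i$ in $Y'$ go into the images of the $\delta_2^i$ in $Y$.

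\textbf{Step 3: the linking-number invariants.} This is the main point. For $\mu:S^5\to C_3^{\tau'}(I\times Y')$, let $\mu_Y$ denote its composite with the map of Step 2. Because $\tilde i$ is an embedding and equivariant, the set $\Co^k_l(c,\mu_Y)$ is empty unless $c=i_*a$, in which case it equals $\Co^k_l(a,\mu)$ with the same orientation. The orientation agrees because $i$ is orientation-preserving and codimension $0$. Therefore $\theta_{c,d}(\mu_Y)=\theta_{a,b}(\mu)$ when $(c,d)=(i_*a,i_*b)$. If $c$ or $d$ is not in the image of $i_*$, then some $\Co$-set appearing in the linking number is empty. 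For example, if $-c\notin \Ima i_*$ then $\Co^1_2$ vanishes, and the remaining terms are handled similarly with $c-d$ and $d$. In that case the relevant linking number should vanish. I expect this case analysis to be the main obstacle: a single empty component does not obviously force the linking number to be zero, and pinning this down requires care with the signs in the subscripts of $\theta$. The same argument applies to the $\zeta^{k,l}$.

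\textbf{Step 4: conclusion.} Step 3 gives $\Theta\circ(\mu\mapsto\mu_Y)=\hat i\circ\Theta'$ at the level of $V(Y')\to V(Y)$. Here $\hat i$ is the map induced on the $\zeta$ parts as well. This map sends $R(Y')$ into $R(Y)$ and the $\delta$-images into the $\delta$-images. Passing to quotients yields $i_*\circ\Psi'=\Psi\circ(\text{induced map})$, which is the required commutative square.
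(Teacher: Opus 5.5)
Your route is the same as the paper's: lift $i$ to an equivariant embedding $\tilde i$, push forward the cosimplicial configuration spaces compatibly with $\Phi_3$ and the Bousfield--Kan identification, and compare $\Theta$. The paper compresses your Step~3 into ``follows from the construction of $\Theta$''. Steps~1, 2, 4 and the $\zeta$ part of Step~3 are fine. The $\theta$ part of Step~3, however, is left open, and the emptiness argument genuinely fails in one subcase.

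Put $A=\Ima i_*$. Then $\Co^k_l(g,\mu_Y)=\emptyset$ for $g\notin A$, $\Co^k_l(i_*a,\mu_Y)=\Co^k_l(a,\mu)$, and by definition
\[
\theta_{c,d}(\mu_Y)=\lk\big(\Co^1_2(-c,\mu_Y)-\Co^1_3(d-c,\mu_Y),\,\Co^3_1(c-d,\mu_Y)-\Co^3_2(-d,\mu_Y)\big).
\]
If exactly one of $c,d$ lies in $A$, or none of $c,d,c-d$ does, a whole side of the pair is empty and the value is $0$. But if $c,d\notin A$ while $c-d\in A$, both sides survive. Then $\theta_{c,d}(\mu_Y)=-\lk\big(\Co^1_3(e,\mu),\Co^3_1(-e,\mu)\big)$ with $e=i_*^{-1}(d-c)\neq 0$. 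Nothing you wrote shows this is zero. Without it, the identity $\Theta(\mu_Y)=\hat i\,\Theta'(\mu)$ used in Step~4 is not established, since $\Theta(\mu_Y)$ could have components $\hat\theta_{c,d}$ with $c,d\notin A$.

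The missing input is that this term vanishes rationally. The sets $\Co^1_3(e,\mu)$ and $\Co^3_1(-e,\mu)$ are the two halves ($t^ex_1$ above, resp.\ below, $x_3$) of the single locus where $t^ex_1$ and $x_3$ have the same $\widetilde{Y}'$-coordinate. Since $t^ex_1\neq x_3$ in $C_3^{\tau'}(I\times Y')$, they are preimages of two disjoint closed subsets. So this linking number is by itself a homomorphism on $\pi_5^\bQ C_3^{\tau'}(I\times Y')$.

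That group is spanned by Whitehead products $[\xi,\eta]$ of $\pi_3$--classes, by the iterated-fibration argument in the proof of Proposition~\ref{prop_pi_3_C5_wij}. For $[\xi,\eta]$, the two preimages are unions of spheres $\{\mathrm{pt}\}\times S^2$ and $S^2\times\{\mathrm{pt}\}$ in $S^5=\partial(D^3\times D^3)$. Spheres of the same type are unlinked, and linking of $2$--cycles in $S^5$ is antisymmetric. Hence the value is $\pm\big(n_+(\xi)n_-(\eta)-n_-(\xi)n_+(\eta)\big)$, where $n_\pm$ are the signed counts of preimages of the two halves.

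The whole locus is a closed cooriented codimension-$3$ submanifold of $(I\times\widetilde{Y}')^2$, and $\pi_3\big((I\times\widetilde{Y}')^2\big)=0$ by asphericity. So, whatever the orientation conventions for the halves, $n_-=\epsilon\, n_+$ for a universal sign $\epsilon$, and the value is $0$. This is the same mechanism that kills the Hopf invariant of maps $S^5\to S^3$. With this, your case analysis closes and Step~4 goes through.

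Alternatively, you can bypass the pointwise comparison. Check the square on the spanning Whitehead products $[t_k^\alpha w_{kl},t_m^\beta w_{mn}]$, which $\tilde i$ sends to the same classes with $i_*\alpha, i_*\beta$. This works provided you have their $\Theta$--values on a full spanning set, as in Proposition~\ref{prop_Theta_wij}.
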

\begin{proof}
The embedding $i$ lifts to a map $\tilde i: \widetilde Y' \to  \widetilde Y$ which satisfies 
$$
\tilde i(t^a\cdot x)= t^{i_\ast (a)}\cdot i(x) 
$$
for all $a\in H_1(Y';\bZ)/\tor$ and $x\in \widetilde Y'$. 
By abuse of notation, we use $\tilde i$ to also denote the map $\id_I \times \tilde i: I\times \widetilde{Y}'\to I\times \widetilde{Y}$. 
Since 
\[
i_\ast:H_1(Y';\bZ)/\tor\to H_1(Y; \bZ)/\tor
\]
is injective, 
$\tilde i$ induces a well-defined map  $C_n^\tau(Y')\to C_n^\tau(Y)$, which in turn induces
a map $\Map_\Delta(\Delta^n,C_n^\tau(I\times Y'))\to \Map_\Delta(\Delta^n,C_n^\tau(I\times Y))$.
Now the proposition follows from the construction of $\Theta$ and the fact that $\tilde i$ is $I$-fiber-preserving.
\end{proof}

Now we show that the homomorphism $\Psi$ in Definition \ref{Dfn_Psi} is invariant under an action of $\pi_1(Y)$. 
For $\beta\in \pi_i(X,x_0)$ and $u\in \pi_1(X,x_0)$, we use $\beta^u$ to denote the $\pi_1$ action of $u$ on $\beta$. Recall that we fix a base point $y_0\in Y$, and let $\mathscr{I}(y_0)$ be the base point of $\Emb_\dagger(I,I\times Y)$.
\begin{Proposition}
\label{prop_invariance_Psi_under_pi1_Y}
 Suppose $\beta\in \pi_2^\bQ\Emb_\dagger(I,I\times Y)$ and $u\in \Ima(\mathscr{I}_*:\pi_1(Y)\to \pi_1\Emb_\dagger(I,I\times Y))$. Then $\Psi(\beta) = \Psi(\beta^u)$.
\end{Proposition}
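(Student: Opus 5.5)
We realize the action of $u=\mathscr{I}_*[\gamma]$ geometrically, exactly as in the proof of Proposition \ref{prop_decomposition_pi_1_emb_dagger}. Let $\gamma$ be a loop at $y_0$ and let $H:Y\times I\to Y$ be a smooth isotopy from $\id$ to $\id$ in which the base point traces $\gamma$. Set $h=H(-,1)$; it is a diffeomorphism of $Y$ fixing $y_0$, isotopic to the identity, and it acts on $\pi_1(Y,y_0)$ by conjugation by $[\gamma]$. Represent $\beta$ by $\hat\beta:S^2\to\Emb_\dagger(I,I\times Y)$. Applying $(\id_I\times H(-,t))_*$ to $\hat\beta$ gives a family of $2$--spheres whose base point moves along $\mathscr{I}\circ\gamma$. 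Hence $\beta^u$ (or $\beta^{u^{-1}}$, depending on conventions, which does not matter here) is represented by $(\id_I\times h)_*\circ\hat\beta$. Since $Y$ is aspherical, $\pi_2\Emb_\dagger\cong\pi_2\Emb_\partial$. The map $(\id_I\times h)_*$ preserves $\Emb_\partial(I,I\times Y)$ because $h(y_0)=y_0$, so we may work there. The claim therefore reduces to the naturality statement
\[
\Psi\circ (\id_I\times h)_* = \Psi \quad\text{on } \pi_2^\bQ\Emb_\partial(I,I\times Y),
\]
for $h$ a base-point-preserving diffeomorphism isotopic to the identity.

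To prove this, lift $h$ to $\tilde h:\widetilde Y\to\widetilde Y$ with $\tilde h(\tilde y_0)=\tilde y_0$. Because $h$ acts on $\pi_1(Y)$ by an inner automorphism, it acts trivially on $H_1(Y;\bZ)/\tor$. The lift is therefore equivariant: $\tilde h(t^a x)=t^a\tilde h(x)$ for every $a$. Consequently $\id_I\times\tilde h$ induces self-maps of every $C_n^\tau(I\times Y)$. These maps commute with the coface and codegeneracy maps and preserve the $\Delta^n$--structures. We also have $\Phi_3\circ(\id_I\times h)_*=(\id_I\times\tilde h)_*\circ\Phi_3$, because the lift of $(\id\times h)\circ f$ starting at $(0,\tilde y_0)$ is $(\id\times\tilde h)\circ\tilde f$. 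The Bousfield--Kan identification \eqref{eqn_Bousfield-Kan} is natural for such maps of cosimplicial spaces, so it remains to show that $\Theta$ is invariant under $(\id_I\times\tilde h)_*$ on $\pi_5^\bQ C_3^\tau(I\times Y)$, at least modulo $\sum_i\Ima\delta_2^i$.

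This last step is the main obstacle, and I see two routes. The first is direct: $\tilde h$ is a diffeomorphism that preserves the $I$--coordinate and commutes with the $t^a$. It therefore maps each $\Co^i_j(a,\mu)$ onto $\Co^i_j(a,(\id\times\tilde h)\circ\mu)$, since the condition ``same $Y$--projection, larger $I$--coordinate'' is preserved. Orientations are preserved because $h$ is orientation preserving. Linking numbers in $S^5$ are then unchanged, so $\theta_{a,b}$ and $\zeta^{i,j}_{a,b}$ are invariant on the nose.

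The subtlety is base points. The lift $\tilde h$ fixes $\tilde y_0$, but it may move the base-point set $\{(t_i,\tilde y_0)\}$ only trivially. So it fixes the base points of $C_3^\tau$ and of $\Map_\Delta$, and no $\pi_1$--twisting of the homotopy classes is introduced. If the direct argument met difficulties with genericity, the fallback is to check invariance on generators. The action of $\tilde h$ sends $t_i^\alpha w_{ij}$ to $t_i^{h_*\alpha}w_{ij}$, and $\tau(h_*\alpha)=\tau(\alpha)$. By Proposition \ref{prop_Theta_wij}, $\Theta$ depends only on $\tau(\alpha)$ and $\tau(\beta)$. Invariance on Whitehead products of such classes would then give the result wherever these span.
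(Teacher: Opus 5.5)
Your proposal is correct and is essentially the paper's argument: the paper obtains this as the special case $y_1=y_0$ of Proposition \ref{prop_compare_Psi_all_base_point_Y}, representing $\beta^u$ by $\hat i_1\circ\hat\beta$ for the end map $i_1$ of an isotopy tracing $u$, then applying the naturality of $\Psi$ (Proposition \ref{prop_naturality_Psi}) together with the fact that $i_1$ acts trivially on $H_1(Y;\bZ)/\tor$, which is exactly what your lift-and-compare argument reproves by hand (your first route suffices, so the fallback is not needed). The only slip is that $H$ should be an isotopy from $\id$ to the point-pushing map $h$, not from $\id$ to $\id$; that is in fact how you use it.
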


Proposition \ref{prop_invariance_Psi_under_pi1_Y} will follow as a special case of the next proposition.
Suppose $y_0,y_1\in Y$ are two choices of base points on $Y$, we compare the $\Psi$ maps defined with respect to $y_0$ and $y_1$. To differentiate the notation, we temporarily use $\Psi_{y_i}:\pi_2(\Emb_\dagger(I,I\times Y),\mathscr{I}(y_i))\to\mathcal{W}(Y)$ $(i=0,1)$ denote the $\Psi$ map defined with respect to the base point $y_i$. Note that $\mathcal{W}(Y)$ does not depend on the base point. Let $u:[0,1]\to Y$ be an arc from $y_0$ to $y_1$ in $Y$. For $\beta\in \pi_2(\Emb_\dagger(I,I\times Y),\mathscr{I}(y_0))$, let $\beta^u\in \pi_2(\Emb_\dagger(I,I\times Y),\mathscr{I}(y_1))$ denote the image of $\beta$ under the base-point-change isomorphism induced by $\mathscr{I}\circ u$. Then we have the following result.

\begin{Proposition}
\label{prop_compare_Psi_all_base_point_Y}
Let $y_0,y_1,\beta,u$ be as above. Then
 $\Psi_{y_1}(\beta^u) = \Psi_{y_0}(\beta)$.
\end{Proposition}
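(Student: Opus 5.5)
Fix $y_0,y_1$ and the arc $u$. The plan is to realize the base-point change by an ambient isotopy of $Y$ and then use naturality of every ingredient of $\Psi$, as in the proof of Proposition \ref{prop_decomposition_pi_1_emb_dagger}. Choose a smooth isotopy $H:Y\times I\to Y$ with $H_0=\id$ and $H_s(y_0)=u(s)$, supported in a small neighborhood of $u$ (away from $\partial Y$). Then $h=H_1$ is a diffeomorphism of $Y$ with $h(y_0)=y_1$, isotopic to the identity. Applying $\id_I\times H_s$ to a representative $\hat\beta:S^2\to\Emb_\dagger(I,I\times Y)$ gives a family of spheres whose base point traces $\mathscr{I}\circ u$. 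Hence $\beta^u=(\id_I\times h)_*\beta$ in $\pi_2(\Emb_\dagger(I,I\times Y),\mathscr{I}(y_1))$. It therefore suffices to show $\Psi_{y_1}\circ(\id_I\times h)_*=\Psi_{y_0}$.

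For this, I will check that each map in Definition \ref{Dfn_Psi} is compatible with $h$. First, $h$ lifts to a diffeomorphism $\tilde h:\widetilde Y\to\widetilde Y$ with $\tilde h(\tilde y_0)=\tilde y_1$, where $\tilde y_1$ is the endpoint of the lift of $u$. This lift is obtained by lifting the isotopy $H$, so $\tilde h$ commutes with every deck transformation $t^a$. Indeed, the induced automorphism of $H_1(Y;\bZ)/\tor$ is the identity, since $h$ is isotopic to $\id$. So $\id_I\times\tilde h$ induces homeomorphisms $C^\tau_n(I\times Y)_{y_0}\to C^\tau_n(I\times Y)_{y_1}$ (endpoints $(0,\tilde y_0),(1,\tilde y_0)$ go to $(0,\tilde y_1),(1,\tilde y_1)$). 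These respect cofaces, codegeneracies and $\Delta^n$--structures, and they intertwine $\Phi_3$ defined with the two base points. Consequently they commute with the Bousfield--Kan identification \eqref{eqn_Bousfield-Kan}, once we check that the base-point conventions of \cite[Section 3.3]{LXZ2025} (points $(t_i,\tilde y_0)$) are carried to the corresponding ones for $\tilde y_1$. This is clear, because the map is applied pointwise.

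The remaining step, which I expect to be the only one needing care, is that $\Theta$ is preserved: $\theta_{a,b}(\tilde h\circ\mu)=\theta_{a,b}(\mu)$ and likewise for $\zeta^{i,j}_{a,b}$. Since $\tilde h$ commutes with $t^a$ and is fiberwise over $I$, the coincidence condition ``$t^a x_i$ and $x_j$ have the same $Y$-projection, and $t^ax_i$ has larger $I$--coordinate'' is unchanged. Thus $\Co^i_j(a,\tilde h\circ\mu)=\Co^i_j(a,\mu)$ as oriented submanifolds of $S^5$, and the orientation is preserved because $h$ is orientation preserving. So the linking numbers agree. Because $\tilde h$ acts trivially on the label set $H_1/\tor$, the identifications with $\mathcal{W}(Y)$ agree, and $\Psi_{y_1}(\beta^u)=\Psi_{y_0}(\beta)$.

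Finally, for Proposition \ref{prop_invariance_Psi_under_pi1_Y}, take $y_1=y_0$ and $u$ a loop. The subtlety there is that $\tilde y_1=t^{\tau(u)}\tilde y_0$ may differ from $\tilde y_0$. I will handle this by composing with the deck transformation $t^{-\tau(u)}$, which also commutes with all $t^a$ and preserves every $\Co^i_j$. Hence the choice of lift does not affect $\Psi$.
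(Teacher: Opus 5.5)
Your proposal is correct and follows the paper's route. You realize the base-point change by an ambient isotopy ending at a diffeomorphism $h$ isotopic to the identity, then combine naturality of $\Psi$ with the fact that $h$ acts trivially on $H_1(Y;\bZ)/\tor$; the paper cites Proposition \ref{prop_naturality_Psi} for the naturality, which you re-derive inline, and your explicit check that the choice of lift $\tilde y_1$ does not matter covers a point the paper leaves implicit.
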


\begin{proof}
After homotopy, assume that $u$ is smooth.
There is an isotopy $F:[0,1]\times Y\to Y$ that equals the identity near the boundary such that $u(t) = F(t,y_0)$, $F(0,-)=\id$. 

Let $i_t=F(1,t):Y\to Y$, let $\hat{i}_t:\Emb_\dagger(I,I\times Y)\to\Emb_\dagger(I,I\times Y)$ be the induced homeomorphism on $\Emb_\dagger(I,I\times Y)$, and let $\hat \beta:S^2\to \Emb_\dagger(I,I\times Y)$ be a representative of $\beta$. Then $\hat{i}_t\circ \hat \beta$ $(t\in[0,1])$ is a homotopy such that the trajectory of the base point is $\mathscr{I}\circ u$. As a result, $\hat{i}_1\circ \hat \beta$ is a representative of $\beta^u$. By Proposition \ref{prop_naturality_Psi}, we have  $\Psi_{y_1}(\beta^u) = (i_1)_*\circ \Psi_{y_0}(\beta)$.
Since $i_1$ induces the identity on homology, the desired result is proved.
\end{proof}

\part{The case when $Y$ is irreducible after filling $D^3$'s}
\label{part_irred_case}
Let $\hat Y$ be obtained by filling each boundary component of $Y$ that is diffeomorphic to $S^2$ with a copy of $D^3$. In Part \ref{part_irred_case} of this paper, we prove Theorem \ref{thm_main} in the case when $\hat Y$ is irreducible. 

For the convenience of notation, if $i:M_1\to M_2$ is a codimension-zero embedding of compact manifolds, and if $\varphi \in \Homeo_\partial(M_1)$, we use $i_*(\varphi)$ to denote the homeomorphism of $M_2$ obtained by extending $i\circ\varphi \circ i^{-1}$ from $\Ima(i)$ to $M_2$ by the identity. We call $i_*(\varphi)$ the \emph{extension} of $\varphi$ from $M_1$ to $M_2$ via the embedding $i$. If $i$ is smooth, then $i_*$ takes a diffeomorphism to a diffeomorphism. 

The main result of Part \ref{part_irred_case} is the following theorem. 

\begin{alphthm}
\label{thm_main_irred_case}
    Suppose $Y$ is a compact, connected, irreducible, oriented $3$--manifold, such that $\pi_1(Y)$ is infinite. 
    Then there exists a smooth embedding $i:Y_0=S^1\times D^2\to Y$, such that the image of the composition map 
    \[
        \pi_0 \Diff_{PI}(I\times Y_0)\xrightarrow{i_*} \pi_0 \Diff_{PI}(I\times Y) \to \pi_0\Homeo (I\times Y,I\times \partial Y)
    \]
is of infinite rank.
\end{alphthm}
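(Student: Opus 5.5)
We would take $\varphi_1,\varphi_2,\dots\in\pi_0\Diff_{PI}(I\times Y_0)$ to be the Budney--Gabai barbell diffeomorphisms of $I\times S^1\times D^2$. These are pseudo-isotopic to the identity, and they are detected in $\pi_0\Diff_\partial(I\times S^1\times D^2)$ through their action on $\pi_2\Emb_\partial(I,I\times S^1\times D^2)$ via the Budney--Gabai invariant $\mathcal W(S^1\times D^2)$. Given a nontrivial $\mathbb Z$-combination $\varphi$ of the $i_*(\varphi_k)$, we assume for contradiction that $\varphi$ is isotopic to $\id$ in $\Homeo(I\times Y,I\times\partial Y)$. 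Since $\varphi$ is supported in $I\times i(Y_0)$ and is homotopic to the identity rel $\{0,1\}\times Y$ (barbell maps are homotopically trivial), Lemma~\ref{lem_Embdagger_obstruct_isotopy} gives that $\varphi_*\circ\mathscr I\simeq\mathscr I$ as maps $Y\to\Emb_\dagger(I,I\times Y)$.

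First consider the case where $b_1(Y)>0$ and $i$ is chosen so that $i_*$ is injective on $H_1(-)/\tor$. We take the core of $i(Y_0)$ to be a curve of infinite order in $H_1(Y)/\tor$. The homotopy $\varphi_*\circ\mathscr I\simeq\mathscr I$ is a map $I\times Y\to\Emb_\dagger$. Restricting it to a suitable $2$-complex in $Y$ (the $2$-skeleton containing $y_0$), we obtain a family of spheres or surfaces. Comparing the images under $\varphi_*$ of a $2$-cell through the region where the barbell acts, we extract an element $\beta\in\pi_2^\bQ\Emb_\dagger(I,I\times Y)$. This element should be the pushforward of the Budney--Gabai class of the combination, modified only by $\pi_1$-actions and by commutator-type terms coming from $\pi_1\Emb_\dagger$ (Proposition~\ref{prop_decomposition_pi_1_emb_dagger}), and it must vanish. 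Applying $\Psi$ kills the $\pi_1(Y)$ ambiguity by Proposition~\ref{prop_invariance_Psi_under_pi1_Y}. By Proposition~\ref{prop_naturality_Psi} and the retraction $r$ of Proposition~\ref{prop_embedding_and_codomain_of_Psi}, $r\circ\Psi$ recovers the Budney--Gabai invariant in $\mathcal W(S^1\times D^2)$, which is nonzero for nontrivial combinations. This gives the contradiction.

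For general irreducible $Y$ with infinite $\pi_1$, we pass to a finite cover $\widetilde Y$ with $b_1>0$; such a cover exists by virtual positive $b_1$ (Agol/Wise for hyperbolic and non-Seifert-fibered cases, directly for Seifert fibered and bounded cases), and it is Haken. By Lemma~\ref{lem_pull_back_diff_PI}, $\varphi$ pulls back to a homeomorphism that is again isotopic to the identity rel $I\times\partial\widetilde Y$. Its support is a union of lifts of $I\times i(Y_0)$, and it is a combination of barbell maps there. We choose $i$ so that the core curve lifts and has nontrivial, non-torsion image in $H_1(\widetilde Y)/\tor$, injective on $H_1(Y_0)$. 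We then repeat the argument, summing the contributions of the different lifts in $\mathcal W(\widetilde Y)$ and checking, via $r$ restricted to one lift's homology line, that they do not cancel.

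The main obstacle is the middle step: turning the homotopy $\varphi_*\circ\mathscr I\simeq\mathscr I$ of maps from the $3$-manifold $Y$ into a genuine vanishing statement for a $\pi_2$ class. Surfaces, rather than spheres, appear, and their homotopy classes are only controlled up to $\pi_1$ data. We would handle this by the surface-homotopy technology of Section~\ref{sec_surface_homotopy}, using the explicit semidirect product structure of $\pi_1\Emb_\dagger$ to show that the ambiguity is absorbed by conjugation, on which $\Psi$ is invariant. This likely requires separate arguments for the bounded, Seifert fibered closed, and non-Seifert closed cases, as the introduction indicates.
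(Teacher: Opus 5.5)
You have the right ingredients: the maps of Theorem~\ref{thm_BG_construction_S1xD3}, Lemma~\ref{lem_Embdagger_obstruct_isotopy}, $\Psi$ with its $\pi_1$--invariance, the retraction $r$, and a Haken cover. But the step you flag as the main obstacle is where the proof actually lives, and the remedy you suggest does not work.

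Proposition~\ref{prop_invariance_Psi_under_pi1_Y} only absorbs the indeterminacy $\beta-\beta^u$ that arises for homotopies relative to $\partial\Sigma$ or to a base point (Lemmas~\ref{lem_surface_hmtpy_rel_boundary} and~\ref{lem_closed_surface_basepoint_connected_sum_homotopy}). Note also that what must vanish is $\Psi$ of your class, not the class itself. When $Y$ is closed, the homotopy from Lemma~\ref{lem_Embdagger_obstruct_isotopy} is free. The base point can therefore run around a loop $\ell$ that merely centralizes the image of $\pi_1(\Sigma)$ (or of your $2$--complex) in $\pi_1\Emb_\dagger(I,I\times Y)$. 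Then $\varphi_*\circ f$ is based-homotopic to $\ell\cdot f$, which differs from $f$ by a connected sum with a class determined by $\ell$. Nothing puts that class in the span of the $\beta-\beta^u$, or in $\ker\Psi$. By Proposition~\ref{prop_decomposition_pi_1_emb_dagger} with $\pi_2(Y)=0$, the possible $\ell$ include $\mathscr I_*$ of centralizing elements of $\pi_1(Y)$ and conjugation-invariant Dax elements of $\bZ[\pi_1(Y)\setminus\{1\}]$. So you must actually prove that the centralizer is trivial, or that every centralizing loop is the trajectory of a self-homotopy of $f$.

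The paper does this with two ideas absent from your proposal.
\begin{itemize}
\item If the normal Haken cover $\widetilde Y$ is not Seifert fibered, the paper first finds $\xi_1,\xi_2\in\pi_1(\widetilde Y)$. In the hyperbolic case this is direct; otherwise it uses an element of minimal positive translation length on the Bass--Serre tree of the JSJ decomposition, together with the Jaco--Shalen/Johannson centralizer theorem. A computation in the semidirect product then shows that $\mathscr I_*\xi_1,\mathscr I_*\xi_2$ have trivial centralizer in $\pi_1\Emb_\dagger$ (Proposition~\ref{prop_trivial_centralizer_in_Emb_dagger}). Finally it adds handles to $\Sigma$ so that $\sigma_*\pi_1(\Sigma)$ contains $\xi_1,\xi_2$, and applies Lemma~\ref{lem_surface_hmtpy_closed}.
\item If $\widetilde Y$ is Seifert fibered, the centralizer is never trivial, since the fiber is central. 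The paper instead passes to a circle bundle over a surface of positive genus and takes $\sigma$ to be the torus over an immersed curve in the base. It then uses the $\U(1)$ action: $m\circledast f$ factors through a circle, and for such maps every centralizing loop is realized by a self-homotopy (Lemmas~\ref{lem_surface_hmtpy_torus}--\ref{lem_map_from_torus_degenerate_after_m}).
\end{itemize}

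Two smaller points are also only asserted in your sketch.
\begin{itemize}
\item The nonvanishing comes from the formula $\Psi(\alpha)=\sum_i\#(\sigma\cap\gamma_i)\,\nu(\gamma_i)_*\Psi\circ\mathcal S([\varphi])$ of Proposition~\ref{prop_interior_connected_sum_formula_on_surface_homotopy_multi_curves}. This needs a $2$--cycle with nonzero algebraic intersection with the core; a single $2$--cell does not suffice. In the bounded case, one tubes the surface to $\partial Y$ so that Lemma~\ref{lem_surface_hmtpy_rel_boundary} applies and no centralizer issue arises.
\item In the cover, the contributions of the lifts do not cancel for two specific reasons. The paper arranges $\#(\sigma\cap\gamma)=1$, so every deck translate has class $\pm[\gamma_1]$ or a class independent of it. And Theorem~\ref{thm_BG_construction_S1xD3}(2) makes the translates of class $-[\gamma_1]$ contribute with the same sign rather than the opposite one.
\end{itemize}
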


Note that since $\pi_0 \Diff_{PI}(I\times Y_0)$ is abelian, its image is also abelian, so it makes sense to refer to the rank of the image. 

\begin{Lemma*}
 Theorem \ref{thm_main_irred_case} implies Theorem \ref{thm_main} in the case when $\hat Y$ is irreducible.
\end{Lemma*}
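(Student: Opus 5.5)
We apply Theorem \ref{thm_main_irred_case} to $\hat Y$ and then transport the conclusion back to $Y$. Suppose $\hat Y$ is irreducible. Write $Y = \hat Y \setminus (B_1\sqcup\dots\sqcup B_k)$, where the $B_j$ are the open $3$--balls we filled in. Filling balls does not change $\pi_1$, so $\pi_1(\hat Y)\cong\pi_1(Y)$ is infinite. Hence $\hat Y$ is compact, connected, irreducible and oriented with infinite $\pi_1$, and Theorem \ref{thm_main_irred_case} gives a smooth embedding $\hat i: Y_0=S^1\times D^2\to \hat Y$. The image of $\pi_0\Diff_{PI}(I\times Y_0)\to \pi_0\Homeo(I\times \hat Y, I\times\partial \hat Y)$ is of infinite rank.

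\textbf{Step 1: move the embedding off the balls.} The image of $\hat i$ is compact, and the balls can be isotoped within $\hat Y$ to be arbitrarily small and placed anywhere. So after an ambient isotopy of $\hat Y$ we may assume the balls $B_j$ are disjoint from $\hat i(Y_0)$. Conjugating by this isotopy does not change the rank of the image. Thus $\hat i$ factors as $j\circ i$, where $i:Y_0\to Y$ and $j:Y\hookrightarrow \hat Y$ is the inclusion.

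\textbf{Step 2: factor the composite through $Y$.} Extension by the identity is functorial, so $\hat i_* = j_*\circ i_*$ on $\pi_0\Diff_{PI}(I\times Y_0)$. We then have a commutative diagram
\[
\pi_0\Diff_{PI}(I\times Y_0)\xrightarrow{i_*}\pi_0\Diff_{PI}(I\times Y)\to \pi_0\Homeo_{PI}(I\times Y)\xrightarrow{j_*}\pi_0\Homeo(I\times\hat Y,I\times\partial\hat Y).
\]
Here $j_*$ extends a homeomorphism that is the identity near $\partial(I\times Y)$ by the identity on $I\times \bigsqcup_j \bar B_j$. It sends elements pseudo-isotopic to the identity (rel boundary) to elements isotopic to the identity in $\Homeo(I\times\hat Y,I\times\partial\hat Y)$, because the pseudo-isotopy also extends by the identity. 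So $j_*$ is a well-defined homomorphism on $\pi_0$, and the composite along the whole row equals the map in Theorem \ref{thm_main_irred_case}.

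\textbf{Step 3: conclude.} The image of the composite has infinite rank, and it is a homomorphic image of $\Ima\big(\pi_0\Diff_{PI}(I\times Y)\to\pi_0\Homeo_{PI}(I\times Y)\big)$. Since all the groups involved are abelian, a homomorphic image of a finite-rank group has finite rank. Therefore the image of \eqref{eqn_main_thm_statement} has infinite rank, which is Theorem \ref{thm_main} in this case.

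\textbf{Main obstacle.} The delicate point is Step 2: checking that the extension $j_*$ respects the subgroups and topologies (the $\Homeo_{PI}$ condition near the new boundary spheres) and that the maps commute on the nose. Step 1 also needs care: we must make sure the isotopy pushing the balls off $\hat i(Y_0)$ keeps $\hat i$ smooth and does not alter the image rank. Both reduce to the standard facts that balls can be isotoped anywhere and that isotopic embeddings induce conjugate extension maps.
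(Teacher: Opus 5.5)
Your route is the paper's: apply Theorem \ref{thm_main_irred_case} to $\hat Y$, isotope the embedding into $Y$, factor through the extension map $j_*$, and compare ranks of abelian groups. The one genuine error is the sentence in Step 2 that justifies $j_*$, which is false as written. Extending a pseudo-isotopy by the identity only shows that $j_*$ carries $\Homeo_{PI}(I\times Y)$ into $\Homeo_{PI}(I\times\hat Y)$. It does \emph{not} make $j_*(f)$ isotopic to the identity in $\Homeo(I\times\hat Y,I\times\partial\hat Y)$. If it did, $j_*$ would annihilate all of $\pi_0\Homeo_{PI}(I\times Y)$, and the composite along your row would be zero. That would contradict Theorem \ref{thm_main_irred_case} and your own Step 3. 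Since pseudo-isotopy versus isotopy is exactly what the theorem detects, this is worth getting right.

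The repair needs no pseudo-isotopy at all. Every element of $\Homeo_{PI}(I\times Y)\subset\Homeo_\partial(I\times Y)$ fixes $I\times\partial Y$ pointwise, in particular the cylinders $I\times\partial\bar B_j$. So it, and any path of such homeomorphisms, extends continuously by the identity over $I\times\bigsqcup_j\bar B_j$. Hence $j_*:\pi_0\Homeo_{PI}(I\times Y)\to\pi_0\Homeo(I\times\hat Y,I\times\partial\hat Y)$ is a well-defined homomorphism, and with this justification the rest of your argument goes through. The paper does the equivalent thing: it defines $j_*$ on $\pi_0\Homeo(I\times Y,I\times\partial Y)$, where extension is evidently well-defined, and notes that the canonical map $h_1$ factors through $\pi_0\Homeo_{PI}(I\times Y)$. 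For the same reason, the ``$\Homeo_{PI}$ condition near the new boundary spheres'' that you flag as the main obstacle is a non-issue.
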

\begin{proof}
 Filling in boundary components diffeomorphic to $S^2$ with copies of $D^3$ does not change the fundamental group, so $\hat Y$ satisfies the assumptions of Theorem \ref{thm_main_irred_case}. 
Let $i:Y_0\to \hat{Y}$ be the embedding given by Theorem \ref{thm_main_irred_case}. After isotopy, we may assume that the image of $i$ is contained in $Y$. Let $j:Y\to\hat Y$ be the embedding of $Y$ in $\hat Y$. Then we have a commutative diagram
\begin{equation}
\label{eqn_com_diagram_main_thm_implication}
\begin{tikzcd}
    \pi_0 \Diff_{PI}(I\times Y_0)\arrow[r,"i_*"] & \pi_0 \Diff_{PI}(I\times Y) \arrow[r,"h_1"] \arrow[d,"j_*"] & \pi_0\Homeo (I\times Y,I\times \partial Y) \arrow[d,"j_*"] \\
   & \pi_0 \Diff_{PI}(I\times \hat{Y}) \arrow[r,"h_2"]  & \pi_0\Homeo (I\times \hat{Y},I\times \partial{\hat Y}),
\end{tikzcd}
\end{equation}
where $h_1$, $h_2$ are the canonical maps that takes $[\varphi]$ to $[\varphi]$.
By Theorem \ref{thm_main_irred_case}, the composition $ \pi_0 \Diff_{PI}(I\times Y_0)\to \pi_0\Homeo (I\times \hat{Y},I\times \partial{\hat Y})$ is of infinite rank, so $h_1$ is also of infinite rank. Note that the composition map 
\[
\pi_0\Diff_{PI}(I\times Y) \to \pi_0\Homeo_{PI}(I\times Y) \to \pi_0\Homeo(I\times Y,I\times \partial Y)
\]
is equal to $h_1$, therefore the map $\pi_0\Diff_{PI}(I\times Y) \to \pi_0\Homeo_{PI}(I\times Y)$ is of infinite rank.
\end{proof}

\begin{Lemma*}
 Theorem \ref{thm_main_irred_case} implies Theorem \ref{thm_main2} in the case when $\hat Y$ is irreducible.
\end{Lemma*}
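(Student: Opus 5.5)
Two claims must be established for $C=C^{\diff}$ and $C=C^{\topo}$. First, $\pi_0\,C(Y)$ contains an abelian subgroup of infinite rank. Second, $\pi_0\,C(I\times Y)$ surjects onto an abelian group of infinite rank. The idea is to use the map that forgets the constraint on the top face of $I\times Y$, namely
\[
\rho:\pi_0\,C(Y)\to \pi_0\Homeo(I\times Y, I\times \partial Y),
\]
together with the map $\pi_0\Diff_\partial(I\times Y)\to \pi_0\,C(Y)$ from the exact sequence in the introduction. Here $C(Y)$ consists of automorphisms of $I\times Y$ fixing a neighborhood of $\{0\}\times Y\cup I\times\partial Y$, so it sits naturally inside $\Homeo(I\times Y,I\times\partial Y)$, and $\rho$ is the induced map on $\pi_0$.

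For the first claim, take $i:Y_0\to\hat Y$ from Theorem \ref{thm_main_irred_case}, isotoped so that its image lies in $Y$, and use the commutative diagram \eqref{eqn_com_diagram_main_thm_implication}. The composition
\[
\pi_0\Diff_{PI}(I\times Y_0)\to \pi_0\Diff_\partial(I\times Y)\to \pi_0\,C(Y)\xrightarrow{\rho}\pi_0\Homeo(I\times Y,I\times\partial Y)
\]
is the canonical map $h_1\circ i_*$, which has infinite-rank image. Choose elements $\varphi_1,\varphi_2,\dots$ of $\pi_0\Diff_{PI}(I\times Y_0)$ whose images under $h_1\circ i_*$ are linearly independent. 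Their images in $\pi_0\,C(Y)$ commute, because they are supported in the copy of $I\times Y_0$ and all arise from the abelian group $\pi_0\Diff_{PI}(I\times Y_0)$. These images generate a subgroup $A\subset\pi_0\,C(Y)$, and $A$ is abelian. Since $\rho|_A$ has infinite-rank image, $A$ itself has infinite rank. This argument works verbatim in the topological case, because diffeomorphisms are homeomorphisms and the diagram is the same.

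For the second claim, use the exact sequence $\pi_0\Diff_\partial(I\times X)\to\pi_0\,C^{\diff}(X)\to\pi_0\Diff_{PI}(X)\to 0$ with $X=I\times Y$, and its topological analogue. This gives a surjection $\pi_0\,C(I\times Y)\to\pi_0\Diff_{PI}(I\times Y)$, respectively $\pi_0\Homeo_{PI}(I\times Y)$. The target is abelian, as noted in the introduction. By Theorem \ref{thm_main} in the irreducible case, which the previous Lemma deduced from Theorem \ref{thm_main_irred_case}, the image of $\pi_0\Diff_{PI}(I\times Y)\to\pi_0\Homeo_{PI}(I\times Y)$ has infinite rank. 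Hence both $\pi_0\Diff_{PI}(I\times Y)$ and $\pi_0\Homeo_{PI}(I\times Y)$ have infinite rank, which yields the required surjection.

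The step needing most care is the first claim. One must check that elements coming from $\pi_0\Diff_\partial(I\times Y)$ actually define classes in $C(Y)$, that is, that they are the identity near $\{0\}\times Y$; this holds because they are the identity near the whole boundary. One must also check that the forgetful map $\rho$ is compatible with the canonical map $h_1$. Both checks are routine. The strength of Theorem \ref{thm_main_irred_case} lies precisely in its target $\pi_0\Homeo(I\times Y,I\times\partial Y)$, which is where the image of $\pi_0\,C(Y)$ lands.
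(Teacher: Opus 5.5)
Your proposal is correct and follows essentially the same route as the paper. For $\pi_0\,C(Y)$, the paper also observes that composing with the forgetful map $\pi_0\,C(Y)\to\pi_0\Homeo(I\times Y,I\times\partial Y)$ recovers $h_1$ from \eqref{eqn_com_diagram_main_thm_implication}, so the abelian image in $\pi_0\,C(Y)$ has infinite rank; for $\pi_0\,C(I\times Y)$, it uses the surjection $ev_*$ onto $\pi_0\Diff_{PI}(I\times Y)$ or $\pi_0\Homeo_{PI}(I\times Y)$. The only cosmetic difference is that you take the image of $\pi_0\Diff_{PI}(I\times Y_0)$ rather than of $\pi_0\Diff_\partial(I\times Y)$, which matches the composition $h_1\circ i_*$ slightly more precisely.
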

\begin{proof}
There is a fibration
\[
\Diff_{\partial}(I\times X)\hookrightarrow C^{\diff}(X)\xrightarrow{ev} \Diff_{PI}(X)\to 0,
\]
where $ev_*([f])=[f|_{\{1\}\times X}]$, which induces an exact sequence
\[
\pi_0\Diff_{\partial}(I\times X)\to \pi_0\,C^{\diff}(X)\xrightarrow{ev_*} \pi_0\Diff_{PI}(X)\to 0.
\]
Similarly, we have an exact sequence
\[
\pi_0\Homeo_{\partial}(I\times X)\to \pi_0\,C^{\topo}(X)\xrightarrow{ev_*} \pi_0\Homeo_{PI}(X)\to 0.
\]

When $X=I\times Y$,  the desired statement follows from the results that $\pi_0\Diff_{PI}(I\times Y)$ and $\pi_0\Homeo_{PI}(I\times Y)$ are abelian groups of infinite rank.

When $X=Y$, the composition 
\[
\pi_0\Diff_\partial(I\times Y)\to \pi_0\,C^{\diff}(Y)\to \pi_0\,C^{\topo}(Y) \to \pi_0 \Homeo(I\times Y,I\times \partial Y)
\]
equals the map $h_1$ in \eqref{eqn_com_diagram_main_thm_implication}, so its image is an abelian group of infinite rank. As a result, the images of $\pi_0\Diff_\partial(I\times Y)\to \pi_0\,C^{\diff}(Y)$ and $\pi_0\Diff_\partial(I\times Y)\to \pi_0\,C^{\topo}(Y)$ are abelian groups of infinite rank.
\end{proof}

The rest of Part \ref{part_irred_case} is devoted to the proof of Theorem \ref{thm_main_irred_case}.

\section{Homotopy classes of maps from a surface}
\label{sec_surface_homotopy}

We will study $\Homeo(I\times Y, I\times \partial Y)$ by considering its actions on surfaces in $\Emb_\dagger(I,I\times Y)$. 
This section develops several general results about surfaces in topological spaces.
Let $\Sigma$ be a compact, connected, oriented surface, possibly with boundary, and let $X$ be a topological space. We study the properties of the homotopy classes of maps from $\Sigma$ to $X$ relative to $\partial \Sigma$. Later, we will take $X$ to be $\Emb_\dagger(I,I\times Y)$.

\subsection{Interior connected sum with $\pi_2(X)$}
Suppose $f:\Sigma\to X$ is a map and $p\in\inte(\Sigma)$, and let $\alpha\in \pi_2(X,f(p))$. We define an \emph{interior connected sum} of $f$ with $\alpha$ at $p$, which will be denoted by $f\#_p\alpha$. This is a map from $\Sigma$ to $X$, and is well-defined up to homotopy relative to $A$ for an arbitrary given compact set $A\subset \Sigma\setminus\{p\}$. 

Here is the definition of $f\#_p\alpha$.
We first define a map $\xi_p:\Sigma\to \Sigma\vee_p S^2$ as follows. Let $D_p$ be a small disk on $\Sigma$ centered at $p$, let $\xi_p$ be the quotient map of $\Sigma$ defined by collapsing $\partial D_p$ to a point, identifying the quotient with $\Sigma \vee_p S^2$. We also require that the homeomorphism from $D_p/\partial D_p$ to $S^2$ is orientation-preserving, and that the map $\xi_p$ equals the identity outside of a neighborhood of $D_p$.
Let $\alpha\in \pi_2(X,f(p))$, and let $\hat \alpha:S^2\to X$ be a representative of $\alpha$. Define $f\# _p\alpha$ to be the composition
\[
\Sigma \xrightarrow{\xi_p} \Sigma \vee S^2\xrightarrow{f\vee \hat \alpha} X. 
\]
The map $f\#_p \alpha$ depends on the choices of $\hat \alpha$ and $\xi_p$. If $A\subset \Sigma\setminus\{p\}$ is a fixed compact set, by requiring $D_p$ to be disjoint from $A$, the map $f\#_p \alpha$ is well-defined up to homotopy relative to $A$.

The rest of this section compares the homotopy classes of $f$ and $f\#_p\alpha$. 

\subsection{Interior connected sum on surfaces with boundary}
In the following, if $x_0\in X$ is a base point, $u\in \pi_1(X,x_0)$, $\beta\in \pi_i(X,x_0)$, we use $\beta^u\in \pi_i(X,x_0)$ to denote the $\pi_1$--action of $u$ on $\beta$. 

\begin{Lemma}
\label{lem_surface_hmtpy_rel_boundary}
    Suppose $\Sigma$ is a connected, oriented, compact surface and $\partial \Sigma\neq \emptyset$. Let $:\Sigma\to X$ be a map, let $p\in\inte(\Sigma)$, and $\alpha\in \pi_2(X,f(p))$. Then $f$ is homotopic to $f\#_p\alpha$ relative to $\partial \Sigma$ if and only if $\alpha$ is in the subgroup of $\pi_2(X,f(p))$ spanned by all elements of the form
    \[
    \beta - \beta^u,
    \]
    where $\beta\in \pi_2(X,f(p))$ and $u$ is in the image of $f_*: \pi_1(\Sigma,p)\to \pi_1(X,f(p))$.
\end{Lemma}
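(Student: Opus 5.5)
We will prove the lemma by obstruction theory on a cell structure of $\Sigma$. Since $\partial\Sigma\neq\emptyset$, $\Sigma$ deformation retracts onto $\partial\Sigma$ together with finitely many arcs. Concretely, we take a CW structure on $\Sigma$ relative to $\partial \Sigma$ with a single $2$--cell $e^2$, finitely many $1$--cells $e^1_1,\dots,e^1_k$ (arcs with endpoints on $\partial\Sigma$, cutting $\Sigma$ into a disk), and no interior $0$--cells. We may assume $p$ lies in the interior of $e^2$ and $D_p\subset e^2$. Then $f$ and $f\#_p\alpha$ agree on the $1$--skeleton $\Sigma^{(1)}=\partial\Sigma\cup\bigcup e^1_i$.

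\textbf{Step 1 (the ``if'' direction).} First we check that $f\#_p\alpha$ depends only on the homotopy class of $\alpha$ and is additive: $(f\#_p\alpha)\#_p\alpha'\simeq f\#_p(\alpha+\alpha')$ rel $\partial\Sigma$. This follows by placing two disjoint small disks and sliding one into the other. Next, for $u=f_*[c]$ with $c$ a loop in $\Sigma$ based at $p$, we may isotope $c$ to avoid $D_p$ except at its base and then drag the small disk $D_p$ around $c$ by an ambient isotopy of $\Sigma$ supported in the interior. This isotopy is rel $\partial\Sigma$, since the loop $c$ can be pushed off $\partial\Sigma$. Dragging shows $f\#_p\beta\simeq f\#_p\beta^{u}$ rel $\partial\Sigma$. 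Combining this with additivity gives $f\#_p(\beta-\beta^u)\simeq (f\#_p\beta)\#_p(-\beta^u)\simeq(f\#_p\beta^u)\#_p(-\beta^u)\simeq f$. Sums of such elements are handled the same way.

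\textbf{Step 2 (the ``only if'' direction).} Suppose $H:I\times\Sigma\to X$ is a homotopy rel $\partial\Sigma$ from $f$ to $g=f\#_p\alpha$. Restricting $H$ to $I\times e^1_i$ gives a map of a square whose boundary lies on $\{0,1\}\times e^1_i\cup I\times\partial e^1_i$, where it equals $f$ on both ends and is constant on the sides. It therefore defines an element $\gamma_i\in\pi_2(X)$ after transport along a path to $f(p)$, obtained by gluing to the trivial homotopy. We first modify $H$: on the $1$--skeleton we replace $H$ by the constant homotopy. The difference over each cell $e^1_i$ is the class $\gamma_i$. Extending over the $2$--cell, the standard difference-cochain formula says that the obstruction on $e^2$ changes by the coboundary of $(\gamma_i)$ in the twisted cellular cochain complex with local coefficients $\pi_2(X)$ pulled back along $f$. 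Each $1$--cell appears twice in $\partial e^2$, with opposite orientations, and the two occurrences are seen from $p$ through paths differing by a loop $c_i$. So the coboundary contributes $\gamma_i-\gamma_i^{u_i}$ with $u_i=f_*[c_i]$. After this modification, $H$ is rel $\Sigma^{(1)}$, and on $e^2$ the difference of $f$ and $g$ rel boundary is exactly $\alpha$. Hence $\alpha$ equals the obstruction, which we have shown is $\sum\pm(\gamma_i-\gamma_i^{u_i})$, i.e. it lies in the stated subgroup. Equivalently, this is the computation $H^2(\Sigma,\partial\Sigma;\pi_2)$ being the coinvariants $\pi_2/\langle\beta-\beta^u\rangle$ for the image of $\pi_1(\Sigma)$ in $\pi_1(X)$ acting on $\pi_2(X)$.

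\textbf{Main obstacle.} The delicate part is Step 2: making the difference-cochain bookkeeping precise with local coefficients, including the correct base-point transport for each $\gamma_i$. In particular we must verify that the two sides of each $1$--cell in $\partial e^2$ produce $\gamma_i$ and $-\gamma_i^{u_i}$, where the $u_i$ range over a generating set of $f_*\pi_1(\Sigma,p)$. We also need the fact that homotopies of $\pi_1$-level data contribute nothing extra, since there are no interior $0$--cells.
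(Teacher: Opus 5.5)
Your proposal is correct and is essentially the paper's argument. The paper uses the same CW structure relative to $\partial\Sigma$ (one $2$--cell, no $0$--cells) and reads off the answer from the exact sequence of the restriction fibration $\Map_{\partial\Sigma}(\Sigma,X)\to\Map_{\partial\Sigma}(\Sigma^{(1)},X)$; its boundary map on $\pi_1\Map_{\partial\Sigma}(\Sigma^{(1)},X)\cong\oplus_j\pi_2(X,f(p))$ is exactly your cellular coboundary $\beta\mapsto\beta-\beta^{u_j}$, so both directions come from one computation. Your separate ``if'' argument is fine provided the dragging means sliding the attaching point of the connected sum along $c$ with $f$ held fixed and $\beta$ transported along $f\circ c$, since precomposing with an ambient isotopy of $\Sigma$ would move $f$ as well.
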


\begin{proof}
    Endow $\Sigma$ with a CW complex structure relative to $\partial \Sigma$ such that there is exactly one 2-cell and no 0-cell. Let $\Sigma^{(1)}$ be the 1-skeleton; namely, $\Sigma^{(1)}$ equals the union of $\partial \Sigma$ and the 1-cells. Let $e:D^2\to \Sigma$ be the characteristic map of the 2-cell, where $D^2$ denotes the 2-dimensional \emph{closed} disk. We also require that $p$ is in (the interior of) the 2-cell. 

    Let $\Map_{\partial \Sigma}(\Sigma,X)$ be the space of all maps $g:\Sigma \to X$ such that $g|_{\partial \Sigma} = f|_{\partial \Sigma}$, endowed with the compact-open topology. Similarly, let $\Map_{\partial \Sigma}(\Sigma^{(1)},X)$ be the set of all maps $g:\Sigma^{(1)} \to X$ such that $g|_{\partial \Sigma} = f|_{\partial \Sigma}$. Then the restriction map 
    \[
    \Map_{\partial \Sigma}(\Sigma,X) \to \Map_{\partial \Sigma}(\Sigma^{(1)},X)
    \]
    is a fibration, and the fiber $F$ is canonically identified with the set of all maps $g:D^2\to X$ such that $g|_{\partial D^2} = f\circ e|_{\partial D^2}$. 

    Consider the map from $\pi_2(X,f(p))$ to $\pi_0(F)$ that takes $\alpha\in \pi_2(X,f(p))$ to the homotopy class of $(f\circ e)\#_{q}\alpha$, where $q = (e)^{-1}(p)$. Then this map is a bijection, and we use it to identify $\pi_0(F)$ with $\pi_2(X,f(p))$.
    
    Let the base points of $\Map_{\partial \Sigma}(\Sigma,X)$ and $\Map_{\partial \Sigma}(\Sigma^{(1)},X)$ be given by $f$ and $f|_{\Sigma^{(1)}}$. We have an exact sequence 
\[
\pi_1 \Map_{\partial \Sigma}(\Sigma^{(1)},X) \xrightarrow{\partial_*} \pi_0(F) \to \pi_0\Map_{\partial\Sigma}(\Sigma,X).
\]
Note that $\pi_0(F)$ admits an abelian group structure via the isomorphism $\pi_0(F)\cong \pi_2(X,f(p))$, and the map $\partial_*$ from $\pi_1 \Map_{\partial \Sigma}(\Sigma^{(1)},X)$ to $\pi_0(F)$ is a group homomorphism. The exactness of the above sequence implies that two elements of $\pi_0(F)$ map to the same element in $\pi_0\Map_{\partial\Sigma}(\Sigma,X)$ if and only if their difference is in the image of $\partial_*$. 

As a consequence, $f$ is homotopic to $f\#_p\alpha$ relative to $\partial \Sigma$ if and only if $\alpha\in \pi_2(X,f(p))$ is in the image of 
\[
\partial_*:\pi_1 \Map_{\partial \Sigma}(\Sigma^{(1)},X) \to \pi_0(F)\cong \pi_2(X,f(p)).
\]

Now we compute the image of $\partial_*: \pi_1 \Map_{\partial \Sigma}(\Sigma^{(1)},X) \to\pi_2(X,f(p))$. 
Let $J$ be the set of 1-cells of $\Sigma^{(1)}$. Then 
\[
\pi_1\Map_{\partial \Sigma}(\Sigma^{(1)},X)\cong \oplus_{j\in J}\, \pi_2(X, f(p)).
\]
For each $j\in J$, let $\pi_2^{(j)}(X, f(p))$ denote the component in the above direct sum decomposition that corresponds to $j$. 
Then $\partial_*(\pi_2^{(j)}(X, f(p)))$ is described as follows. 
 Let $q$ be a point in the interior of the 1-cell, then $e^{-1}(q)$ contains two points on $\partial D^2$; write them as $\hat q_1,\hat q_2$. Let $\gamma_i$ be an arc in $D^2$ from $p$ to $\hat q_i$ $(i=1,2)$. The concatenation of $f(\gamma_1)$ and the reverse of $f(\gamma_2)$ in $X$ defines an element $u_j$ in $\pi_1(X,f(p))$. Note that $u_j$ only depends on $j$ and does not depend on the other choices in the construction. By a straightforward computation, we have
 \[
\partial_*(\pi_2^{(j)}(X, f(p))) = \{\beta-\beta^{u_j}\mid \beta\in \pi_2(X, f(p))\}.
 \]
 Since elements of the form $u_j$ generate the image of $\pi_1(\Sigma,p)$ in $\pi_1(X,f(p))$, the lemma is proved. 
\end{proof}

\subsection{Interior connected sum on closed surfaces}
\label{subsec_interior_connected_sum_homotopy}
This subsection proves several technical lemmas on the homotopy classes of maps from \emph{closed} surfaces. 
The first lemma considers the homotopy classes relative to a base point. 
\begin{Lemma}
\label{lem_closed_surface_basepoint_connected_sum_homotopy}
Let $\Sigma$ be a connected, oriented, closed surface, let $p,q$ be distinct points on $\Sigma$. Let $f:\Sigma\to X$ be a map, and let $\alpha\in \pi_2(X,f(p))$. View $f\#_p\alpha$ as a map defined up to homotopy relative to $q$. 
Then $f\#_p \alpha$ is homotopic to $f$ relative to $q$ if and only if $\alpha$ is contained in the subgroup of $\pi_2(X,f(p))$ generated by all elements of the form
    \[
    \beta - \beta^u,
    \]
    where $\beta\in \pi_2(X,f(p))$ and $u$ in the image of $f_*: \pi_1(\Sigma,p)\to \pi_1(X,f(p))$. 
\end{Lemma}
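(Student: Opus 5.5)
The plan is to reduce to Lemma \ref{lem_surface_hmtpy_rel_boundary} by removing a small disk around $q$. Let $D_q$ be a small closed disk centered at $q$, disjoint from $p$ and from the disk $D_p$ used to build $f\#_p\alpha$. Set $\Sigma'=\Sigma\setminus \inte(D_q)$; this is a compact, connected, oriented surface with nonempty boundary. After a homotopy of $f$ relative to $q$, we may assume $f$ is constant on $D_q$, equal to $f(q)$. Then $f\#_p\alpha$ is also constant on $D_q$.

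\emph{Key step: homotopy relative to $q$ versus relative to $D_q$.} A homotopy relative to $\partial\Sigma'$ between maps constant on $D_q$ extends by the constant map to a homotopy relative to $D_q$, hence relative to $q$. For the converse we must straighten a homotopy $H$ relative to $q$. Using a radial deformation of $D_q$ onto $q$, reparametrize $H$ near $D_q$: on the annulus $D_q\setminus\{q\}$ with radius $r\in(0,1]$ and time $t$, precompose $H$ with a map shrinking $D_q\times I$ onto $\{q\}\times I \cup D_q\times\{0,1\}$. This is possible because $(D_q\times I,\ \partial D_q\times I\cup D_q\times\{0,1\})$ retracts appropriately, so the homotopy can be modified to be constant on a smaller disk $D'_q\subset D_q$. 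Shrinking $D_q$ at the start to $D'_q$ changes nothing. I expect this step to be the only real point; it is standard, using that $\{q\}\hookrightarrow D_q$ is a cofibration and that $D_q$ is contractible rel $q$.

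\emph{Applying the boundary lemma.} With the reduction in hand, $f\#_p\alpha\simeq f$ rel $q$ if and only if $f|_{\Sigma'}\#_p\alpha\simeq f|_{\Sigma'}$ rel $\partial\Sigma'$. Lemma \ref{lem_surface_hmtpy_rel_boundary} says this holds exactly when $\alpha$ lies in the span of the elements $\beta-\beta^u$ with $u\in \Ima(\pi_1(\Sigma',p)\to\pi_1(X,f(p)))$. Since $\pi_1(\Sigma',p)\to\pi_1(\Sigma,p)$ is surjective (removing a disk only kills the boundary relation), the images of $\pi_1(\Sigma',p)$ and $\pi_1(\Sigma,p)$ in $\pi_1(X,f(p))$ coincide. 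This gives the stated criterion.
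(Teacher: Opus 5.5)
Your proof is correct and takes essentially the paper's route: excise a disk around $q$, apply Lemma \ref{lem_surface_hmtpy_rel_boundary} to the punctured surface, and use that $\pi_1(\Sigma\setminus\inte(D_q),p)\to\pi_1(\Sigma,p)$ is surjective, which the paper leaves implicit. The paper skips your preliminary homotopy of $f$ and your ``key step'' by precomposing directly with the quotient map $r:\Sigma\setminus\inte(D_q)\to\Sigma$ that collapses the boundary circle to $q$ (your radial collapse, with $\rho\times\id_I$ applied to homotopies), so that homotopies rel $q$ on $\Sigma$ correspond exactly to homotopies rel boundary on the punctured surface.
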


\begin{proof}
    Let $D$ be a small closed disk in $\Sigma$ around $q$ that is disjoint from $p$, and let $\mathring\Sigma=\Sigma\setminus \inte(D)$. Then $\mathring\Sigma/\partial \mathring\Sigma\cong \Sigma$. Let $r: \mathring\Sigma \to \Sigma$ be a quotient map that equals the identity near $p$. Then $f$ and $f\#_p\alpha$ are homotopic relative to $q$ if and only if $r\circ f$ and $r\circ (f\#_p\alpha) = (r\circ f)\#_p \alpha$ are homotopic relative to $\partial \mathring\Sigma$. So the result is proved by applying Lemma \ref{lem_surface_hmtpy_rel_boundary} to $\mathring\Sigma$.
\end{proof}

The next two lemmas show that the existence of free homotopies implies the existence of homotopies relative to a base point, under additional technical assumptions. 

\begin{Lemma}
\label{lem_surface_hmtpy_closed}
Suppose $\Sigma$ is a connected, oriented, closed surface, $f:\Sigma\to X$ is a map, $p\in \Sigma$, and $\alpha\in \pi_2(X,f(p))$. Let $q\in \Sigma\setminus\{p\}$, and view $f\#_p\alpha$ as a map defined up to homotopy relative to $q$. 
Suppose further that the centralizer of the image of $\pi_1(\Sigma)$ in $\pi_1(X)$ is trivial. Then $f$ and $f\#_p \alpha$ are homotopic if and only if they are homotopic relative to $q$.
\end{Lemma}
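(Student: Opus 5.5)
The plan is to show that the only obstruction to turning a free homotopy into a homotopy relative to $q$ is the loop traced out by $q$, and that this loop is forced to be trivial by the centralizer hypothesis. One direction is trivial: a homotopy relative to $q$ is in particular a free homotopy. For the converse, write $g=f\#_p\alpha$. We may choose the disk $D_p$ in the construction of $f\#_p\alpha$ to avoid $q$, so $g=f$ outside a small neighborhood $N$ of $D_p$ with $q\notin N$. In particular $g(q)=f(q)=:x_0$. Let $H:\Sigma\times I\to X$ be a free homotopy from $f$ to $g$, and let $\gamma=H|_{\{q\}\times I}$. This is a loop in $X$ based at $x_0$.

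\textbf{Step 1: $\gamma$ centralizes the image of $\pi_1(\Sigma)$.} The standard change-of-basepoint argument applied to $H$ gives
\[
g_*(c)=[\gamma]^{-1}f_*(c)[\gamma]\quad\text{for all } c\in\pi_1(\Sigma,q),
\]
up to the orientation convention for $\gamma$. Every element of $\pi_1(\Sigma,q)$ is represented by a loop avoiding $N$, since removing a disk from a surface does not disconnect it and $\pi_1(\Sigma\setminus N,q)\to\pi_1(\Sigma,q)$ is surjective. On such loops $f$ and $g$ agree, so $g_*=f_*$ on $\pi_1(\Sigma,q)$. Hence $[\gamma]$ commutes with every element of $f_*\pi_1(\Sigma,q)$. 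By hypothesis the centralizer of this image is trivial, so $[\gamma]=1$ in $\pi_1(X,x_0)$.

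\textbf{Step 2: straighten $H$ so that it fixes $q$.} Choose a null-homotopy of $\gamma$ relative to its endpoints. Define a homotopy of $H$ on the subspace $A=\Sigma\times\partial I\cup\{q\}\times I$ as follows: it is stationary on $\Sigma\times\partial I$, and on $\{q\}\times I$ it is the null-homotopy of $\gamma$ to the constant path at $x_0$. These two pieces agree on the overlap because the null-homotopy is relative to endpoints. Since $(\Sigma\times I, A)$ is a CW pair, the homotopy extension property extends this to a homotopy of $H$ on all of $\Sigma\times I$. The end map $H'$ is still a homotopy from $f$ to $g$, because $\Sigma\times\partial I$ was held fixed, and it satisfies $H'(q,t)=x_0$ for all $t$. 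Thus $H'$ is a homotopy relative to $q$.

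The main point requiring care is Step 1. One must check that the basepoint-change formula correctly relates $f_*$ and $g_*$ via $\gamma$. One must also check that $f_*=g_*$ on the nose at $q$, which uses the fact that the modification $f\#_p\alpha$ is supported away from a full set of generators of $\pi_1(\Sigma,q)$. The well-definedness of $f\#_p\alpha$ up to homotopy relative to $q$ guarantees that the statement does not depend on the choices made in this argument. Step 2 is routine homotopy extension.
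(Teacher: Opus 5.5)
Your proposal is correct and follows essentially the same route as the paper. The paper also takes the trajectory $\gamma$ of $q$ under a free homotopy and uses a CW structure with $q$ as the unique $0$--cell, on whose $1$--skeleton $f$ and $f\#_p\alpha$ agree; this plays the role of your ``loops avoiding $N$''. It then deduces that $[\gamma]$ centralizes $f_*\pi_1(\Sigma,q)$, hence is trivial, and concludes by homotopy extension. Your Step 2 just writes out that final homotopy-extension step in more detail.
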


\begin{proof}
The ``if'' part is obvious; we prove the ``only if'' part of the lemma. Suppose $f$ and $f\#_p \alpha$ are homotopic to each other. Choose a CW complex structure of $\Sigma$ such that $q$ is the unique 0-cell and $p$ is in a 2-cell. 
Let $f'$ be a representative of $f\#_p \alpha$ such that $f=f'$ on the 1-skeleton. Take a homotopy from $f$ to $f'$, let $\gamma$ be the trajectory of $q$ under the homotopy. Then $\gamma$ defines an element $[\gamma]\in \pi_1(X,f(q))$. Since $f$ and $f\#_p \alpha$ restrict to the same map on the 1-cells, the element $[\gamma]$ must be commutative with all elements in the image of $f_*: \pi_1(\Sigma,q)\to \pi_1(X,f(q))$. By the assumptions, we deduce that $[\gamma]$ is trivial. By the homotopy extension theorem, there exists a homotopy from $f$ to $f\#_p \alpha$ that preserves the image of $q$.
\end{proof}

\begin{Lemma}
\label{lem_surface_hmtpy_torus}
Suppose $\Sigma=S^1\times S^1$. Let $f:\Sigma\to X$ be a map, let $p$ be a point on $\Sigma$, and let $\alpha\in \pi_2(X,f(p))$. 
Let $q\in \Sigma\setminus\{p\}$, and view $f\#_p\alpha$ as a map defined up to homotopy relative to $q$. 
Assume further that $f$ is equal to a composition map
\begin{equation}
\label{eqn_factor_map_from_torus_through_circle}
\Sigma \cong S^1\times S^1 \xrightarrow{\text{projection}} S^1 \xrightarrow{g} X.
\end{equation}
Then $f$ and $f\#_p \alpha$ are homotopic if and only if they are homotopic relative to $q$.
\end{Lemma}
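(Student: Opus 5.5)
The plan is to reduce to the same situation as in the proof of Lemma \ref{lem_surface_hmtpy_closed}. The centralizer assumption used there is replaced by an explicit self-homotopy of $f$, which exists because $f$ factors through a circle. The ``if'' direction is trivial, so assume $f$ and $f\#_p\alpha$ are freely homotopic.

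\textbf{Step 1: reduce to the trajectory of $q$.} As in the proof of Lemma \ref{lem_surface_hmtpy_closed}, choose a CW structure on $\Sigma$ with $q$ the unique $0$--cell and $p$ in the $2$--cell. Let $f'$ be a representative of $f\#_p\alpha$ with $f'=f$ on the $1$--skeleton. Fix a free homotopy $H$ from $f$ to $f'$, and let $\gamma$ be the trajectory of $q$ under $H$, giving $[\gamma]\in\pi_1(X,f(q))$. Since $f$ and $f'$ agree on the $1$--skeleton, $[\gamma]$ commutes with the image of $f_*:\pi_1(\Sigma,q)\to\pi_1(X,f(q))$. By the factorization \eqref{eqn_factor_map_from_torus_through_circle}, this image is the cyclic group generated by $u=[g]$, after identifying base points via $q$'s first coordinate $x_q$. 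So all we know is $[\gamma]u=u[\gamma]$.

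\textbf{Step 2: a loop of maps at $f$ realizing $\gamma$.} This is the key step. Consider the map $u\vee\gamma:S^1\vee S^1\to X$, where the first circle is mapped by $g$ (based at $x_q$) and the second by $\gamma$. Because $[u,[\gamma]]=1$, it extends over the $2$--cell of the torus, giving $K:S^1\times S^1\to X$ with $K(\cdot,0)=g$ and $K(x_q,\cdot)=\gamma$. Define a loop $F_t\in\Map(\Sigma,X)$, $t\in S^1$, by $F_t(x,y)=K(x,t)$. Then $F_0=F_1=f$, and the trajectory of $q$ under $F$ is exactly $\gamma$.

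\textbf{Step 3: conclude.} Concatenate the reverse of the loop $F$ with $H$. This gives a free homotopy from $f$ to $f'$ whose trajectory of $q$ is $\gamma^{-1}\gamma$, which is null-homotopic rel endpoints. By the homotopy extension property applied to the pair $(\Sigma\times I,\ \Sigma\times\partial I\cup\{q\}\times I)$, we can push this trajectory to the constant path while keeping the ends fixed. This yields a homotopy from $f$ to $f'$ relative to $q$.

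I expect Step 2 to be the only real point. The check needed there is that the extension $K$ exists with the prescribed restrictions. This holds because the obstruction to extending a map on the $1$--skeleton of the torus is exactly the based commutator $[u,[\gamma]]$ in $\pi_1(X,f(q))$. One should take care that $u$ and $\gamma$ are both based at $f(q)=g(x_q)$, so that the commutator statement from Step 1 is the one used here.
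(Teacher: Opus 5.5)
Your proof is correct and follows the same route as the paper. You use the commutation of $[\gamma]$ with $[g]$, which comes from $f=f'$ on the $1$--skeleton, to build a self-homotopy of $f$ factoring through $g$ whose trajectory at $q$ is $\gamma$ (the paper uses its reverse); you then concatenate with $H$ and apply homotopy extension. Your Step 2, extending $g\vee\gamma$ over the $2$--cell of a torus, simply makes explicit the free self-homotopy of $g$ that the paper asserts exists.
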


\begin{proof}
The ``if'' part is obvious; we prove the ``only if'' part of the lemma.
Take the standard CW complex structure with one $0$--cell and one $1$--cell on $S^1$, and endow $\Sigma\cong S^1\times S^1$ with the product CW complex structure. 
We require that $q$ is the unique $0$--cell, and $p$ is contained in the $2$--cell. 
Let $\hat q$ be the projection image of $q\in \Sigma$ in $S^1$, then $\hat q$ is the unique $0$--cell of $S^1$. 
Let $f'$ be a representative of $f\#_p\alpha$ such that $f=f'$ on the $1$--skeleton of $\Sigma$. Suppose $f$ is homotopic to $f'$.

Let $H:[0,1]\times \Sigma\to X$ be a homotopy such that $H(0,-) = f$, $H(1,-)= f'$. Let $\gamma:[0,1]\to X$ be defined by $\gamma(t) = H(t,q)$. Let $g$ be the map in \eqref{eqn_factor_map_from_torus_through_circle}. Then both $\gamma$ and $g$ are loops in $X$ based at $f(q)$, and we have 
\[
[g] = [\gamma]\cdot [g]\cdot [\gamma]^{-1} \in \pi_1(X,f(q)).
\]
Hence, there exists a free homotopy from $g$ to itself such that the trajectory of $\hat q$ is the reverse of $\gamma$. Since $f$ factors through $g$, this implies that there exists a homotopy $G:[0,1]\times \Sigma\to X$ such that $G(0,-) = G(1,-) = f$ and the trajectory of $q$ is the reverse of $\gamma$. 

The composition of the homotopies $G$ and $H$ then gives a homotopy from $f$ to $f'$ such that the trajectory of $q$ is a contractible loop in $X$. Therefore, by the homotopy extension theorem, there exists a homotopy from $f$ to $f'$ that preserves the image of $q$.
\end{proof}

We will later apply Lemma \ref{lem_surface_hmtpy_torus} to the case when there is a natural $\U(1)$ action on $X$. This will be the case, for example, when $Y$ is a $\U(1)$ bundle and $X=\Emb_\dagger(I,I\times Y)$. 

Suppose $m:\U(1)\times X\to X$ is a $\U(1)$ action on $X$. Identify $S^1$ with $\U(1)$, and let $f:S^1\times S^1\to X$ be a map. Define a map $m\circledast f:S^1\times S^1\to X$ by $m\circledast f(e^{ix},e^{iy}) = m(e^{-ix}, f(e^{ix},e^{iy}))$. It is clear that the operation $m\circledast -$ defines a bijection on the homotopy classes of maps from $S^1\times S^1$ to $X$. We record the following two lemmas for later reference.

\begin{Lemma}
\label{lem_m_circledast_alpha_connected_sum}
    Let $p\in \{1\}\times S^1\subset S^1\times S^1$. Let $\alpha\in \pi_2(X,f(p))$. Then $m\circledast(f\#_p\alpha)$ is homotopic to $(m\circledast f)\# \alpha$. 
\end{Lemma}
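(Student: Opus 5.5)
The plan is to compare the two maps directly from the definitions, using that $m\circledast -$ is a pointwise reparametrization. Write $f' = f\#_p\alpha = (f\vee\hat\alpha)\circ\xi_p$, where $\xi_p$ collapses $\partial D_p$ for a small disk $D_p$ around $p=(1,e^{iy_p})$. By definition,
\[
m\circledast f'(e^{ix},e^{iy}) = m\bigl(e^{-ix}, f'(e^{ix},e^{iy})\bigr).
\]
Outside a neighborhood $N$ of $D_p$ this agrees with $m\circledast f$. On $D_p$ it is the map $(e^{ix},e^{iy})\mapsto m(e^{-ix},\hat\alpha(\xi_p(e^{ix},e^{iy})))$. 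This is not literally $\hat\alpha$, because the group element $e^{-ix}$ varies over the disk.

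The first step is to shrink $D_p$, which changes $f\#_p\alpha$ only by a homotopy, so that the $x$-coordinate on $D_p$ ranges over a tiny interval $(-\epsilon,\epsilon)$ around $0$. Then I will homotope the group element on $N$. I will use the homotopy
\[
H_s(e^{ix},e^{iy}) = m\bigl(e^{-i\rho_s(x,y)x},\, f'(e^{ix},e^{iy})\bigr),
\]
where $\rho_s$ is a cutoff function. It equals $1$ outside $N$ for all $s$, and at $s=1$ it is $0$ on $D_p$. For $s=0$ take $\rho_0\equiv1$. Since $U(1)$ is path connected and $x$ is small on $N$, the family $e^{-i\rho_s x}$ is well-defined and continuous. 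So $H_s$ is a homotopy from $m\circledast f'$ to a map $H_1$.

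Next I identify $H_1$. On $D_p$, $H_1$ equals $m(1,\hat\alpha\circ\xi_p)=\hat\alpha\circ\xi_p$, since $m(1,\cdot)=\id$. Outside $N$, $H_1$ equals $m\circledast f$. On the collar $N\setminus D_p$, $f'$ is $f\circ\xi_p$, and $f\circ \xi_p$ is constant along the collapsed circle. Thus $H_1$ restricted to $N\setminus D_p$ is a map that agrees with $m\circledast f$ near $\partial N$ and equals the constant $f(p)$ on $\partial D_p$, interpolated through the cutoff. Collapsing $\partial D_p$, $H_1$ is therefore of the form $(g\vee\hat\alpha)\circ\xi_p$, where $g$ is homotopic to $m\circledast f$ via the same $\rho$-homotopy restricted to $\Sigma\setminus D_p$ (extended over $D_p$ using $f$). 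Because $p$ lies on $\{1\}\times S^1$, we have $m\circledast f(p)=m(1,f(p))=f(p)$. Hence $\alpha$ is based at the correct point, and $H_1$ is a representative of $(m\circledast f)\#_p\alpha$.

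The main obstacle is the basepoint bookkeeping. The homotopy must not move the base point $f(p)$ of $\alpha$ along a nontrivial loop, since that would replace $\alpha$ by $\alpha^u$. The choice $p\in\{1\}\times S^1$ together with $\rho_s(p)\cdot 0 = 0$ guarantees that the image of $p$ stays fixed throughout. Consequently no $\pi_1$-action is introduced, and the connected sum element remains $\alpha$ itself.
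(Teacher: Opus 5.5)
Your argument is correct and is essentially the paper's. The paper also deforms the rotation parameter to be identically $1$ near the connected-sum disk, but it does so globally: it replaces $e^{-ix}$ by $v(e^{-ix})$ for a degree-one $v:\U(1)\to\U(1)$ that equals $1$ near $1$ and is homotopic to $\id$ relative to $1$. This makes $m'\circledast(f\#_p\alpha)$ and $(m'\circledast f)\#_p\alpha$ literally the same map, and gives $m'\circledast f\simeq m\circledast f$ relative to $\{1\}\times S^1$, so it avoids your collar bookkeeping. Note that the basepoint condition that actually matters is on the homotopy from your background map $g$ to $m\circledast f$: the wedge point $p$ must stay at $f(p)$. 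It is not a condition on $H_s$, which may be a free homotopy. The condition holds because on $N$ both rotation angles are small real-valued functions vanishing at $p$, so a straight-line homotopy of the angles works. Taken literally, reusing ``the same $\rho$-homotopy'' in the collapsed model is not quite well-defined.
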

\begin{proof}
    Let $v:\U(1)\to \U(1)$ be a smooth map with degree $1$ such that $v(e^{ix})=1$ when $x$ is close to $0$. Define $m'\circledast f:S^1\times S^1\to X$ by 
    \[
    m'\circledast f(e^{ix},e^{iy}) = m(v(e^{-ix}), f(e^{ix},e^{iy})).
    \]
    Since $v$ is homotopic to $\id$ relative to $1\in S^1$, we know that $m'\circledast f$ is homotopic to $m\circledast f$ relative to $\{1\}\times S^1$. On the other hand, $m'\circledast(f\#_p\alpha)$ and $(m'\circledast f)\# \alpha$ are represented by the same map. So the lemma is proved. 
\end{proof}

Lemmas \ref{lem_surface_hmtpy_torus}, \ref{lem_m_circledast_alpha_connected_sum}, and \ref{lem_closed_surface_basepoint_connected_sum_homotopy} imply the following result.

\begin{Lemma}
\label{lem_map_from_torus_degenerate_after_m}
Suppose $\Sigma=S^1\times S^1$. Let $f:\Sigma\to X$ be a map, let $p$ be a point on $\Sigma$, and let $\alpha\in \pi_2(X,f(p))$. 
Assume there exists a $\U(1)$ action $m:\U(1)\times X\to X$ such that $m\circledast f$ is equal to a composition map
\[
\Sigma \cong S^1\times S^1 \xrightarrow{\text{projection}} S^1 \xrightarrow{g} X.
\]
Then $f$ and $f\#_p \alpha$ are homotopic if and only if $\alpha$ is contained in the subgroup of $\pi_2(X,f(p))$ generated by all elements of the form
    \[
    \beta - \beta^u,
    \]
    where $\beta\in \pi_2(X,f(p))$ and $u$ in the image of $(m\circledast f)_*: \pi_1(\Sigma,p)\to \pi_1(X,(m\circledast f)(p))$. 
\end{Lemma}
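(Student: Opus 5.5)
The plan is to transfer the whole question from $f$ to $m\circledast f$, which factors through a circle, and then apply the two earlier lemmas in sequence. Since $m\circledast -$ is a bijection on homotopy classes of maps $S^1\times S^1\to X$, the maps $f$ and $f\#_p\alpha$ are freely homotopic if and only if $m\circledast f$ and $m\circledast(f\#_p\alpha)$ are freely homotopic.

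First I would reduce to the case $p\in\{1\}\times S^1$, so that Lemma \ref{lem_m_circledast_alpha_connected_sum} applies. Precomposing with a rotation of the first $S^1$ factor is harmless. Concretely, $f\#_p\alpha$ up to homotopy only depends on $p$ through the choice of small disk, and moving $p$ along a path moves $\alpha$ by the corresponding change-of-basepoint isomorphism. Alternatively, I would simply redo the proof of Lemma \ref{lem_m_circledast_alpha_connected_sum} with $v$ chosen to be constant near the first coordinate of $p$. Either way, $m\circledast(f\#_p\alpha)\simeq (m\circledast f)\#_p\alpha$, where the connected sum on the right uses the same $\alpha$.

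Here $\alpha$ is viewed in $\pi_2(X,(m\circledast f)(p))$. This identification is canonical because the first coordinate of $p$ is $1$ (or $v$ is trivial near it), so $(m\circledast f)(p)=f(p)$.

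Now set $f_1=m\circledast f$, which by hypothesis factors as the projection followed by $g$. Lemma \ref{lem_surface_hmtpy_torus} applied to $f_1$ shows that $f_1$ and $f_1\#_p\alpha$ are homotopic if and only if they are homotopic relative to a point $q\neq p$. Lemma \ref{lem_closed_surface_basepoint_connected_sum_homotopy} then shows the relative homotopy exists if and only if $\alpha$ lies in the subgroup generated by the elements $\beta-\beta^u$, with $u\in\Ima((f_1)_*:\pi_1(\Sigma,p)\to\pi_1(X,f_1(p)))$. Chaining these equivalences gives exactly the stated criterion.

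I expect the main obstacle to be the basepoint bookkeeping. Specifically, I must check that the identification of $\alpha$ as an element of $\pi_2$ at $f(p)$ versus at $(m\circledast f)(p)$ is compatible in the connected-sum formula. The cleanest route is to choose $v$ in the proof of Lemma \ref{lem_m_circledast_alpha_connected_sum} to equal $1$ on a neighborhood of the first coordinate of $p$. Then $f$ and $m'\circledast f$ literally agree on a neighborhood of the disk $D_p$, so the connected sums match on the nose and no change-of-basepoint ambiguity arises.
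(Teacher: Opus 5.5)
Your main line is the paper's proof step for step. You use that $m\circledast-$ is a bijection on homotopy classes, rotate the first factor so that $p\in\{1\}\times S^1$ (which is exactly what makes $(m\circledast f)(p)=f(p)$ and the statement well posed), and then finish with Lemma \ref{lem_m_circledast_alpha_connected_sum}, Lemma \ref{lem_surface_hmtpy_torus} applied to $m\circledast f$, and Lemma \ref{lem_closed_surface_basepoint_connected_sum_homotopy}.

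The one thing to discard is your ``cleanest route'' of taking $v\equiv 1$ near $e^{-ix_p}$, where $p=(e^{ix_p},e^{iy_p})$ with $x_p\not\equiv 0$. It does make $m'\circledast f$ agree with $f$ near $D_p$. However, $m'\circledast f$ then no longer factors through the projection. Moreover, any homotopy from $m'\circledast f$ to $m\circledast f$ carries the image of $p$ from $f(p)$ to $m(e^{-ix_p},f(p))$, which in general differs from $f(p)$. So a change of basepoint does occur when you pass to $m\circledast f$, contrary to your claim. The rotation reduction avoids this, and it is what the paper does.
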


\begin{proof}
    Since $m\circledast -$ defines a bijection on the homotopy classes of maps from $\Sigma$ to $X$, we know that $f$ is homotopic to $f\#_p \alpha$ if and only if $m\circledast f$ is homotopic to $m\circledast (f\#_p \alpha)$. Note that rotating the coordinates of the $S^1$--factors of $\Sigma$ changes the definition of $m\circledast f$ but does not change its homotopy class. After rotating the coordinates of $\Sigma$, we may assume $p\in \{1\}\times S^1\subset S^1\times S^1$.  By Lemma \ref{lem_m_circledast_alpha_connected_sum}, we know that $m\circledast (f\#_p \alpha)$ is homotopic to $(m\circledast f)\#_p \alpha$. By the assumptions and Lemma \ref{lem_surface_hmtpy_torus}, $m\circledast f$ is homotopic to $(m\circledast f)\#_p \alpha$ if and only if they are homotopic relative to a base point $q\in \Sigma\setminus\{p\}$. Hence, the desired result follows from Lemma \ref{lem_closed_surface_basepoint_connected_sum_homotopy}. 
\end{proof}

\section{Diffeomorphisms on $S^1\times D^3$ and the map $\Psi$}
\label{sec_barbell_and_surface}
This section studies the relations of Budney--Gabai's barbell diffeomorphisms on $S^1\times D^3$ and the map $\Psi$ defined in Section \ref{sec_embedding_space_homotopy}.

From now on, we denote $Y_0 = S^1\times D^2$. 
Budney--Gabai introduced a homomorphism from $\pi_0\Homeo_\partial (I\times Y_0)$ to $\pi_2\Emb_\partial(I,I\times Y_0)$ in \cite{budney2019knotted} (cf. \cite[Section 2]{LXZ2025}), which we will denote by $\mathcal{S}$. We briefly recall the definition of $\mathcal{S}$ here. 
Pick a point $(t_0,x_0)\in S^1\times D^2$ and let $i_0:D^2\to Y_0$ be given by $i_0(x)=(t_0,x)$. 
We have the composition $\mathscr{I}\circ i_0:D^2\to \Emb_\dagger(I, I\times Y_0)$ (recall that $\mathscr{I}$ was defined in \eqref{eqn_def_mathscrI}). 
Given $\varphi\in \Homeo_\partial (I\times Y_0)$, consider the map
$$
\varphi_\ast \circ \mathscr{I}\circ i_0:D^2\to \Emb_\dagger(I, I\times Y_0)
$$
where $\varphi_\ast :\Emb_\dagger(I, I\times Y_0)\to \Emb_\dagger(I, I\times Y_0)$ is induced by $\varphi$. Since
$\varphi$ is supported in the interior of $I\times Y_0$, $\varphi_\ast \circ \mathscr{I}\circ i_0$
and  $\mathscr{I}\circ i_0$ coincide near the boundary of $D^2$. Therefore, we may glue the two maps
to obtain $\mathcal{S}(\varphi)\in\pi_2(\Emb_\dagger(I, I\times Y_0), \mathscr{I}(t_0,x_0))$. It is clear that $\mathcal{S}(\varphi)$ only depends on the isotopy class $[\varphi]\in\pi_0\Homeo_\partial(I\times Y_0)$, so we also denote it by $\mathcal{S}([\varphi])$.

Sometimes, it is important to specify the base point $t_0\in S^1$ in the above definition. In this case, we will denote the map $\mathcal{S}$ by $\mathcal{S}_{t_0}$. 


Now suppose $\sigma:(\Sigma,\partial\Sigma)\looparrowright (Y,\partial Y)$ is a properly immersed, connected, oriented surface. If $Y$ is closed, then this assumption implies $\partial \Sigma=\emptyset$. Suppose $\gamma:S^1\hookrightarrow Y$ is a smoothly embedded oriented circle such that the induced map 
\[
\gamma_*:H_1(S^1;\bZ) \to H_1(Y;\bZ)/\tor
\] 
is injective. Let $\nu(\gamma):Y_0=S^1\times D^2\to Y$ be an orientation-preserving embedding whose core circle is $\gamma$. 
Let $\varphi\in \Homeo_\partial(I\times Y_0)$, let $\nu(\gamma)_*(\varphi)\in \Diff(I\times Y)$ be the extension of $\varphi$ to $I\times Y$ via 
$\id_I\times \nu(\gamma):I\times Y_0\to I\times Y.$
Let $\varphi_*:\Emb_\dagger(I,I\times Y)\to \Emb_\dagger(I,I\times Y)$ denote the homeomorphism induced by $\nu(\gamma)_*\varphi$. 
Let $f=\mathscr{I}\circ \sigma:\Sigma\to \Emb_\dagger(I,I\times Y)$. 
The next proposition compares the homotopy classes of $f$ and $\varphi_*\circ f$.

\begin{Proposition}
\label{prop_interior_connected_sum_formula_at_each_intersection}
Suppose $\gamma(S^1)$ intersects $\sigma(\Sigma)$ transversely at $k$ distinct points $p_1,\dots,p_k\in Y$, with signs $\epsilon_1,\dots,\epsilon_k$. Suppose that $\sigma$ is an embedding on a neighborhood of 
\[
\sigma^{-1}(\{p_1,\dots,p_k\})\subset \Sigma.
\]
Then the map $\varphi_*\circ f:\Sigma\to \Emb_\dagger(I,I\times Y)$ is homotopic relative to $\partial \Sigma$ to the result of interior connected sums of $f$ with $\alpha_i \in \pi_2 (\Emb_\dagger(I,I\times Y),\mathscr{I}(p_i))$ 
at $\sigma^{-1}(p_i)$ ($i=1,\cdots,k)$ respectively, where each $\alpha_i$ satisfies
\begin{equation}
\label{eqn_alphi_i_Psi_in_connected_sum}
\Psi(\alpha_i) = \epsilon_i\cdot  \nu(\gamma)_*\circ \Psi\circ \mathcal{S}_{\gamma^{-1}(p_i)}([\varphi]).
\end{equation}
\end{Proposition}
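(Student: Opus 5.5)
The plan is a localization argument. First we show that $\varphi_*\circ f$ differs from $f$ only on small disks around the points $\sigma^{-1}(p_i)$. Then we identify the resulting spherical difference at each $p_i$ with the Budney--Gabai class $\mathcal{S}$, transported by $\nu(\gamma)$, and finally we apply $\Psi$ using its naturality.

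\textbf{Step 1: localization.} The support of $\nu(\gamma)_*\varphi$ lies in $I\times \nu(\gamma)(Y_0)$. Shrinking the tubular neighborhood, we may assume that $\nu(\gamma)(Y_0)\cap\sigma(\Sigma)$ is a disjoint union of disks $\sigma(D_i)$. Here each $D_i\subset\inte(\Sigma)$ is a small disk around $\sigma^{-1}(p_i)$ on which $\sigma$ is an embedding, and each $\sigma(D_i)$ is a meridian disk $\nu(\gamma)(\{t_i\}\times D^2)$ with $t_i=\gamma^{-1}(p_i)$. This uses the transversality hypothesis and the hypothesis that $\sigma$ is an embedding near the preimages, which makes the $\sigma(D_i)$ disjoint embedded disks.

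For $x\notin\bigcup D_i$, the arc $\mathscr{I}(\sigma(x))=I\times\{\sigma(x)\}$ misses the support, so $\varphi_*\circ f=f$ there. Hence $\varphi_*\circ f$ is obtained from $f$ by replacing $f|_{D_i}$ with $\varphi_*\circ f|_{D_i}$, and the two agree on $\partial D_i$. Because the $D_i$ are disjoint disks, a standard argument shows that $\varphi_*\circ f$ is homotopic rel $\partial\Sigma$ to $f\#_{\sigma^{-1}(p_1)}\alpha_1\cdots\#_{\sigma^{-1}(p_k)}\alpha_k$. Here $\alpha_i$ is the sphere obtained by gluing $f|_{D_i}$ and $\varphi_*\circ f|_{D_i}$ along $\partial D_i$, with the orientation convention that matches the definition of $\#_p$. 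Base points are handled by a short path inside $D_i$ to the center, so $\alpha_i\in\pi_2(\Emb_\dagger(I,I\times Y),\mathscr{I}(p_i))$.

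\textbf{Step 2: identification with $\mathcal{S}$.} Through the parametrization $\nu(\gamma)\circ i_0:D^2\to\sigma(D_i)$ with base point $t_i$, the sphere $\alpha_i$ is exactly the image of $\mathcal{S}_{t_i}([\varphi])$ under the map
\[
\pi_2\Emb_\dagger(I,I\times Y_0)\to\pi_2\Emb_\dagger(I,I\times Y)
\]
induced by $\id_I\times\nu(\gamma)$, up to the sign $\epsilon_i$. The sign records whether $\sigma|_{D_i}$ composed with this parametrization preserves orientation; by the definition of the intersection sign of $\gamma$ with $\sigma$, this is $\epsilon_i$. Reversing the orientation of the domain disk negates the glued sphere class.

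\textbf{Step 3: applying $\Psi$.} The embedding $\nu(\gamma)$ is orientation-preserving and codimension $0$, and it is injective on $H_1(-;\bZ)/\tor$ by the hypothesis on $\gamma_*$. Both $Y_0$ and $Y$ satisfy the standing assumptions for $\Psi$: $Y_0=S^1\times D^2$ is irreducible with $b_1>0$, and in the application $Y$ is as well. So Proposition \ref{prop_naturality_Psi} applies, after arranging that $\nu(\gamma)$ takes base point to base point. Changing base points to $\mathscr{I}(p_i)$ does not change $\Psi$, by Proposition \ref{prop_compare_Psi_all_base_point_Y}. Together these give
\[
\Psi(\alpha_i)=\epsilon_i\cdot\nu(\gamma)_*\Psi(\mathcal{S}_{\gamma^{-1}(p_i)}([\varphi])),
\]
which is \eqref{eqn_alphi_i_Psi_in_connected_sum}.

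\textbf{Main obstacle.} The delicate point is the sign and orientation bookkeeping in Step 2, together with the check that the glued sphere agrees with the interior connected sum convention of $\xi_p$. That is, we must confirm that replacing $f$ on $D_i$ by $\varphi_*\circ f$ equals $f\#\alpha_i$ rather than $f\#(-\alpha_i)$, consistently with how $\mathcal{S}$ glues $\varphi_*\circ\mathscr{I}\circ i_0$ to $\mathscr{I}\circ i_0$. I would settle this by a local model on a single disk, comparing the two gluing orders directly.
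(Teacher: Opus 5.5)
Your proposal is correct and follows the same route as the paper. The paper asserts in one line that, by the construction of $\mathcal{S}$, $\varphi_*\circ f$ is homotopic rel $\partial\Sigma$ to the interior connected sums with $\alpha_i=\epsilon_i\cdot\nu(\gamma)_*\mathcal{S}_{\gamma^{-1}(p_i)}([\varphi])$ (your Steps 1--2, which you spell out in more detail), and then applies Proposition \ref{prop_naturality_Psi} (your Step 3). Your extra appeal to Proposition \ref{prop_compare_Psi_all_base_point_Y} is harmless but unnecessary, since $\nu(\gamma)$ already sends the base point $(\gamma^{-1}(p_i),x_0)$ to $p_i$.
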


Here, the map $\Psi$ on the left-hand side of \eqref{eqn_alphi_i_Psi_in_connected_sum} is defined on $\pi_2(\Emb_\dagger(I,I\times Y),\mathscr{I}(p_i))$, and the map $\Psi$ on the right-hand side is defined on $\pi_2(\Emb_\dagger(I,I\times Y_0), \mathscr{I}(\gamma^{-1}(p_i),x_0))$, where $x_0\in D^2$ is a base point on the $D^2$ factor of $Y_0$. The map $\mathcal{S}_{\gamma^{-1}(p_i)}$ is the map $\mathcal{S}$ defined above with respect to the base point $(\gamma^{-1}(p_i),x_0)\in S^1\times D^2=Y_0$. The map $\nu(\gamma)_*:\mathcal{W}(Y_0)\to \mathcal{W}(Y)$ is the map on the codomains of $\Psi$ induced by $\nu(\gamma)$, as defined in Section \ref{subsec_property_Psi}. 

\begin{proof}
By the construction of $\mathcal{S}$ and  $\varphi_*\circ f$, it is clear that
 $\varphi_*\circ f$ is homotopic relative to $\partial \Sigma$ to the interior connected sum of $f$ with $\alpha_i $ at $\sigma^{-1}(p_i)$ ($i=1,\cdots,k)$ respectively, where
 $$
 \alpha_i = \epsilon_i \cdot \nu(\gamma)_* \circ \mathcal{S}_{\gamma^{-1}(p_i)}([\varphi]) \in \pi_2 (\Emb_\dagger(I,I\times Y),\mathscr{I}(p_i)),
 $$
 and $\nu(\gamma)_*: \pi_2 (\Emb_\dagger(I,I\times Y_0),\mathscr{I}(\gamma^{-1}(p_i),x_0))\to \pi_2 (\Emb_\dagger(I,I\times Y),\mathscr{I}(p_i))$ denotes the map on the homotopy groups of embedding spaces induced by $\nu(\gamma)$. 
 Now the desired proposition follows from Proposition \ref{prop_naturality_Psi}.
\end{proof}

Note that the codomains of $\Psi$ and the map $\nu(\gamma)_*:\mathcal{W}(Y_0)\to\mathcal{W}(Y)$ are given by the homology groups of $Y_0$ and $Y$, and hence do not depend on the choice of the base point. We also have the following observation.

\begin{Lemma}
\label{lem_Stvarphi_independent_t}
    The value of $\Psi\circ \mathcal{S}_{t}([\varphi])$ does not depend on the base point $t\in S^1$. 
\end{Lemma}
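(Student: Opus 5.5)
The plan is to compare $\mathcal{S}_{t}([\varphi])$ and $\mathcal{S}_{t'}([\varphi])$ for two base points $t,t'\in S^1$ via a rotation of the $S^1$ factor, and then invoke Proposition \ref{prop_compare_Psi_all_base_point_Y}. Let $\rho_s:Y_0\to Y_0$, $s\in[0,1]$, be the rotation $(\tau,x)\mapsto(\tau+s(t'-t),x)$ (viewing $S^1=\mathbb{R}/\mathbb{Z}$). It is an isotopy of $Y_0$, but it does not fix $\partial Y_0$, so it should not be used to transport $\varphi$. Instead, the idea is to move the slice $i_0$ inside $Y_0$ while keeping $\varphi$ fixed.

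Set $i_s(x)=(t+s(t'-t),x)$. Then $\varphi_*\circ\mathscr{I}\circ i_s$ and $\mathscr{I}\circ i_s$ agree near $\partial D^2$ for every $s$, because $\varphi$ is supported in the interior of $I\times Y_0$ and $i_s(\partial D^2)\subset\partial Y_0$. Gluing the two disks for each $s$ therefore gives a continuous family of maps $S^2\to\Emb_\dagger(I,I\times Y_0)$. The glued sphere is based at a boundary point, namely $\mathscr{I}(i_s(x_0))$ with $x_0\in\partial D^2$, which the paper's conventions allow after the usual identification. This family is a free homotopy from a representative of $\mathcal{S}_t([\varphi])$ to one of $\mathcal{S}_{t'}([\varphi])$. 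Along it, the base point moves through $\mathscr{I}\circ u$, where $u(s)=(t+s(t'-t),x_0)$ is an arc in $Y_0$ from $(t,x_0)$ to $(t',x_0)$. Hence $\mathcal{S}_{t'}([\varphi])=\mathcal{S}_t([\varphi])^{u}$ under the base-point-change isomorphism induced by $\mathscr{I}\circ u$.

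Applying Proposition \ref{prop_compare_Psi_all_base_point_Y} with $Y=Y_0$, $y_0=(t,x_0)$, $y_1=(t',x_0)$ and $\beta=\mathcal{S}_t([\varphi])$ then gives $\Psi_{y_1}(\mathcal{S}_{t'}([\varphi]))=\Psi_{y_0}(\mathcal{S}_t([\varphi]))$. Since the codomain $\mathcal{W}(Y_0)$ does not depend on the base point, this is the lemma. The proposition needs $Y_0$ irreducible with $b_1>0$, and $Y_0=S^1\times D^2$ satisfies both.

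The main point needing care is that the base point of the sphere $\mathcal{S}_t$ is taken on $\partial Y_0$ (or on a boundary collar), whereas Proposition \ref{prop_compare_Psi_all_base_point_Y} requires an isotopy $F$ of $Y$ fixing $\partial Y$ with $F(\cdot,y_0)=u$. If $x_0$ is literally on the boundary, I would first slide the base point slightly into the interior along a short radial arc; this changes nothing, since base-point change along a path composes. After that, $u$ can be realized by a compactly supported isotopy, and $i_1$ acts trivially on homology, so the argument inside the proof of Proposition \ref{prop_compare_Psi_all_base_point_Y} applies verbatim. Alternatively, one can note that the loop $u$ (when $t'=t$ after a full turn) changes $\beta$ by the $\pi_1$-action, which Proposition \ref{prop_invariance_Psi_under_pi1_Y} shows leaves $\Psi$ invariant.
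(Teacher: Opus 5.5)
Your argument is correct, but it is organized differently from the paper's. The paper transports $\varphi$ rather than the slice. Let $r$ be the rotation carrying $\{t_1\}\times D^2$ to $\{t_2\}\times D^2$. Then $\mathcal{S}_{t_2}([\varphi])=r_*\,\mathcal{S}_{t_1}([r^{-1}\circ\varphi\circ r])$, and Proposition \ref{prop_naturality_Psi}, applied to the diffeomorphism $r$ (which acts trivially on $H_1$), gives $\Psi\circ\mathcal{S}_{t_2}([\varphi])=\Psi\circ\mathcal{S}_{t_1}([r^{-1}\circ\varphi\circ r])$. Finally, $s\mapsto r_s^{-1}\circ\varphi\circ r_s$ is an isotopy from $\varphi$ to $r^{-1}\circ\varphi\circ r$ in $\Homeo_\partial(I\times Y_0)$.

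Your remark that the rotation ``should not be used to transport $\varphi$'' is therefore too cautious. The rotation need not fix $\partial Y_0$ pointwise; it only has to preserve the boundary, and then the conjugates are still the identity near the boundary. In fact your free homotopy is the concatenation of the paper's two steps, since
\[
\varphi_*\circ\mathscr{I}\circ i_s=(r_s)_*\circ(r_s^{-1}\circ\varphi\circ r_s)_*\circ\mathscr{I}\circ i_0 .
\]

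Your version has the advantage that you never conjugate $\varphi$, and the identity $\mathcal{S}_{t'}([\varphi])=\mathcal{S}_t([\varphi])^u$ is immediate. The cost is that you rely on the proof of Proposition \ref{prop_compare_Psi_all_base_point_Y}, which uses an isotopy fixing $\partial Y$; that is where your base-point worry comes from.

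Your radial slide does not settle that worry. It also moves a point off the boundary, so ``this changes nothing'' needs the same justification. There are two cleaner fixes. First, you can take $x_0\in\inte(D^2)$: the definition of $\mathcal{S}$ allows any $(t_0,x_0)$, and the glued sphere still passes through $\mathscr{I}(t_0,x_0)$ on its undeformed hemisphere $\mathscr{I}\circ i_0$. Second, the proof of Proposition \ref{prop_compare_Psi_all_base_point_Y} works verbatim with $F(s,\cdot)=r_s$, because Proposition \ref{prop_naturality_Psi} does not require the embedding to fix the boundary.
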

\begin{proof}
    Let $t_1,t_2\in S^1$, and let $r:Y_0\to Y_0$ be the rotation in the $S^1$ factor such that $r(\{t_1\}\times D^2)=\{t_2\}\times D^2$. Then by Proposition \ref{prop_naturality_Psi}, we have $\Psi\circ \mathcal{S}_{t_2}([\varphi])=\Psi\circ \mathcal{S}_{t_1}([r^{-1}\circ \varphi\circ r])$. Since $r^{-1}\circ \varphi\circ r$ is isotopic to $\varphi$ relative to boundary, we have $\Psi\circ \mathcal{S}_{t_1}([\varphi]) = \Psi\circ \mathcal{S}_{t_2}([\varphi])$. 
\end{proof}

By Lemma \ref{lem_Stvarphi_independent_t}, it makes sense to refer to $\Psi\circ \mathcal{S}([\varphi])\in \mathcal{W}(Y_0)$ without specifying the base point $t\in S^1$. 
As a result, we have the following corollary of Proposition \ref{prop_interior_connected_sum_formula_at_each_intersection}.     
Let $\gamma$, $\sigma$, $f$, $\varphi_*$ be given by the setup above Proposition \ref{prop_interior_connected_sum_formula_at_each_intersection}. Here, we do not need to assume that $\gamma$ and $\sigma$ have transverse intersections.
\begin{Corollary}
\label{cor_interior_connected_sum_formula_on_surface_homotopy}
Let $q\in\inte(\Sigma)$ be a fixed point. Then
     $\varphi_*\circ f:\Sigma\to \Emb_\dagger(I,I\times Y)$ is homotopic relative to $\partial \Sigma$ to the interior connected sum of $f$ at $q\in\inte(\Sigma)$ with $\alpha \in \pi_2 (\Emb_\dagger(I,I\times Y),\mathscr{I}\circ \sigma(q))$, where $\alpha$ satisfies
    \[
    \Psi(\alpha) = \# (\sigma\cap \gamma) \cdot  \nu(\gamma)_*\circ \Psi\circ \mathcal{S}([\varphi]).
    \]
    Here, $\# (\sigma\cap \gamma)$ denotes the algebraic intersection number of $\sigma$ and $\gamma$.
\end{Corollary}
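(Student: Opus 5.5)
The plan is to reduce to Proposition \ref{prop_interior_connected_sum_formula_at_each_intersection} and then move all the connected sums to the single point $q$.

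First, perturb $\gamma$ by a small smooth isotopy of $Y$ so that $\gamma(S^1)$ meets $\sigma(\Sigma)$ transversely. The intersections should lie at points $p_1,\dots,p_k$ whose preimages are single points $\sigma^{-1}(p_i)$, and $\sigma$ should be an embedding near those preimages. This is possible because the double-point set of a generic immersed surface is one-dimensional, so a generic circle misses it. An isotopy of $\gamma$ changes $\nu(\gamma)$ by an isotopy of embeddings $Y_0\to Y$. This changes $\nu(\gamma)_*\varphi$ only by an isotopy in $\Homeo_\partial(I\times Y)$, so $\varphi_*\circ f$ changes only by a homotopy relative to $\partial\Sigma$. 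The map $\nu(\gamma)_*$ on $\mathcal{W}$ is unchanged, since it depends only on homology.

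Applying Proposition \ref{prop_interior_connected_sum_formula_at_each_intersection}, $\varphi_*\circ f$ is homotopic relative to $\partial\Sigma$ to $f$ with connected sums of $\alpha_i$ at $q_i=\sigma^{-1}(p_i)$. By Lemma \ref{lem_Stvarphi_independent_t}, each $\alpha_i$ satisfies $\Psi(\alpha_i)=\epsilon_i\,\nu(\gamma)_*\Psi\mathcal{S}([\varphi])$.

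Next, move each connected-sum point from $q_i$ to $q$. Choose an embedded arc $c_i$ in $\inte(\Sigma)$ from $q_i$ to $q$ that avoids the other points and $\partial\Sigma$. Sliding the small disk $D_{q_i}$ along $c_i$ gives a homotopy relative to $\partial\Sigma$. Under it, $f\#_{q_i}\alpha_i$ becomes $f\#_q\alpha_i'$, where $\alpha_i'\in\pi_2(\Emb_\dagger(I,I\times Y),\mathscr{I}\circ\sigma(q))$ is the image of $\alpha_i$ under the change-of-basepoint isomorphism along the path $f\circ c_i=\mathscr{I}\circ\sigma\circ c_i$. This path is the image under $\mathscr{I}$ of the path $u=\sigma\circ c_i$ in $Y$. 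Proposition \ref{prop_compare_Psi_all_base_point_Y} therefore gives $\Psi(\alpha_i')=\Psi(\alpha_i)$.

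Several connected sums at the same point combine into one connected sum with the sum of the classes. This uses disjoint small disks near $q$ and the standard identification of summing in $\pi_2$ with pinching off several spheres. So $\varphi_*\circ f\simeq f\#_q\alpha$ with $\alpha=\sum_i\alpha_i'$. By additivity of $\Psi$, $\Psi(\alpha)=\sum_i\epsilon_i\,\nu(\gamma)_*\Psi\mathcal{S}([\varphi])=\#(\sigma\cap\gamma)\cdot\nu(\gamma)_*\Psi\mathcal{S}([\varphi])$.

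The main subtlety is the basepoint transport: $\Psi$ is only defined relative to base points of the form $\mathscr{I}(y)$. We need the transporting path to be $\mathscr{I}$ of a path in $Y$, which holds because $f=\mathscr{I}\circ\sigma$. A secondary check is that the transversality perturbation does not change the class of $\varphi_*\circ f$. Rationally, $\Psi$ is taken on $\pi_2^\bQ$, which causes no issue.
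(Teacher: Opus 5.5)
Your proposal is correct and follows essentially the same route as the paper. Both arguments isotope $\gamma$ to meet $\sigma$ transversely where $\sigma$ is locally injective, apply Proposition~\ref{prop_interior_connected_sum_formula_at_each_intersection}, transport each $\alpha_i$ to $q$ along $\mathscr{I}\circ\sigma\circ\eta_i$ so that Proposition~\ref{prop_compare_Psi_all_base_point_Y} preserves $\Psi$, and then sum; your extra remarks (that the isotopy of $\gamma$ changes $\varphi_*\circ f$ only by a homotopy rel $\partial\Sigma$, and the explicit appeal to Lemma~\ref{lem_Stvarphi_independent_t}) spell out what the paper leaves implicit.
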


\begin{proof}
After isotopy of $\gamma$, we may assume $\gamma(S^1)$ intersects $\sigma(\Sigma)$ transversely at $k$ points
$p_1,\cdots,p_k$, and that $\sigma$ is injective on a neighborhood of $\sigma^{-1}(\{p_1,\dots,p_k\})\subset \Sigma$.

  According to Proposition \ref{prop_interior_connected_sum_formula_at_each_intersection},
  $\varphi_*\circ f$ is homotopic to the result of interior connected sums of $f$ with $\alpha_i \in \pi_2 (\Emb_\dagger(I,I\times Y),\mathscr{I}(p_i))$ at $\sigma^{-1}(p_i)$ ($i=1,\cdots,k)$ respectively, where each $\alpha_i$ satisfies \eqref{eqn_alphi_i_Psi_in_connected_sum}. Pick an arbitrary path $\eta_i$ from $\sigma^{-1}(p_i)$ to $q$ in $\Sigma$ for each $i$. 
  Let $\alpha_i^{\eta_i}\in \pi_2 (\Emb_\dagger(I,I\times Y),\mathscr{I}\circ \sigma(q))$ denote the image of $\alpha_i$ under the base-point-change isomorphism from $\pi_2 (\Emb_\dagger(I,I\times Y),\mathscr{I}(p_i))$ to $\pi_2 (\Emb_\dagger(I,I\times Y),\mathscr{I}\circ \sigma(q))$ induced by the path $\mathscr{I}\circ\sigma\circ \eta_i$.
  Then
  $\varphi_*\circ f$ is homotopic to the interior connected sum of $f$ with $\alpha = \sum_i \alpha_i^{\eta_i}$ at $q$. By Proposition \ref{prop_compare_Psi_all_base_point_Y}, we have $\Psi(\alpha_i^{\eta_i}) = \Psi(\alpha_i)$. So 
  \[
    \Psi(\alpha) = \sum_i \Psi(\alpha_i) = \# (\sigma\cap \gamma) \cdot  \nu(\gamma)_*\circ \Psi\circ \mathcal{S}([\varphi]).
  \]
  Hence, the corollary is proved.
\end{proof}

Corollary \ref{cor_interior_connected_sum_formula_on_surface_homotopy} extends almost verbatim if we replace $\gamma$ with a disjoint union of embedded circles. Suppose $\gamma_1,\dots,\gamma_k:S^1\to Y$ are disjoint smoothly embedded oriented circles in $Y$, such that each $\gamma_i$ induces an injection $H_1(S^1)\to H_1(Y;\mathbb{Z})/\tor$, and let $\nu(\gamma_i): Y_0=S^1\times D^2 \to Y$ be an orientation-preserving embedding with the core circle given by $\gamma_i$. Assume that the $\nu(\gamma_i)$'s have disjoint images. Let $\varphi_i\in \Homeo_\partial(I\times Y_0)$, let $\nu(\gamma_i)_*(\varphi)\in \Diff(I\times Y)$ be the extension of $\varphi_i$ to $I\times Y$ via $\id_I\times \nu(\gamma_i)$.  Note that $\nu(\gamma_i)_*(\varphi_i)$ have disjoint supports, so they are commutative.  Let $ \varphi$ be the product of all $\nu(\gamma_i)_*(\varphi_i)$. Let 
\[
\varphi_*:\Emb_\dagger(I,I\times Y)\to \Emb_\dagger(I,I\times Y)
\]
denote the homeomorphism induced by $\varphi$.
Let $\sigma$ be a proper immersion of $(\Sigma,\partial \Sigma)$ in $(Y,\partial Y)$, and let $f=\mathscr{I}\circ \sigma:\Sigma\to \Emb_\dagger(I,I\times Y)$. Then we have the following result.

\begin{Proposition}
\label{prop_interior_connected_sum_formula_on_surface_homotopy_multi_curves}
Let $\varphi_i,\gamma_i,\varphi,f$ be as above. Let $q\in \inte(\Sigma)$ be a fixed point.
    Then the map $\varphi_*\circ f$ is homotopic relative to $\partial \Sigma$ to the interior connected sum of $f$ at $q$ with $\alpha \in \pi_2 (\Emb_\dagger(I,I\times Y),\mathscr{I}\circ \sigma(q))$, where $\alpha$ satisfies
    \[
    \Psi(\alpha) = \sum_i \# (\sigma\cap \gamma_i) \cdot  \nu(\gamma_i)_*\circ \Psi\circ \mathcal{S}([\varphi_i]).
    \]
    Here, $\# (\sigma\cap \gamma_i)$ denotes the algebraic intersection number of $\sigma$ and $\gamma_i$.
\end{Proposition}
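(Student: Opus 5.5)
The plan is to reduce to the single-curve case, Proposition \ref{prop_interior_connected_sum_formula_at_each_intersection}, applied at all intersection points simultaneously, and then to combine the local contributions at the single point $q$ as in the proof of Corollary \ref{cor_interior_connected_sum_formula_on_surface_homotopy}.

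First, isotope each $\gamma_i$, together with its tubular neighborhood $\nu(\gamma_i)$, so that the following hold.
\begin{enumerate}
\item The $\gamma_i$ stay pairwise disjoint with disjoint tubular images.
\item Each $\gamma_i(S^1)$ meets $\sigma(\Sigma)$ transversely at finitely many points $p_{i,1},\dots,p_{i,k_i}$, with signs $\epsilon_{i,l}$.
\item $\sigma$ is an embedding near all preimages $\sigma^{-1}(p_{i,l})$.
\end{enumerate}
An isotopy of the embeddings $\nu(\gamma_i)$ changes $\varphi$ only by an isotopy in $\Homeo_\partial(I\times Y)$, so the homotopy class of $\varphi_*\circ f$ relative to $\partial\Sigma$ is unchanged. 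The algebraic intersection numbers $\#(\sigma\cap\gamma_i)$ are also unchanged.

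Next, I would repeat the local analysis of Proposition \ref{prop_interior_connected_sum_formula_at_each_intersection} with all curves at once. The supports of the $\nu(\gamma_i)_*(\varphi_i)$ are disjoint. Near each point $\sigma^{-1}(p_{i,l})$, the map $\sigma$ sends a small disk to a meridian disk $\{t\}\times D^2$ of $\nu(\gamma_i)$, with orientation sign $\epsilon_{i,l}$. Outside these disks, $\sigma$ avoids every tubular neighborhood, so $\varphi_*\circ f=f$ there.

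On each small disk, only the factor $\nu(\gamma_i)_*(\varphi_i)$ acts, because the other factors are the identity on arcs lying in $I\times\nu(\gamma_i)$. Hence $\varphi_*\circ f$ agrees with $f$ off the disks and equals the $\mathcal{S}$-construction on each disk. This means $\varphi_*\circ f$ is homotopic rel $\partial\Sigma$ to the simultaneous interior connected sum of $f$ with classes
\[
\alpha_{i,l}=\epsilon_{i,l}\cdot\nu(\gamma_i)_*\circ\mathcal{S}_{\gamma_i^{-1}(p_{i,l})}([\varphi_i]).
\]
By Proposition \ref{prop_naturality_Psi} and Lemma \ref{lem_Stvarphi_independent_t}, each of these satisfies
\[
\Psi(\alpha_{i,l})=\epsilon_{i,l}\cdot\nu(\gamma_i)_*\circ\Psi\circ\mathcal{S}([\varphi_i]).
\]

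Finally, choose paths $\eta_{i,l}$ in $\Sigma$ from $\sigma^{-1}(p_{i,l})$ to $q$. Moving each connected-sum sphere along its path to $q$ transports $\alpha_{i,l}$ by base-point change along $\mathscr{I}\circ\sigma\circ\eta_{i,l}$. Connected sums at the same point add, so $\varphi_*\circ f$ is homotopic rel $\partial\Sigma$ to $f\#_q\alpha$ with $\alpha=\sum_{i,l}\alpha_{i,l}^{\eta_{i,l}}$.

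By Proposition \ref{prop_compare_Psi_all_base_point_Y}, $\Psi$ is invariant under base-point change, so
\[
\Psi(\alpha)=\sum_i\Big(\sum_l\epsilon_{i,l}\Big)\nu(\gamma_i)_*\Psi\mathcal{S}([\varphi_i])=\sum_i\#(\sigma\cap\gamma_i)\cdot\nu(\gamma_i)_*\Psi\mathcal{S}([\varphi_i]),
\]
using $\sum_l\epsilon_{i,l}=\#(\sigma\cap\gamma_i)$.

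The only step needing care is the simultaneous local analysis: checking that the disjoint supports let the single-curve argument run independently at each intersection point. Since each disk meets only one support, this is the same reasoning as in Proposition \ref{prop_interior_connected_sum_formula_at_each_intersection}.
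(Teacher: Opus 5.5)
Your proposal is correct and follows essentially the same route as the paper. The paper applies the single-curve Corollary \ref{cor_interior_connected_sum_formula_on_surface_homotopy} to each $\gamma_i$ at distinct points $q_i$ and then slides these connected sums to $q$ via Proposition \ref{prop_compare_Psi_all_base_point_Y}; you instead unwind that corollary and do the local analysis at all intersection points $p_{i,l}$ at once, which has the minor advantage of making explicit why disjoint supports let the factors act independently, a point the paper's curve-by-curve phrasing leaves implicit.
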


\begin{proof}
Let $q_1,\dots,q_k$ be distinct points in $\inte(\Sigma)$.
    By the same argument as Corollary \ref{cor_interior_connected_sum_formula_on_surface_homotopy}, each $\nu(\gamma_i)_*(\varphi_i)$ changes the homotopy class of $f$ by taking an interior connected sum with some $\alpha_i \in \pi_2 (\Emb_\dagger(I,I\times Y),\mathscr{I}\circ \sigma(q_i))$ at $q_i$, where  $\Psi(\alpha_i) =  \# (\sigma\cap \gamma_i) \cdot  \nu(\gamma_i)_*\circ \Psi\circ \mathcal{S}([\varphi_i])$. Hence, the desired result follows by moving the attaching point of each interior connected sum from $q_i$ to $q$ via an arc on $\Sigma$ and invoking Proposition \ref{prop_compare_Psi_all_base_point_Y}.
\end{proof}

We also record the following results by Budney--Gabai for later reference. Recall that we denote $Y_0=S^1\times D^2$.

\begin{Theorem}[\cite{budney2019knotted}]
\label{thm_BG_construction_S1xD3}
    There exists a sequence $\varphi_i\in \Diff_{PI}(I\times Y_0)$, such that 
\begin{enumerate}
    \item $\Psi\circ\mathcal{S}([\varphi_i])\in \mathcal{W}(Y_0)$ are linearly independent over $\mathbb{Z}$.
    \item Suppose $\iota:Y_0\to Y_0$ is an orientation-preserving homeomorphism that induces $(-\id)$ on $H_1(Y_0;\mathbb{Z})\cong \bZ$. Then 
    \[
            \iota_*\circ \Psi\circ\mathcal{S}([\varphi_i])=-\Psi\circ\mathcal{S}([\varphi_i])
    \]
    for all $i$. 
\end{enumerate}
\end{Theorem}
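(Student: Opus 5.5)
This theorem is a citation of Budney--Gabai, so the plan is to recover it from their computation, restated in the language of this paper: the barbell diffeomorphisms $\varphi_i$ on $I\times Y_0=S^1\times D^3$, and the invariant $\Psi\circ\mathcal{S}$ landing in $\mathcal{W}(Y_0)$.

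\textbf{Step 1: the candidates.} For $i\ge 1$ (or $i\ge 2$), take the barbell diffeomorphisms $\varphi_i=\delta_{i}$ of \cite{budney2019knotted}. Each one is obtained by implanting the barbell map along a barbell whose two cuffs are spheres linking the core arc, and whose bar winds $i$ times around the $S^1$ factor.

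These lie in $\Diff_{PI}$ because the barbell map is the boundary restriction of a diffeomorphism of the thickened barbell times $I$. Concretely, it is the ``graph'' of a loop of embeddings of $S^2\sqcup S^2$, and this loop extends over a family supported in the $5$--dimensional thickening.

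\textbf{Step 2: compute $\mathcal{S}([\varphi_i])$.}
\begin{itemize}
\item The map $\mathcal{S}([\varphi_i])\in\pi_2\Emb_\partial(I,I\times Y_0)$ is the $2$--parameter family of arcs swept out by sliding the straight arc $I\times\{y\}$ through the barbell. As in \cite{budney2019knotted}*{Section 3}, it is represented by a ``graspers on a chord diagram'' family.
\item Feeding this family through $\Phi_3$ gives a class in $\pi_5^\bQ C_3^\tau(I\times Y_0)$. By Proposition~\ref{prop_Theta_wij}, this class is a combination of Whitehead products $[t_1^{\alpha}w_{12},t_2^{\beta}w_{23}]$ and $[t_i^\alpha w_{ij},t_i^\beta w_{ij}]$, with exponents determined by the winding of the bar.
\item Applying $\Theta$ and the quotient of Definition~\ref{Dfn_Psi} then gives an explicit element of $\mathcal{W}(Y_0)$. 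It is a short signed sum of classes $\hat\theta_{a,b}$ with $a,b\in\{\pm i,\pm1,\pm(i-1),\dots\}$, in the pattern
\[
\hat\theta_{i,1}-\hat\theta_{1,i}+\cdots,
\]
with the other terms coming from the relations.
\end{itemize}

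\textbf{Step 3: linear independence (the main obstacle).} Since $\mathcal{W}(Y_0)$ is the quotient $V(Y_0)/R(Y_0)$, I would construct linear functionals on $V(Y_0)$ that vanish on every relator
\[
\hat\theta_{a-b,-b}-\hat\theta_{b-a,-a}-\hat\theta_{a,a-b}+\hat\theta_{b,b-a}.
\]
The plan is to use these functionals to detect a ``leading term'' $\hat\theta_{i,\ast}$ of $\Psi\circ\mathcal{S}([\varphi_i])$ that does not occur for smaller indices. This gives a triangular structure and hence independence over $\mathbb{Z}$.

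This step is exactly the Budney--Gabai infinite-dimensionality computation of \cite{budney2025automorphism}*{Section 4}. The first thing I would try is a symmetry-orbit invariant: for each pair $(a,b)$, sum the coefficients over the orbit of $(a,b)$ under the transformations that appear in the relator. I would then check that this sum kills $R(Y_0)$ while separating the $i$.

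\textbf{Step 4: part (2).} Let $\iota$ be the orientation-preserving involution $(t,x)\mapsto(\bar t,\bar x)$, which acts by $-\id$ on $H_1$. By Proposition~\ref{prop_naturality_Psi}, $\iota_*\Psi\mathcal{S}([\varphi_i])=\Psi\mathcal{S}([\iota\varphi_i\iota^{-1}])$, and $\iota_*$ sends $\hat\theta_{a,b}\mapsto\hat\theta_{-a,-b}$.

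So I need either to identify $\iota\varphi_i\iota^{-1}$ with $\varphi_i^{-1}$ up to isotopy, or to verify directly that the explicit element from Step 2 is anti-invariant under $\hat\theta_{a,b}\mapsto\hat\theta_{-a,-b}$ modulo $R(Y_0)$. The second check is mechanical once Step 2 is done.

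If the sign turns out wrong for some $i$, I would replace $\varphi_i$ by $\varphi_i\circ(\iota\varphi_i\iota^{-1})^{-1}$. This replacement is anti-invariant by construction. Its independence then has to be rechecked in Step 3, which it should survive if the leading terms do not cancel.
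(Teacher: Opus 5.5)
\textbf{Where your plan agrees with the paper.} The part of your plan that actually carries the proof is the paper's argument. The paper observes that for $Y=Y_0$, where $\tau$ is an isomorphism onto $\mathbb{Z}$ and $\widetilde Y_0$ is the universal cover, the composite $\Psi\circ\mathcal S$ coincides with Budney--Gabai's quotient invariant $W_3'$. Part (1) is then \cite[Theorem 8.3]{budney2019knotted}, together with the remark in \cite[Section 4]{budney2025automorphism} that independence survives the quotient. Part (2) is read off from the explicit values in \cite[Corollary 8.2]{budney2019knotted}. You should state this identification explicitly. Your Step~2 only guesses the shape of the answer, so it cannot replace citing that formula. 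The steps you add beyond the citation contain errors that would make them fail.

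\textbf{Step~4 has a sign error.} You use $\Psi\circ\mathcal S([\hat\iota\varphi\hat\iota^{-1}])=\iota_*\Psi\circ\mathcal S([\varphi])$, with $\hat\iota=\id_I\times\iota$. Since $\iota$ preserves orientation but reverses the $S^1$ direction, it reverses the orientation of the slice $\{t_0\}\times D^2$ used to define $\mathcal S$. So in fact $\mathcal S([\hat\iota\varphi\hat\iota^{-1}])=-\hat\iota_*\mathcal S([\varphi])$, as in the proof of Corollary~\ref{cor_homeo_reverse_H1_on_Y0}. Hence (2) is equivalent to $\Psi\circ\mathcal S([\hat\iota\varphi_i\hat\iota^{-1}])=\Psi\circ\mathcal S([\varphi_i])$. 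This has two consequences:
\begin{enumerate}
\item Your first route aims at a false statement. If $\hat\iota\varphi_i\hat\iota^{-1}$ were isotopic to $\varphi_i^{-1}$, then $w=\Psi\circ\mathcal S([\varphi_i])$ would satisfy $\iota_*w=w$. Together with (2) this forces $w=0$, contradicting (1).
\item Your fallback $\varphi_i\circ(\hat\iota\varphi_i\hat\iota^{-1})^{-1}$ has invariant $w+\iota_*w$, which is $\iota_*$-invariant, not anti-invariant. The anti-invariant combination would be $\varphi_i\circ\hat\iota\varphi_i\hat\iota^{-1}$.
\end{enumerate}
Only the direct check against the explicit formula is sound, and that is what the paper does.

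\textbf{Step~3: unsigned orbit sums cannot separate these classes.} The substitution $T(x,y)=(x-y,x)$ carries the first index pair $(a-b,-b)$ of the relator to the third, $(a,a-b)$, and $T^3(x,y)=(-x,-y)$. So every orbit is symmetric under $(x,y)\mapsto(-x,-y)$. Any orbit-sum functional $\ell$ does kill $R(Y_0)$, since the coefficients $1,-1,-1,1$ cancel, but it also satisfies $\ell\circ\iota_*=\ell$. Since (2) holds, such an $\ell$ vanishes on every $\Psi\circ\mathcal S([\varphi_i])$. A detecting functional must be odd under $(a,b)\mapsto(-a,-b)$.

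\textbf{Step~1: the pseudo-isotopy reason is not the operative one.} The reason you give would apply to every barbell map. What is actually used is a specific criterion: a cuff bounds a $3$-ball, \cite[Proposition 2.6]{budney2025automorphism}, as in the paper's construction of $\psi_k$. Alternatively, cite \cite{budney2019knotted}, where the $\varphi_i$ are constructed in $\Diff_{PI}$.
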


\begin{proof}
    When applied to the special case of $Y=Y_0$, the composition map $\Psi\circ \mathcal{S}$ is the same as the ``quotient of the $W_3$ invariant'' on $\pi_0\Homeo_\partial(S^1\times D^3)$ constructed by Budney--Gabai in \cite[Section 4]{budney2025automorphism}. The ``quotient of the $W_3$ invariant'' is denoted by $W_3'$ in \cite{budney2025automorphism}.  In \cite{budney2019knotted}, a sequence of elements $\varphi_i\in \Diff_{PI}(S^1\times D^3)$ is constructed, and it was shown in \cite[Theorem 8.3]{budney2019knotted} that the $W_3$ invariants of $\varphi_i$ are linearly independent.  It was noted in \cite[Section 4]{budney2025automorphism} that after the quotient on the codomain, the $W_3'$ invariants are also linearly independent. This yields Statement (1) of the theorem. 
    
    The above properties were proved by directly computing the $W_3$ and $W_3'$ invariants. The result of the computations can be found in \cite[Corollary 8.2]{budney2019knotted}. Statement (2) of the theorem is an immediate consequence of the result of the computation.
\end{proof}

Theorem \ref{thm_BG_construction_S1xD3} has the following consequence.

\begin{Corollary}
\label{cor_homeo_reverse_H1_on_Y0}
    Let $\varphi_i$ be given by Theorem \ref{thm_BG_construction_S1xD3}. Let $\iota:Y_0\to Y_0$ be an orientation-preserving diffeomorphism that induces $(-\id)$ on $H_1(Y_0;\mathbb{Z})$, let $\hat\iota = \id_I\times\iota: I\times Y_0\to I\times Y_0$. Then 
    \[
\Psi\circ\mathcal{S}([\varphi_i])=\Psi\circ\mathcal{S}([\hat\iota\circ \varphi_i\circ\hat\iota^{-1}])
    \]
\end{Corollary}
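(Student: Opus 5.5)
The plan is to compute $\Psi\circ\mathcal{S}$ of the conjugated diffeomorphism $\hat\iota\circ\varphi_i\circ\hat\iota^{-1}$ by naturality under $\iota$, and then use Statement (2) of Theorem \ref{thm_BG_construction_S1xD3} twice. The natural expectation is that conjugation contributes one factor of $\iota_*$ and one sign from the reversal of the arc direction relevant to $\mathcal{S}$, and that these two sign changes cancel.

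First, I will unwind the definition of $\mathcal{S}$. Choose the base point $(t_0,x_0)$ and the disk $i_0(D^2)=\{t_0\}\times D^2$. The map $(\hat\iota\circ\varphi_i\circ\hat\iota^{-1})_*\circ\mathscr{I}\circ i_0$ equals $\hat\iota_*\circ(\varphi_i)_*\circ\hat\iota^{-1}_*\circ\mathscr{I}\circ i_0$. Since $\hat\iota=\id_I\times\iota$, we have $\hat\iota^{-1}_*\circ\mathscr{I}=\mathscr{I}\circ\iota^{-1}$. After an isotopy of $\iota$, we may assume that $\iota^{-1}$ maps $\{t_0\}\times D^2$ to some meridian disk $\{t_1\}\times D^2$. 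Because $\iota$ is orientation-preserving but reverses the $S^1$ direction, it must also reverse the orientation of the disk. Thus $\iota^{-1}\circ i_0=i_1\circ\rho$, where $\rho$ is an orientation-reversing homeomorphism of $D^2$. Gluing with the standard map along the boundary then gives
\[
\mathcal{S}_{t_0}([\hat\iota\varphi_i\hat\iota^{-1}])=-\,\hat\iota_*\big(\mathcal{S}_{t_1}([\varphi_i])\big),
\]
with the sign coming from $\rho$. Here $\hat\iota_*$ is the map induced on $\pi_2\Emb_\dagger(I,I\times Y_0)$ with base points changed accordingly.

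Next, I will apply $\Psi$. By Proposition \ref{prop_naturality_Psi}, applied to the self-embedding $\iota$ (which is an isomorphism on $H_1/\tor$), together with Proposition \ref{prop_compare_Psi_all_base_point_Y} to handle the base points, we get $\Psi\circ\hat\iota_*=\iota_*\circ\Psi$. Lemma \ref{lem_Stvarphi_independent_t} removes the dependence on $t_1$. This yields
\[
\Psi\circ\mathcal{S}([\hat\iota\varphi_i\hat\iota^{-1}])=-\iota_*\Psi\circ\mathcal{S}([\varphi_i]).
\]
By Theorem \ref{thm_BG_construction_S1xD3}(2), the right-hand side equals $\Psi\circ\mathcal{S}([\varphi_i])$, which is the claim.

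The main obstacle is the sign bookkeeping in the first step. I need to check that the disk orientation is really reversed: $\iota$ preserves the orientation of $S^1\times D^2$ and reverses the $S^1$ factor, hence it reverses the $D^2$ factor. I also need to check that the orientation reversal of the domain sphere in the gluing produces exactly a negative sign in $\pi_2$, which is compatible with $\Psi$ being a homomorphism. If Proposition \ref{prop_naturality_Psi} requires base-point-preserving embeddings, I will first compose $\iota$ with a rotation isotopic to the identity so that it fixes the base point. This does not change the homotopy class of the conjugate.
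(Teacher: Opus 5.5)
Your proposal is correct and follows essentially the same route as the paper. The paper likewise derives $\mathcal{S}([\hat\iota\circ\varphi_i\circ\hat\iota^{-1}])=-\hat\iota_*\mathcal{S}([\varphi_i])$ from the orientation reversal of the meridian disk, then applies naturality of $\Psi$ and Theorem \ref{thm_BG_construction_S1xD3}(2). The only cosmetic difference is that the paper first isotopes $\iota$ to preserve $\{t_0\}\times D^2$ and fix the base point, whereas you let the disk move to $\{t_1\}\times D^2$ and absorb this with Lemma \ref{lem_Stvarphi_independent_t} and Proposition \ref{prop_compare_Psi_all_base_point_Y}. Note also that Statement (2) is needed only once, not twice as your opening sentence says.
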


\begin{proof}
Let $(t_0,x_0)\in S^1\times D^2=Y_0$ be the base point.
    After isotopy, assume that $\iota(\{t_0\}\times D^2) = (\{t_0\}\times D^2)$ and that $\iota$ fixes $(t_0,x_0)$. Then $\iota$ restricts to an orientation-reversing diffeomorphism on $\{t_0\}\times D^2$. 
    
    Let $\hat{\iota}_*:\pi_2\Emb_\dagger(I,I\times Y_0)\to \pi_2\Emb_\dagger(I,I\times Y_0)$ be the induced map by $\hat\iota$. Then $\mathcal{S}([\hat\iota\circ \varphi_i\circ\hat\iota^{-1}]) = -\hat{\iota}_*\circ  \mathcal{S}([\varphi])$. The negative sign comes from the fact that $\iota$ restricts to an orientation-reversing diffeomorphism on $\{t_0\}\times D^2$, so a coordinate change by $\hat\iota$ reverses the sign in the definition of $\mathcal{S}$. Therefore, by Theorem \ref{thm_BG_construction_S1xD3} (2) and Proposition \ref{prop_naturality_Psi}, we have 
    \[
\Psi\circ \mathcal{S}([\hat\iota\circ \varphi_i\circ\hat\iota^{-1}]) = -\Psi\circ \hat{\iota}_*\circ  \mathcal{S}([\varphi]) = -\hat{\iota}_*\circ \Psi\circ  \mathcal{S}([\varphi]) = \Psi\circ  \mathcal{S}([\varphi]) .
    \]
\end{proof}

\section{The case when $\partial Y\neq \emptyset$}
\label{sec_boundary_non_empty}

This section proves Theorem \ref{thm_main_irred_case} in the case when $\partial Y\neq \emptyset$. In this case, the theorem can be restated as follows. Recall that 
$\pi_0 \Diff_{PI}(I\times Y)$, $\pi_0 \Diff_{\partial}(I\times Y)$, $\pi_0\Homeo_{\partial}(I\times Y)$
are all abelian. 
\begin{Theorem}
\label{thm_main_non-closed_case}
    Suppose $Y$ is a compact, connected, oriented, irreducible $3$--manifold such that $\partial Y\neq \emptyset$ and $\pi_1(Y)$ is infinite. Then there exists a smooth embedding $i:S^1\times D^2=Y_0\to Y$, such that the image of the composition map 
    \[
        \pi_0 \Diff_{PI}(I\times Y_0)\xrightarrow{(\id_I\times i)_*} \pi_0 \Diff_{PI}(I\times Y) \to \pi_0\Homeo (I\times Y,I\times \partial Y)
    \]
is of infinite rank, where $(\id_I\times i)_*$ denotes the extension to $I\times Y$ via $\id_I\times i$.
\end{Theorem}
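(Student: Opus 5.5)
We will use the surface-homotopy obstruction of Section \ref{sec_surface_homotopy} together with Corollary \ref{cor_interior_connected_sum_formula_on_surface_homotopy}.

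\emph{Step 1: choosing $\gamma$ and $\sigma$.} Since $Y$ is irreducible with nonempty boundary and infinite $\pi_1$, no boundary component is a sphere (otherwise $Y=D^3$). Hence $b_1(Y)\ge 1$ by the half-lives-half-dies principle, and $Y$ is aspherical, so $\Psi$ is defined. Choose a properly embedded, connected, oriented surface $\sigma:(\Sigma,\partial\Sigma)\hookrightarrow(Y,\partial Y)$ that is Poincaré--Lefschetz dual to a nonzero class in $H^1(Y)$; we may take $\partial\Sigma\neq\emptyset$. Then choose an embedded circle $\gamma$ with $\#(\sigma\cap\gamma)=n\neq 0$. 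This forces $[\gamma]$ to have infinite order in $H_1(Y;\bZ)$, so $\gamma_*:H_1(S^1)\to H_1(Y;\bZ)/\tor$ is injective. Set $i=\nu(\gamma)$.

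\emph{Step 2: reduction to a $\pi_2$ statement.} Let $\varphi_k$ be the Budney--Gabai diffeomorphisms of Theorem \ref{thm_BG_construction_S1xD3}. Let $\varphi$ be the extension of a nontrivial integer combination $\sum c_k\varphi_k$, and suppose $\varphi$ is trivial in $\pi_0\Homeo(I\times Y,I\times\partial Y)$. Since $\varphi$ is smoothly pseudo-isotopic to the identity, it is homotopic to $\id$ rel $\{0,1\}\times Y$; we aim to justify this from the pseudo-isotopy, or alternatively by construction via barbells. Lemma \ref{lem_Embdagger_obstruct_isotopy} then gives a homotopy $\varphi_*\circ\mathscr I\simeq\mathscr I$. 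Restricting to $\Sigma$, $\varphi_*\circ f\simeq f$ with $f=\mathscr I\circ\sigma$.

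The homotopy on $\partial\Sigma$ is the crucial point, since Lemma \ref{lem_surface_hmtpy_rel_boundary} needs a homotopy rel $\partial\Sigma$. Here we use that $\varphi_*$ is the identity on arcs lying in a collar of $I\times\partial Y$, which is disjoint from the support of $\varphi$. We will try to arrange the homotopy from Lemma \ref{lem_Embdagger_obstruct_isotopy} to be stationary on $\mathscr I(\partial Y)$. Its proof produces level-preserving maps built from an isotopy in $\Homeo(I\times Y,I\times\partial Y)$, which fixes $I\times\partial Y$. We expect that redoing the proof with the homotopy $F$ chosen rel $I\times\partial Y$ (possible since $Y$ is aspherical and $\varphi$ is the identity there) gives a homotopy rel $\partial\Sigma$. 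Once this holds, Corollary \ref{cor_interior_connected_sum_formula_on_surface_homotopy} shows that $f\#_q\alpha\simeq f$ rel $\partial\Sigma$, where
\[
\Psi(\alpha)=n\cdot i_*\Psi\mathcal S\Big(\sum c_k[\varphi_k]\Big).
\]

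\emph{Step 3: the algebraic contradiction.} Lemma \ref{lem_surface_hmtpy_rel_boundary} then puts $\alpha$ in the span of the elements $\beta-\beta^u$ with $u\in f_*\pi_1(\Sigma)$. Every such $u$ lies in $\mathscr I_*\pi_1(Y)$, so Proposition \ref{prop_invariance_Psi_under_pi1_Y} gives $\Psi(\beta-\beta^u)=0$, hence $\Psi(\alpha)=0$. We need $\Psi$ to be additive on $\pi_2^{\bQ}$; it is, being a homomorphism. Since $i_*$ is injective by Proposition \ref{prop_embedding_and_codomain_of_Psi}, $n\neq0$, and the values $\Psi\mathcal S([\varphi_k])$ are linearly independent, we get $c_k=0$ for all $k$, a contradiction. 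Hence the image has infinite rank.

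The main obstacle is Step 2: obtaining a homotopy rel $\partial\Sigma$ and verifying the hypothesis that $\varphi$ is homotopic to $\id$ rel $\{0,1\}\times Y$, which is the hypothesis of Lemma \ref{lem_Embdagger_obstruct_isotopy}.
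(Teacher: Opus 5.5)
Your proposal is correct and follows the paper's own proof essentially step for step: half-lives-half-dies for $b_1(Y)>0$, a surface tubed to $\partial Y$ meeting $\gamma$ algebraically nontrivially, Corollary \ref{cor_interior_connected_sum_formula_on_surface_homotopy}, injectivity of $\nu(\gamma)_*$ from Proposition \ref{prop_embedding_and_codomain_of_Psi}, $\Psi$-invariance from Proposition \ref{prop_invariance_Psi_under_pi1_Y}, and then Lemmas \ref{lem_surface_hmtpy_rel_boundary} and \ref{lem_Embdagger_obstruct_isotopy}. The paper passes over the rel-$\partial\Sigma$ issue you flag without comment, and your fix works: composing the pseudo-isotopy with the projection to $I\times Y$ (or your asphericity argument, using that $\partial(I\times Y)$ is connected and $\pi_1$-surjective) gives a homotopy $\varphi\simeq\id$ rel all of $\partial(I\times Y)$; since the isotopy to a level-preserving representative in Lemma \ref{lem_Embdagger_obstruct_isotopy} also lies in $\Homeo_\partial(I\times Y)$, the resulting homotopy $\varphi_*\circ\mathscr I\simeq\mathscr I$ is stationary on $\mathscr I(\partial Y)$, hence rel $\partial\Sigma$.
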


\begin{proof}
    We first show that the assumptions imply $b_1(Y)>0$. Note that since $\pi_1(Y)$ is infinite, we know that $Y\not\cong D^3$. Since $Y$ is irreducible, $\partial Y$ does not contain any connected component that is diffeomorphic to $S^2$. Since $\partial Y\neq \emptyset$, we have $\rank H_1(\partial Y)>0$. By a standard result in $3$--manifold topology (see, for example, \cite[Lemma 3.5]{hatcher2007notes}), the rank of the image of $H_1(\partial Y)\to H_1(Y)$ is $\frac12 \rank H_1(\partial Y)$. So $b_1(Y)>0$.

    Let $\sigma:(\Sigma,\partial \Sigma)\to (Y,\partial Y)$ be a properly immersed, connected, oriented surface and let $\gamma:S^1\to \inte(Y)$ be a smooth embedding such that the algebraic intersection number of $\Sigma$ and $\gamma$ is non-zero. Since $b_1(Y)>0$, such a pair $(\sigma,\gamma)$ exists. After tubing $\Sigma$ to $\partial Y$, we may assume that $\partial \Sigma\neq \emptyset$. 

    Let $\nu(\gamma):S^1\times D^2 = Y_0\to Y$ be an orientation-preserving embedding with core circle given by $\gamma$.
    Let $\varphi_1,\varphi_2,\dots\in \Diff_{PI}(I\times Y_0)$ be given by Theorem \ref{thm_BG_construction_S1xD3}. 
    Let $\nu(\gamma)_*(\varphi_i)$ be the extension of $\varphi_i$ to $I\times Y$ via $\id_I\times \nu(\gamma)$. 
    
    We claim that $\nu(\gamma)_*(\varphi_i)$'s are linearly independent over $\mathbb{Z}$ in $\pi_0\Homeo(I\times Y,I\times \partial Y)$. Suppose $0\neq (c_k) \in \oplus_\infty \mathbb{Z}$ and let $\varphi=\prod_k \varphi_k^{c_k}$. The goal is to show that $\nu(\gamma)_*(\varphi)$ is non-trivial in $\pi_0\Homeo(I\times Y,I\times \partial Y)$. 

    Consider the map $f = \mathscr{I}\circ \sigma: \Sigma \to \Emb_\dagger(I,I\times Y)$. Let $\varphi_*: \Emb_\dagger(I,I\times Y) \to \Emb_\dagger(I,I\times Y)$ be the self-homeomorphism of $\Emb_\dagger(I,I\times Y)$ induced by $\nu(\gamma)_*\varphi:I\times Y \to I\times Y$. We first show that $f$ and $\varphi_*\circ f$ are not homotopic relative to $\partial \Sigma$. 
    
    By Corollary \ref{cor_interior_connected_sum_formula_on_surface_homotopy}, there exists $\alpha\in \pi_2\Emb_\dagger(I,I\times Y)$ such that $\varphi_* f$ is homotopic to an interior connected sum of $f$ with $\alpha$, where $\alpha$ satisfies
    \[
    \Psi(\alpha) = \#(\sigma\cap\gamma)\cdot \nu(\gamma)_*\circ \Psi\circ \mathcal{S}([\varphi]).
    \]
    By Theorem \ref{thm_BG_construction_S1xD3} and the definition of $\varphi$, we know that $\Psi\circ \mathcal{S}([\varphi])\neq 0\in \mathcal{W}(Y_0)$. By Proposition \ref{prop_invariance_Psi_under_pi1_Y}, the map $\nu(\gamma)_*:\mathcal{W}(Y_0)\to \mathcal{W}(Y)$ is injective. We also have $\#(\sigma\cap\gamma)\neq 0$ by the definitions of $\sigma$ and $\gamma$. So $\Psi(\alpha)\neq 0$.

By Proposition \ref{prop_invariance_Psi_under_pi1_Y}, all elements of the form 
$\beta - \beta^u$, 
where 
\[
\beta\in \pi_2\Emb_\dagger(I,I\times Y),\quad  u\in \Ima(i_*:\pi_1(\Sigma)\to \pi_1\Emb_\dagger(I,I\times Y)),
\]
are in the kernel of $\Psi$. 
So $\alpha$ is not contained in the subgroup generated by the elements of the form $\beta - \beta^u$. 
By Lemma \ref{lem_surface_hmtpy_rel_boundary}, we conclude that $f$ and $f'$ are not homotopic to each other relative to $\partial \Sigma$. 

Note that $\nu(\gamma)_*(\varphi)$ is homotopic to $\id$ relative to $\{0,1\}\times Y$. 
Therefore, by Lemma \ref{lem_Embdagger_obstruct_isotopy}, $\nu(\gamma)_*(\varphi)$ is non-trivial in $\pi_0\Homeo(I\times Y,I\times \partial Y)$. This proves the desired result. 
\end{proof}

\begin{remark}
        The proof for Theorem \ref{thm_main_non-closed_case} relies on the assumption that $\partial Y\neq \emptyset$ in two steps: First, the proof invokes Lemma \ref{lem_surface_hmtpy_rel_boundary}, which only applies to surfaces with non-empty boundaries. Second, the assumptions of Theorem \ref{thm_main_non-closed_case} imply $b_1(Y)>0$, which is not true in general in the closed case.  Section \ref{sec_closed_irreducible} will address these issues and generalize the argument to the case when $\partial Y=\emptyset$. 
\end{remark}

\section{The case when $\partial Y= \emptyset$}
\label{sec_closed_irreducible}

To prove Theorem \ref{thm_main_irred_case} when $\partial Y=\emptyset$, we invoke the following theorem from the literature.

\begin{Theorem}[Agol \cite{agol2013virtual}, Wise \cite{wise2009research}, Luecke \cite{luecke1988finite}, et al]
\label{thm_virtual_positive_b1}
    Suppose $Y$ is a closed, oriented, irreducible $3$--manifold such that $\pi_1(Y)$ is infinite. Then there exists a finite cover $\widetilde{Y}$ of $Y$ such that $b_1(\widetilde{Y})>0$.
\end{Theorem}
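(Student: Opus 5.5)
This statement is imported from the literature rather than proved from the paper's own machinery, so the plan is to assemble it from known results via geometrization. Since $Y$ is closed, oriented and irreducible with $\pi_1(Y)$ infinite, $Y$ is aspherical by the sphere theorem. Perelman's geometrization gives a JSJ decomposition, and we split into three cases: $Y$ Seifert fibered, $Y$ hyperbolic, or $Y$ has a nontrivial JSJ decomposition.

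\textbf{Seifert fibered case.} Since $\pi_1(Y)$ is infinite, $Y$ is not spherical. After passing to a finite cover we may assume the base orbifold is a closed orientable surface with no cone points, so that $Y$ is an honest circle bundle over a closed orientable surface $F$. If $F$ has positive genus, then $b_1(Y)\ge b_1(F)>0$. If $F=S^2$, infiniteness of $\pi_1$ forces the Euler number to be zero, so $Y\cong S^2\times S^1$. This contradicts irreducibility, or alternatively $b_1=1$ anyway. The case of a Euclidean base orbifold is covered by the same argument: one passes to a torus bundle, which has $b_1\ge 1$.

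\textbf{Hyperbolic case.} Here we invoke Agol's virtual Haken and virtual fibering theorem, building on Wise's work on special cube complexes together with Kahn--Markovic. This gives a finite cover fibering over $S^1$, hence with $b_1>0$.

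\textbf{Nontrivial JSJ case.} Here $Y$ is Haken. Luecke's theorem, together with the graph-manifold results of Kojima and Luecke, and the later Przytycki--Wise and Agol results for mixed manifolds, provides a finite cover with positive first Betti number; in fact such $Y$ are virtually fibered unless they are certain graph manifolds, and those still have virtually positive $b_1$.

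The main obstacle is entirely the hyperbolic case, which is the deep Agol--Wise theorem and is quoted rather than reproved. The remaining step is the routine check that each case of geometrization is covered, which I would present as a short case list with citations.
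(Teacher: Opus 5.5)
Your proposal is correct and follows the paper's own case split: Agol--Wise virtual fibering when $Y$ is hyperbolic, Luecke's theorem when the JSJ decomposition is nontrivial, and, for Seifert fibered $Y$, a finite cover that is a circle bundle over a surface of positive genus. The only difference is in the Seifert fibered case. The paper goes straight to a positive-genus cover using $\chi_{\orb}(B)\le 0$ (a consequence of irreducibility and infinite $\pi_1$). That inequality also covers two points you handle separately or leave implicit: the goodness of the base orbifold, and the $S^2$-base case.
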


\begin{proof}
    If $Y$ is hyperbolic, then the virtually fibered theorem \cite{agol2013virtual,wise2009research} states that $Y$ admits a finite cover that is a surface bundle over $S^1$, which implies the desired result. If $Y$ admits a non-trivial JSJ decomposition, the desired result is given by  \cite{luecke1988finite}. If $Y$ is Seifert fibered, then by the assumption that $Y$ is irreducible and has an infinite $\pi_1$, we know that the orbit space of $Y$ has a non-positive orbifold Euler number, so $Y$ can be covered by an $S^1$ bundle over an oriented surface with a positive genus, and the desired result follows.
\end{proof}

\begin{remark}
The virtually fibered property is also known to hold for mixed $3$--manifolds (i.e., manifolds that are neither hyperbolic nor graph) \cite{przytycki2018mixed} and for graph manifolds that admit Riemannian metrics with non-positive sectional curvature \cite{liu2013virtual}.
\end{remark}

The next result shows that we can take the covering space $\widetilde{Y}$ in Theorem \ref{thm_virtual_positive_b1} to be a normal covering. 

\begin{Corollary}
    \label{cor_normal_cover_positive_b1}
    Suppose $Y$ is a closed, oriented, irreducible $3$--manifold such that $\pi_1(Y)$ is infinite. Then there exists a \emph{normal} finite cover $\widetilde{Y}$ of $Y$ such that $b_1(\widetilde{Y})>0$.
\end{Corollary}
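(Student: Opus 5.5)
We take the finite cover $\widetilde Y'\to Y$ given by Theorem \ref{thm_virtual_positive_b1}, with $b_1(\widetilde Y')>0$, and pass to its normal core. Let $H=\pi_1(\widetilde Y')\subset \pi_1(Y)$; this is a subgroup of finite index $n$. Let $N=\bigcap_{g\in\pi_1(Y)} gHg^{-1}$. Since $H$ has only finitely many conjugates, $N$ is a finite intersection of finite-index subgroups. Hence $N$ is a normal subgroup of finite index in $\pi_1(Y)$; its index is at most $n!$, as it is the kernel of the action of $\pi_1(Y)$ on the cosets $\pi_1(Y)/H$. Let $\widetilde Y\to Y$ be the covering corresponding to $N$. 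It is a finite normal cover of $Y$, and because $N\subset H$ it factors as $\widetilde Y\to\widetilde Y'\to Y$, where $p:\widetilde Y\to \widetilde Y'$ is a finite cover of some degree $d$.

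It remains to see that $b_1(\widetilde Y)\ge b_1(\widetilde Y')>0$. This is the standard transfer argument. For a finite cover $p$ of degree $d$ there is a transfer map $p^!:H_1(\widetilde Y';\bQ)\to H_1(\widetilde Y;\bQ)$, sending a cycle to the sum of its lifts, and it satisfies $p_*\circ p^!=d\cdot\id$. Since $d\neq 0$ in $\bQ$, the map $p_*:H_1(\widetilde Y;\bQ)\to H_1(\widetilde Y';\bQ)$ is surjective, so $b_1(\widetilde Y)\ge b_1(\widetilde Y')>0$.

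Alternatively, one can avoid the transfer. A class in $H^1(\widetilde Y';\bQ)$ is a nonzero homomorphism $\pi_1(\widetilde Y')=H\to\bQ$, and its restriction to $N$ is still nonzero. Indeed, the image of $H$ is a nontrivial subgroup of the torsion-free group $\bQ$. Since $N$ has finite index in $H$, some power $h^k$ of any $h\in H$ lies in $N$, and the restriction cannot kill $h^k$ unless it kills $h$.

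No step is a serious obstacle. The only points needing care are the finiteness of the index of the core subgroup and the non-vanishing of the pulled-back cohomology class, and both are standard.
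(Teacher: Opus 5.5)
Your proof is correct and follows the paper's argument: you pass to the normal core $\bigcap_{g} gHg^{-1}$ of $H=\pi_1(\widetilde Y')$, and the resulting cover factors through $\widetilde Y'$. The only difference is how $b_1(\widetilde Y)\ge b_1(\widetilde Y')$ is justified: the paper pulls back harmonic forms to show $H^1(\widetilde Y';\mathbb{R})\to H^1(\widetilde Y;\mathbb{R})$ is injective, while your transfer argument (or the restriction of homomorphisms to $\bQ$) reaches the same conclusion more elementarily.
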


\begin{proof}
    Let $\widetilde{Y}'$ be a finite cover of $Y$ with $b_1(\widetilde{Y}')>0$, as given by Theorem \ref{thm_virtual_positive_b1}. Fix a base point of $\widetilde{Y}'$ and let its image in $Y$ be the base point of $Y$.  Let $H\subset \pi_1(Y)$ be the image of $\pi_1(\widetilde{Y}')$ in $\pi_1(Y)$. Since $H$ is a subgroup of $\pi_1(\widetilde{Y}')$ with finite index, there are at most finitely many subgroups of $\pi_1(\widetilde{Y}')$ with the form $gHg^{-1}$, $g\in \pi_1(Y)$. Let $K = \cap_{g\in \pi_1(Y)} g H g^{-1}$, then $K$ is a normal subgroup of $\pi_1(Y)$ with finite index. Let $\widetilde{Y}$ be the cover of $Y$ determined by $K$, then $\widetilde{Y}$ is a normal cover. There is a finite covering map $p:\widetilde{Y}\to \widetilde{Y'}$. The map $p$ induces an injection $H^1(\widetilde{Y'};\mathbb{R}) \to H^1(\widetilde{Y};\mathbb{R})$, because pulling-back by $p$ defines an injection on the space of harmonic forms. Therefore, $b_1(\widetilde{Y})\ge b_1(\widetilde{Y}')>0$. So the corollary is proved. 
\end{proof}

Note that if $\widetilde{Y}$ is a finite cover of $Y$, then every element of $\Diff_{PI}(I\times Y)$ admits a unique lifting to $\Diff_{PI}(I\times \widetilde{Y})$ by the homotopy lifting property of covering spaces.
Recall that $\Homeo_0(I\times Y)$ denotes the group of homeomorphisms of $I\times Y$ that are homotopic to $\id$, without necessarily fixing the boundary. 
The next lemma allows us to define a pull-back homomorphism from $\pi_0\Homeo_{0}(I\times Y)$ to $\pi_0\Homeo_{0}(I\times \widetilde{Y})$. 

\begin{Lemma}
\label{lem_pull_back_diff_PI}
    Suppose $Y$ is a closed oriented $3$--manifold, and $p:\widetilde{Y}\to Y$ is a normal covering map. Also assume that every deck transformation on $\widetilde{Y}$ that is homotopic to the identity is isotopic to the identity. Let $\varphi\in \Homeo_{0}(I\times Y)$. Then there exists a lifting $\tilde\varphi\in \Homeo_0(I\times \widetilde{Y})$ such that the diagram commutes
    \[
\begin{tikzcd}
        I\times \widetilde{Y} \arrow[r,"\tilde{\varphi}"] \arrow[d, "\id_I\times p"] & I\times \widetilde{Y}\arrow[d, "\id_I\times p"] \\
        I\times Y \arrow[r,"\varphi"] & I\times Y.
\end{tikzcd}
    \]
    Moreover, the isotopy class of $\tilde\varphi$ is determined by $\varphi$.
\end{Lemma}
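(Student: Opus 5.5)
We will lift $\varphi$ by covering space theory and then fix the remaining ambiguity with the hypothesis on deck transformations.

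\textbf{Existence.} The map $\id_I\times p: I\times\widetilde{Y}\to I\times Y$ is a normal covering with deck group $G=\pi_1(Y)/p_*\pi_1(\widetilde{Y})$. Since $\varphi\in\Homeo_0(I\times Y)$, it is homotopic to the identity. Hence $\varphi_*$ acts on $\pi_1(I\times Y)$ as conjugation by some element, namely the trajectory of the base point under the homotopy. Because $p_*\pi_1(\widetilde{Y})$ is normal, $\varphi_*$ preserves this subgroup. The lifting criterion then gives a lift $\tilde\varphi$ of $\varphi\circ(\id_I\times p)$ through $\id_I\times p$. Applying the same argument to $\varphi^{-1}$, and adjusting by a deck transformation, produces an inverse lift, so $\tilde\varphi$ is a homeomorphism. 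To see that some lift lies in $\Homeo_0$, lift the homotopy $H$ from $\id$ to $\varphi$ using the homotopy lifting property, starting from $\id$. The endpoint is a lift of $\varphi$ that is homotopic to $\id$, and we take this as $\tilde\varphi$.

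\textbf{Uniqueness up to isotopy.} There are two sources of ambiguity.
\begin{enumerate}
\item Any two lifts of $\varphi$ differ by a deck transformation $g$ (applied as $\id_I\times g$). If both lifts lie in $\Homeo_0(I\times\widetilde{Y})$, then $\id_I\times g$ is homotopic to the identity. Projecting to $\widetilde{Y}$ shows that $g$ itself is homotopic to $\id_{\widetilde{Y}}$. By hypothesis $g$ is then isotopic to the identity, so the two lifts are isotopic.
\item We must check that the isotopy class of $\tilde\varphi$ depends only on the isotopy class of $\varphi$. Given an isotopy $\varphi_s$ in $\Homeo_0(I\times Y)$, lift it continuously using the lifting property for paths, applied to the family of homeomorphisms. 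Concretely, lift the map $[0,1]\times I\times\widetilde{Y}\to I\times Y$, $(s,x)\mapsto\varphi_s(\id_I\times p(x))$, with initial condition $\tilde\varphi_0$. Each $\tilde\varphi_s$ is a lift of $\varphi_s$ and a homeomorphism, and the family stays in $\Homeo_0$ by continuity. Hence $\tilde\varphi_1$ is isotopic to $\tilde\varphi_0$, and by step 1 this is the class associated to $\varphi_1$.
\end{enumerate}

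\textbf{Main obstacle.} The hardest step is making the well-definedness in step 1 precise. We need "$\id_I\times g\simeq\id$" to imply "$g\simeq\id$", which holds by composing the homotopy with the inclusion of a slice $\{1/2\}\times\widetilde{Y}$ and the projection to $\widetilde{Y}$. We then need the isotopy $g_t$ of $\widetilde{Y}$ to induce an isotopy $\id_I\times g_t$ of $I\times\widetilde{Y}$ within $\Homeo_0$, which is immediate. Note also that $G$ may be larger than the set of lifts homotopic to $\id$, but only lifts in $\Homeo_0$ are compared.

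If the lemma is later needed relative to the boundary (e.g. for $\Homeo_{\partial}$), the same argument applies. The lift obtained from a homotopy rel boundary is automatically the identity near $\{0,1\}\times\widetilde{Y}$, so in that setting no deck ambiguity arises.
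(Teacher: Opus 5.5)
Your proof is correct and follows the paper's argument. Existence comes from lifting the homotopy from $\id$ to $\varphi$, and uniqueness from the fact that two lifts in $\Homeo_0$ differ by $\id_I$ times a deck transformation that is homotopic, hence by hypothesis isotopic, to the identity. Your additional checks are fine: that the lift is a homeomorphism, and that lifting an isotopy $\varphi_s$ makes the class depend only on the isotopy class of $\varphi$, which the paper uses implicitly when it defines $p^*$ on $\pi_0$.
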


\begin{proof}
    The existence of $\tilde{\varphi}$ is given by the lifting of the homotopy to the identity to the covering space. If $\tilde{\varphi}_1$ and $\tilde{\varphi}_2$ are two such liftings, then they must differ by $\id_I$ times a deck transformation that is homotopic to the identity. By the assumptions on $\widetilde{Y}$, the diffeomorphism must be isotopic to the identity. Therefore, the isotopy class of $\tilde\varphi$ is determined by $\varphi$.
\end{proof}
\begin{remark}
    By a theorem of Waldhausen \cite{waldhausen1968irreducible}, if $\widetilde{Y}$ is Haken, then every homeomorphism of $\widetilde{Y}$ that is homotopic to the identity is isotopic to the identity.
\end{remark}

By Lemma \ref{lem_pull_back_diff_PI}, if $p:\widetilde{Y}\to Y$ is a normal covering map and $\widetilde{Y}$ is Haken, we have a well-defined homomorphism $p^*:\pi_0\Homeo_{PI}(I\times Y) \to \pi_0\Homeo_{PI}(I\times \widetilde{Y})$ given by $p^*([\varphi]) = [\tilde{\varphi}]$. 

Let $Y$ be closed, oriented, irreducible, with an infinite $\pi_1$, and let $\widetilde{Y}$ be a normal cover of $Y$ such that $b_1(\widetilde{Y})>0$. Note that since $Y$ is irreducible and $\pi_1(Y)$ is infinite, $Y$ is aspherical. Since $\pi_i(\widetilde{Y})\cong \pi_i(Y)$ for all $i\ge 2$, we know that $\widetilde{Y}$ is also aspherical, so it is irreducible. Since $b_1(\widetilde{Y})>0$, we know that $\widetilde{Y}$ is Haken. So the pull-back map $\pi_0\Homeo_{PI}(I\times Y)\to \pi_0\Homeo_{PI}(I\times \widetilde{Y})$ is well-defined in this case. 

\subsection{The case when $\widetilde{Y}$ is not Seifert fibered}
In this subsection, we prove Theorem \ref{thm_main_irred_case} when $Y$ is closed and $\widetilde{Y}$ is not Seifert fibered.
We will apply Lemmas \ref{lem_closed_surface_basepoint_connected_sum_homotopy} and \ref{lem_surface_hmtpy_closed} to obstruct the homotopy of surfaces in $\Emb_\dagger(I,I\times Y)$. In order to verify the assumptions of Lemma \ref{lem_surface_hmtpy_closed}, we need to study the centralizers of subgroups of $\pi_1\Emb_\dagger(I,I\times Y)$.

\begin{Proposition}
\label{prop_trivial_centralizer_in_Emb_dagger}
    Suppose $Y$ is a closed, oriented, irreducible $3$--manifold, and $\pi_1(Y)$ contains two elements $\xi_1$, $\xi_2$, such that 
    \begin{enumerate}
        \item $\xi_1$ is of infinite order, and for each positive integer $m$, the centralizer of $(\xi_1)^m$ in $\pi_1(Y)$ is the infinite cyclic group generated by $\xi_1$. 
        \item $\xi_2$ is not commutative with $\xi_1$, and $(\xi_2)^2 \neq 1$. 
    \end{enumerate} 
    Then the subgroup of $\pi_1\Emb_\dagger(I,I\times Y)$ generated by $\mathscr{I}_*(\xi_1),\mathscr{I}_*(\xi_2)$ has a trivial centralizer in $\pi_1\Emb_\dagger(I,I\times Y)$. 
\end{Proposition}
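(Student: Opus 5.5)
Throughout I use Proposition \ref{prop_decomposition_pi_1_emb_dagger}. Since $Y$ is closed, irreducible and has infinite $\pi_1$, it is aspherical, so $\pi_2(Y)=0$ and
\[
\pi_1\Emb_\dagger(I,I\times Y)\cong \bZ[\pi_1(Y)\setminus\{1\}]\rtimes \pi_1(Y).
\]
Here $\pi_1(Y)$ acts on the free abelian group $A=\bZ[\pi_1(Y)\setminus\{1\}]$ by conjugating generators: $g\cdot [h]=[ghg^{-1}]$. Write a general element as $(a,g)$ with $a\in A$ and $g\in\pi_1(Y)$. Then $\mathscr{I}_*(\xi)=(0,\xi)$, and the product is $(a,g)(b,h)=(a+g\cdot b,gh)$.

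Suppose $(a,g)$ commutes with $(0,\xi_1)$ and $(0,\xi_2)$. The plan is to extract two separate conditions from this.

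\textbf{First condition.} Project to $\pi_1(Y)$. Then $g$ commutes with $\xi_1$ and with $\xi_2$. By hypothesis (1) with $m=1$, $g=\xi_1^k$ for some $k$. Since $\xi_2$ does not commute with $\xi_1$, I expect to force $k=0$. If $\xi_2$ commuted with $\xi_1^k$ for some $k\neq 0$, then $\xi_2$ would lie in the centralizer of $\xi_1^{|k|}$, which is $\langle\xi_1\rangle$ by hypothesis (1). That would make $\xi_2$ commute with $\xi_1$, a contradiction. So $g=1$.

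\textbf{Second condition.} Now take the $A$-component of the commutation relation with $g=1$. This gives $(a,1)(0,\xi)=(a,\xi)$ and $(0,\xi)(a,1)=(\xi\cdot a,\xi)$. So $a$ is fixed by conjugation by both $\xi_1$ and $\xi_2$.

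Since $A$ is the free abelian group on $\pi_1(Y)\setminus\{1\}$ and the action permutes the basis, a fixed element $a$ is a finite sum that is constant on orbits. Its support is therefore a finite union of orbits of the subgroup $\langle\xi_1,\xi_2\rangle$ acting on $\pi_1(Y)\setminus\{1\}$ by conjugation. So it suffices to show that no element $h\neq 1$ has a finite orbit under this subgroup.

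\textbf{Main step: no finite orbits.} Suppose $h\neq 1$ has a finite orbit. Then some power $\xi_1^m$ with $m\geq 1$ fixes $h$, so $h$ lies in the centralizer of $\xi_1^m$, which is $\langle\xi_1\rangle$. Hence $h=\xi_1^j$ with $j\neq 0$. Some power $\xi_2^n$ with $n\geq 1$ also fixes $h$, so $\xi_2^n$ commutes with $\xi_1^j$. Applying hypothesis (1) to $\xi_1^{|j|}$ gives $\xi_2^n\in\langle\xi_1\rangle$, say $\xi_2^n=\xi_1^l$.

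Conjugating by $\xi_2$ permutes the finite orbit, which lies in $\langle\xi_1\rangle$. So $\xi_2\xi_1^j\xi_2^{-1}=\xi_1^{j'}$, and $\xi_2$ normalizes the centralizer of $\xi_1^j$. Since that centralizer is $\langle\xi_1\rangle$, we get $\xi_2\xi_1\xi_2^{-1}=\xi_1^{\pm1}$. The sign $+1$ contradicts (2), so $\xi_2\xi_1\xi_2^{-1}=\xi_1^{-1}$.

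Then $\xi_2^n=\xi_1^l$ gives $\xi_1^l=\xi_2\xi_1^l\xi_2^{-1}=\xi_1^{-l}$. As $\xi_1$ has infinite order, $l=0$, so $\xi_2^n=1$. Thus $\xi_2$ has finite order.

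Here I invoke the standard fact that the fundamental group of an aspherical closed $3$--manifold is torsion-free (finite cohomological dimension). This gives $\xi_2=1$, contradicting (2). Hypothesis (2)'s assumption $\xi_2^2\neq1$ is what rules out the dihedral-type case directly, though torsion-freeness already handles it. Hence $a=0$, and the centralizer is trivial.

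The main obstacle is this orbit-finiteness analysis, specifically passing from $\xi_2$ normalizing $\langle\xi_1\rangle$ to a contradiction.
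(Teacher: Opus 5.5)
Your proof is correct and follows the paper's argument: it uses the same semidirect-product reduction to showing that a conjugation-invariant element of $\bZ[\pi_1(Y)\setminus\{1\}]$ vanishes, forces the support into $\langle\xi_1\rangle$, and then gets $\xi_2\xi_1\xi_2^{-1}=\xi_1^{\pm1}$. The differences are cosmetic: you obtain $\pm1$ from conjugation by $\xi_2$ carrying $\langle\xi_1\rangle$ onto itself, where the paper uses the permutation of exponents $c_i=kb_i$. In the case $\xi_2\xi_1\xi_2^{-1}=\xi_1^{-1}$ you invoke torsion-freeness of $\pi_1(Y)$, whereas the paper uses $(\xi_2)^2\neq 1$ directly: $(\xi_2)^2$ centralizes $\xi_1$, so it is a nonzero power of $\xi_1$, which forces $\xi_2\in\langle\xi_1\rangle$.
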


\begin{remark}
To be more precise about the base points of the homotopy groups in the statement of Proposition \ref{prop_trivial_centralizer_in_Emb_dagger}, fix $q\in Y$ to be the base point of $Y$,  let $\mathscr{I}(q)$ be the base point of $\Emb_\dagger(I,I\times Y)$. Note that the triviality of the centralizer does not depend on the choice of $q$. We will omit the base points to simplify notation.
\end{remark}
\begin{proof}
    The assumptions imply that $\pi_1(Y)$ is infinite and $\pi_2(Y)=0$. By Proposition \ref{prop_decomposition_pi_1_emb_dagger}, we have 
    \[
    \pi_1\Emb_\dagger(I,I\times Y) \cong \bZ[\pi_1(Y)\setminus\{1\}]\rtimes \pi_1(Y),
    \]
    and $\mathscr{I}_*:\pi_1(Y)\to \pi_1\Emb_\dagger(I,I\times Y)$ is the inclusion to the component in the semidirect product. In particular, every element in $\pi_1\Emb_\dagger(I,I\times Y)$ can be uniquely represented by a product $xy$, where $x\in \bZ[\pi_1(Y)\setminus\{1\}]$ and $y \in \pi_1(Y)$. For $z\in \pi_1(Y)$, we have 
    \[
    z(xy)z^{-1} = (zxz^{-1})(zyz^{-1}),
    \]
    so $z$ is commutative with $xy$ if and only if $z$ is commutative with both $x$ and $y$. 

    By the assumptions, the subgroup of $\pi_1(Y)$ generated by $\xi_1,\xi_2$ has a trivial centralizer in $\pi_1(Y)$. So we only need to show that if $x\in \bZ[\pi_1(Y)\setminus\{1\}]$ is invariant under the conjugation actions of both $\xi_1$ and $\xi_2$, then $x=0$.

Suppose $x=\sum_{i=1}^n a_i \eta_i\in \mathbb{Z}[\pi_1(Y)\setminus \{1\}]$, where $a_i\in\mathbb{Z}, \eta_i\in \pi_1(Y)\setminus\{1\}$, is invariant under the conjugation actions of $\xi_1,\xi_2$, we show that $x = 0$. Without loss of generality, assume $\eta_1,\dots,\eta_n$ are mutually distinct and $a_i\neq 0$ for all $i$. The goal is to deduce a contradiction when $n>0$. Since $\sum a_i \eta_i$ is invariant under the conjugation of $\xi_1$, the conjugation action of $\xi_1$ defines a permutation on the set $\{\eta_i\}_{1\le i \le n}$. Therefore, $(\xi_1)^{n!}$ is commutative with each $\eta_i$. By the assumptions, there exist non-zero integers $b_1,\dots,b_n$ such that $\eta_i=(\xi_1)^{b_i}$. 

Since $\sum a_i \eta_i$ is also invariant under the conjugation action of $\xi_2$, there exists a sequence of non-zero integers $c_1,\dots,c_n$ such that
\[
\xi_2 \cdot (\xi_1)^{b_i}\cdot (\xi_2)^{-1} = (\xi_1)^{c_i}
\]
and that $(c_1,\dots,c_n)$ equals a permutation of $(b_1,\dots,b_n)$. The above equation implies 
\[
(\xi_2 \cdot \xi_1 \cdot (\xi_2)^{-1})^{b_i} = (\xi_1)^{c_i},
\]
so $\xi_2 \cdot \xi_1 \cdot (\xi_2)^{-1}$ is commutative with $(\xi_1)^{c_i}$. By the assumption on $\xi_1$, there exists an integer $k$ such that 
\[
\xi_2 \cdot \xi_1 \cdot (\xi_2)^{-1} = (\xi_1)^k.
\]
Since $\xi_1$ is non-torsion, we have $c_i=k b_i$. Since $(c_1,\dots,c_n)$ is a permutation of $(b_1,\dots,b_n)$, we have $k=1$ or $-1$. If $k=1$, then $\xi_1$ is commutative with $\xi_2$, contradicting the assumption on $\xi_2$. If $k=-1$, we have 
\[
(\xi_2)^2\,\xi_1\,(\xi_2)^{-2} = \xi_1,
\]
so $(\xi_2)^2$ is commutative with $\xi_1$. By the assumptions, there exists a non-zero integer $l$ such that $(\xi_2)^2 = (\xi_1)^l$. So $\xi_2$ is commutative with $(\xi_1)^l$, therefore $\xi_2$ is an integer power of $\xi_1$, thus $\xi_2$ is commutative with $\xi_1$, which yields a contradiction.
\end{proof}

Now we can prove Theorem \ref{thm_main_irred_case} when $\widetilde{Y}$ satisfies the assumptions of Proposition \ref{prop_trivial_centralizer_in_Emb_dagger}. Note that since we assume $Y$ is closed, $\Homeo(I\times Y,I\times \partial Y)$ is the same as $\Homeo(I\times Y)$. 

\begin{Theorem}
\label{thm_main_irred_case_when_xi1_xi2_exist_in_pi1}
Suppose $Y$ is a closed, oriented, irreducible $3$--manifold, and $\pi_1(Y)$ is infinite. Let $\widetilde{Y}$ be a finite normal cover of $Y$ such that $b_1(\widetilde{Y})>0$. Suppose there exist $\xi_1,\xi_2\in \pi_1(\widetilde{Y})$ such that    
    \begin{enumerate}
        \item $\xi_1$ is of infinite order, and for each positive integer $m$, the centralizer of $(\xi_1)^m$ in $\pi_1(\widetilde{Y})$ is the infinite cyclic group generated by $\xi_1$. 
        \item $\xi_2$ is not commutative with $\xi_1$, and $(\xi_2)^2 \neq 1$. 
    \end{enumerate} 
Then there exists a smooth embedding $S^1\times D^2=Y_0\to Y$, such that the image of 
\[\pi_0\Diff_{PI}(I\times Y_0)\to \pi_0\Homeo(I\times Y)\] is of infinite rank. 
\end{Theorem}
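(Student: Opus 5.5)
We pass to the cover, where we can apply the closed-surface lemmas. Let $p:\widetilde{Y}\to Y$ be the finite normal cover. As noted above, $\widetilde{Y}$ is aspherical and Haken, so Lemma \ref{lem_pull_back_diff_PI} and Waldhausen's theorem give a pull-back homomorphism $p^*$ from $\pi_0\Homeo_0(I\times Y)$ to $\pi_0\Homeo_0(I\times\widetilde{Y})$. It therefore suffices to show that pull-backs of suitable linear combinations are nontrivial in $\pi_0\Homeo(I\times\widetilde{Y})$. Elements of $\Diff_{PI}$ are homotopic to $\id$ rel $\{0,1\}\times Y$, so they lie in $\Homeo_0$, and their lifts are homotopic to $\id$ rel $\{0,1\}\times\widetilde{Y}$.

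\textbf{Step 1: the embedding.} Since $b_1(\widetilde{Y})>0$, there is a closed oriented embedded surface $\widetilde{\Sigma}\subset\widetilde{Y}$ and an embedded circle $\widetilde{\gamma}$ with $\#(\widetilde{\Sigma}\cap\widetilde{\gamma})\ne0$, with $\widetilde{\gamma}$ nontorsion in $H_1(\widetilde{Y})/\tor$. We want $\widetilde{\gamma}$ to be a lift of an embedded circle $\gamma\subset Y$. Take $\gamma$ embedded in $Y$ representing a class whose preimage has a nontrivial component in $H_1(\widetilde{Y};\mathbb{Q})$ pairing nontrivially with some class. The preimage $p^{-1}(\gamma)=\widetilde{\gamma}_1\sqcup\dots\sqcup\widetilde{\gamma}_r$ is a disjoint union of embedded circles. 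The lift of $\nu(\gamma)_*\varphi$ is the product of the extensions $\nu(\widetilde{\gamma}_j)_*\varphi^{(j)}$, where each $\varphi^{(j)}$ is $\varphi$ conjugated by the covering $S^1\times D^2\to S^1\times D^2$ of some degree $d_j$.

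\textbf{Step 2: computing the obstruction.} Apply Proposition \ref{prop_interior_connected_sum_formula_on_surface_homotopy_multi_curves} to $f=\mathscr{I}\circ\sigma$, where $\sigma$ is an immersed closed surface. We arrange that $\sigma_*\pi_1(\Sigma)$ contains conjugates of $\xi_1,\xi_2$, for instance by tubing $\widetilde{\Sigma}$ with immersed tori or annuli realizing $\xi_1,\xi_2$, which does not change homology. The result is $\alpha$ with
\[
\Psi(\alpha)=\sum_j\#(\sigma\cap\widetilde{\gamma}_j)\,\nu(\widetilde{\gamma}_j)_*\Psi\mathcal{S}([\varphi^{(j)}]).
\]
The covering of $S^1\times D^2$ multiplies $H_1$ by $d_j$, so $\Psi\mathcal{S}([\varphi^{(j)}])$ is the image of $\Psi\mathcal{S}([\varphi])$ under the map on $\mathcal{W}$ induced by $a\mapsto d_ja$. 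To show $\Psi(\alpha)\ne0$, push everything into $\mathcal{W}$ of $H_1(\widetilde{Y})/\tor$ and apply a retraction $r$ in the spirit of Proposition \ref{prop_embedding_and_codomain_of_Psi}. Project onto the line spanned by a primitive class $c$ that is a multiple of the classes $[\widetilde{\gamma}_j]$, or more carefully onto a direction chosen using torsion-freeness, so that the contributions land on rescalings of the Budney--Gabai basis. Opposite orientations are handled by Corollary \ref{cor_homeo_reverse_H1_on_Y0}. I expect this to be the \textbf{main obstacle}: the $\widetilde{\gamma}_j$ may represent different classes, and cancellation has to be ruled out. I would first try to choose $\widetilde{\Sigma}$ dual to a single $\widetilde{\gamma}_1$ with respect to an equivariantly chosen basis, so that only the $\widetilde{\gamma}_j$ homologous to $\pm[\widetilde{\gamma}_1]$ contribute. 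Those terms then add with the same sign, by Corollary \ref{cor_homeo_reverse_H1_on_Y0}, making $\Psi(\alpha)$ a nonzero multiple of a nonzero vector.

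\textbf{Step 3: concluding.} By Proposition \ref{prop_invariance_Psi_under_pi1_Y} and its base-point version Proposition \ref{prop_compare_Psi_all_base_point_Y}, $\Psi$ kills all elements $\beta-\beta^u$. Hence Lemma \ref{lem_closed_surface_basepoint_connected_sum_homotopy} shows that $f\not\simeq\tilde\varphi_*f$ rel a point. By Proposition \ref{prop_trivial_centralizer_in_Emb_dagger}, the image of $\pi_1(\Sigma)$, which contains $\mathscr{I}_*\xi_1$ and $\mathscr{I}_*\xi_2$, has trivial centralizer. Lemma \ref{lem_surface_hmtpy_closed} then shows they are not freely homotopic either. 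If $\nu(\gamma)_*\varphi$ were isotopic to $\id$ in $\Homeo(I\times Y)$, its lift would be isotopic to $\id$, and Lemma \ref{lem_Embdagger_obstruct_isotopy} would give $\tilde\varphi_*\circ\mathscr{I}\simeq\mathscr{I}$, hence $\tilde\varphi_*f\simeq f$, a contradiction. Applying this to every nonzero combination $\varphi=\prod\varphi_k^{c_k}$ gives infinite rank.
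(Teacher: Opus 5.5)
Your architecture matches the paper's: pull back to $\widetilde Y$, use $f=\mathscr{I}\circ\sigma$ and the multi-curve connected-sum formula, show $\Psi(\alpha)\neq 0$, then apply Lemma~\ref{lem_closed_surface_basepoint_connected_sum_homotopy}, Proposition~\ref{prop_trivial_centralizer_in_Emb_dagger}, Lemma~\ref{lem_surface_hmtpy_closed} and Lemma~\ref{lem_Embdagger_obstruct_isotopy}. Step~3 is fine. The gap is in Steps~1--2.

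Because you choose $\gamma$ downstairs, components of $p^{-1}(\gamma)$ may cover $\gamma$ with degree $d>1$. You then need $\Psi\circ\mathcal{S}$ of the \emph{lift} (not a conjugate) of $\varphi$ under a $d$-fold self-cover of $S^1\times D^3$. Nothing in the paper provides this, and your formula appears to be backwards. For the cover, the configuration space only excludes $x_i=t^{da}x_j$, where $t$ is the deck generator for the base. Hence $\Co^i_j(a,\cdot)$ upstairs equals $\Co^i_j(da,\cdot)$ downstairs. So the $\hat\theta_{a,b}$-coefficient of the lift's invariant is the $\hat\theta_{da,db}$-coefficient of $\Psi\mathcal{S}([\varphi])$. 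That is a restriction to indices divisible by $d$, which can kill the Budney--Gabai invariant, so nonvanishing is no longer guaranteed.

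The paper avoids this. Take $\gamma\subset\widetilde Y$ with $\#(\sigma\cap\gamma)=1$, and perturb it so that $p\circ\gamma$ is embedded. Then $p^{-1}(p(\gamma))$ is the disjoint union of the deck translates $\gamma_1=\gamma,\dots,\gamma_k$, each mapping with degree one. Choose the $\nu(\gamma_i)$ as deck translates of one another. Then the lift of the extension via $\hat\nu(\gamma)=p\circ\nu(\gamma_1)$ is exactly $\prod_i\nu(\gamma_i)_*\varphi_j$, with the same $\varphi_j$ in every factor.

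The cancellation problem you call the main obstacle is left unresolved, and your first suggestion cannot work in general. The translate classes can be linearly dependent, e.g.\ $[\gamma_1]+[\gamma_2]+[\gamma_3]=0$ for an order-three deck action on a rank-two lattice. Then no surface pairs nontrivially with $[\gamma_1]$ and trivially with the other two. Fortunately you don't need that. The missing ingredient is primitivity: $\#(\sigma\cap\gamma)=1$ makes $[\gamma_1]$ primitive in $H_1(\widetilde Y)/\tor$, and deck transformations preserve primitivity. So each $[\gamma_i]$ falls into one of two cases:
\begin{enumerate}
\item $\bZ[\gamma_i]\cap\bZ[\gamma_1]=0$, and the retraction $r$ for $\nu(\gamma_1)$ from Proposition~\ref{prop_embedding_and_codomain_of_Psi} kills its contribution;
\item $[\gamma_i]=\pm[\gamma_1]$.
\end{enumerate}
In the minus case, the intersection number and the orientation reversal from Theorem~\ref{thm_BG_construction_S1xD3}(2) each contribute a sign, so the contributions add. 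This gives $r\circ\Psi(\alpha)=k\cdot\#(\sigma\cap\gamma_1)\cdot\Psi\mathcal{S}([\prod_j\varphi_j^{c_j}])$ with $k\ge 1$, which is nonzero.

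Finally, ``contains conjugates of $\xi_1,\xi_2$'' is not enough for Proposition~\ref{prop_trivial_centralizer_in_Emb_dagger}, since independently conjugated copies may commute. Attaching handles along based loops puts $\xi_1,\xi_2$ themselves into $\sigma_*\pi_1(\Sigma)$, which is what your Step~3 actually uses.
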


\begin{proof}
    Let $\gamma:S^1\to \widetilde{Y}$ be an embedded simple closed curve, and let $\sigma:\Sigma\looparrowright \widetilde{Y}$ be a connected, closed, oriented, immersed surface whose algebraic intersection number with $\gamma$ equals $1$. By attaching handles to $\Sigma$, we may assume that the image of $\sigma_*:\pi_1(\Sigma)\to \pi_1(Y)$ contains $\xi_1$ and $\xi_2$. 
    
After perturbation, we may assume that the image of $\gamma$ in $Y$ is embedded, so $\gamma$ is disjoint from its images under the deck transformation group. Let $\gamma_1,\dots,\gamma_k$ be the images of $\gamma$ under deck transformations, where $\gamma_1=\gamma$. Let $\nu(\gamma_i):Y_0=S^1\times D^2 \to \widetilde{Y}$ be an orientation-preserving embedding whose core circle is $\gamma_i$. We may require that the maps $\nu(\gamma_i)$ are related to each other by deck transformations, and that the images of $\nu(\gamma_i)$ are disjoint from each other. Let $p:\widetilde{Y}\to Y$ be the covering map, then $p\circ \nu(\gamma_i)$ are the same for all $i$. Let $\hat{\nu}(\gamma) = p\circ \nu(\gamma_i)$. Then $\hat{\nu}(\gamma)$ is a smooth embedding of $S^1\times D^2$ in $Y$.

Let $\varphi_1,\varphi_2,\dots\in \Diff_{PI}(I\times Y_0)$ be given by Theorem \ref{thm_BG_construction_S1xD3}. Let $\tilde{\varphi}_{i,j}$ be the extension of $\varphi_j$ to $I\times \widetilde{Y}$ via $\id_{I}\times \nu(\gamma_i)$. Then, for a fixed value of $j$, the diffeomorphisms $\tilde{\varphi}_{i,j}$ for different values of $i$ are commutative to each other because they have disjoint supports. Let $\tilde{\varphi}_j = \prod_i \tilde{\varphi}_{i,j}$. Then 
the isotopy class of $\tilde{\varphi}_j$ is in the image of 

\begin{align}
        & \pi_0\Diff_{PI}(I\times Y_0)\xrightarrow{(\id_I\times \hat{\nu}(\gamma))_*}  \pi_0\Diff_{PI}(I\times Y)\nonumber\\
& \quad \to \pi_0\Homeo_{0}(I\times Y)\xrightarrow{(\id_I\times p)^*} \pi_0\Homeo_{0}(I\times \widetilde{Y}).
\label{eqn_composition_from_cylinder_to_covering_non_fibered_case}
\end{align}

We show that $\tilde{\varphi}_j$ are linearly independent over $\mathbb{Z}$ in $\pi_0\Homeo_{0}(I\times \widetilde{Y})$.
Let $0\neq (c_j)\in \oplus_\infty \mathbb{Z}$, let $\varphi = \prod_j \tilde{\varphi}_j^{c_j}\in \Homeo_{0}(I\times \widetilde{Y})$. We show that $\varphi$ is not isotopic to the identity in $\Homeo_{0}(I\times \widetilde{Y})$. 

Let $f=\mathscr{I}\circ \sigma:\Sigma\to \Emb_\dagger(I,I\times\widetilde{Y})$, let $\varphi_*: \Emb_\dagger(I,I\times\widetilde{Y})\to \Emb_\dagger(I,I\times\widetilde{Y})$ be the homeomorphism induced by $\varphi$. We obstruct the isotopy between $\varphi$ and $\id$ by showing that $f$ and $\varphi_*\circ f$ are not homotopic. 

By Proposition \ref{prop_interior_connected_sum_formula_on_surface_homotopy_multi_curves}, $\varphi_*\circ f$ is obtained from $f$ by an interior connected sum with $\alpha\in \pi_2\Emb_\dagger(I,I\times\widetilde{Y})$, where
\[
\Psi(\alpha) = \sum_i \#(\sigma\cap \gamma_i)\cdot \nu(\gamma_i)_*\circ \Psi\circ \mathcal{S}([\prod_j \varphi_j^{c_j}]).
\]
By Theorem \ref{thm_BG_construction_S1xD3} (1), we have $\Psi\circ \mathcal{S}([\prod_j \varphi_j^{c_j}])\neq 0 \in \mathcal{W}(Y_0)$. 

Let $r:\mathcal{W}(\widetilde{Y})\to \mathcal{W}(Y_0)$ be the retraction given by Proposition \ref{prop_embedding_and_codomain_of_Psi} with respect to the embedding $\nu(\gamma_1):Y_0\to \widetilde{Y}$. We claim that $r\circ \Psi(\alpha)$ is non-zero. 

To prove the claim, note that since the algebraic intersection number of $\gamma$ and $\sigma$ is $1$, the fundamental class of $\gamma$ is primitive in $H_1(\widetilde{Y};\mathbb{Z})/\tor$. In the following, we will use $H_1(\widetilde{Y})$ and $H_1(Y_0)$ to denote the homology groups in $\mathbb{Z}$ coefficients.  
Then for each $i$, the fundamental class of $\gamma_i$ is primitive in $\widetilde{Y}$. So, there are three possibilities:
\begin{enumerate}
    \item the image of 
\[
(\gamma_i)_*: \mathbb{Z}\cong H_1(Y_0)/\tor  \to H_1(\widetilde{Y})/\tor 
\]
intersects the image of $(\gamma_1)_*$ at $\{0\}$.
\item $(\gamma_i)_*=(\gamma_1)_*$ as maps from $H_1(Y_0)/\tor$ to $H_1(\widetilde{Y})/\tor$.
\item $(\gamma_i)_*=-(\gamma_1)_*$ as maps from  $H_1(Y_0)/\tor$ to $H_1(\widetilde{Y})/\tor$.
\end{enumerate}

For Case (1), the map $r$ equals $0$ on the image of $(\gamma_i)_*:\mathcal{W}(Y_0)\to \mathcal{W}(Y)$. So
\[
r(\#(\sigma\cap \gamma_i)\cdot \nu(\gamma_i)_*\circ \Psi\circ \mathcal{S}(\varphi)) = 0.
\]
For Case (2), we have $\#(\sigma\cap \gamma_i) = \#(\sigma\cap \gamma_1)$ and $\nu(\gamma_i)_*\circ \Psi\circ \mathcal{S}(\varphi)=\nu(\gamma_1)_*\circ \Psi\circ \mathcal{S}(\varphi)$, so 
\[
r(\#(\sigma\cap \gamma_i)\cdot \nu(\gamma_i)_*\circ \Psi\circ \mathcal{S}(\varphi)) = \#(\sigma\cap \gamma_1)\cdot \nu(\gamma_1)_*\circ \Psi\circ \mathcal{S}(\varphi).
\]
For Case (3), we have  $\#(\sigma\cap \gamma_i) = -\#(\sigma\cap \gamma_1)$ and $\nu(\gamma_i)_*=\nu(\gamma_1)_*\circ \tau_*$, where $\tau:Y_0\to Y_0$ is an orientation-preserving homeomorphism that induces $(-\id)$ on $H_1(Y_0)$. Hence by Theorem \ref{thm_BG_construction_S1xD3} (2), we still have
\[
r(\#(\sigma\cap \gamma_i)\cdot \nu(\gamma_i)_*\circ \Psi\circ \mathcal{S}(\varphi)) = \#(\sigma\cap \gamma_1)\cdot \nu(\gamma_1)_*\circ \Psi\circ \mathcal{S}(\varphi).
\]
In conclusion, we have 
\[
r\circ \Psi(\alpha) = k\cdot \#(\sigma\cap \gamma_1)\cdot \nu(\gamma_1)_*\circ \Psi\circ \mathcal{S}(\varphi),
\]
where $k$ equals the number of indices $i$ such that Cases (2) or (3) above hold. It is clear that $k>0$, since $i=1$ satisfies Case (2). Therefore, $r\circ \Psi(\alpha)\neq 0$, so $\Psi(\alpha)\neq 0$.

By the assumptions on $\sigma$ and Proposition \ref{prop_trivial_centralizer_in_Emb_dagger}, we know that the image of $(\mathscr{I}\circ\sigma)_*:\pi_1(\Sigma)\to\pi_1(\widetilde{Y})$ has a trivial centralizer in $\pi_1\Emb_\dagger(I,I\times \widetilde{Y})$.  Hence, the map $f$ satisfies the assumptions of Lemma \ref{lem_surface_hmtpy_closed}. 
Since $\Psi(\alpha)\neq 0$, we conclude that $f$ is not homotopic to $\varphi_*\circ f$ by Lemmas \ref{lem_surface_hmtpy_closed}, \ref{lem_closed_surface_basepoint_connected_sum_homotopy}, and Proposition \ref{prop_invariance_Psi_under_pi1_Y}. By Lemma \ref{lem_Embdagger_obstruct_isotopy}, $\varphi$ is not isotopic to $\id$ in $\Homeo_0(I\times\widetilde{Y})$.

This shows that the image of the composition \eqref{eqn_composition_from_cylinder_to_covering_non_fibered_case} is of infinite rank. As a consequence, the composition 
\[
\pi_0\Diff_{PI}(I\times Y_0)\xrightarrow{(\id_I\times \hat{\nu}(\gamma))_*} \pi_0\Diff_{PI}(I\times Y)
\to \pi_0\Homeo_{0}(I\times Y)
\]
is of infinite rank. Since the canonical map $\pi_0\Homeo_{0}(I\times Y)\to \pi_0\Homeo(I\times Y)$ is an injection, the desired theorem is proved.
\end{proof}

\begin{remark}
    It may be counterintuitive that the contributions to $r\circ \Psi(\alpha)$ by Cases (2) and (3) above have the same sign instead of canceling each other. This can be illustrated by Corollary \ref{cor_homeo_reverse_H1_on_Y0}: if we reverse all the orientations of $\gamma_i$, then the orientations of the normal bundles are also reversed, and the value of $\Psi(\alpha)$ would remain the same instead of having the reversed sign. 
\end{remark}

The next result shows that every closed, oriented, irreducible $3$--manifold $Y$ such that $b_1(Y)>0$ and $Y$ is not Seifert fibered satisfies the assumptions of Proposition \ref{prop_trivial_centralizer_in_Emb_dagger}. Therefore, the assumptions about $\pi_1(\widetilde{Y})$ in Theorem \ref{thm_main_irred_case_when_xi1_xi2_exist_in_pi1} are satisfied if $\widetilde{Y}$ is not Seifert fibered.
\begin{Proposition}
\label{prop_existence_xi1_xi2_non_fibered_case}
    Suppose $Y$ is a closed, oriented, irreducible $3$--manifold, such that $b_1(Y)>0$ and $Y$ is not Seifert fibered. Then there exist $\xi_1,\xi_2\in \pi_1(Y)$ such that 
    \begin{enumerate}
        \item $\xi_1$ is of infinite order, and for each positive integer $m$, the centralizer of $(\xi_1)^m$ in $\pi_1(Y)$ is the infinite cyclic group generated by $\xi_1$. 
        \item $\xi_2$ is not commutative with $\xi_1$, and $(\xi_2)^2 \neq 1$. 
    \end{enumerate} 
\end{Proposition}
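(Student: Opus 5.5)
We split according to the geometric decomposition of $Y$. Since $b_1(Y)>0$ and $Y$ is irreducible, $Y$ is Haken and aspherical, and $\pi_1(Y)$ is torsion-free. Torsion-freeness settles the condition $(\xi_2)^2\neq 1$ in (2) automatically once $\xi_2\neq 1$. So the real task is to produce an element $\xi_1$ that is not a proper power and whose nontrivial powers all have centralizer exactly $\langle \xi_1\rangle$, together with any element $\xi_2$ not commuting with $\xi_1$. Because $Y$ is not Seifert fibered, geometrization leaves two cases: $Y$ is hyperbolic, or $Y$ has a nontrivial JSJ decomposition with at least one piece.

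\emph{Hyperbolic case.} Here $\pi_1(Y)$ is a torsion-free cocompact Kleinian group, and every nontrivial element is loxodromic.

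1. The centralizer of any loxodromic $g$ is the stabilizer of its axis. Discreteness and torsion-freeness make this stabilizer infinite cyclic.
2. Take $\xi_1$ to be a generator of the centralizer of some loxodromic element. Then $\xi_1$ is primitive, and $\xi_1^m$ has the same axis as $\xi_1$, hence the same centralizer $\langle\xi_1\rangle$.
3. Since $\pi_1(Y)$ is not virtually cyclic, some $\xi_2$ does not preserve the axis, and so does not commute with $\xi_1$.

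\emph{JSJ case.} Let $Y=\bigcup Y_v$ be the nontrivial JSJ decomposition, and let $T$ be the Bass--Serre tree of the induced graph of groups. Edge groups are $\bZ^2$ and vertex groups are the $\pi_1(Y_v)$.

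1. If some piece $Y_v$ is hyperbolic, choose $\xi_1\in\pi_1(Y_v)$ loxodromic and not conjugate into a peripheral subgroup; the finite-volume analogue of the closed case applies to it. Such an element fixes exactly one vertex of $T$: if it fixed an edge it would lie in a peripheral $\bZ^2$. Any $h$ commuting with $\xi_1^m$ must therefore fix the unique fixed vertex of $\xi_1^m$, which is the same vertex. So $h\in\pi_1(Y_v)$, and there the centralizer of a non-peripheral loxodromic is the cyclic group generated by its primitive root. Replace $\xi_1$ by that primitive root.
2. If all pieces are Seifert fibered (the graph manifold case), take $\xi_1$ to be a hyperbolic element for the action on $T$, for instance a product of elements from adjacent vertex groups chosen to avoid the fibers and the edge groups, so that $\xi_1$ acts as a translation along an axis. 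Centralizers of $\xi_1^m$ preserve the axis. An element fixing the axis pointwise lies in the intersection of consecutive edge groups along it. Adjacent edge groups meet only in the fiber subgroup of the shared piece, and the fibers of two adjacent pieces do not match, because the decomposition is a JSJ decomposition. Choosing the axis to traverse at least two distinct edges at some vertex therefore forces this pointwise stabilizer to be trivial. Hence the centralizer embeds in the translations of the axis, giving $\bZ$, and we again pass to the primitive root.
3. In both subcases, $\xi_2$ can be taken from a vertex group that moves the fixed vertex (respectively the axis). Such an element does not commute with $\xi_1$.

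The main obstacle is step 2 of the JSJ case. Controlling the pointwise stabilizer of the axis in the graph-manifold setting requires care with fiber directions, and with pieces such as the twisted $I$-bundle over the Klein bottle, where edge stabilizers can be larger. I would first look for a citable statement on centralizers in 3-manifold groups, for instance Jaco--Shalen's centralizer theorem or the survey of Aschenbrenner--Friedl--Wilton. The centralizer of a nontrivial element of $\pi_1(Y)$ is either $\bZ$, or lies in a Seifert piece, or is $\bZ^2$ coming from a JSJ torus, and this would reduce the case analysis to exhibiting one element of the first type.
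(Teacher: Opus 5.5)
\textbf{There is a genuine gap, in step 2 of your JSJ case (all pieces Seifert fibered).} The claim that the pointwise stabilizer of the axis is trivial is false in general. Your hyperbolic case and your JSJ subcase with a hyperbolic piece are correct. Using torsion-freeness of $\pi_1(Y)$ to get $(\xi_2)^2\neq 1$ is also tidier than the paper's choice of $\xi_2$ with nonzero image in $H_1(Y;\mathbb{Q})$.

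The failure is not only at the twisted $I$-bundle pieces you flagged. Take $Y=T_A$, a torus bundle over the circle with Anosov monodromy $A\in\mathrm{SL}_2(\mathbb{Z})$. It is closed, oriented, irreducible and not Seifert fibered, and $b_1(T_A)=1$. So it satisfies the hypotheses and lands in your ``all pieces Seifert fibered'' subcase:
\begin{itemize}
\item the JSJ torus is the fiber, and the only piece is $T^2\times I$;
\item the vertex group equals both incident edge groups, so adjacent edge groups meet in all of $\mathbb{Z}^2$, not in a fiber subgroup;
\item the Bass--Serre tree is a line on which the fiber subgroup $\mathbb{Z}^2$ acts trivially, so every axis is fixed pointwise by $\mathbb{Z}^2$.
\end{itemize}
Your recipe for building a hyperbolic element, a product of elements of adjacent vertex groups lying outside the edge group, also has nothing to work with here. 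The conclusion still holds for $T_A$: the elements of $\mathbb{Z}^2$ commuting with $t^m$ form $\ker(A^m-I)=0$. But this uses the monodromy, which your argument never does.

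\textbf{How the paper closes this.} The fallback you mention is what the paper does. However, it must control all powers at once: exhibiting one element with cyclic centralizer is not enough, since (1) requires every $C(\xi_1^m)$ to be cyclic. The paper's argument runs as follows.
\begin{itemize}
\item Orient the edges so that $\pi_1(Y)$ acts on the tree without inversions.
\item No vertex group is all of $\pi_1(Y)$, and $\pi_1(Y)$ is finitely generated, so Serre's criterion gives an element acting without fixed points. Take $\xi_1$ of minimal positive translation length $l(\xi_1)$.
\item Since $l(\xi^k)=|k|\,l(\xi)$, every $\xi_1^m$ also acts without fixed points. Hence $\xi_1^m$ is not conjugate into any $\pi_1(Y_i)$ or $\pi_1(T_j)$, and the Jaco--Shalen/Johannson centralizer theorem makes $C(\xi_1^m)$ cyclic.
\item Minimality of $l(\xi_1)$ forbids $\xi_1$ from being a proper power, so $C(\xi_1^m)=\langle\xi_1\rangle$.
\end{itemize}
This treats every nontrivial JSJ decomposition uniformly, so your hyperbolic-piece subcase becomes unnecessary, and it never needs pointwise stabilizers of axes.
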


\begin{proof}
If $Y$ is a closed hyperbolic manifold, then every non-trivial element in $\pi_1(Y)$ is of infinite order, and the centralizer of every non-trivial element is cyclic. Let $\xi_1\in \pi_1(Y)$ be an indivisible element, and let $\xi_2\in \pi_1(Y)$ be an element that is not commutative to $\xi_1$, then $\xi_1,\xi_2$ satisfy the desired properties. From now on, we will assume that $Y$ is not hyperbolic. Since we also assume $Y$ is irreducible and not Seifert fibered, $Y$ has a non-trivial JSJ decomposition.

Let $T_1,\dots,T_k\subset Y$ $(k\ge 1)$ be the JSJ tori of $Y$, and let $Y_1,\dots,Y_l$ be the connected components of $Y\setminus (T_1\cup\dots\cup T_k)$. Then $\pi_1(Y_i)\to \pi_1(Y)$ and $\pi_1(T_j)\to \pi_1(Y)$ are injective for all $i,j$, and the images of all these homomorphisms are proper subgroups of $\pi_1(Y)$. 

Let $p:Z\to Y$ be the universal cover of $Y$. Then every component of $p^{-1}(Y_i)$ and $p^{-1}(T_j)$ is simply connected. We construct a graph $\mathcal{T}$ as follows. Let the vertices of $\mathcal{T}$ be the components of $p^{-1}(Y_i)$, and the edges be the components of $p^{-1}(T_j)$. The two vertices of each edge are given by the two sides of the component of $p^{-1}(T_j)$. By the Seifert-van Kampen theorem, $\pi_1(Z)\cong \pi_1(\mathcal{T})$, so $\mathcal{T}$ is a tree. We endow a metric on $\mathcal{T}$ such that each edge is a geodesic with length $1$.

The group $\pi_1(Y)$ acts on $\mathcal{T}$ isometrically. Since $Y$ is an oriented manifold, we may assign an orientation to each $T_j$ and use them to assign an orientation to each edge of $\mathcal{T}$ so that the action of $\pi_1(Y)$ on $\mathcal{T}$ preserves the edge orientations. As a consequence, the action of $\pi_1(Y)$ on $\mathcal{T}$ admits a non-empty fixed point set if and only if there exists a fixed vertex. Since the image of each $\pi_1(Y_i)\to \pi_1(Y)$ is a proper subgroup of $\pi_1(Y)$, we know that the action of $\pi_1(Y)$ on $\mathcal{T}$ has no global fixed vertices. Therefore, the action has no global fixed points. Since $\pi_1(Y)$ is finitely generated, by \cite[I \S 6.5, Corollary 3]{serre2002trees}, we conclude that there exists an element in $\pi_1(Y)$ whose action on $\mathcal{T}$ has no fixed point. 

For $\xi\in \pi_1(Y)$, let 
\[
l(\xi) = \inf_{x\in\mathcal{T}} d(x,\xi(x)).
\]
Then $l(\xi)$ is an integer, and it equals zero if and only if the action of $\xi$ admits a fixed point. Let $\xi_1\in \pi_1(Y)$ be such that $l(\xi_1)>0$ and $l(\xi_1)$ achieves the minimum positive value.

By \cite[I \S 6.4]{serre2002trees}, we know that $l(\xi^k)=|k|\, l(\xi)$ for all $\xi\in \pi_1(Y)$ and  integers $k$. Therefore, the element $\xi_1$ constructed above is of infinite order, and the only cyclic subgroup of $\pi_1(Y)$ that contains $\xi_1$ is the infinite cyclic group generated by $\xi_1$.

By a result of Jaco--Shalen and by Johannson (\cite{jaco1979seifert, johannson2006homotopy}, see also \cite[Theorem 1.1]{friedl2011centralizers}), if $\xi\in \pi_1(Y)$ has a non-cyclic centralizer, then $\xi$ is conjugate to an element in $\pi_1(T_j)$ or $\pi_1(Y_i)$ with $Y_i$ Seifert fibered. This implies the action of $\xi$ on $\mathcal{T}$ has a fixed vertex.
Therefore, for each positive integer $m$, the centralizer of $(\xi_1)^m$ is a cyclic group. 

Since the centralizer group of $(\xi_1)^m$ contains $\xi_1$, it must be the cyclic group generated by $\xi_1$. This shows that $\xi_1$ satisfies Condition (1).

It is a standard result in $3$--manifold topology that the only closed oriented $3$--manifold with $\pi_1$ isomorphic to $\mathbb{Z}$ is $S^1\times S^2$ (it is straightforward to show that such a manifold must be prime and not aspherical, so it must be $S^1\times S^2$). Hence our assumptions imply $\pi_1(Y)\not\cong \mathbb{Z}$. Recall that we assume $b_1(Y)>0$, so $\pi_1(Y)$ is generated by elements with non-trivial fundamental classes in $H_1(Y;\bQ)$. Therefore, there exists $\xi_2\in \pi_1(Y)$, whose image in $H_1(Y;\bQ)$ is non-zero, such that $\xi_2$ is not commutative with $\xi_1$. Since the image of $\xi_2$ in $H_1(Y;\bQ)$ is non-zero, it is of infinite order.  Therefore, $\xi_1,\xi_2$ satisfy Condition (2). 
\end{proof}

Applying Proposition \ref{prop_existence_xi1_xi2_non_fibered_case} to $\widetilde{Y}$ and invoking Theorem \ref{thm_main_irred_case_when_xi1_xi2_exist_in_pi1}, we conclude that Theorem \ref{thm_main_irred_case} holds if $Y$ is closed and $\widetilde{Y}$ is not Seifert fibered. 

\begin{remark}
    The conclusion of Proposition \ref{prop_existence_xi1_xi2_non_fibered_case} does not hold if $Y$ is a closed Seifert fibered manifold. If $Y$ is Seifert fibered with orientable fibers, then the center of $\pi_1(Y)$ contains an isomorphic copy of $\mathbb{Z}$ generated by the image of the fiber. If $Y$ is Seifert fibered with non-orientable fibers, then $\pi_1(Y)$ has an index-two subgroup with a center that contains an isomorphic copy of $\mathbb{Z}$. In either case, it is straightforward to see that the elements $\xi_1,\xi_2$ described in Proposition  \ref{prop_existence_xi1_xi2_non_fibered_case} cannot exist. 
\end{remark}

\subsection{The case when $\widetilde{Y}$ is Seifert fibered}

Now we prove Theorem \ref{thm_main_irred_case} in the case when $\widetilde{Y}$ is Seifert fibered. The proof will be similar to the proof of Theorem \ref{thm_main_irred_case_when_xi1_xi2_exist_in_pi1}, except that we will use the $\U(1)$ action instead of the trivial centralizer groups to obstruct homotopies of surfaces. 

Let $B$ be the orbit space of $\widetilde{Y}$. Since $\pi_2(\widetilde{Y})=0$, we have $\chi_{\orb}(B)\le 0$. Therefore, there exists a covering map $\widetilde{B}\to B$, where $\widetilde{B}$ is a closed oriented surface with genus $\ge 1$. So $Y$ is covered by an $S^1$ bundle over a closed oriented surface with genus $\ge 1$. Since finite covering spaces of $S^1$ bundles over surfaces are always $S^1$ bundles over surfaces, after taking a further covering, we may obtain a \emph{normal} covering of $Y$ by an $S^1$ bundle over a closed oriented surface with genus $\ge 1$. Therefore, without loss of generality, we may assume that $\widetilde{Y}$ is an $S^1$ bundle over a closed oriented surface with positive genus.

\begin{Theorem}
Suppose $Y$ admits a finite normal cover $\widetilde{Y}$ which is diffeomorphic to an $S^1$ bundle over a closed oriented surface with positive genus.
Then there exists a smooth embedding $Y_0=S^1\times D^2\to Y$, such that the image of $\pi_0\Diff_{PI}(I\times Y_0)\to \pi_0\Homeo(I\times Y)$ is of infinite rank. 
\end{Theorem}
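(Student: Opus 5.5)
The plan is to follow the proof of Theorem \ref{thm_main_irred_case_when_xi1_xi2_exist_in_pi1} almost verbatim. The one change is the step showing that a free homotopy of surfaces forces a based homotopy: instead of Lemma \ref{lem_surface_hmtpy_closed}, which needs trivial centralizers and fails here by the remark above, we use the $\U(1)$ action through Lemma \ref{lem_map_from_torus_degenerate_after_m}.

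\textbf{Setup.} Write $\pi:\widetilde{Y}\to \widetilde{B}$ for the $S^1$ bundle, with $g(\widetilde{B})\ge 1$. The $\U(1)$ action on the fibers induces an action $m$ on $\Emb_\dagger(I,I\times\widetilde{Y})$ by acting on the $\widetilde{Y}$ factor. Since $g\ge 1$, choose an embedded essential simple closed curve $c\subset\widetilde{B}$. Let $\Sigma=\pi^{-1}(c)$, a torus since the bundle is orientable, with $\sigma:\Sigma\hookrightarrow\widetilde{Y}$ the inclusion. Choose a horizontal lift of a curve in $\widetilde{B}$ meeting $c$ once; this lift is an embedded circle $\gamma$ in $\widetilde{Y}$ with $\#(\sigma\cap\gamma)=\pm1$. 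Perturb $\gamma$ so that its image in $Y$ is embedded, and let $\gamma_1=\gamma,\dots,\gamma_k$ be its deck translates with disjoint tubular neighborhoods.

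\textbf{Lifted diffeomorphisms.} Define $\tilde\varphi_j=\prod_i\tilde\varphi_{i,j}$ from the Budney--Gabai diffeomorphisms $\varphi_j$ of Theorem \ref{thm_BG_construction_S1xD3}, exactly as before. These are the pullbacks under Lemma \ref{lem_pull_back_diff_PI} of the extensions of $\varphi_j$ via $\hat\nu(\gamma)$; Waldhausen applies because $\widetilde{Y}$ is Haken. For $0\neq(c_j)$, set $\varphi=\prod_j\tilde\varphi_j^{c_j}$ and $f=\mathscr{I}\circ\sigma$. By Proposition \ref{prop_interior_connected_sum_formula_on_surface_homotopy_multi_curves}, $\varphi_*\circ f\simeq f\#_p\alpha$. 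The retraction argument with Cases (1)--(3) then gives $r\circ\Psi(\alpha)\neq 0$ verbatim, using Theorem \ref{thm_BG_construction_S1xD3}(2) for Case (3). Hence $\Psi(\alpha)\neq0$.

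\textbf{Free homotopy forces based homotopy.} Parametrize $\Sigma=S^1\times S^1$ by $(e^{ix},e^{iy})\mapsto e^{ix}\cdot s(e^{iy})$, where $s$ is a section of $\pi$ over $c$, which exists since bundles over a circle are trivial. Then
\[
m\circledast f(e^{ix},e^{iy})=e^{-ix}\cdot\mathscr{I}(e^{ix}s(e^{iy}))=\mathscr{I}(s(e^{iy})),
\]
because the $\U(1)$ action commutes with $\mathscr{I}$. So $m\circledast f$ factors through the projection to the second factor, and Lemma \ref{lem_map_from_torus_degenerate_after_m} applies. It says $f\simeq f\#_p\alpha$ iff $\alpha$ lies in the span of $\beta-\beta^u$ with $u\in\Ima (m\circledast f)_*$. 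Since $m\circledast f=\mathscr{I}\circ s\circ\mathrm{pr}$, every such $u$ lies in $\Ima\mathscr{I}_*$. By Proposition \ref{prop_invariance_Psi_under_pi1_Y}, $\Psi$ kills all such $\beta-\beta^u$, and $\Psi(\alpha)\neq0$, so $f\not\simeq\varphi_*\circ f$. Lemma \ref{lem_Embdagger_obstruct_isotopy} then shows $\varphi$ is not isotopic to $\id$ in $\Homeo_0(I\times\widetilde{Y})$. Its homotopy-to-identity hypothesis holds because the $\varphi_j$ lie in $\Diff_{PI}$ and are homotopic to the identity relative to the ends. Pushing down as at the end of the proof of Theorem \ref{thm_main_irred_case_when_xi1_xi2_exist_in_pi1} gives infinite rank in $\pi_0\Homeo(I\times Y)$.

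\textbf{Expected obstacle.} The delicate step is the based-point bookkeeping in Lemma \ref{lem_map_from_torus_degenerate_after_m}. One must check that the $\pi_2$ classes are compared at the correct base point after applying $m\circledast$, so that $\Psi(\alpha)$ is unchanged; this uses Proposition \ref{prop_compare_Psi_all_base_point_Y} together with the fact that the $\U(1)$ action is isotopic to the identity on $\widetilde{Y}$. One must also confirm that $\gamma$ can be chosen primitive in $H_1(\widetilde{Y})/\tor$. This holds because $\#(\sigma\cap\gamma)=\pm1$.
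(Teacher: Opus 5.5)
Your proposal is correct and is essentially the paper's proof. It uses the same lifted Budney--Gabai diffeomorphisms $\tilde\varphi_j$ and the same retraction argument giving $\Psi(\alpha)\neq 0$. It also replaces the trivial-centralizer lemma by the $\U(1)$ action through Lemma \ref{lem_map_from_torus_degenerate_after_m}, exactly as the paper does, with $m\circledast f$ factoring through $\mathscr{I}$. Your choice of $\sigma$ as the torus over an embedded curve, with $\gamma$ satisfying $\#(\sigma\cap\gamma)=\pm1$, differs only cosmetically from the paper's immersed $\mu$ and $\gamma$ with primitive projection.

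Two small wording fixes are needed. First, $c$ should be non-separating, since an essential separating curve has no dual curve meeting it once. Second, $\gamma$ should be a section of the trivial bundle over the dual curve, because a horizontal lift need not close up.
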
 

\begin{proof}

Let $S^1\to\widetilde{Y}\to B$ be the fiber bundle structure of $\widetilde{Y}$. By the assumptions, the fibers are oriented, so we may identify $\widetilde{Y}$ with a $\U(1)$ bundle. 

Let $\gamma:S^1\to \widetilde{Y}$ be a smooth embedding, such that its projection image in $B$ has a primitive fundamental class in $H_1(B;\mathbb{Z})$. After perturbation, we may assume that the projection image of $\gamma$ in $Y$ is embedded, so $\gamma$ is disjoint from its images under the deck transformation group. Let $\gamma_1,\dots,\gamma_k$ be the images of $\gamma$ under deck transformations, where $\gamma_1=\gamma$. Let $\nu(\gamma_i):Y_0=S^1\times D^2 \to \widetilde{Y}$ be an orientation-preserving embedding whose core circle is $\gamma_i$. We may require that the maps $\nu(\gamma_i)$ are related to each other by deck transformations, and that the images of $\nu(\gamma_i)$ are disjoint from each other. 

Let $\varphi_1,\varphi_2,\dots\in \Diff_{PI}(I\times Y_0)$ be given by Theorem \ref{thm_BG_construction_S1xD3}. Let $\tilde{\varphi}_{i,j}$ be the extension of $\varphi_j$ to $I\times Y$ via $\id_{I}\times \nu(\gamma_i)$. Then for a fixed value of $j$, the diffeomorphisms $\tilde{\varphi}_{i,j}$ for different values of $i$ are commutative to each other because they have disjoint supports. Let $\tilde{\varphi}_j = \prod_i \tilde{\varphi}_{i,j}$. Then 
the isotopy class of $\tilde{\varphi}_j$ is in the image of 
\begin{align}
        & \pi_0\Diff_{PI}(I\times Y_0)\xrightarrow{(\id_I\times \hat{\nu}(\gamma))_*}  \pi_0\Diff_{PI}(I\times Y)\nonumber\\
& \quad \to \pi_0\Homeo_{0}(I\times Y)\xrightarrow{(\id_I\times p)^*} \pi_0\Homeo_{0}(I\times \widetilde{Y}).
\label{eqn_composition_from_cylinder_to_covering_fibered_case}
\end{align}

We show that $\tilde{\varphi}_j$ are linearly independent over $\mathbb{Z}$ in $\pi_0\Homeo_{0}(I\times \widetilde{Y})$.
Let $0\neq (c_j)\in \oplus_\infty \mathbb{Z}$, let $\varphi = \prod_j \tilde{\varphi}_j^{c_j}\in \Homeo_{PI}(I\times \widetilde{Y})$. We show that $\varphi$ is not isotopic to the identity in $\Homeo_{0}(I\times \widetilde{Y})$. 

Let $\mu:S^1\to B$ be an immersed curve that has a non-zero intersection number with the projection image of $\gamma_1$ in $B$. The pull-back of $\widetilde{Y}$ (as an $S^1$ bundle) to $\mu$ is a trivial $S^1$ bundle. Let $\sigma:S^1\times S^1\to \widetilde{Y}$ be the bundle map that lifts $\mu$. Let $f=\mathscr{I}\circ \sigma:S^1\times S^1\to \Emb_\dagger(I,I\times\widetilde{Y})$, let $\varphi_*: \Emb_\dagger(I,I\times\widetilde{Y})\to \Emb_\dagger(I,I\times\widetilde{Y})$ be the homeomorphism induced by $\varphi$. We show that $f$ and $\varphi_*\circ f$ are not homotopic. 

By Proposition \ref{prop_interior_connected_sum_formula_on_surface_homotopy_multi_curves}, $\varphi_*\circ f$ is obtained from $f$ by an interior connected sum with $\alpha\in \pi_2\Emb_\dagger(I,I\times\widetilde{Y})$, where
\[
\Psi(\alpha) = \sum_i \#(\sigma\cap \gamma_i)\cdot \nu(\gamma_i)_*\circ \Psi\circ \mathcal{S}(\varphi).
\]
The same argument as in Theorem \ref{thm_main_irred_case_when_xi1_xi2_exist_in_pi1} shows that $\Psi(\alpha)\neq 0$.

Note that the map $f$ satisfies the assumptions of Lemma \ref{lem_map_from_torus_degenerate_after_m}. This is because $\widetilde{Y}$ admits an $\U(1)$ action $m:\U(1)\times \widetilde{Y}\to \widetilde{Y}$ from its bundle structure. 
This induces $\U(1)$ actions on $I\times \widetilde{Y}$ and on $\Emb_\dagger(I,I\times \widetilde{Y})$, which we also denote by $m$. 
By the construction of $\sigma:S^1\times S^1\to \widetilde{Y}$, we know that $S^1\times S^1$ is endowed with the structure of a trivial $\U(1)$ bundle over $S^1$, and $\sigma$ is $\U(1)$--equivariant. 
As a result, $m\circledast f$ satisfies the desired properties.

Since $\Psi(\alpha)\neq 0$, we conclude that $f$ is not homotopic to $\varphi_*\circ f$ by Lemma \ref{lem_map_from_torus_degenerate_after_m} and Proposition \ref{prop_invariance_Psi_under_pi1_Y}. Therefore, by Lemma \ref{lem_Embdagger_obstruct_isotopy}, $\varphi$ is non-trivial in $\pi_0\Homeo_{0}(I\times \widetilde{Y})$.

This shows that the image of the composition \eqref{eqn_composition_from_cylinder_to_covering_fibered_case} is of infinite rank. As a consequence, the composition $\pi_0\Diff_{PI}(I\times Y_0)\xrightarrow{\hat{\nu}(\gamma)_*} \pi_0\Diff_{PI}(I\times Y)
\to \pi_0\Homeo_{0}(I\times Y)$ is of infinite rank. So the desired result is proved.
\end{proof}

This concludes the proof of Theorem \ref{thm_main_irred_case} when $Y$ is closed and $\widetilde{Y}$ is Seifert fibered. 

\part{The case when $Y$ is reducible after filling $D^3$'s}\label{part_red_case}
Similar to the notation in Part \ref{part_irred_case}, we let $\hat Y$ be the manifold obtained by filling each boundary component of $Y$ that is diffeomorphic to $S^2$ with a copy of $D^3$. The following theorem is the main result of Part \ref{part_red_case}.

\begin{alphthm}
\label{thm_main_red_case}
Suppose $Y$ is a compact, connected, reducible, oriented $3$--manifold, such that $\pi_1(Y)$ is infinite and $\partial Y$ contains no connected components diffeomorphic to $S^2$. Then there exists a smooth embedding $i:X_0 \to  I\times Y$, such that the image of the composition map   
\[
        \pi_0 \Diff_{PI}(X_0)\xrightarrow{i_*} \pi_0 \Diff_{PI}(I\times Y) \to \pi_0\Homeo (I\times Y,I\times \partial Y)
    \]
is of infinite rank. Here, we set $X_0$ to be $S^1\times D^3$ if $Y=S^1\times S^2$ or $\mathbb{RP}^3\#\mathbb{RP}^3$ and we set $X_0$ to be $(S^2\times D^2)\natural(S^1\times D^3)$ otherwise. 
\end{alphthm}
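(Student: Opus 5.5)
Following the introduction, the plan splits into the generic reducible case and the two exceptional manifolds $S^1\times S^2$ and $\mathbb{RP}^3\#\mathbb{RP}^3$. In the generic case the obstruction will come from conjugacy classes in $\pi_1\Emb_\dagger(I,I\times Y)$, computed by Proposition \ref{prop_decomposition_pi_1_emb_dagger}. Surfaces, and hence the map $\Psi$, will not be needed there.

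\textbf{Generic case.} Since $Y$ is reducible, has no $S^2$ boundary components, and is not $S^1\times S^2$, there is an essential sphere $S\subset Y$ with $[S]\neq 0$ in $\pi_2(Y)$. In the infinite-$\pi_1$ setting we can choose $S$ so that $\pi_1(Y)$ acts on its class with infinite orbit, or at least nontrivially. Choose a loop $\gamma$ meeting a neighborhood of $S$. Take $X_0=(S^2\times D^2)\natural(S^1\times D^3)$, embedded in $I\times Y$ as a neighborhood of $I'\times S$ union an arc along $\gamma$, joined by a $1$--handle; this is exactly the neighborhood of a barbell. On $X_0$ we use a family of barbell diffeomorphisms $\varphi_k$, and these lie in $\Diff_{PI}$. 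For each $k$ we would compute the effect of $\varphi_k$ on a loop $\mathscr{I}\circ\delta$, where $\delta$ is a loop in $Y$ (with $\mathscr{I}$ as in Section \ref{sec_embedding_space_definition}). The expected answer is that $\varphi_*(\mathscr{I}\circ\delta)$ differs from $\mathscr{I}\circ\delta$ by an element of $\bZ[\pi_1(Y)\setminus\{1\}]\oplus\pi_2(Y)$ of the form $(g_k-g_k^{-1})\cdot [S]$ or a Dax term $g_k$, where the $g_k$ are distinct group elements determined by the barbell.

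Using the semidirect product structure, we would then show that a nontrivial combination $\prod\varphi_k^{c_k}$ changes the conjugacy class of $\mathscr{I}_*(\delta)$. Conjugating $xy$ by $\mathscr{I}_*(\pi_1(Y))$ and by the abelian part moves the $x$--component only by twisted coboundaries $z\cdot x$ and $w-y\cdot w$. So it suffices to find a homomorphism on $\bZ[\pi_1(Y)\setminus\{1\}]\oplus\pi_2(Y)$ that kills all such coboundaries but detects the changes. Counting coefficients along orbits of conjugation together with $\langle\delta\rangle$--cosets should work, because the orbit of $[S]$ is infinite.

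To pass from homotopy of $\varphi_*$ to isotopy, we use Lemma \ref{lem_Embdagger_obstruct_isotopy}. Isotopy in $\Homeo(I\times Y,I\times\partial Y)$ implies that $\varphi_*\circ\mathscr{I}\simeq\mathscr{I}$ freely. A free homotopy of maps $Y\to\Emb_\dagger$ preserves conjugacy classes of images of loops, which gives the contradiction.

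\textbf{Exceptional case.} For $Y=S^1\times S^2$ or $\mathbb{RP}^3\#\mathbb{RP}^3$, both of which are double-covered by or related to $S^1\times S^2$ with $\pi_1$ virtually $\bZ$, take $X_0=S^1\times D^3$ embedded as a neighborhood of $I'\times\gamma$ for a loop generating infinite order, and use the Budney--Gabai $\varphi_i$ of Theorem \ref{thm_BG_construction_S1xD3}. The idea is to compare $\pi_0$ of the mapping class group of $I\times Y$ with that of $S^1\times D^3$, using the inclusion of $I\times Y$ minus a neighborhood of $I\times(\text{point-sphere})$. We would detect the images either via a Dax-type count in $\pi_1\Emb_\dagger$, now with nonzero $d_3$, or via the $W_3$ invariant on a suitable cover or on $S^1\times D^3$ after cutting along $\{pt\}\times S^2$, showing that the cut map is well defined on isotopy classes.

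The main obstacle is the exceptional case, together with checking in the generic case that the Dax/$\pi_2$ change produced by barbells survives modulo conjugation by all of $\pi_1\Emb_\dagger$.
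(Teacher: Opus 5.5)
Your generic-case architecture is the paper's: a barbell neighbourhood $X_0$ of a reducing sphere plus an arc, acting on loops $\mathscr{I}\circ\delta$, obstructed through conjugacy classes in the semidirect product of Proposition~\ref{prop_decomposition_pi_1_emb_dagger} and then Lemma~\ref{lem_Embdagger_obstruct_isotopy}. However, the two decisive steps are only expectations, and the mechanisms you suggest for them fail.

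The $\pi_2(Y)$ option cannot work. Every element of $\Diff_{PI}$ is homotopic to the identity rel boundary, so $\varphi_*$ is homotopic to the identity on the corresponding space of maps. Hence the $\pi_2(Y)$-component of $[\varphi_*\circ\mathscr{I}\circ\delta]$ differs from that of $[\mathscr{I}\circ\delta]$ by a twisted coboundary $w-\delta\cdot w$, and conjugation by an element of $\mathcal{J}(\pi_2(Y))$ removes it. In fact your candidate $(g-g^{-1})[S]$ is such a coboundary for $\delta=g$. The orbit of $[S]$ plays no role; everything must come from the Dax summand.

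Nor is the Dax change a single term $g_k$. Since $S$ separates $Y=Y_1\#Y_2$, a loop $\hat\alpha$ representing $\alpha=\alpha_1\alpha_2$ crosses the support twice with opposite orientations. For the self-referential barbell, Gabai's computation $\mathrm{Dax}(D_b,\psi_1(D_b))=g+g^{-1}$ is transported by self-embeddings with $g\mapsto g^k$. One gets $[\varphi_*\circ\mathscr{I}\circ\hat\alpha]=\alpha_1\beta\alpha_2\beta^{-1}$ with $\beta=F(\beta_0)$ and $\beta_0=\sum_k c_k(\alpha^k+\alpha^{-k})$. Writing this as $x\cdot\alpha$ gives $x=C_{\alpha_1}(\beta_0)-C_{\alpha}(\beta_0)$, a difference of two conjugates of $\beta_0$. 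So any count of coefficients over conjugacy classes annihilates it; the detecting functional may only be invariant under conjugation by the loop itself.

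After conjugating by $\alpha_1$, one must exclude $\beta_1-C_{\alpha_2\alpha_1}(\beta_1)=\beta_0-C_{\alpha_2}(\beta_0)$. The coefficient of $(\alpha_2\alpha_1)^k$ vanishes on the left, since $(\alpha_2\alpha_1)^k$ commutes with $\alpha_2\alpha_1$. It equals $-c_k$ on the right, because $(\alpha_2\alpha_1)^k\neq(\alpha_1\alpha_2)^{-k}$ in the free product, and this is exactly where $\alpha_2^2\neq 1$ is used. So the splitting has to be chosen with $\pi_1(Y_2)\not\cong 1,\bZ/2$, and one must show such an $\alpha_2$ exists. For $\mathbb{RP}^3\#\mathbb{RP}^3$ one has $C_{\alpha_2}(\beta_0)=\beta_0$, and the change vanishes identically. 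Your outline never isolates this condition, which is the real reason that case is exceptional.

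For the exceptional manifolds only candidate routes are listed, and the first cannot succeed. The implanted Budney--Gabai elements are supported in $I\times N$ for a solid torus $N\subset Y$. Every loop $\delta\subset Y$ can be homotoped off $N$, so $\varphi_*\circ\mathscr{I}\circ\delta\simeq\mathscr{I}\circ\delta$. Thus no $\pi_1$-level (Dax) count on the loops accessible through Lemma~\ref{lem_Embdagger_obstruct_isotopy} sees them; moreover $d_3=0$ here too, by the argument of Section~\ref{subsec_Dax_iso}. Cutting along $I\times(\text{sphere})$ destroys the infinite-order class that $W_3$ sees: for $S^1\times S^2$ it leaves the simply connected $I\times I\times S^2$. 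It also does not address the real difficulty, namely that isotopies in $\Homeo(I\times Y)$ move $\{0,1\}\times Y$.

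The missing idea is to fill rather than cut. Embed $I\times(S^1\times S^2)$ as a collar $\kappa$ of $\partial(S^1\times D^3)$, and use Hatcher's computation that $\pi_1\Homeo(S^1\times S^2)\cong\bZ\oplus\bZ/2$ is generated by loops that extend over $S^1\times D^3$. This gives two facts:
\begin{itemize}
\item an isotopy from $\varphi$ to $\id$ in $\Homeo(I\times Y)$ extends to one from $\kappa_*\varphi$ to $\id$ in $\Homeo(S^1\times D^3)$ (Lemma~\ref{lem_S1xS2_push_to_S1xD3});
\item $\pi_0\Homeo_\partial(S^1\times D^3)\to\pi_0\Homeo(S^1\times D^3)$ is injective (Lemma~\ref{lem_w3_obstruct_isotopy_moving_boundary}).
\end{itemize}
Hence $\Psi\circ\mathcal{S}$ detects $\kappa_*\varphi$, which is isotopic rel boundary to $\prod_k\varphi_k^{c_k}$. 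For $\mathbb{RP}^3\#\mathbb{RP}^3$ you then pull back to the double cover $S^1\times S^2$; this is well defined by Lemma~\ref{lem_pull_back_diff_PI}, since the deck involution is not homotopic to the identity. Use a $\hat\tau$-equivariant collar. You also need Corollary~\ref{cor_homeo_reverse_H1_on_Y0} to see that the two lifted copies contribute equally, doubling the $\Psi\circ\mathcal{S}$ value rather than cancelling.
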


Theorem \ref{thm_main_red_case} implies Theorems \ref{thm_main} and \ref{thm_main2} in the case when $\hat Y$ is reducible. This follows from the same argument after the statement of Theorem \ref{thm_main_irred_case} by replacing $I\times Y_0$ with $X_0$. The rest of Part \ref{part_red_case} is devoted to the proof of Theorem \ref{thm_main_red_case}.

\section{The case when $Y=S^1\times S^2$}

This section proves Theorem \ref{thm_main_red_case} when $Y=S^1\times S^2$. Suppose $Y=S^1\times S^2$. Let $\gamma:S^1\to Y$ be an embedding that induces an isomorphism on $H_1$. Let $\nu(\gamma):S^1\times D^3\to I\times \widetilde{Y}$ be an orientation-preserving embedding with core circle $\gamma$. Let $\varphi_j\in \Diff_{PI}(S^1\times D^3)\cong \Diff_{PI}(I\times S^1\times D^2)$ be the sequence of diffeomorphisms given by Theorem \ref{thm_BG_construction_S1xD3}. Let $\nu(\gamma)_*(\varphi_j)$ be the extension of $\varphi_j$ to $I\times Y$ via $\nu(\gamma)$. We will show that the images of $\nu(\gamma)_*(\varphi_j)$ are linearly independent over $\mathbb{Z}$ in $\pi_0\Homeo(I\times Y)$. 

We start with the following lemmas. First, note that by the isotopy extension properties, we have a Serre fibration
\[
\Homeo_\partial(S^1\times D^3)\hookrightarrow \Homeo(S^1\times D^3)\to \Homeo(S^1\times S^2)
\]
\begin{Lemma}
\label{lem_S1xD3_boundary_Homeo_pi1}
    The restriction map $\Homeo(S^1\times D^3)\to \Homeo(S^1\times S^2)$ induces a surjection on the loop space based at $\id$.
\end{Lemma}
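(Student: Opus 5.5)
We want to show that every loop in $\Homeo(S^1\times S^2)$ based at $\id$ is homotopic to the restriction of a loop in $\Homeo(S^1\times D^3)$ based at $\id$. The plan is to find an explicit generating set for $\pi_1(\Homeo(S^1\times S^2),\id)$ and then extend each generator over $S^1\times D^3$ by a family of homeomorphisms that starts and ends at $\id$. Since the restriction map is a homomorphism of topological groups, the induced map on $\pi_1$ is a group homomorphism. It therefore suffices to extend each generator.

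\textbf{Step 1: a generating set.} By Hatcher's resolution of the Smale conjecture and its extension to $S^1\times S^2$, the inclusion $O(2)\times O(3)\hookrightarrow \Diff(S^1\times S^2)$ is, up to a finite extension, a homotopy equivalence onto its image component. The inclusion $\Diff(S^1\times S^2)\to\Homeo(S^1\times S^2)$ is a weak equivalence, by the work of Hatcher and of Cerf and Kirby--Siebenmann on smoothing in dimension $3$.

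Concretely, the identity component $\Homeo_0(S^1\times S^2)$ is homotopy equivalent to $\SO(2)\times \SO(3)\times \Omega\SO(3)$, where the last factor consists of maps $(\theta,x)\mapsto(\theta,g(\theta)x)$ for loops $g\colon S^1\to\SO(3)$. Hence $\pi_1$ is generated by three kinds of loops:
\begin{enumerate}
\item rotation of the $S^1$ factor, $R_s(\theta,x)=(\theta+s,x)$;
\item a loop $h_s\in\SO(3)$ acting on the $S^2$ factor;
\item loops in $\Omega\SO(3)$, whose $\pi_1$ is $\pi_2\SO(3)=0$, so these contribute nothing.
\end{enumerate}
I expect this identification to be the main obstacle, in particular justifying the passage from the smooth to the topological category. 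The cleanest route is to quote Hatcher's theorem together with $\pi_k\big(\Homeo(M^3)/\Diff(M^3)\big)=0$.

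\textbf{Step 2: extending the generators.} Each generator extends naturally over $S^1\times D^3$.
\begin{enumerate}
\item Rotation $R_s$ extends by $(\theta,v)\mapsto(\theta+s,v)$.
\item A loop in $\SO(3)$ acts linearly on $D^3$, giving $(\theta,v)\mapsto(\theta,h_s v)$.
\item If one prefers not to use $\pi_2\SO(3)=0$, a map $(\theta,x)\mapsto(\theta,g(\theta)x)$ extends via $(\theta,v)\mapsto(\theta,g(\theta)v)$, and this works in families as well.
\end{enumerate}
All of these extensions are based at $\id$.

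\textbf{Step 3: components.} We only need loops based at $\id$, so only the identity component matters. Any loop in $\Homeo(S^1\times S^2)$ based at $\id$ lies in $\Homeo_0$, and it is a product of the generators above up to homotopy. Hence it is homotopic to the restriction of a product of the corresponding extended loops. This proves that the restriction map is surjective on $\pi_1$ at $\id$.
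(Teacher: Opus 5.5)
Your computation of $\pi_1$ is the same as the paper's. Hatcher's work gives $\pi_1\Homeo(S^1\times S^2)\cong\bZ\oplus\bZ/2$, generated by rotation of the $S^1$ factor and the nontrivial loop of $\SO(3)$ acting on $S^2$. Both loops extend over $S^1\times D^3$, by rotating the $S^1$ factor and by acting linearly on $D^3$ respectively.

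Two small corrections to Step 1, neither of which affects $\pi_1$:
\begin{enumerate}
\item Hatcher's theorem is $\Diff(S^1\times S^2)\simeq O(2)\times O(3)\times\Omega\SO(3)$. The loop-space factor is therefore not a ``finite extension'' of $O(2)\times O(3)$.
\item The identity component only involves the null-homotopic loops $\Omega_0\SO(3)$. A generator of $\pi_1\SO(3)$ gives a twist map $(\theta,x)\mapsto(\theta,g(\theta)x)$ that is not even homotopic to $\id$.
\end{enumerate}

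\textbf{The gap is at the end.} You prove that the restriction map $r$ is surjective on $\pi_1$. That is, every loop based at $\id$ is \emph{homotopic} to a restricted loop. The lemma asserts more: $r$ is surjective on the loop space $\Omega_{\id}$, meaning every loop $\ell$ based at $\id$ \emph{is} the restriction of a loop in $\Homeo(S^1\times D^3)$ based at $\id$. This strong form is what Lemma~\ref{lem_S1xS2_push_to_S1xD3} uses. There the isotopy $G$ must satisfy $G|_{I\times\partial(S^1\times D^3)}=F|_{I\times\{0\}\times Y}$ exactly, so that it can be glued to $F$.

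The missing step is the one the paper supplies in its last sentence: $r$ is a Serre fibration.
\begin{enumerate}
\item Given $\ell$, take a loop $\tilde\ell_0$ in $\Homeo(S^1\times D^3)$ based at $\id$, together with a homotopy $H\colon I\times I\to\Homeo(S^1\times S^2)$ rel $\partial I$ from $r\circ\tilde\ell_0$ to $\ell$.
\item The pairs $(I\times I,\,I\times\{0\}\cup\partial I\times I)$ and $(I\times I,\,I\times\{0\})$ are homeomorphic. So the Serre lifting property lets you lift $H$ so that the lift equals $\tilde\ell_0$ on $I\times\{0\}$ and the constant map $\id$ on $\partial I\times I$.
\item The restriction of this lift to $I\times\{1\}$ is a loop based at $\id$ lying exactly over $\ell$.
\end{enumerate}
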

\begin{proof}
    The homotopy type of $\Homeo(S^1\times S^2)$ was computed by Hatcher \cite{hatcher1981diffeomorphism}. In particular, it is known that $\pi_1\Homeo(S^1\times S^2) \cong \bZ\oplus \bZ/2$, and it is generated by a rotation on the $S^1$ factor and a rotation on the $S^2$ factor by the non-trivial element in $\pi_1(\SO(3))$. Both loops can be lifted to $\Homeo(S^1\times D^3)$. So the map $\Homeo(S^1\times D^3)\to \Homeo(S^1\times S^2)$ induces a surjection on $\pi_1$. Since the map is a Serre fibration, it also induces a surjection on the loop space.
\end{proof}
Let $\kappa:I\times Y\to S^1\times D^3$ be an embedding whose image is a closed collar neighborhood of $\partial (S^1\times D^3)$. 

\begin{Lemma}
\label{lem_S1xS2_push_to_S1xD3}
Suppose $\varphi\in \Homeo_\partial(I\times Y)$, let $\kappa_*\varphi$ denote the extension of $\varphi$ to $S^1\times D^3$ via $\kappa$. If $[\varphi]$ is trivial in $\pi_0\Homeo(I\times Y)$, then $[\kappa_*\varphi]$ is trivial in $\Homeo(S^1\times D^3)$.
\end{Lemma}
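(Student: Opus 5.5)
The idea is to convert an isotopy of $\varphi$ to $\id$ in $\Homeo(I\times Y)$, which may move the boundary $\{0,1\}\times Y$, into an isotopy of $\kappa_*\varphi$ to $\id$ in $\Homeo(S^1\times D^3)$. The only issue is the outer boundary component. We orient the collar so that $\kappa(\{0\}\times Y)=\partial(S^1\times D^3)$ and $\kappa(\{1\}\times Y)$ is the inner boundary of the collar. Let $\Phi_t$ ($t\in[0,1]$) be an isotopy in $\Homeo(I\times Y)$ with $\Phi_0=\id$ and $\Phi_1=\varphi$. Since $\varphi$ fixes a neighborhood of the boundary, the restriction of $\Phi_t$ to $\{0,1\}\times Y$ gives two loops $\ell_0,\ell_1$ in $\Homeo(Y)=\Homeo(S^1\times S^2)$ based at $\id$.

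First I would correct the loop $\ell_1$ on the inner side. Let $W$ be the complement of the open collar, a copy of $S^1\times D^3$ with boundary $\kappa(\{1\}\times Y)$. By Lemma~\ref{lem_S1xD3_boundary_Homeo_pi1}, $\ell_1$ lifts to a loop $L_t$ in $\Homeo(W)$ based at $\id$ with $L_t|_{\partial W}=\ell_1(t)$. We may need to replace $\Phi_t$ first by a nearby isotopy that is level-preserving near the ends, using collars. Then $\kappa\circ\Phi_t\circ\kappa^{-1}$ on the collar and $L_t$ on $W$ glue to a path of homeomorphisms $G_t$ of $S^1\times D^3$ from $\id$ to $\kappa_*\varphi\cup L_1=\kappa_*\varphi$.

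Next I deal with the outer boundary, where $G_t$ restricts to $\ell_0(t)$, a loop in $\Homeo(S^1\times S^2)$. By Lemma~\ref{lem_S1xD3_boundary_Homeo_pi1} again, $\ell_0$ lifts to a loop $M_t$ in $\Homeo(S^1\times D^3)$ based at $\id$. Set $H_t=M_t^{-1}\circ G_t$. This is a path from $\id$ to $\kappa_*\varphi$ consisting of homeomorphisms that fix $\partial(S^1\times D^3)$ pointwise. By collaring it can be made into a path in $\Homeo_\partial(S^1\times D^3)$. Hence $[\kappa_*\varphi]$ is trivial, even in $\pi_0\Homeo_\partial(S^1\times D^3)$, and in particular in $\pi_0\Homeo(S^1\times D^3)$.

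The main obstacle is the point-set gluing step. The isotopy $\Phi_t$ need not be standard near $\{1\}\times Y$, so gluing it to $L_t$ along $\kappa(\{1\}\times Y)$ requires first isotoping $\Phi_t$ rel endpoints to be a product $\id_{[1-\epsilon,1]}\times\ell_1(t)$ near the end. I would do this using the topological collar uniqueness and isotopy extension results (Edwards--Kirby) in a $1$-parameter family. Alternatively, I would use the Serre fibration $\Homeo_\partial\to\Homeo\to\Homeo(\partial)$ directly. Note that if only the weaker conclusion in $\Homeo(S^1\times D^3)$ is needed, the second step, which uses $M_t$, is unnecessary.
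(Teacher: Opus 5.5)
Your proposal is correct and follows the paper's proof: lift the loop traced on the inner boundary of the collar to a loop in $\Homeo(S^1\times D^3)$ based at $\id$ via Lemma \ref{lem_S1xD3_boundary_Homeo_pi1}, then glue it to the given isotopy. This already gives triviality in $\pi_0\Homeo(S^1\times D^3)$. Your second step, which upgrades the conclusion to $\Homeo_\partial$, is an optional extra; it is essentially the argument of Lemma \ref{lem_w3_obstruct_isotopy_moving_boundary}.

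The gluing step you flag as the main obstacle is not actually one. Each $\Phi_t$ preserves $\{1\}\times Y$, so the two pieces agree on the closed interface $\kappa(\{1\}\times Y)$. Since we are in the topological category, the pasting lemma directly gives a continuous path of homeomorphisms. There is no need to make $\Phi_t$ level-preserving near the ends or to invoke Edwards--Kirby.
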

\begin{proof}
Without loss of generality, assume $\kappa(\{1\}\times Y) = \partial (S^1\times D^3)$. 
    Suppose $F:I\times (I\times Y)\to I\times Y$ is an isotopy from $\varphi$ to $\id$ in $\Homeo(I\times Y)$. Then by Lemma \ref{lem_S1xD3_boundary_Homeo_pi1}, there exists an isotopy $G:I\times (S^1\times D^3)\to S^1\times D^3$ from $\id$ to $\id$, such that $G|_{I\times \partial(S^1\times D^3)} = F|_{I\times \{0\}\times Y}$. Gluing $G$ and $F$ yields the isotopy from $\kappa_*\varphi$ to $\id$ in $\Homeo(S^1\times D^3)$.
\end{proof}

Recall that we denote $Y_0=S^1\times D^2$, so $I\times Y_0\cong S^1\times D^3$.
\begin{Lemma}
\label{lem_w3_obstruct_isotopy_moving_boundary}
    Suppose $\varphi\in \Homeo_\partial(I\times Y_0) \cong \Homeo_\partial(S^1\times D^3)$ satisfies $\Psi\circ \mathcal{S}([\varphi])\neq 0$, then $[\varphi]$ is non-trivial in $\pi_0\Homeo(S^1\times D^3)$. 
\end{Lemma}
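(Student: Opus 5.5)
We argue by contrapositive: assuming $[\varphi]$ is trivial in $\pi_0\Homeo(S^1\times D^3)$, we show $\Psi\circ\mathcal{S}([\varphi])=0$. The plan is to reduce from isotopies that move the boundary to isotopies rel boundary, up to an error we can control, and then use that $\Psi\circ\mathcal{S}$ kills that error.

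\textbf{Step 1: reduce to level-preserving homeomorphisms.} Consider the fibration
\[
\Homeo_\partial(S^1\times D^3)\hookrightarrow \Homeo(S^1\times D^3)\to \Homeo(S^1\times S^2).
\]
Its long exact sequence shows that $\ker\big(\pi_0\Homeo_\partial(S^1\times D^3)\to\pi_0\Homeo(S^1\times D^3)\big)$ is the image of the boundary map from $\pi_1\Homeo(S^1\times S^2)$. By Hatcher's computation, quoted in Lemma \ref{lem_S1xD3_boundary_Homeo_pi1}, this group is generated by the $S^1$-rotation and the $\SO(3)$-rotation of the $S^2$ factor. The boundary map sends a loop $\ell$ to the homeomorphism obtained by implanting $\ell$ in a collar $I\times S^1\times S^2$ of the boundary. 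Thus, if $\varphi$ is trivial in $\pi_0\Homeo(S^1\times D^3)$, then $\varphi$ is isotopic rel boundary to a product of collar twists: the $S^1$-collar twist $\tau_1$ and the $\SO(3)$-collar twist $\tau_2$.

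\textbf{Step 2: show $\Psi\circ\mathcal{S}$ vanishes on $\tau_1$ and $\tau_2$.} Since $\Psi\circ\mathcal{S}$ is a homomorphism, it suffices to check each generator. Both homeomorphisms are, or are isotopic rel boundary to, extensions of maps that are smooth and fiberwise over the $I$-direction. Explicitly, in coordinates $I\times Y_0$, $\tau_1$ and $\tau_2$ can be represented by level-preserving maps of the form $(s,y)\mapsto(s,g_s(y))$, where $g_s$ is a loop in $\Diff(Y_0)$ rel boundary at the ends, for instance rotation of the $S^1$ factor or rotation of the $D^2$ factor.

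Then $\varphi_*\circ\mathscr{I}\circ i_0$ consists of arcs $s\mapsto (s,g_s(i_0(x)))$. These arcs are graphs, hence embedded, and the whole family deforms through graph arcs, by straight-line or loop contraction in the $Y$-coordinate over the disk. This places $\mathcal{S}(\tau_i)$ in the image of $\pi_2$ of the space of graph arcs, which is homotopy equivalent to $\Map_\partial(I,Y_0)$. Because $Y_0$ is aspherical, that space has trivial $\pi_2$. Hence $\mathcal{S}(\tau_i)=0$, or at least its image in the quotient defining $\mathcal{W}(Y_0)$ vanishes.

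The key point here is that graph arcs form a subspace whose homotopy is that of the mapping space. Under $\Phi_3$, graph arcs map to configurations in which the points are ordered by their $I$-coordinate, so the linking invariants $\theta_{a,b}$ on them reduce to data depending only on $\pi_2(Y_0)=0$.

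\textbf{Step 3: conclude.} Combining Steps 1 and 2, we get $\Psi\circ\mathcal{S}([\varphi])=\Psi\circ\mathcal{S}([\tau_1^{a}\tau_2^{b}])=0$, which is the contrapositive.

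The main obstacle is Step 2, and specifically the claim that the collar twists are level-preserving up to isotopy rel boundary. The $S^1$-rotation twist is clearly of the form $(s,(t,x))\mapsto (s,(t+\rho(s),x))$. The $\SO(3)$ twist, however, rotates $S^2=\partial D^3$ along the radial coordinate of $D^3$, not along the $I$-coordinate of $I\times Y_0$. To handle this, I would try conjugating by the identification $S^1\times D^3\cong I\times S^1\times D^2$, so that the radial collar becomes a neighborhood of $\{0,1\}\times Y_0\cup I\times\partial Y_0$. I would then argue, via Lemma \ref{lem_Embdagger_obstruct_isotopy}-style reasoning, that the map $\mathscr{I}\circ i_0$ transported through the twist stays within the space of graph arcs, because the rotation preserves $I$-levels after a reparametrization. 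If that fails, the fallback is to observe that $\tau_2$ has order two in $\pi_0$. Since $\mathcal{W}(Y_0)$ is a $\bQ$-vector space, any value $\Psi\circ\mathcal{S}(\tau_2)$ would have to be $2$-torsion and hence zero, so only $\tau_1$ needs to be analyzed directly.
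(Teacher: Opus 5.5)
Your proof is correct, but it takes a longer route than the paper.

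\textbf{The paper's argument.} The paper uses the same fibration and exact sequence. It then observes, in Lemma \ref{lem_S1xD3_boundary_Homeo_pi1}, that both generators of $\pi_1\Homeo(S^1\times S^2)$ lift to loops in $\Homeo(S^1\times D^3)$: the $S^1$-rotation trivially, and the $\SO(3)$ loop by rotating the whole $D^3$ factor. So the boundary map $\pi_1\Homeo(S^1\times S^2)\to\pi_0\Homeo_\partial(S^1\times D^3)$ is zero. In your language, the collar twists $\tau_1,\tau_2$ are already isotopic to the identity rel boundary. Hence $\pi_0\Homeo_\partial(S^1\times D^3)\to\pi_0\Homeo(S^1\times D^3)$ is injective. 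The lemma is then immediate, since $\Psi\circ\mathcal{S}([\varphi])\neq 0$ forces $[\varphi]\neq 0$ in $\pi_0\Homeo_\partial$.

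\textbf{Your argument.} You never use that the loops extend. Instead you show that $\Psi\circ\mathcal{S}$ kills the image of the boundary map.
\begin{itemize}
\item For $\tau_1$, the graph-arc argument is sound. The graph arcs in $\Emb_\dagger(I,I\times Y_0)$ form a component of the free loop space of $Y_0\simeq S^1$, which has vanishing $\pi_2$. So $\mathcal{S}(\tau_1)=0$ already in $\pi_2$, and the hedge about the quotient is unnecessary.
\item One small correction: in $I\times Y_0$ coordinates, the twist angle of $\tau_1$ must depend on the distance to all of $\partial(I\times D^2)$, not only on $s$. Otherwise it is not the identity near $I\times\partial Y_0$. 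It remains level-preserving, so the argument is unaffected.
\item For $\tau_2$, your $2$-torsion fallback is valid. The boundary map is a homomorphism, so $\tau_2^2$ is trivial rel boundary, and $\mathcal{W}(Y_0)$ is a $\mathbb{Q}$-vector space.
\item Your primary plan for $\tau_2$ also works. Choose the rotation axis of $D^3$ along the $I$-direction, and use an $\SO(2)$-equivariant homeomorphism $D^3\cong I\times D^2$ (both orbit spaces are a disk with one boundary arc fixed). Then $\tau_2$ becomes a rotation of the $D^2$ factor, which is level-preserving.
\end{itemize}

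\textbf{Comparison.} The paper's route gives the stronger injectivity statement with no computation of the invariant. Yours only shows that $\Psi\circ\mathcal{S}$ descends to $\pi_0\Homeo(S^1\times D^3)$, which suffices here. Its advantage is that it uses only properties of the invariant, in the spirit of Lemma \ref{lem_Embdagger_obstruct_isotopy}. It would therefore survive in settings where the boundary loops do not extend over the manifold, but here it costs extra work.
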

\begin{proof}
    We have a Serre fibration 
    \[
    \Homeo_\partial(S^1\times D^3) \hookrightarrow \Homeo(S^1\times D^3)\to \Homeo(S^1\times S^2),
    \]
    therefore an exact sequence 
    \begin{align*}
    &\pi_1 \Homeo(S^1\times D^3) \to \pi_1 \Homeo(S^1\times S^2) \\
    &\quad \to \pi_0  \Homeo_\partial(S^1\times D^3) \to \pi_0 \Homeo(S^1\times D^3).
    \end{align*}
    By Lemma \ref{lem_S1xD3_boundary_Homeo_pi1}, $\pi_1 \Homeo(S^1\times D^3) \to \pi_1 \Homeo(S^1\times S^2)$ is surjective, so $\pi_0  \Homeo_\partial(S^1\times D^3) \to \pi_0 \Homeo(S^1\times D^3)$ is injective, and hence the result follows. 
\end{proof}

Now we can finish the proof of Theorem \ref{thm_main_red_case} when $Y=S^1\times S^2$.
\begin{proof}
Let $0\neq (c_k)\in \oplus_\infty \mathbb{Z}$, and let $\hat \varphi = \prod_k \varphi_k^{c_k}$, let 
$\varphi = \prod_{k} \nu(\gamma)_*(\varphi_k)^{c_k} =\nu(\gamma)_*(\hat\varphi).$
We show that $\varphi$ is non-trivial in $\pi_0\Homeo(I\times Y)$. Note that $\kappa\circ \gamma$ is isotopic to the core circle of $S^1\times D^3$, so we may choose $\nu(\gamma)$ so that $\kappa\circ \nu(\gamma)$ is isotopic to the standard embedding of $S^1\times D^3$ to a neighborhood of the core circle in $S^1\times D^3$. Therefore, $\kappa_*\varphi$ is isotopic in $\Homeo_\partial(S^1\times D^3) = \Homeo_\partial(I\times Y_0)$ to $\hat\varphi$. By Theorem \ref{thm_BG_construction_S1xD3}, we know that $\Psi\circ\mathcal{S}([\kappa_\ast\varphi])\neq 0$. So by Lemma \ref{lem_S1xS2_push_to_S1xD3} and Lemma \ref{lem_w3_obstruct_isotopy_moving_boundary}, we know that $\varphi$ is non-trivial in $\pi_0\Homeo(I\times Y)$. 
\end{proof}

\section{The case when $Y=\mathbb{RP}^3\#\mathbb{RP}^3$}
This section proves Theorem \ref{thm_main_red_case} when $Y=\mathbb{RP}^3\#\mathbb{RP}^3$. Suppose $Y=\mathbb{RP}^3\#\mathbb{RP}^3$, then there is a double covering map $p:S^1\times S^2\to Y$. Let $\widetilde{Y}=S^1\times S^2$. The non-trivial deck transformation $\tau:\widetilde{Y}\to \widetilde{Y}$ is given by the product of a reflection on $S^1$ and the antipodal map on $S^2$. 

Let $\hat\tau$ be the product of a reflection on $S^1$ and the map $x\mapsto -x$ on $D^3$. Then $\hat\tau$ is an involution on $S^1\times D^3$. There exists an embedding $\kappa:I\times \widetilde{Y}\to S^1\times D^3$, whose image is a closed collar neiborhood of $\partial (S^1\times D^3)$, such that $\hat\tau\circ \kappa = \kappa\circ (\id_I\times \tau)$. 

The non-trivial deck transformation on $S^1\times S^2$ is not homotopic to the identity. Therefore, by Lemma \ref{lem_pull_back_diff_PI}, the pull-back map 
\[
\pi_0\Homeo_{0}(I\times Y)\xrightarrow{(\id_I\times p)^*}\pi_0\Homeo_{0}(I\times \widetilde{Y})
\]
is well-defined. 

Let $\gamma:S^1\to I\times \widetilde{Y}$ be a smooth embedding whose image represents a generator of $\pi_1(I\times \widetilde{Y})$. Perturb $\gamma$ so that its image is disjoint from $\tau\circ \gamma$. Write $\gamma_1=\gamma$, $\gamma_2 = (\id_I\times\tau)\circ \gamma$. Let $\nu(\gamma_i):S^1\times D^3\to I\times \widetilde{Y}$ be an orientation-preserving embedding with core circle $\gamma_i$. We also require that $\nu(\gamma_i)$ $(i=1,2)$ are disjoint, and $\nu(\gamma_2) = (\id_I\times \tau)\circ \nu(\gamma_1)$. Define $\hat\nu(\gamma)$ to be the embedding of $S^1\times D^3$ in $I\times Y$ given by 
\[
\hat\nu(\gamma)=(\id_I\times p)\circ \nu(\gamma_1) = (\id_I\times p)\circ \nu(\gamma_2).
\]

Let $\varphi_j\in \Diff_{PI}(S^1\times D^3)\cong \Diff_{PI}(I\times S^1\times D^2)$ be the sequence of diffeomorphisms given by Theorem \ref{thm_BG_construction_S1xD3}. Let $\tilde{\varphi}_{i,j}$ be the extension of $\varphi_j$ to $I\times \widetilde{Y}$ via $\nu(\gamma_i)$. Let $\tilde{\varphi}_j = \tilde{\varphi}_{1,j}\circ \tilde{\varphi}_{2,j}$. Then the isotopy class of $\tilde{\varphi}_j$ is in the image of 
\begin{align}
&\pi_0\Diff_{PI}(S^1\times D^3) \xrightarrow{\hat{\nu}(\gamma)_*}\pi_0\Diff_{PI}(I\times Y)\nonumber \\
&\quad \to \pi_0\Homeo_{0}(I\times Y)\xrightarrow{(\id_I\times p)^*}\pi_0\Homeo_{0}(I\times \widetilde{Y}).
\label{eqn_composition_from_cylinder_S1xS2}
\end{align}

We show that $\tilde{\varphi_j}$ are linearly independent in $\pi_0\Homeo_{0}(I\times \widetilde{Y})$. 
Consider the extension map
\[
\kappa_*:\pi_0\Homeo_{PI}(I\times \widetilde{Y}) \to \pi_0\Homeo_{PI}(S^1\times D^3).
\]
Then $\kappa_*(\tilde{\varphi}_j) = \kappa_*(\tilde{\varphi}_{1,j})\circ \kappa_*(\tilde{\varphi}_{2,j})$, and we have $\kappa_*(\tilde{\varphi}_{2,j})=\tau \circ  \kappa_*(\tilde{\varphi}_{1,j})\circ \tau^{-1}$. Now we invoke Corollary \ref{cor_homeo_reverse_H1_on_Y0}. Note that we can choose $\hat\iota$ in Corollary \ref{cor_homeo_reverse_H1_on_Y0} so that $\tau\circ  \kappa_*(\tilde{\varphi}_{1,j})\circ \tau^{-1}$ is isotopic to $\hat{\iota}\circ  \kappa_*(\tilde{\varphi}_{1,j})\circ \hat{\iota}^{-1}$. So we have 
\[
\Psi\circ \mathcal{S}( \kappa_*(\tilde{\varphi}_{1,j})) = \Psi\circ \mathcal{S}( \kappa_*(\tilde{\varphi}_{2,j})),
\]
and hence
\[
\Psi\circ \mathcal{S}( \kappa_*(\tilde{\varphi}_{j})) = 2 \cdot \Psi\circ \mathcal{S}( \kappa_*(\tilde{\varphi}_{1,j})).
\]
Therefore, $\kappa_*(\tilde{\varphi}_j)$ have linearly independent images under $\Psi\circ \mathcal{S}$, so $\tilde{\varphi}_j$ are linearly independent in $\pi_0\Homeo_{0}(I\times \widetilde{Y}) $ by Lemma \ref{lem_S1xS2_push_to_S1xD3} and Lemma \ref{lem_w3_obstruct_isotopy_moving_boundary}.

In summary, we have proved that the image of \eqref{eqn_composition_from_cylinder_S1xS2} is of infinite rank. Therefore, the composition 
\[
\pi_0\Diff_{PI}(S^1\times D^3) \xrightarrow{\hat{\nu}(\gamma)_*}\pi_0\Diff_{PI}(I\times Y) \to \pi_0\Homeo(I\times Y)
\]
is of infinite rank.

\section{The case when $Y$ is $ Y_1\#Y_2$ with $Y_2\not\cong \mathbb{RP}^3$} \label{connected_sum_case}
Now we assume $Y$ satisfies the assumptions of Theorem \ref{thm_main_red_case} and $Y\not\cong S^1\times S^2$ or $\mathbb{RP}^3\#\mathbb{RP}^3$. Then we can write $Y=Y_1\#Y_2$ with $Y_1\not\cong S^3$, and $Y_2\not\cong S^3$ or $\mathbb{RP}^3$. Since no component of $\partial Y$ is diffeomorphic to $S^2$, we have $\pi_1(Y_1)\not\cong \{1\}$, and $\pi_1(Y_2)\not\cong \{1\}$ or $\mathbb{Z}/2$. In this case, we will construct a sequence of barbell diffeomorphisms on $I\times Y$, and show that they are linearly independent in $\pi_0\Homeo(I\times Y,I\times \partial Y)$ by considering the actions on the homotopy classes of curves in $\Emb_\dagger(I,I\times Y)$. 

\begin{Lemma}
    There exists $\alpha\in \pi_1(Y_2)$ such that $\alpha^2\neq 1$.
\end{Lemma}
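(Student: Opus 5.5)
We argue by contradiction: suppose every element of $G=\pi_1(Y_2)$ satisfies $\alpha^2=1$. Then $G$ has exponent dividing $2$, so it is abelian, since $ab=(ab)^{-1}=b^{-1}a^{-1}=ba$. Hence $G$ is an elementary abelian $2$--group. It is finitely generated because $Y_2$ is a compact $3$--manifold. So $G\cong(\mathbb{Z}/2)^n$ for some $n\ge 0$.

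The standing assumptions give $G\not\cong\{1\}$ and $G\not\cong\mathbb{Z}/2$, so $n\ge 2$. We will show this is impossible for the fundamental group of $Y_2$. Here $Y_2$ is a compact, connected, oriented $3$--manifold, and we may assume it has no boundary components diffeomorphic to $S^2$ (otherwise cap them off, which does not change $\pi_1$). We also reduce to the prime case. By Kneser's theorem (prime decomposition together with Grushko's theorem), $\pi_1(Y_2)$ is a free product of the fundamental groups of the prime summands. A nontrivial free product of two nontrivial groups contains elements of infinite order, for instance the product of one nontrivial element from each factor. So in an abelian exponent-$2$ group at most one prime summand has nontrivial $\pi_1$, and we may assume $Y_2$ is prime.

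The key step, and the main obstacle, is ruling out $(\mathbb{Z}/2)^n$ with $n\ge 2$ as the fundamental group of a prime compact oriented $3$--manifold. The cases go as follows.
\begin{enumerate}
\item If $G$ is finite and $Y_2$ is closed, then by geometrization $Y_2$ is a spherical space form $S^3/G$ with $G\subset \SO(4)$ acting freely. Finite groups acting freely on spheres have periodic cohomology, so every abelian subgroup is cyclic (Smith theory/Milnor's condition). This excludes $\mathbb{Z}/2\times\mathbb{Z}/2$.
\item If $\partial Y_2\neq\emptyset$ with no sphere components, then $\partial Y_2$ has a component of positive genus. By the half-lives-half-dies argument already used in the proof of Theorem \ref{thm_main_non-closed_case}, $b_1(Y_2)>0$, so $G$ has an element of infinite order, a contradiction.
\item $Y_2\cong S^1\times S^2$ is excluded since $\pi_1=\mathbb{Z}$.
\item Irreducible $Y_2$ with infinite $\pi_1$ is aspherical, so $\pi_1$ is torsion-free; this contradicts $G$ being a nontrivial finite group.
\end{enumerate}
Thus no such $G$ exists, and some $\alpha\in\pi_1(Y_2)$ has $\alpha^2\neq 1$.

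I expect the only nontrivial input to be the finite-closed case. For it I would cite the classical fact, going back to Milnor, that $\mathbb{Z}/2\oplus\mathbb{Z}/2$ cannot act freely on a homology $3$--sphere, or equivalently the periodic-cohomology constraint, rather than rely on full geometrization.
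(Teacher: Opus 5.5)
Your proof is correct, but it takes a different route from the paper's.

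\textbf{Paper's route.} The paper makes the same first observation: exponent $2$ forces $\pi_1(Y_2)$ to be abelian. It then cites the classification of abelian $3$--manifold groups (cyclic, $\mathbb{Z}^2$, $\mathbb{Z}^3$, $\mathbb{Z}\oplus\mathbb{Z}/2$). Among these, only $\{1\}$ and $\mathbb{Z}/2$ have exponent $2$.

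\textbf{Your route.} You prove the needed special case directly. You rule out $(\mathbb{Z}/2)^n$ with $n\ge 2$ by half-lives-half-dies when $\partial Y_2\neq\emptyset$, and by the Smith/Milnor periodicity obstruction for free actions on spheres in the closed case.

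\textbf{Comparison.} The paper's citation is a one-liner, but it rests on the geometrization-era classification. Your version is self-contained modulo classical facts, and it can avoid geometrization entirely. The universal cover of a closed $3$--manifold with finite $\pi_1$ is a simply connected closed $3$--manifold, hence a homotopy $3$--sphere by Hurewicz and Poincar\'e duality, and that is all the periodicity argument needs.

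\textbf{Simplifications.} The reduction to the prime case is unnecessary, and so are your cases (3) and (4). Once $G$ is finite, the dichotomy $\partial Y_2\neq\emptyset$ versus $\partial Y_2=\emptyset$ already covers every case. Also, the free-product decomposition of $\pi_1$ of a connected sum is just van Kampen; Grushko's theorem plays no role there.
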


\begin{proof}
    Assume every element of $\pi_1(Y_2)$ is of order $2$, then for all $\alpha,\beta\in \pi_1(Y_2)$, we have 
    \[
\alpha\beta = (\alpha\beta)^{-1} = \beta^{-1}\alpha^{-1} = \beta\alpha,
    \]
    so $\pi_1(Y_2)$ is abelian. It is known that the only abelian $3$--manifold groups are the cyclic groups, $\mathbb{Z}^2$, $\mathbb{Z}^3$, and $\mathbb{Z}\oplus \mathbb{Z}/2$ (see, for example, \cite[Table 1.2]{aschenbrenner20153manifold}). Since $\pi_1(Y_2)\not\cong\{1\}$ or $\bZ/2$, the desired result is proved.
\end{proof}

Let $S^2\cong S\subset Y$ be the attaching sphere in the connected sum decomposition. Let $\nu(S)$ denote a closed tubular neighborhood of $S$ in $Y$. We fix an orientation-preserving diffeomorphism $\rho:[1,2]\times S\cong \nu(S)$
and define 
\[
\widetilde{\rho}: I\times [1,2]\times S\to I\times Y,\quad  \widetilde{\rho}(t,s,x):=(t,\rho(s,x)).
\]
Fix two points $b\neq b'\in S$ and set  $b$ as the base point. We write $Y$ as 
\[
Y = Y_1^{\circ}\cup_{\rho(\{1\}\times S)} \big(\nu(S)\big) \cup_{\rho(\{2\}\times S)}Y_2^{\circ},
\]
where $Y_1^{\circ}$, $Y_2^{\circ}$ are the closures of the two components of $Y\setminus \nu(S)$. 

Pick nontrivial elements $\alpha_{i}\in\pi_{1}(Y_{i},b)$ $(i=1,2)$, and we require that $\alpha_{2}$ is not $2$--torsion. Let $\alpha=\alpha_1\cdot \alpha_2\in \pi_{1}(Y,b)$. 
We represent $\alpha$ by an embedded loop $\hat{\alpha}:S^1\cong \mathbb{R}/4\mathbb{Z}\hookrightarrow Y$ that satisfies the following conditions:
\begin{enumerate}
      \item $\hat{\alpha}(t)=\rho(t,b)$ for all $t\in [1,2]$.
    \item $\hat{\alpha}(t)=\rho(4-t,b')$ for all $t\in [3,4]$. 
    \item $\hat{\alpha}([0,1])\subset Y^{\circ}_{1}$ and represents the element 
    \[\alpha_{1}\in \pi_{1}(Y^{\circ }_1,\rho(\{1\}\times S))\cong \pi_{1}(Y_1).\]
    \item $\hat{\alpha}([2,3])\subset Y^{\circ}_{2}$ and represents the element 
    \[\alpha_{2}\in \pi_{1}(Y^{\circ }_2,\rho(\{2\}\times S))\cong \pi_{1}(Y_2).\]
\end{enumerate}
Here, $\pi_1(Y_{i}^{\circ},\rho(\{i\}\times S))$ denotes the relative homotopy groups. 

Recall that $X_0=(S^2\times D^2)\natural(S^1\times D^3)$.
We consider the self-referential barbell $\mathcal{B}\subset X_0$, defined as follows: Let $S_0=S^2\times \{0\}\subset S^2\times D^2$. Pick an embedding \[D=D^3\hookrightarrow (S^2\times \mathrm{int}(D^2))\setminus S_0\subset X_0.\] Let $S_1=\partial D^3$. Take an arc $\gamma: I\to X_0$ that satisfies the following conditions: 
\begin{enumerate}
    \item $\gamma(0)\in S_0$,  $\gamma(1)\in S_1,\  \gamma([0,\frac{1}{2}])\subset S^2\times D^2$;
\item $\gamma(0,1)\cap S_0=\emptyset$ and $\gamma(0,1)\pitchfork D=\gamma(\frac{1}{2})\in \mathrm{int}(D)$;
\item $[\gamma]$ represents a generator $g$ in $\mathbb{Z}\cong \pi_{1}(X_0,S^2\times D^2)$.
\end{enumerate}
Then the regular neighborhood of $S_0\cup \mathrm{Im}(\gamma)\cup S_1$ is a barbell, which we denote by $\mathcal{B}$. We let $\psi_1\in \Diff_{\partial}(X_0)$ be the barbell diffeomorphism associated with $\mathcal{B}$. Note that for every integer $k>1$, we have a self-embedding $i_{k}:X_0\hookrightarrow X_0$ that takes $S_0$ to itself and satisfies $(i_{k})_*(g)=g^{k}\in \pi_{1}(X_0)$. By extending the diffeomorphism $i_{k}\circ \psi_{1}\circ i^{-1}_{k}\in \Diff_{\partial}(\mathrm{Im}(i_{k}))$ with the identity, we obtain a diffeomorphism $\psi_{k}\in \Diff_{\partial}(X_0)$. Since $S_1$ bounds a embedded $D^3$, by \cite[Proposition 2.6]{budney2025automorphism}, we have $\psi_{k}\in \Diff_{PI}(X_0)$. 

The next step is to turn $\psi_{k}$ into a diffeomorphism on $X$. We pick a smoothly embedded loop $\xi:I\to I\times Y$ that satisfies the following conditions:
\begin{enumerate}
    \item $\xi(0)=\xi(1)\in \{1/2\}\times (S\setminus \{b,b'\}).$
    \item $\Ima\xi \cap (I\times \Ima\hat{\alpha})=\emptyset$
    \item $[\xi]=\alpha\in \pi_{1}(I\times Y, \{1/2\}\times S)\cong \pi_{1}(Y)$.
\end{enumerate}
We take a closed tubular neighborhood $\nu(\{\frac{1}{2}\}\times S)$ of $\{\frac{1}{2}\}\times S\subset I\times Y$ that is contained in $\Ima\widetilde{\rho}$, and take a closed tubular neighborhood $\nu(\xi)$ of $\Ima(\xi)$ that does not intersect $I\times \Ima\hat{\alpha}$. For $x\in S^2$, we let $D_{x}=\{x\}\times D^2\subset S^2\times D^2$ and let $D'_{x}:=\widetilde{\rho}([0,1]\times [1,2]\times \{x\})\subset X$.
Then there exists an embedding 
\[
i: X_0\xrightarrow{\cong} \nu(\{1/2\}\times S)\cup \nu(\Ima\xi) \subset X
\]
such that 
\begin{equation}\label{eq: image of alpha intersect support of phi}
(\Ima i)\cap (I\times \Ima \hat{\alpha})=i(D_{b}\sqcup D_{b'})= D'_{b}\sqcup D'_{b'}.
\end{equation}

By extending $i\circ \psi_{k}\circ i^{-1}\in \Diff_{PI}(\Ima i)$ with the identity map, we obtain a diffeomorphism $\varphi_{k}\in \Diff_{PI}(I\times Y)$ for every $k\geq 1$. We will prove Theorem \ref{thm_main_red_case} by showing that $\{\varphi_{k}\}_{k\geq 1}$ are linearly independent in $\pi_0\Homeo(I\times Y, I\times \partial Y)$. For this, let $0\neq (c_1,\dots,c_n)\in \bZ^n$ and let $\varphi=\prod_k \varphi^{c_k}_{k}$. Then the diffeomorphism $\varphi$ induces a homeomorphism
\[
\varphi_*:\Emb_{\dagger}(I,I\times Y)\to \Emb_{\dagger}(I,I\times Y).
\]
\begin{Proposition}\label{prop: loop nonisotopic}
The loops 
\[
S^1\xrightarrow{\hat{\alpha}} Y\xrightarrow{\mathscr{I}}\Emb_{\dagger}(I,I\times Y),
\]
and 
\[
S^1\xrightarrow{\hat{\alpha}} Y\xrightarrow{\mathscr{I}}\Emb_{\dagger}(I,I\times Y)\xrightarrow{\varphi_*}\Emb_{\dagger}(I,I\times Y)
\]
are not freely homotopic.
\end{Proposition}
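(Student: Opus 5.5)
We plan to compute both loops as elements of $\pi_1\Emb_\dagger(I,I\times Y)$ via Proposition \ref{prop_decomposition_pi_1_emb_dagger}, and then show they are not conjugate. Since $\pi_1(Y)$ is infinite, we have
\[
\pi_1\Emb_\dagger(I,I\times Y)\cong(\bZ[\pi_1(Y)\setminus\{1\}]\oplus\pi_2(Y))\rtimes\pi_1(Y).
\]
Under this identification, the loop $\mathscr{I}\circ\hat\alpha$ corresponds to $(0,\alpha)$. The loop $\varphi_*\circ\mathscr{I}\circ\hat\alpha$ projects to the same element $\alpha\in\pi_1(Y)$. Hence it has the form $(x,\alpha)$ with $x\in\bZ[\pi_1(Y)\setminus\{1\}]\oplus\pi_2(Y)$.

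\textbf{Step 1: computing $x$.} By \eqref{eq: image of alpha intersect support of phi}, the family of arcs $\mathscr{I}(\hat\alpha(t))$ meets the support of $\varphi$ only for $t\in[1,2]\cup[3,4]$, that is, along the two disks $D'_b$ and $D'_{b'}$. So $x$ is a sum of two local contributions, one from each disk. Each contribution is the image of the Budney--Gabai class (the action of the barbell diffeomorphism $\psi_k$ on the family of arcs $D_b$, respectively $D_{b'}$, in $X_0$), transported into $\pi_1\Emb_\partial(I,I\times Y)$ by base-point change along $\hat\alpha$. The two disks are traversed in opposite directions, by conditions (1) and (2) on $\hat\alpha$.

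For the self-referential barbell, the Dax invariant of the $D_x$-family under $\psi_1$ is an explicit combination of $g^{\pm1}$ (and of $\pi_2$ terms). Pushing forward by $i_k$ and then by $i$ replaces $g$ with $\alpha^{k}$, since $[\xi]=\alpha$. For the $b'$ disk, the base-point change by the arc $\hat\alpha|_{[1,3]}$ conjugates by the portion $\alpha_2$ of the path. Summing over $k$ with coefficients $c_k$, we expect the Dax part of $x$ to be
\[
x=\sum_k c_k\Big(P_k(\alpha)-\alpha_2^{-1}P_k(\alpha)\,\alpha_2\Big),
\]
for some explicit Laurent combination $P_k$ of powers $\alpha^{\pm k}$, possibly with $\alpha^{\pm 2k}$ as well. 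The precise form of $P_k$ will be read off from the Budney--Gabai computation for the self-referential barbell. The $\pi_2$ part we handle separately, or discard by projecting.

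\textbf{Step 2: non-conjugacy.} Conjugating $(0,\alpha)$ by $(y,\beta)$ gives an element $(y-\beta\alpha\beta^{-1}\cdot y',\,\beta\alpha\beta^{-1})$. For this to have second coordinate $\alpha$, we need $\beta$ in the centralizer $C(\alpha)$. Since $\alpha=\alpha_1\alpha_2$ is cyclically reduced in the free product $\pi_1(Y_1)*\pi_1(Y_2)$ with both factors nontrivial, $C(\alpha)$ is infinite cyclic, generated by a root of $\alpha$. The first coordinates achievable by conjugation then form the image of $(1-\alpha\cdot)$, the twisted conjugation $y\mapsto y-\alpha y\alpha^{-1}$ on $\bZ[\pi_1(Y)\setminus\{1\}]$. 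So it suffices to show that $x\notin\{y-\alpha y\alpha^{-1}\}$.

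To see this, apply the homomorphism to $\bZ$ that sums coefficients over each $\alpha$-conjugacy orbit of group elements. This kills the image of $1-\mathrm{conj}_\alpha$. The element $\alpha^{k}$ is fixed by conjugation by $\alpha$, so its orbit is a singleton. By contrast, $\alpha_2^{-1}\alpha^{k}\alpha_2$ is a different group element lying in another orbit, because cyclically reduced words in a free product are not conjugate by powers of $\alpha$ unless they are equal. Hence the $\{\alpha^{k}\}$-orbit coefficient of $x$ is the coefficient $c_k$ (up to sign), which is nonzero for some $k$.

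\textbf{Expected obstacle.} The main obstacle is Step 1: pinning down the exact Dax contribution of the self-referential barbell to the disk family, including signs. In particular, we must show that the two disk contributions do not cancel. This is exactly where the hypothesis $\alpha_2^2\neq1$ should enter: it ensures that $\alpha_2^{-1}\alpha^{k}\alpha_2$ is distinct from $\alpha^{\pm k}$, and that the terms coming from $\alpha^{-k}$ do not collide with them. For the computation itself, we would first cite the Budney--Gabai/Dax computation for $\psi_1$ on $S^1\times D^3$, transported through $i_k$ and $i$.
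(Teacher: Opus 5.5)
Your route matches the paper's. You identify both loops in the semidirect product of Proposition \ref{prop_decomposition_pi_1_emb_dagger}, reduce non-conjugacy to a twisted-conjugacy equation in $\bZ[\pi_1(Y)\setminus\{1\}]$, and detect it with a coefficient fixed by the relevant conjugation, using $\alpha_2^2\neq 1$.

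Step 2 is sound and is the paper's Lemma \ref{lem: beta_1 doesn't exist} in other clothing. Your functional at the singleton orbit $\{\alpha^k\}$ plays the role of the paper's coefficient of $(\alpha_2\alpha_1)^k$, which comes from basing the same loop at a different point. A few small remarks on Step 2:
\begin{itemize}
\item The claim that $C(\alpha)$ is infinite cyclic is unnecessary: any $y\in\pi_1(Y)$ with $y\alpha y^{-1}=\alpha$ simply drops out of $w\alpha w^{-1}$.
\item Based at $\hat\alpha(0)$, the loop is $\alpha_1\beta\alpha_2\beta^{-1}$, with fiber coordinate $C_{\alpha_1}(\beta_0)-\beta_0$. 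This agrees with your $x$ only up to sign and modulo $\Ima(1-C_\alpha)$, which your functional kills, so this is harmless.
\item Projecting away $\pi_2(Y)$ is legitimate, since that projection is $\pi_1(Y)$-equivariant.
\end{itemize}

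The genuine gap is Step 1. You leave it open, and it is not a formality. The last sentence of Step 2, that the orbit coefficient of $\alpha^k$ equals $\pm c_k$, presupposes that $P_j$ has coefficient $\pm\delta_{jk}$ on $\alpha^k$. With the $\alpha^{\pm 2k}$ terms you allow, that sentence would be false as stated: coefficients of different $c_j$ would mix (for example, $c_2\pm c_1$ on $\alpha^2$), and you would need a separate independence argument for the $P_k$.

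The missing input is exactly the following.
\begin{enumerate}
\item Gabai's computation $\mathrm{Dax}(D_b,\psi_1(D_b))=g+g^{-1}$ in $\bZ[\pi_1(X_0)\setminus\{1\}]$ \cite{gabai2021self}*{Theorem 4.9}.
\item Naturality under $i_k$, giving $g^k+g^{-k}$, and then under $i$, giving $\alpha^k+\alpha^{-k}$ because $[\xi]=\alpha$.
\item Additivity for composites: $\mathrm{Dax}(D_b,\psi\psi'(D_b))=\mathrm{Dax}(D_b,\psi(D_b))+\psi_*\mathrm{Dax}(D_b,\psi'(D_b))$, with $\psi_*=\id$ on $\pi_1(X_0)$. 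Hence $\prod_k\psi_k^{c_k}$ yields $\beta_0=\sum_k c_k(\alpha^k+\alpha^{-k})$.
\item The $b'$ segment contributes $-\beta$.
\end{enumerate}
Once these are in place, your Step 2 already settles the non-cancellation of the two disk contributions that worries you, because the conjugated terms lie in other orbits when $\alpha_2^2\neq1$. The argument then closes exactly as in the paper.
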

Proposition \ref{prop: loop nonisotopic} implies that the map $\varphi_*\circ\mathscr{I}$ is not homotopic to $\mathscr{I}$, and hence by Lemma \ref{lem_Embdagger_obstruct_isotopy}, $\varphi$ is not isotopic to the identity in $\Homeo(I\times Y,I\times \partial Y)$.  So Proposition \ref{prop: loop nonisotopic} implies Theorem \ref{thm_main_red_case}.

The rest of this section is devoted to the proof of Proposition \ref{prop: loop nonisotopic}.  Note that both loops $\mathscr{I}\circ \hat{\alpha}$ and $\varphi_*\circ \mathscr{I}\circ \hat{\alpha}$ take the base point of $S^1$ to the base point $\mathscr{I}(b)$ of $\Emb_\dagger(I,I\times Y)$, so they represent elements 
$[\mathscr{I}\circ \hat{\alpha}]$ and $ [\varphi_*\circ \mathscr{I}\circ \hat{\alpha}]$ in $\pi_{1}\Emb_{\dagger}(I,I\times Y)$.
Since $[\hat{\alpha}]=\alpha=\alpha_1\cdot \alpha_2\in \pi_{1}(Y)$, we have 
\begin{equation}\label{eq: i_alpha}
[\mathscr{I}\circ \hat{\alpha}]=\mathscr{I}_*(\alpha_1)\cdot \mathscr{I}_*(\alpha_2) \in \pi_{1}\Emb_{\dagger}(I,I\times Y).
\end{equation}

To compute $[\varphi_*\circ \mathscr{I}\circ \hat{\alpha}]$, we need some preparations. First, we note the isomorphism 
\[
\pi_{1}(\Emb_{\dagger}(I,I\times Y),\mathscr{I}(\nu(S)))\cong \pi_{1}\Emb_{\dagger}(I,I\times Y)
\]
because $\mathscr{I}(\nu(S))\cong I\times S^2$ is simply-connected. 
For $[a,b]\subset [0,4]$, we use $\widetilde{\alpha}_{[a,b]}$ to denote the path \[\varphi_{*}\circ \mathscr{I}\circ \hat{\alpha}|_{[a,b]}:[a,b]\to \Emb_{\dagger}(I,I\times Y).\] 
Then we have 
\begin{equation}\label{eq: decomposition of alpha}
\varphi_{*}\circ \mathscr{I}\circ \hat{\alpha}=\widetilde{\alpha}_{[0,1]}\cdot \widetilde{\alpha}_{[1,2]} \cdot \widetilde{\alpha}_{[2,3]}\cdot  \widetilde{\alpha}_{[3,4]}.    
\end{equation}
Since $\varphi$ is supported in the image of $i:X_0\to X$, by (\ref{eq: image of alpha intersect support of phi}), we have 
\[
[\widetilde{\alpha}_{[0,1]}]=[\mathscr{I}\circ \hat{\alpha}|_{[0,1]}]=\mathscr{I}_{*}([\alpha_1])\in \pi_{1}(\Emb_{\dagger}(I,I\times Y),\mathscr{I}(\nu(S)))
\]
and
\[
[\widetilde{\alpha}_{[2,3]}]=[\mathscr{I}\circ \hat{\alpha}|_{[2,3]}]=\mathscr{I}_{*}([\alpha_2])\in \pi_{1}(\Emb_{\dagger}(I,I\times Y),\mathscr{I}(\nu(S))).
\]
Recall from Section \ref{subsec_Dax_iso} that $\pi^{D}_{1}\Emb_{\partial}(I, I\times Y)$ denotes the kernel of the map $\Emb_{\partial}(I, I\times Y)\to \Map_{\partial}(I, I\times Y) \cong \pi_2(Y)$. 

\begin{Definition}
Define $F$ to be the composition map
\[
\pi^{D}_{1}\Emb_{\partial}(I, I\times Y)\hookrightarrow \pi_{1}\Emb_{\dagger}(I, I\times Y)\cong \pi_{1}(\Emb_{\dagger}(I, I\times Y),\mathscr{I}(\nu(S))).
\]
\end{Definition}

Let \[\beta_{0}:=\sum^{n}_{k=1}c_{k}(\alpha^{k}+\alpha^{-k})\in \bZ[\pi_1(Y)\setminus\{1\}]\cong \pi^{D}_{1}\Emb_{\dagger}(I,I\times Y).\]
\begin{Proposition}\label{prop: calculation of alpha} 
We have
 $[\widetilde{\alpha}_{[1,2]}]=F(\beta_0)$, $[\widetilde{\alpha}_{[3,4]}]=-F(\beta_0)$. 
\end{Proposition}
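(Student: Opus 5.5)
We will compute $[\widetilde{\alpha}_{[1,2]}]$ by reducing to a local computation inside $X_0$ and the model self-referential barbell. By construction, $\hat\alpha|_{[1,2]}$ is the arc $t\mapsto\rho(t,b)$, so $\mathscr{I}\circ\hat\alpha|_{[1,2]}$ sweeps out the disk $D'_b=i(D_b)$: at time $t$ it is the vertical arc $I\times\{\rho(t,b)\}$, which crosses $\Ima i$ exactly in the arc $\widetilde\rho([0,1]\times\{t\}\times\{b\})\subset D'_b$. Since $\varphi$ is supported in $\Ima i$, the path $\widetilde\alpha_{[1,2]}$ is the family of arcs obtained by applying $\varphi$ to this one-parameter family of arcs in the disk $D'_b$, with endpoints in $\mathscr{I}(\nu(S))$. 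Contracting the endpoint data through the simply connected set $\mathscr{I}(\nu(S))$, we obtain a loop which, after projecting to $\Emb_\partial$, lies in $\pi_1\Emb_\partial(I,I\times Y)$. Its $\pi_2(Y)$-component under $\mathcal{F}$ vanishes, because as a family of maps it is homotopic to the original family, which is nullhomotopic rel endpoints; indeed $\varphi$ is homotopic to the identity rel boundary, being pseudo-isotopic to it. So the class lies in $\pi_1^D$, and our task is to identify it as an element of $\bZ[\pi_1(Y)\setminus\{1\}]$ via the Dax invariant.

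For the key step we will use naturality of the Dax invariant under the codimension-zero embedding $i:X_0\to I\times Y$. The family $D_b\subset X_0$, viewed as a loop of arcs $\{x\}\times$(diameter), gives an element of $\pi_1^D\Emb_\partial(I,X_0)\cong\bZ[\pi_1(X_0)\setminus\{1\}]=\bZ[\langle g\rangle\setminus\{1\}]$ after applying $\psi_k$. We will compute the Dax invariant of the image of this family under $\psi_1$ directly from the barbell. The barbell diffeomorphism is obtained from the $\pi_1$-family in $\Emb(S^2\sqcup S^2)$ given by spinning $S_1$ around $S_0$ and $S_0$ around $S_1$, and the disk $D_b$ meets $S_0$ transversely once. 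Following the standard computation of Budney--Gabai for $S^1\times D^3$ (the self-referential barbell contributes $t+t^{-1}$; cf.\ \cite{budney2019knotted}), the double points of the associated track are located where the moving arc passes through the spun sphere $S_1$, which is linked with the arc $\gamma$ representing $g$. The track therefore has two double points, with group elements $g$ and $g^{-1}$ and equal signs, giving $g+g^{-1}$. Conjugating by $i_k$ replaces $g$ by $g^k$, so $\psi_k$ contributes $g^k+g^{-k}$. Pushing forward by $i$ sends $g$ to $[\xi]=\alpha$, so the contribution of $\varphi=\prod\varphi_k^{c_k}$, whose factors are additive on $\pi_1^D$, is $\sum_k c_k(\alpha^k+\alpha^{-k})=\beta_0$. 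Hence $[\widetilde\alpha_{[1,2]}]=F(\beta_0)$.

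For $\widetilde\alpha_{[3,4]}$, the arc $\hat\alpha|_{[3,4]}$ is $t\mapsto\rho(4-t,b')$, which sweeps the disk $D'_{b'}$ in the reverse direction. Because $\varphi$ restricted to a neighborhood of $D'_b\cup D'_{b'}$ is conjugate to itself by the isotopy of $X_0$ that moves $b$ to $b'$ within $S^2$, the sweep of $D'_{b'}$ in the forward direction gives the same class $F(\beta_0)$. Here we must check that the resulting group elements are unchanged, which holds since the path from $b$ to $b'$ lies in the simply connected set $\nu(S)$. Reversing the parametrization inverts the class, which gives $-F(\beta_0)$.

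We expect the main obstacle to be the local Dax computation for the self-referential barbell: pinning down that the two double points carry $g^{\pm1}$, with the same sign rather than cancelling, and with orientation conventions matching the definition of $F$. We will try first to quote this from Budney--Gabai's computation of barbell actions on $\pi_1\Emb(I,S^1\times D^3)$, and to verify the signs by the symmetry $g\mapsto g^{-1}$ coming from the involution of $X_0$.
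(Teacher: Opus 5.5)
Your proposal is correct and follows the paper's proof essentially step for step: naturality of the Dax invariant under $i$, the self-referential computation $\mathrm{Dax}(D_b,\psi_1(D_b))=g+g^{-1}$, rescaling by $i_k$ to get $g^{\pm k}$, additivity over the factors, pushing forward $g\mapsto\alpha$, and for the interval $[3,4]$ the $b\to b'$ symmetry combined with reversal of the sweep. The local computation is exactly \cite[Theorem 4.9]{gabai2021self}, so you can cite it rather than adapting Budney--Gabai's $S^1\times D^3$ computations; the additivity you assert without proof is justified in the paper by $\mathrm{Dax}(D_b,\psi\psi'(D_b))=\mathrm{Dax}(D_b,\psi(D_b))+\psi_*\mathrm{Dax}(D_b,\psi'(D_b))$ together with the fact that $\psi_*=\id$ on $\pi_1(X_0)$.
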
 
\begin{proof}
Consider the diffeomorphism 
$\psi:=\prod_k\psi^{c_k}_{k}\in \Diff_{PI}(X_0).$
Note that the family of arcs \[\{\mathscr{I}\circ \hat{\alpha}(s)\}_{s\in [1,2]}\subset \Emb_{\dagger}(I,I\times Y).\]  is a scanning family for the disk $D'_{b}\hookrightarrow X$.
By \eqref{eq: image of alpha intersect support of phi}, we have  $i^{-1}(D'_{b})=D_{b}$. By the naturality of the Dax invariant, $[\widetilde{\alpha}_{[1,2]}]\in \pi_{1}(\Emb_{\dagger}(I,I\times Y),\mathscr{I}(S))$ equals the image of the relative Dax invariant 
\[
\mathrm{Dax}(D_{b},\psi(D_{b}))\in \pi^{D}_{1}(\Emb_{\partial}(I,X_0))\cong \mathbb{Z}[\pi_{1}(X_0)\setminus 1]
\]
under the composition 
\begin{equation}\label{eq: maps between embedding spaces}
\begin{split}
\pi^{D}_{1}(\Emb_{\partial}(I,X_0))&\xrightarrow{i_*}\pi^{D}_{1}(\Emb_{\partial}(I,X))\\
&\hookrightarrow \pi_{1}(\Emb_{\dagger}(I,X))\cong \pi_{1}(\Emb_{\dagger}(I,X),\mathscr{I}(S)). 
\end{split}
\end{equation}
Next, we compute the Dax invariant. 
By \cite[Theorem 4.9]{gabai2021self}, we have $\mathrm{Dax}(D_{b},\psi_1(D_{b}))=g+g^{-1}$, 
where $g$ is the standard generator of $\pi_{1}(X_0)\cong \mathbb{Z}$. Since $\psi_{k}$ is extended from $i_{k}\circ \psi_{1}\circ i^{-1}_{k}$, and the map $(i_{k})_*:\pi_{1}(X_0)\to \pi_{1}(X_0)$ sends $g$ to $g^{k}$, we have $\mathrm{Dax}(D_{b},\psi_k(D_{b}))=g^k+g^{-k}$. 

Given $\psi,\psi'\in \Diff_{PI}(X_0)$, by the additivity and the naturality of the Dax invariant, we have
\begin{align*}
\mathrm{Dax}(D_{b},(\psi\circ \psi')(D_{b}))&=\mathrm{Dax}(D_{b},\psi(D_{b}))+\mathrm{Dax}(\psi(D_{b}),(\psi\circ\psi')(D_{b}))\\
& = \mathrm{Dax}(D_{b},\psi(D_{b}))+\psi_*\mathrm{Dax}(D_{b},\psi'(D_{b}))\\
& = \mathrm{Dax}(D_{b},\psi(D_{b}))+\mathrm{Dax}(D_{b},\psi'(D_{b})).
\end{align*}
The last equation above uses the fact that $\psi$ induces the identity map on $\pi_1(X_0)$. 
This implies 
\begin{equation}\label{eq: Dax of psi}
\mathrm{Dax}(D_{b},\psi(D_{b}))=\sum^{n}_{i=1}c_{k}(g^{k}+g^{-k}).    
\end{equation} Under (\ref{eq: maps between embedding spaces}), the element $\sum^{n}_{i=1}c_{k}(g^{k}+g^{-k})$ is sent to $F(\beta_0)$. This finishes the proof of $[\widetilde{\alpha}_{[1,2]}]=F(\beta_0)$. The proof of $[\widetilde{\alpha}_{[3,4]}]=-F(\beta_0)$ is similar.
\end{proof}

\begin{Corollary} Let $\beta=F(\beta_0)$. Then we have 
\[[\varphi_*\circ \mathscr{I}\circ \hat{\alpha}]=\mathscr{I}(\alpha_1)\cdot \beta\cdot \mathscr{I}(\alpha_2)\cdot \beta^{-1}\in \pi_{1}\Emb_{\dagger}(I,I\times Y).\]
\end{Corollary}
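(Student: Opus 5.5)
The plan is to assemble the four pieces of the decomposition \eqref{eq: decomposition of alpha}. Every piece is a path in $\Emb_\dagger(I,I\times Y)$ whose endpoints lie in the simply connected set $\mathscr{I}(\nu(S))\cong I\times S^2$. Hence each piece defines a well-defined class in $\pi_1(\Emb_\dagger(I,I\times Y),\mathscr{I}(\nu(S)))$. Classes of consecutive pieces multiply, because we may join each endpoint to $\mathscr{I}(b)$ by a path inside $\mathscr{I}(\nu(S))$, and any two such choices are homotopic rel endpoints there. Under the identification with $\pi_1\Emb_\dagger(I,I\times Y)$ at the base point $\mathscr{I}(b)$, the class of the full loop $\varphi_*\circ\mathscr{I}\circ\hat\alpha$ is therefore the ordered product
\[
[\widetilde{\alpha}_{[0,1]}]\cdot[\widetilde{\alpha}_{[1,2]}]\cdot[\widetilde{\alpha}_{[2,3]}]\cdot[\widetilde{\alpha}_{[3,4]}].
\]

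Next I substitute the known values. By \eqref{eq: image of alpha intersect support of phi}, the support of $\varphi$ misses $I\times\hat\alpha([0,1])$ and $I\times\hat\alpha([2,3])$. So, as already noted before the statement, $[\widetilde{\alpha}_{[0,1]}]=\mathscr{I}_*(\alpha_1)$ and $[\widetilde{\alpha}_{[2,3]}]=\mathscr{I}_*(\alpha_2)$. Proposition \ref{prop: calculation of alpha} gives $[\widetilde{\alpha}_{[1,2]}]=F(\beta_0)=\beta$ and $[\widetilde{\alpha}_{[3,4]}]=-F(\beta_0)$.

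The only point needing care is to read the additive notation $-F(\beta_0)$ as $\beta^{-1}$ in the (possibly nonabelian) group $\pi_1\Emb_\dagger(I,I\times Y)$. This holds because $F$ is the composition of the inclusion of the abelian subgroup $\pi_1^D\Emb_\partial(I,I\times Y)$, which is a homomorphism, with a base-point change isomorphism inside a simply connected set. Thus $F(-\beta_0)=F(\beta_0)^{-1}$.

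I expect the main (mild) obstacle to be justifying that the base-point bookkeeping is consistent. Specifically, the identification used in Proposition \ref{prop: calculation of alpha} for the pieces on $[1,2]$ and $[3,4]$ must be the same one used for $[0,1]$ and $[2,3]$, and the piece on $[3,4]$ runs along $b'$ rather than $b$. Since all four pieces start and end in $\mathscr{I}(\nu(S))$, and that set is simply connected, all choices of connecting paths agree up to homotopy. Substituting then yields $\mathscr{I}(\alpha_1)\cdot\beta\cdot\mathscr{I}(\alpha_2)\cdot\beta^{-1}$.
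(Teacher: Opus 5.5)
Your proposal is correct and follows the paper's argument. The paper's proof is the same one-line combination of the decomposition \eqref{eq: decomposition of alpha}, the observation that $\varphi_*$ fixes the pieces over $[0,1]$ and $[2,3]$, and Proposition \ref{prop: calculation of alpha}; your remarks on base-point bookkeeping in the simply connected set $\mathscr{I}(\nu(S))$, and on reading $-F(\beta_0)$ as $\beta^{-1}$ because $F$ is a homomorphism out of an abelian group, spell out what the paper leaves implicit.
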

\begin{proof}
This follows from  (\ref{eq: decomposition of alpha}) and Proposition \ref{prop: calculation of alpha}.   
\end{proof}

To proceed, we need the following lemma. Recall that $\alpha = \alpha_1\alpha_2\in\pi_1(Y)$. Also recall that $\pi_1(Y)$ acts on  $\mathbb{Z}[\pi_{1}(Y)\setminus \{1\}]$ by conjugations on generators. For each $\hat\alpha\in \pi_1(Y)$, let 
\[
C_{\hat\alpha}:\mathbb{Z}[\pi_{1}(Y)\setminus \{1\}]\to \mathbb{Z}[\pi_{1}(Y)\setminus \{1\}]
\]
denote the conjugation action of $\hat\alpha$.

\begin{Lemma}\label{lem: beta_1 doesn't exist}
There does not exist $\beta_{1}\in \mathbb{Z}[\pi_{1}(Y)\setminus \{1\}]$ such that \[\beta_1 - C_{\alpha_2\alpha_1}(\beta_1) = \beta_0 - C_{\alpha_2}(\beta_0).\]   
\end{Lemma}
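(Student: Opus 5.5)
The plan is to find a linear functional on $\mathbb{Z}[\pi_1(Y)\setminus\{1\}]$ that kills every element of the form $\beta_1-C_{\alpha_2\alpha_1}(\beta_1)$ but does not kill $\beta_0-C_{\alpha_2}(\beta_0)$. Throughout, write $h=\alpha_2\alpha_1$ and use the van Kampen decomposition $\pi_1(Y)\cong\pi_1(Y_1)*\pi_1(Y_2)$, so that normal forms in free products are available.

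\emph{Step 1: the functional.} The cyclic group $\langle h\rangle$ acts on $\pi_1(Y)\setminus\{1\}$ by conjugation. The operator $C_h$ permutes the basis of $\mathbb{Z}[\pi_1(Y)\setminus\{1\}]$, and it preserves each $\langle h\rangle$-orbit $O$. For each orbit $O$, define $\epsilon_O(\sum a_\eta\eta)=\sum_{\eta\in O}a_\eta$. This is a finite sum because elements have finite support, and it is well defined whether $O$ is finite or infinite. Since $C_h$ permutes the basis elements inside $O$, we get $\epsilon_O(C_h\beta_1)=\epsilon_O(\beta_1)$. Hence $\epsilon_O$ vanishes on the image of $1-C_h$. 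It therefore suffices to exhibit one orbit $O$ with $\epsilon_O(\beta_0-C_{\alpha_2}(\beta_0))\neq 0$.

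\emph{Step 2: compute the right-hand side.} Using $\alpha_2(\alpha_1\alpha_2)^{j}\alpha_2^{-1}=(\alpha_2\alpha_1)^j=h^j$, we obtain
\[
\beta_0-C_{\alpha_2}(\beta_0)=\sum_{k}c_k\bigl(\alpha^k+\alpha^{-k}-h^k-h^{-k}\bigr).
\]
Let $m$ be the largest index with $c_m\neq 0$; the sign convention for $m$ does not matter, since everything below is symmetric. The orbit of $h^m$ under conjugation by $h$ is the singleton $O=\{h^m\}$. Hence $\epsilon_O$ of the right-hand side is just the coefficient of the basis element $h^m$.

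\emph{Step 3: free product normal forms.} This step carries the real content; the rest is bookkeeping. I claim the coefficient of $h^m$ equals $-c_m$, which is nonzero. For this I would verify the following three facts.

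First, $h$ has infinite order. Indeed, $\alpha_1$ and $\alpha_2$ are nontrivial elements of different free factors, so $h^j=\alpha_2\alpha_1\alpha_2\alpha_1\cdots$ is a cyclically reduced word of length $2|j|$. In particular $h^{j}=h^m$ forces $j=m$, so none of the terms $h^{-j}$ with $j\ge 1$, and no $h^{k}$ with $k\neq m$, coincides with $h^m$.

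Second, $\alpha^{\pm j}\neq h^m$ for all $j\geq 1$. The reduced word of $(\alpha_1\alpha_2)^j$ begins with a letter from $\pi_1(Y_1)$. The reduced word of $(\alpha_1\alpha_2)^{-j}=(\alpha_2^{-1}\alpha_1^{-1})^{j}$ ends with $\alpha_1^{-1}$, whereas $h^m$ ends with $\alpha_1$. The first observation distinguishes $\alpha^j$ from $h^m$ once the first letters are compared. For $\alpha^{-j}$, I would compare the first letters as well: $\alpha^{-j}$ starts with $\alpha_2^{-1}$, and $h^m$ starts with $\alpha_2$. These agree only if $\alpha_2^2=1$, which is precisely excluded by the choice of $\alpha_2$. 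This is where the hypothesis that $\alpha_2$ is not $2$--torsion enters.

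Third, all these words are genuinely reduced, because each letter is nontrivial and consecutive letters lie in different factors. This uses the standing assumption that $\alpha_1\neq 1$ and $\alpha_2\neq1$.

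Combining these facts, the only contribution to the basis element $h^m$ is $-c_m h^m$. So $\epsilon_O(\beta_0-C_{\alpha_2}(\beta_0))=-c_m\neq 0$, while by Step 1 we have $\epsilon_O(\beta_1-C_h(\beta_1))=0$. This contradiction shows that no such $\beta_1$ exists. I expect the careful normal-form comparison in Step 3 to be the only delicate point, in particular the case $\alpha^{-j}$ versus $h^{m}$, which depends on $\alpha_2^2\neq1$.
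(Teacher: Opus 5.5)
Your proposal is correct and follows the paper's argument. Both proofs isolate the coefficient of $(\alpha_2\alpha_1)^k$, which is fixed by $C_{\alpha_2\alpha_1}$ and so vanishes in $\beta_1-C_{\alpha_2\alpha_1}(\beta_1)$, but equals $-c_k$ in $\beta_0-C_{\alpha_2}(\beta_0)$ because $\alpha_2^2\neq 1$; your free-product normal-form check makes explicit what the paper leaves implicit, and your orbit-sum functional is only ever used on a singleton orbit, so it reduces to the paper's coefficient comparison.
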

\begin{proof} 
We have 
\[
\begin{split}
C_{\alpha_2}(\beta_{0})&=\sum^{n}_{k=1}c_{k}(\alpha_{2}(\alpha_{1}\alpha_{2})^{k}\alpha_{2}^{-1}+\alpha_{2}(\alpha^{-1}_{2}\alpha^{-1}_{1})^k\alpha_{2}^{-1})\\&=\sum^{n}_{k=1}c_{k}((\alpha_{2}\alpha_{1})^{k}+(\alpha^{-1}_{1}\alpha^{-1}_{2})^k)
\end{split}
\]
The coefficient of $(\alpha_2\alpha_1)^{k}$ in $C_{\alpha_2}(\beta_{0})$ is $c_k$. Since $\alpha_2$ is not $2$--torsion, the coefficient of $(\alpha_2\alpha_1)^{k}$ in $\beta_0$ is $0$. Hence the coefficient of  $(\alpha_2\alpha_1)^{k}$ in $\beta_0 - C_{\alpha_2}(\beta_0)$ is $-c_k$. On the other hand, the coefficient of $(\alpha_2\alpha_1)^{k}$ in $\beta_1 - C_{\alpha_2\alpha_1}(\beta_1)$ is always zero because $\alpha_{2}\alpha_{1}$ commutes with $(\alpha_2\alpha_1)^{k}$. Since $(c_k)\neq 0\in\mathbb{Z}^n$, we have $\beta_1 - C_{\alpha_2\alpha_1}(\beta_1) \neq \beta_0 - C_{\alpha_2}(\beta_0)$ for every $\beta_1$.
\end{proof}

Now we can finish the proof of Proposition \ref{prop: loop nonisotopic}.
\begin{proof}[Proof of Proposition \ref{prop: loop nonisotopic}] 
Let $\beta=F(\beta_0)$.
It suffices to show that the elements
\[[\mathscr{I}\circ \hat{\alpha}]=\mathscr{I}(\alpha_1)\cdot \mathscr{I}(\alpha_2)\]
and 
\[[\varphi_*\circ \mathscr{I}\circ \hat{\alpha}]=\mathscr{I}(\alpha_1)\cdot \beta\cdot \mathscr{I}(\alpha_2)\cdot \beta^{-1} \] are not conjugate in $\pi_{1}\Emb_{\dagger}(I,I\times Y)$. Suppose this is not the case, then there exists $w\in \pi_{1}\Emb_{\dagger}(I,I\times Y)$ such that 
\begin{equation}\label{eq: conjugation}
w\cdot \mathscr{I}(\alpha_1)\cdot \mathscr{I}(\alpha_2)\cdot w^{-1}=\mathscr{I}(\alpha_1)\cdot \beta\cdot \mathscr{I}(\alpha_2)\cdot \beta^{-1}    
\end{equation}
By Proposition \ref{prop_decomposition_pi_1_emb_dagger}, we have
\[
\pi_1\Emb_\dagger(I,I\times Y)\cong (\mathbb{Z}[\pi_1(Y)\setminus\{1\}]\oplus \pi_2(Y))\rtimes\pi_1(Y),
\]
where $\mathscr{I}:\pi_1(Y)\to \pi_1\Emb_\dagger(I,I\times Y)$ coincides with the inclusion map to the $\pi_1(Y)$ component in the semidirect product. In the following, we will identify $\pi_1(Y)$ with its image by $\mathscr{I}$, and view $\pi_1\Emb_\partial(I,I\times Y)$ as a subgroup of $\pi_1\Emb_\dagger(I,I\times Y)$.
Then we can write $w=x\cdot  y$, where 
\[x\in \pi_{1}\Emb_{\partial}(I,I\times Y)\cong \mathbb{Z}[\pi_1(Y)\setminus\{1\}]\oplus \pi_2(Y),\,\,\,  y\in \pi_{1}(Y).\]
The projection of \eqref{eq: conjugation} to $\pi_1(Y)$ yields $y\alpha y^{-1}=\alpha$. Hence \eqref{eq: conjugation} implies
\[
x\alpha x^{-1}=\alpha_1\beta\alpha_2\beta^{-1},
\]
which can be rewritten as
\[
(\alpha_1^{-1}x\alpha_1)(\alpha_2x^{-1}\alpha_2^{-1}) = \beta(\alpha_2\beta^{-1}\alpha_2^{-1}).
\]
Let $x' = \alpha_1^{-1}x\alpha_1$, then 
\[
x'\big((\alpha_2\alpha_1)(x')^{-1}(\alpha_2\alpha_1)^{-1}\big) = \beta(\alpha_2\beta^{-1}\alpha_2^{-1}).
\]
Let $\beta_1\in \bZ[\pi_1(Y)\setminus\{1\}]$ be the projection of $x$ under the map 
\[
\pi_{1}\Emb_{\partial}(I,I\times Y)\cong \mathbb{Z}[\pi_1(Y)\setminus\{1\}]\oplus \pi_2(Y) \to \mathbb{Z}[\pi_1(Y)\setminus\{1\}].
\]
Then we have
\[
\beta_1 - C_{\alpha_2\alpha_1}(\beta_1) = \beta_0 - C_{\alpha_2}(\beta_0).
\]
By Lemma \ref{lem: beta_1 doesn't exist}, such $\beta_1$ does not exist, so there is no $w$ satisfying \eqref{eq: conjugation}. 
\end{proof}

\bibliographystyle{amsalpha}
\bibliography{references}

\end{document}